\theoremstyle{remark}
\theoremstyle{definition}
\newtheorem{defi}{Definition}[section]
\newtheorem{const}[defi]{Construction}
\newtheorem{expl}[defi]{Example}
\newtheorem{rmk}[defi]{Remark}
\newtheorem*{conv}{Convention}
\newtheorem*{plan}{Organization of the paper}
\newtheorem*{merci}{Acknowlegments}
\newtheorem{notat}[defi]{Notation}
\theoremstyle{plain}
\newtheorem{pro}[defi]{Proposition}
\newtheorem{thm}[defi]{Theorem}
\newtheorem{cor}[defi]{Corollary}
\newtheorem{conj}[defi]{Conjecture}
\newtheorem{lmm}[defi]{Lemma}
\newtheorem*{thm2}{Theorem}
\theoremstyle{remark}
\title{From maps between coloured operads to Swiss-Cheese algebras}
\author{Julien Ducoulombier}
\date{}
\begin{document}
\maketitle 

\abstract \noindent 
In the present work, we extract pairs of topological spaces from maps between coloured operads. We prove that those pairs are weakly equivalent to explicit algebras over the one dimensional Swiss-Cheese operad $\mathcal{SC}_{1}$. Thereafter, we show that the pair formed by the space of long embeddings and the manifold calculus limit of $(l)$-immersions from $\mathbb{R}^{d}$ to $\mathbb{R}^{n}$ is an $\mathcal{SC}_{d+1}$-algebra assuming the Dwyer-Hess' conjecture.

\section*{Introduction}
A multiplicative operad $O$ is a non-symmetric operad under the associative operad $\mathcal{A}s$. In \cite{McClure04}, McClure and Smith build a cosimplicial space $O^{\bullet}$ from a multiplicative operad $O$ and they show that, under technical conditions, its homotopy totalization has the homotopy type of a double loop space. Dwyer and Hess in \cite{Dwyer12}  and independently Turchin in \cite{Tourtchine10} identify the double loop space by proving that, under the assumption $O(0)\simeq O(1) \simeq \ast$, the following weak equivalences hold:
$$
hoTot(O^{\bullet})\simeq \Omega^{2}Operad^{h}_{ns}(\mathcal{A}s\,;\,O)\simeq \Omega^{2}Operad^{h}_{ns}(\mathcal{A}s_{>0}\,;\,O),
$$ 
where $Operad^{h}_{ns}(\mathcal{A}s\,;\,O)$ is the derived mapping space of non-symmetric operads and $\mathcal{A}s_{>0}$ is the non-unital version of the associative operad. In \cite{Ducoulombier14}, we extend this result to the coloured case by using the Swiss-Cheese operad $\mathcal{SC}_{2}$ which is a relative version of the little cubes operad $\mathcal{C}_{2}$. In that case, a typical example of $\mathcal{SC}_{2}$-algebra is a pair of topological spaces of the form
$$
\big(\, \Omega^{2}X\,;\, \Omega^{2}(X\,;\,Y):=\Omega(hofib(f:Y\rightarrow X))\,\big),
$$ 
where $f:Y\rightarrow X$ is a continuous map between pointed spaces. In order to identify typical $\mathcal{SC}_{2}$-algebras, we introduce a two-coloured operad $\mathcal{A}ct=\pi_{0}(\mathcal{SC}_{1})$ whose algebras are pairs of spaces $(A\,;\,B)$ with $A$ a topological monoid and $B$ a left $A$-module. From a pointed operad $O$ (i.e. a two-coloured operad $O$ endowed with a map $\eta:\mathcal{A}ct\rightarrow O$), a pair $(O_{c}\,;\,O_{o})$ of cosimplicial spaces is built and the pair $(hoTot(O_{c})\,;\,hoTot(O_{o}))$ is proved to be weakly equivalent to the explicit $\mathcal{SC}_{2}$-algebra
\begin{equation}\label{g6}
\big(\, \Omega^{2}Operad^{h}_{ns}(\mathcal{A}s_{>0}\,;\,O_{c})\,\,;\,\,\Omega^{2}\big(Operad^{h}_{ns}(\mathcal{A}s_{>0}\,;\,O_{c})\,;\,Operad^{h}_{ns}(\mathcal{A}ct_{>0}\,;\,O)\big)\,\big),
\end{equation}
where $\mathcal{A}ct_{>0}$ is the non-unital version of the operad $\mathcal{A}ct$. 

Pointed operads are helpful in understanding cosimplicial spaces. However, it requires a significant amount of work to identify topological spaces with the homotopy totalization of cosimplicial spaces (coming from multiplicative operads). In many cases, the relations are satisfied up to homotopy.   For instance, as soon as $d>1$, we don't know a cosimplicial model for the space of long embeddings compactly supported in higher dimensions
$$
\overline{Emb}_{c}(\mathbb{R}^{d}\,;\, \mathbb{R}^{n}):=hofib\big(\, Emb_{c}(\mathbb{R}^{d}\,;\, \mathbb{R}^{n})\longrightarrow Imm_{c}(\mathbb{R}^{d}\,;\, \mathbb{R}^{n})\,\big).
$$
\hspace{-250cm}\footnote{ETH Zurich, Ramistrasse 101, 809 \mbox{Zurich}, Switzerland}
\footnote{Key words: coloured operads, loop spaces, space of knots, model category}
\footnote{julien.ducoulombier@math.ethz.ch}
\footnote{Homepage: http://ducoulombier-math.esy.es/}

\noindent However, in the context of symmetric operads, Arone and Turchin develop in \cite{Arone14} a machinery to identify spaces of embeddings with derived mapping space of infinitesimal bimodules. In particular, for $n-d>2$, the two authors in \cite{Arone14} and simultaneously Turchin in \cite{Turchin13} prove that there is the  weak equivalence
\begin{equation}\label{a2}
\overline{Emb}_{c}(\mathbb{R}^{d}\,;\, \mathbb{R}^{n})\simeq Ibimod_{\mathcal{C}_{d}}^{h}(\mathcal{C}_{d}\,;\,\mathcal{C}_{n}).
\end{equation}  
Since the homotopy totalization can be expressed as the derived mapping space of non-symmetric infinitesimal bimodules over $\mathcal{A}s$, it is natural to expect that derived mapping spaces of infinitesimal bimodules over $\mathcal{C}_{d}$ are weakly equivalent to $(d+1)$-iterated loop spaces. For this purpose, we use the category $Bimod_{O}$ of bimodules over an operad $O$ which is an intermediate notion between infinitesimal bimodule and operad in the sense that any operad is a bimodule over itself and any bimodule with a based point in arity $1$ is also an infinitesimal bimodule. So, the Dwyer-Hess' conjecture asserts that if $\eta:\mathcal{C}_{d}\rightarrow O$ is a map of symmetric operads, then the following weak equivalences hold under the assumption $O(0)\simeq O(1)\simeq \ast$:
\begin{equation*}
Ibimod_{\mathcal{C}_{d}}^{h}(\mathcal{C}_{d}\,;\,O)\simeq \Omega^{d}Bimod^{h}_{\mathcal{C}_{d}}(\mathcal{C}_{d}\,;\,O)\simeq \Omega^{d+1}Operad^{h}(\mathcal{C}_{d}\,;\,O).
\end{equation*}
This conjecture is proved by Boavida de Brito and Weiss \cite{Weiss15} for the operad $O=\mathcal{C}_{n}$. More generally, the left weak equivalence is the subject of the paper \cite{Ducoulombier16.2} while the right weak equivalence is proved by the author in \cite{Ducoulombier16}. In particular, we show that if $\eta:O\rightarrow O'$ is a map of operads such that $O$ is a well pointed  $\Sigma$-cofibrant operad and the spaces $O(1)$ and $O'(1)$ are contractible, then the following weak equivalence holds:
\begin{equation}\label{A6}
Bimod_{O}^{h}(O\,;\,O')\simeq \Omega Operad^{h}(O\,;\,O').
\end{equation}

In the present work, we prove a relative version of the Dwyer-Hess' conjecture by extending the weak equivalences (\ref{g6}) and (\ref{A6}) to pairs of topological spaces which don't necessarily arise from homotopy totalizations. For this purpose, we consider the category of ($P$-$Q$) bimodules, denoted by $Bimod_{P\text{-}Q}$, where $P$ and $Q$ are two well pointed coloured operads. Then, we introduce a model category structure on $Bimod_{P\text{-}Q}$ and we adapt the Boardman-Vogt resolution (well known for operads, see \cite{Berger06} and \cite{Boardman73}) to obtain cofibrant replacements. In order to recognize typical $\mathcal{SC}_{1}$-algebras, we introduce a Quillen adjunction between the category of  ($P$-$Q$) bimodules and a subcategory of coloured operads (with set of colours $S=\{o\,;\,c\}$) denoted by $Op[P\,;\,Q]$:
\begin{equation}\label{g7}
\mathcal{L}:Bimod_{P\text{-}Q} \leftrightarrows Op[P\,;\,Q]:\mathcal{R}.
\end{equation}
By using explicit cofibrant resolutions, we prove the main theorem of the paper:

\begin{thm2}{[Theorem \ref{f3}]}
Let $O$ be a well pointed $\Sigma$-cofibrant operad. Let $\eta:\mathcal{L}(O)\rightarrow O'$ be a map in $Op[O\,;\,O]$ and $O_{c}'$ be the restriction of the operad $O'$ to the colour $c$. If the spaces $O(1)$ and $O'_{c}(1)$ are contractible, then the pair of topological spaces 
$$
\big(\, Bimod_{O}^{h}(O\,;\, O'_{c})\,\,;\,\, Bimod_{O}^{h}(O\,;\,\mathcal{R}(O'))\,\big)
$$
is weakly equivalent to the explicit $\mathcal{SC}_{1}$-algebra
$$
\big(\,\Omega Operad^{h}(O\,;\,O'_{c})\,\,;\,\, \Omega\big(\,Operad^{h}(O\,;\,O'_{c})\,;\, Op[O\,;\,\emptyset]^{h}(\mathcal{L}(O)\,;\,O')\,\big)\,\big). 
$$
\end{thm2}

\noindent As a consequence of the above theorem together with the Dwyer-Hess' conjecture, we are able to recognize $\mathcal{SC}_{d+1}$-algebras. In order to do that, we consider the two-coloured operad $\mathcal{CC}_{d}:=\mathcal{L}(\mathcal{C}_{d})$ in the category $Op[\mathcal{C}_{d}\,;\, \mathcal{C}_{d}]$ where $\mathcal{C}_{d}$ is the $d$-dimensional little cubes operad.   According to the notation of Theorem \ref{f3}, $O$ is the operad $\mathcal{C}_{d}$ and one has the following statement:

\begin{thm2}{[Theorem \ref{F6}]}
Assume that the Dwyer-Hess conjecture is true. Let $\eta:\mathcal{CC}_{d}\rightarrow O'$ be a map in $Op[\mathcal{C}_{d}\,;\, \mathcal{C}_{d}]$ such that $O_{c}'(0)$, $O_{c}'(1)$ and $\mathcal{R}(O')(0)$ are contractible. The pair of topological spaces
$$
\big(\, Ibimod_{\mathcal{C}_{d}}^{h}(\mathcal{C}_{d}\,;\, O'_{c})\,;\, Ibimod_{\mathcal{C}_{d}}^{h}(\mathcal{C}_{d}\,;\,\mathcal{R}(O'))\,\big)
$$
is weakly equivalent to the explicit $\mathcal{SC}_{d+1}$-algebra
$$
\left(\,\,
\Omega^{d+1} Operad^{h}(\mathcal{C}_{d}\,;\,O'_{c})\,\,;\,\,\Omega^{d+1}\big(\,\,Operad^{h}(\,\mathcal{C}_{d}\,;\,O'_{c}\,)\,\,;\,\, Op[\mathcal{C}_{d}\,;\,\emptyset]^{h}(\,\mathcal{CC}_{d}\,\,;\,\,O'\,)\,\,\big)\,\, 
\right).
$$
\end{thm2}
\noindent Furthermore, the method used in this paper produces relative deloopings for truncated infinitesimal bimodules. Roughly speaking, $T_{k}Ibimod_{O}$ is the restriction of infinitesimal bimodules to operations with at most $k$ inputs (see Section \ref{A7}). The restriction functors $T_{k}Ibimod_{O}\rightarrow T_{k-1}Ibimod_{O}$ give rise to a tower which \phantom{blablabla} \vspace{-10pt}
\newpage
\noindent plays an important role in understanding the manifold calculus tower associated to the space of embeddings. Under the conditions of Theorem \ref{F6}, if $T_{k}Operad$ denotes the category of $k$-truncated operads, then there are the following weak equivalences: 
$$
\begin{array}{rcl}\vspace{5pt}
T_{k}Ibimod_{\mathcal{C}_{d}}^{h}(T_{k}(\mathcal{C}_{d})\,;\, T_{k}(O'_{c})) & \simeq & \Omega^{d+1}\big(\, T_{k}Operad^{h}(T_{k}(\mathcal{C}_{d})\,;\,T_{k}(O'_{c}))\,\big), \\ 
T_{k}Ibimod_{\mathcal{C}_{d}}^{h}(T_{k}(\mathcal{C}_{d})\,;\,T_{k}(\mathcal{R}(O'))) & \simeq & \Omega^{d+1}\big(\,\,T_{k}Operad^{h}(\,T_{k}(\mathcal{C}_{d})\,;\,T_{k}(O'_{c})\,)\,\,;\,\, T_{k}Op[\mathcal{C}_{d}\,;\,\emptyset]^{h}(\,T_{k}(\mathcal{CC}_{d})\,\,;\,\,T_{k}(O')\,)\,\,\big).
\end{array} 
$$

An application of the previous results concerns the spaces of long embeddings. Due to the weak equivalence (\ref{a2}), we know that the space of long embeddings is related to the map of operads $\eta_{1}:\mathcal{C}_{d}\rightarrow\mathcal{C}_{n}$. In a similar way, Dobrinskaya and Turchin show in \cite{Turchin14} that the manifold calculus limit of $(l)$-immersions is related to the $\mathcal{C}_{n}$-bimodule $\mathcal{C}_{n}^{(l)}$, called the non-$(l)$-overlapping little cubes bimodule. More precisely, one has the following weak equivalences:
$$
T_{k}\overline{Im}m^{(l)}_{c}(\mathbb{R}^{d}\,;\,\mathbb{R}^{n})\simeq T_{k}Ibimod_{\mathcal{C}_{d}}^{h}(T_{k}(\mathcal{C}_{d})\,;\,T_{k}(\mathcal{C}_{n}^{(l)}))
\hspace{15pt}\text{and}\hspace{15pt}
T_{\infty}\overline{Im}m^{(l)}_{c}(\mathbb{R}^{d}\,;\,\mathbb{R}^{n})\simeq Ibimod_{\mathcal{C}_{d}}^{h}(\mathcal{C}_{d}\,;\,\mathcal{C}_{n}^{(l)}),
$$ 
where $T_{k}\overline{Im}m^{(l)}_{c}(\mathbb{R}^{d}\,;\,\mathbb{R}^{n})$ is the $k$-th polynomial approximation of the space of $(l)$-immersions (see Section \ref{F7}). This bimodule doesn't arise from an operad under the little cubes operad. However, there is a map of $\mathcal{C}_{n}$-bimodules $\eta_{2}:\mathcal{C}_{n}\rightarrow \mathcal{C}_{n}^{(l)}$ induced by the inclusion. From the maps $\eta_{1}$ and $\eta_{2}$, we build a two-coloured operad $\Theta$ under $\mathcal{CC}_{d}$ such that $\Theta_{c}=\mathcal{C}_{n}$ and $\mathcal{R}(\Theta)=\mathcal{C}_{n}^{(l)}$. As a consequence of the main theorem of the paper, the pairs of  spaces 
\begin{equation}\label{g8}
\big(\, T_{k}\overline{Emb}_{c}(\mathbb{R}^{d}\,;\, \mathbb{R}^{n}) \,;\, T_{k}\overline{Im}m^{(l)}_{c}(\mathbb{R}^{d}\,;\,\mathbb{R}^{n})\,\big)
\hspace{15pt}\text{and}\hspace{15pt}
\big(\, \overline{Emb}_{c}(\mathbb{R}^{d}\,;\, \mathbb{R}^{n}) \,;\, T_{\infty}\overline{Im}m^{(l)}_{c}(\mathbb{R}^{d}\,;\,\mathbb{R}^{n})\,\big)
\end{equation}
are proved to be weakly equivalent to explicit $\mathcal{SC}_{d+1}$-algebras.

\begin{plan}
The paper is divided into $4$ sections. The \textit{first section} gives an introduction on coloured operads and (infinitesimal) bimodules over coloured operads as well as the truncated versions of these notions. In particular, the little cubes operad, the Swiss-Cheese operad and the non-$(l)$-overlapping little cubes bimodule are defined.

 In the \textit{second section}, we give a presentation of the left adjoint functor to the forgetful functor from the category of ($P$-$Q$) bimodules to the category of $S$-sequences. This presentation is used to endow  $ Bimod_{P\text{-}Q}$ with a cofibrantly generated model category structure. Thereafter, we prove that a Boardman-Vogt type resolution yields explicit and functorial cofibrant replacements in the model category of ($P$-$Q$) bimodules. We also show that similar statements hold true for truncated bimodules.  

 The \textit{third section} is devoted to the proof of the main theorem \ref{f3}. For this purpose, we give a presentation of the functor $\mathcal{L}$ and prove that the adjunction (\ref{g7}) is a Quillen adjunction. Then we change slightly the Boardman-Vogt resolution introduced in Section \ref{G0} in order to obtain explicit cofibrant replacements in the category $Op[O\,;\,\emptyset]$. Finally, by using Theorem \ref{f3} and the Dwyer-Hess' conjecture, we identify $\mathcal{SC}_{d+1}$-algebras from maps of operads $\eta:\mathcal{CC}_{d}\rightarrow O'$. 

 In the \textit{last section} we give an application of our results to the space of long embeddings in higher dimension. We introduce quickly the Goodwillie calculus as well as the relation between the manifold calculus tower and the mapping space of infinitesimal bimodules. Then we show that the pairs (\ref{g8}) are weakly equivalent to explicit typical $\mathcal{SC}_{d+1}$-algebras.  

\end{plan}

\begin{conv}
By a space we mean a compactly generated Hausdorff space and by abuse of notation we denote by $Top$ this category (see e.g. \cite[section 2.4]{Hovey99}). If $X$, $Y$ and $Z$ are spaces, then $Top(X;Y)$ is equipped with the compact-open topology in order to have a homeomorphism $Top(X;Top(Y;Z))\cong Top(X\times Y;Z)$. By using the Serre fibrations, the category $Top$ is endowed with a cofibrantly generated monoidal model  structure. In the paper the categories considered are enriched over $Top$.
\end{conv}

\section{Bimodules and Ibimodules over coloured operads}

In what follows, we cover the notion of operad with the example of the little cubes operad and the notion of (infinitesimal) bimodule together with their truncated versions. For more details about these objects, we refer the reader to \cite{Arone14} and \cite{May72}. For our purpose, we focus on the operads with two colours $S=\{o\,;\,c\}$. In particular, we recall the definition of the $d$-dimensional Swiss-Cheese operad $\mathcal{SC}_{d}$ introduced by Voronov in \cite{Voronov99} (see also \cite{Kontsevich99}). The non-$(l)$-overlapping little cubes bimodule, which is also described below, was introduced in \cite{Turchin14} by Dobrinskaya and Turchin.

\newpage

\subsection{Topological coloured operads} \label{G5} 

\begin{defi}
Let $S$ be a set called \textit{the set of colours}. An \textit{$S$-sequence} is a family of topological spaces
$$M:=\{M(s_{1},\ldots,s_{n};s_{n+1})\}\hspace{15pt}\text{with }s_{i}\in S \text{ and } n\in \mathbb{N},$$
endowed with an action of the symmetric group: for each configuration of $n+1$ elements in $S$ and each permutation $\sigma \in \Sigma_{n}$, there is a continuous map
\begin{equation}\label{A0}
\begin{array}{rcl}
\sigma^{\ast}:M(s_{1},\ldots,s_{n};s_{n+1}) & \longrightarrow & M(s_{\sigma(1)},\ldots,s_{\sigma(n)};s_{n+1}); \\ 
 x & \longmapsto & x\cdot \sigma 
\end{array} 
\end{equation}
satisfying the relation $(x\cdot\sigma)\cdot \tau=x\cdot(\sigma\tau)$ with $\tau\in \Sigma_{n}$. A map between two $S$-sequences is given by a family of continuous maps compatible with the action of the symmetric group. In the rest of the paper, we denote by $Seq(S)$ the category of $S$-sequences. Given an integer $k\geq 1$, we also consider the category of $k$-truncated $S$-sequences $T_{k}Seq(S)$. The objects are family of topological spaces
$$M:=\{M(s_{1},\ldots,s_{n};s_{n+1})\}\hspace{15pt}\text{with }s_{i}\in S \text{ and } n\leq k,$$
endowed an action of the symmetric group (\ref{A0}) with $n\leq k$. A ($k$-truncated) $S$-sequence is said to be pointed if there are distinguished elements $\{\ast_{s}\in O(s\,;\,s)\}_{s\in S}$ called \textit{units}.
\end{defi}

\begin{defi}
An \textit{$S$-operad} is a pointed $S$-sequence $O$ together with operations called \textit{operadic compositions}
\begin{equation}\label{A1}
\circ_{i}:O(s_{1},\ldots,s_{n};s_{n+1})\times O(s'_{1},\ldots,s'_{m};s_{i})\longrightarrow O(s_{1},\ldots,s_{i-1},s'_{1},\ldots,s'_{m},s_{i+1},\ldots,s_{n};s_{n+1}),
\end{equation}
with $s_{j},\,s_{j}'\in S$ and  $1\leq i\leq n$, satisfying compatibility with the action of the symmetric group, associativity and unit axioms. A map between two coloured operads should respect the operadic compositions.  We denote by $Operad_{S}$ the categories of $S$-operads. Given an integer $k\geq 1$, we also consider the category of $k$-truncated $S$-operads $T_{k}Operad_{S}$. The objects are pointed $k$-truncated $S$-sequences endowed with operadic compositions (\ref{A1}) with $n+m-1\leq k$ and $n\leq k$. One has an obvious functor
$$
T_{k}(-):Operad_{S} \longrightarrow T_{k}Operad_{S}.
$$
\end{defi}

\begin{notat}
If the set of colours $S$ has only one element, then an $S$-operad $O$ is said to be uncoloured. In this case, $O$ is a family of topological spaces $\{O(n)\}_{n\geq 0}$ together with a symmetric group action and operadic compositions
$$
\circ_{i}:O(n)\times O(m)\longrightarrow O(n+m-1), \hspace{15pt} \text{with } 1\leq i\leq n.
$$
Let $Operad$ and $T_{k}Operad$ be the categories of uncoloured operads and $k$-truncated uncoloured operads respectively. On the other hand, if $S=\{o\,;\,c\}$, then the "open" colour $o$ will be represented in red whereas the "closed" colour $c$ will be represented in black in the rest of the paper. 
\end{notat}

\begin{expl}\textbf{The uncoloured operad $\mathcal{C}_{d}^{\infty}$}

\noindent A $d$-dimensional little cube is a continuous map $c:[0\,,\,1]^{d}\rightarrow [0\,,\,1]^{d}$ arising from an affine embedding preserving the direction of the axes. The operad $\mathcal{C}_{d}^{\infty}$ is the sequence $\{\mathcal{C}_{d}^{\infty}(n)\}$ whose $n$-th component is given by $n$ little cubes, that is, $n$-tuples $<c_{1},\ldots,c_{n}>$ with $c_{i}$ a $d$-dimensional little cube. The distinguished point in $\mathcal{C}_{d}^{\infty}(1)$ is the identity little cube $id:[0\,,\,1]^{d}\rightarrow [0\,,\,1]^{d}$ whereas $\sigma\in \Sigma_{n}$ permutes the indexation:
$$
\sigma^{\ast}:\mathcal{C}_{d}^{\infty}(n)\longrightarrow \mathcal{C}_{d}^{\infty}(n)\,\,\,;\,\,\, <c_{1},\ldots,c_{n}>  \longmapsto  <c_{\sigma(1)},\ldots,c_{\sigma(n)}>.
$$ 
The operadic compositions are given by the formula
$$
\begin{array}{clcl}
\circ_{i}: & \mathcal{C}_{d}^{\infty}(n)\times \mathcal{C}_{d}^{\infty}(m) & \longrightarrow & \mathcal{C}_{d}^{\infty}(n+m-1); \\ 
 &  <c_{1},\ldots,c_{n}>\,;\, <c'_{1},\ldots,c'_{m}> & \longmapsto & <c_{1},\ldots,c_{i-1}, c_{i}\circ c'_{1},\ldots, c_{i}\circ c'_{m},c_{i+1},\ldots,c_{n}>.
\end{array} 
$$
By convention $\mathcal{C}_{d}^{\infty}(0)$ is the one point topological space and the operadic composition $\circ_{i}$ with this point consists in forgetting the $i$-th little cube.
\end{expl}

\begin{defi}
Let $S$ be a set and $O$ be an $S$-operad. An algebra over the operad $O$, or \textit{$O$-algebra}, is given by a family of topological spaces $X:=\{X_{s}\}_{s\in S}$ endowed with operations
$$
\mu:O(s_{1},\ldots , s_{n};s_{n+1})\times X_{s_{1}}\times \cdots\times X_{s_{n}}\longrightarrow X_{s_{n+1}},
$$ 
compatible with the operadic compositions and the action of the symmetric group.
\end{defi}

\begin{expl}\label{a8}\textbf{The little cubes operad $\mathcal{C}_{d}$}\\
\noindent The $d$-dimensional little cubes operad $\mathcal{C}_{d}$ is the 
sub-operad of $\mathcal{C}_{d}^{\infty}$ whose $n$-th component is the configuration space of $n$ little cubes with disjoint interiors. In other words, $\mathcal{C}_{d}(n)$ is the subspace of $\mathcal{C}_{d}^{\infty}(n)$  formed by configurations $<c_{1},\ldots,c_{n}>$ satisfying the relation
\begin{equation}\label{a1}
Int(Im(c_{i}))\cap Int(Im(c_{j}))=\emptyset, \hspace{15pt} \forall i\neq j.
\end{equation}
The operadic compositions and the action of the symmetric group arise from the operad $\mathcal{C}_{d}^{\infty}$. Furthermore, if  $(X\,;\,\ast)$ is a pointed topological space, then the $d$-iterated loop space $\Omega^{d}X$ is an example of $\mathcal{C}_{d}$-algebra.
\end{expl}

\begin{figure}[!h]
\begin{center}
\includegraphics[scale=0.3]{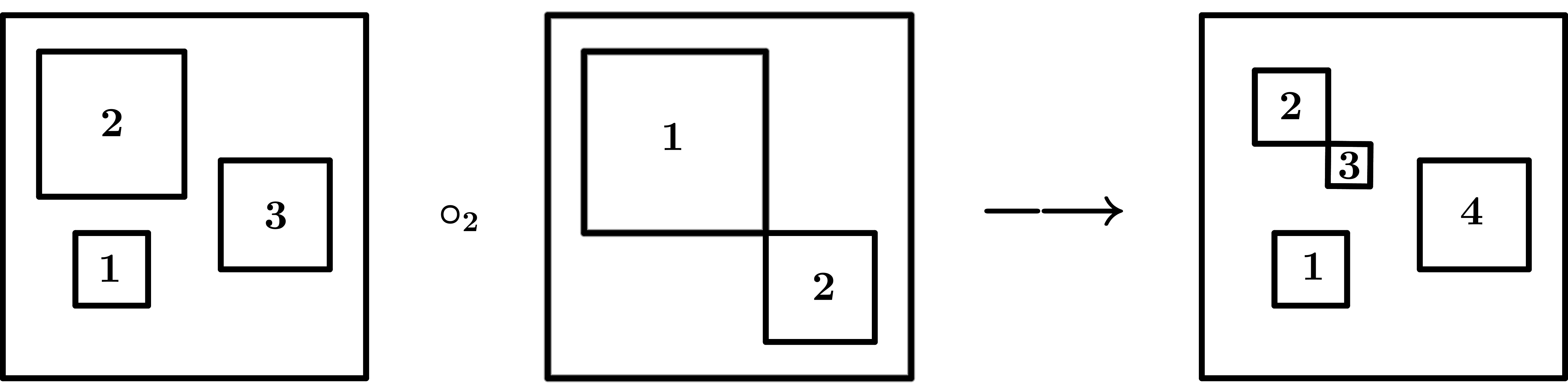}
\caption{The operadic composition $\circ_{2}:\mathcal{C}_{2}(3)\times \mathcal{C}_{2}(2)\rightarrow \mathcal{C}_{2}(4)$.}\vspace{-10pt}
\end{center}
\end{figure}

\begin{expl}\label{a5}\textbf{The Swiss-Cheese operad $\mathcal{SC}_{d}$}\\
The $d$-dimensional Swiss-Cheese operad $\mathcal{SC}_{d}$, considered in this paper, is the alternative version introduced by Kontsevich in \cite{Kontsevich99}. Its restriction to the colour $c$ coincides with the little cubes operad $\mathcal{C}_{d}$:
$$
\mathcal{SC}_{d}(\,\underset{n}{\underbrace{c,\ldots,c}}\,;c)=
\left\{
\begin{array}{cl}\vspace{4pt}
\mathcal{C}_{d}(n) & \text{if}\,\,n\geq 1, \\ 
\ast & \text{if}\,\, n=0.
\end{array} 
\right.
$$
The space $\mathcal{SC}_{d}(s_{1},\ldots,s_{n};o)$ is a subspace of $\mathcal{C}_{d}^{\infty}(n)$ formed by configurations of $n$ little cubes $<c_{1},\ldots,c_{n}>$ satisfying the relation (\ref{a1}) and the  following condition:
$$
s_{i}=o \Rightarrow c_{i}(F_{1})\subset F_{1} \hspace{15pt}\text{with}\hspace{15pt} F_{1}:=\{\,(t_{1},\ldots,t_{d})\in [0\,,\,1]^{d}\,\,|\,\, t_{1}=1\,\}.
$$
By convention the spaces $\mathcal{SC}_{d}(\,\,;\,o)$ is the one point topological space. Furthermore, if there is an integer $i$ so that $s_{i}=o$, then $\mathcal{SC}_{d}(s_{1},\ldots,s_{n};c)=\emptyset$. The operadic compositions and the action of the symmetric group arise from the operad $\mathcal{C}_{d}^{\infty}$.
\begin{figure}[!h]
\begin{center}
\includegraphics[scale=0.3]{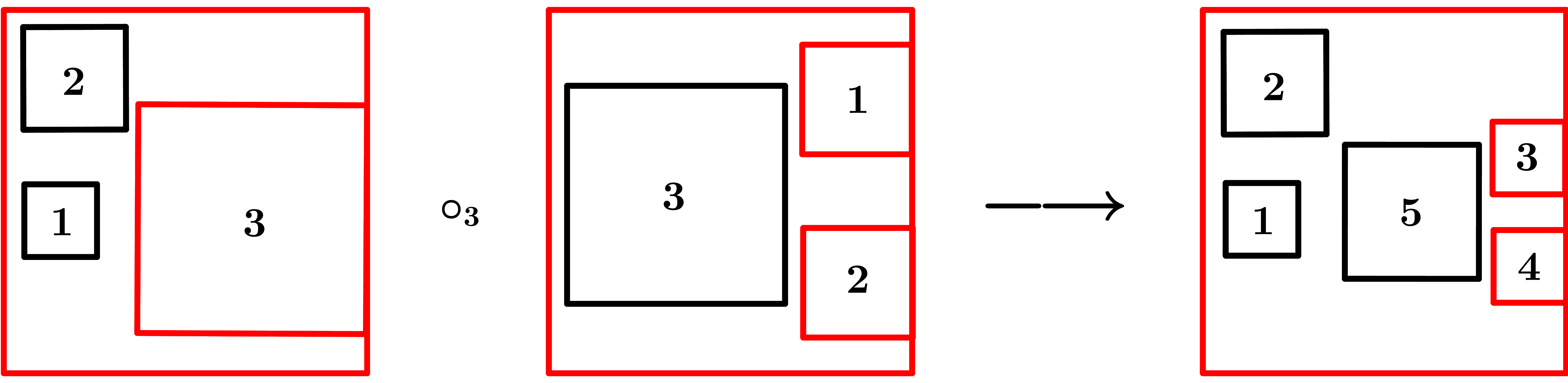}
\caption{The operadic composition $\circ_{3}:\mathcal{SC}_{2}(c,c,o;o)\times \mathcal{SC}_{2}(o,o,c;o)\rightarrow \mathcal{SC}_{2}(c,c,o,o,c;o)$.}\vspace{-10pt}
\end{center}
\end{figure}

\noindent If $f:(Y\,;\,\ast)\rightarrow (X\,;\,\ast)$ is a continuous map between pointed spaces, then the pair $(\Omega^{d}X\,;\,\Omega^{d}(X\,;\,Y))$ is an example of $\mathcal{SC}_{d}$-algebra. Contrary to the little cubes operad, there is no recognition principle for the Swiss-Cheese operad. The conjecture saying that the algebras over $\mathcal{SC}_{d}$ are weakly equivalent to pairs of the form $(\Omega^{d}X\,;\,\Omega^{d}(X\,;\,Y))$ is only proved in the case $d=1$ by Hoefel, Livernet and Stasheff in \cite{Livernet13}.  By $\Omega^{d}(X\,;\,Y)$ we mean the $d$-iterated relative loop space defined as follows:
$$
\begin{array}{rl}\vspace{7pt}
\Omega^{d}(X\,;\,A) & := hofib\big( \, \Omega^{d-1}f:\Omega^{d-1}A\longrightarrow \Omega^{d-1}X\,\big), \\ 
 & := \Omega^{d-1}\big( \,hofib( f:A\longrightarrow X)\,\big).
\end{array} 
$$
\end{expl}

\subsection{Bimodules over coloured operads}

\begin{defi}\label{c7}
Let $P$ and $Q$ be two $S$-operads. A \textit{($P$-$Q$) bimodule} is an $S$-sequence $M$ together with operations 
\begin{equation}\label{A2}
\begin{array}{ll}\vspace{7pt}
\gamma_{r}: & M(s_{1},\ldots,s_{n};s_{n+1})\times Q(s^{1}_{1},\ldots,s^{1}_{m_{1}};s_{1})\times \cdots\times Q(s^{n}_{1},\ldots,s^{n}_{m_{n}};s_{n})  \longrightarrow  M(s^{1}_{1},\ldots,s^{n}_{m_{n}};s_{n+1}), \\ \vspace{7pt}
\gamma_{l}: & P(s_{1},\ldots,s_{n};s_{n+1})\times M(s^{1}_{1},\ldots,s^{1}_{m_{1}};s_{1})\times \cdots\times M(s^{n}_{1},\ldots,s^{n}_{m_{n}};s_{n})  \longrightarrow  M(s^{1}_{1},\ldots,s^{n}_{m_{n}};s_{n+1}),\vspace{-10pt}
\end{array}
\end{equation}
satisfying compatibility with the action of the symmetric group, associativity and unity axioms. In particular, for each $s\in S$ there is a map $\gamma_{s}: P(\,;\, s)\longrightarrow M(\,;\,s)$. A map between ($P$-$Q$) bimodules should respect the operations. We denote by $Bimod_{P\text{-}Q}$ the category of ($P$-$Q$) bimodules. 

Given an integer $k\geq 1$, we also consider the category of $k$-truncated bimodules $T_{k}Bimod_{P\text{-}Q}$. An object is a $k$-truncated $S$-sequence endowed with a bimodule structure (\ref{A2}) for $m_{1}+\cdots + m_{n}\leq k$ (and $n\leq k$ for $\gamma_{r}$). To simplify the notation, the categories $Bimod_{P\text{-}P}$ and $T_{k}Bimod_{P\text{-}P}$ are denoted by $Bimod_{P}$ and $T_{k}Bimod_{P}$ respectively. One has an obvious functor 
$$
T_{k}(-):Bimod_{P\text{-}Q}\longrightarrow T_{k}Bimod_{P\text{-}Q}.
$$
\end{defi}

\begin{notat}
Thanks to the distinguished points in $Q(s\,;\,s)$, the right operations $\gamma_{r}$ can equivalently be defined as a family of continuous maps
$$
\circ^{i}:M(s_{1},\ldots,s_{n};s_{n+1})\times Q(s'_{1},\ldots,s'_{m};s_{i})\longrightarrow M(s_{1},\ldots,s_{i-1},s'_{1},\ldots,s'_{m},s_{i+1},\ldots, s_{n};s_{n+1}),\hspace{15pt}\text{with }1\leq i\leq n.
$$
Furthermore, we use the following notation: 
$$
\begin{array}{ll}\vspace{7pt}
x\circ^{i}y=\circ^{i}(x\,;\,y) & \text{for } x\in M(s_{1},\ldots,s_{n};s_{n+1}) \text{ and } y\in Q(s'_{1},\ldots,s'_{m};s_{i}),  \\ 
x(y_{1},\ldots,y_{n})=\gamma_{l}(x\,;\,y_{1}\,;\ldots;\,y_{n}) & \text{for } x\in P(s_{1},\ldots,s_{n};s_{n+1}) \,\,\text{ and } y_{i}\in M(s^{i}_{1},\ldots,s^{i}_{m_{i}};s_{i}).
\end{array} 
$$
\end{notat}

\begin{expl}\label{d1}
Let $\eta:O_{1}\rightarrow O_{2}$ be a  map of $S$-operads. In that case, the map $\eta$ is also  a bimodule map over $O_{1}$ and the bimodule structure on $O_{2}$ is defined as follows: 
$$
\begin{array}{lrll}\vspace{4pt}
\gamma_{r}: & O_{2}(s_{1},\ldots,s_{n};s_{n+1})\times O_{1}(s^{1}_{1},\ldots,s^{1}_{m_{1}};s_{1})\times \cdots\times O_{1}(s^{n}_{1},\ldots,s^{n}_{m_{n}};s_{n})  & \longrightarrow &  O_{2}(s^{1}_{1},\ldots,s^{n}_{m_{n}};s_{n+1}); \\\vspace{7pt}
& (x\,;\,y_{1},\ldots,y_{n})&\longmapsto & (\cdots(x\circ_{n}\eta(y_{n}))\cdots)\circ_{1}\eta(y_{1})),\\ \vspace{4pt}
\gamma_{l}: &  O_{1}(s_{1},\ldots,s_{n};s_{n+1})\times O_{2}(s^{1}_{1},\ldots,s^{1}_{m_{1}};s_{1})\times \cdots\times O_{2}(s^{n}_{1},\ldots,s^{n}_{m_{n}};s_{n})  & \longrightarrow &  O_{2}(s^{1}_{1},\ldots,s^{n}_{m_{n}};s_{n+1});\\
& (x\,;\,y_{1},\ldots,y_{n})&\longmapsto & (\cdots(\eta(x)\circ_{n}y_{n})\cdots)\circ_{1}y_{1}.
\end{array} 
$$
\end{expl}

\begin{expl}\textbf{The non-$(l)$-overlapping little cubes bimodule $\mathcal{C}_{d}^{(l)}$}\\
The $d$-dimensional non-$(l)$-overlapping little cubes bimodule $\mathcal{C}_{d}^{(l)}$ has been introduced by Dobrinskaya and Turchin in \cite{Turchin14}. The space $\mathcal{C}_{d}^{(l)}(n)$ is the subspace of $ \mathcal{C}_{d}^{\infty}(n)$ formed by configurations of $n$ little cubes $<c_{1},\ldots,c_{n}>$ satisfying the following relation:
\begin{equation}\label{i3}
\forall\, i_{1}< \cdots < i_{l} \in \{1,\ldots, n\}, \hspace{15pt} \underset{1\leq j\leq l}{\bigcap}\, Int(Im(c_{i_{j}}))=\emptyset.
\end{equation}
In particular, $\mathcal{C}_{d}^{(2)}$ coincides with the little cubes operad $\mathcal{C}_{d}$. The action of the symmetric group and the bimodule structure over the little cubes operad $\mathcal{C}_{d}$ arise from the operadic structure of $\mathcal{C}_{d}^{\infty}$. %Let us notice that the sequence $\mathcal{C}_{d}^{(k)}$, with $k>2$, is not an operad since the operadic compositions of  $\mathcal{C}_{d}^{\infty}$ don't necessarily preserve the condition (\ref{i3}).
\begin{figure}[!h]
\begin{center}
\includegraphics[scale=0.32]{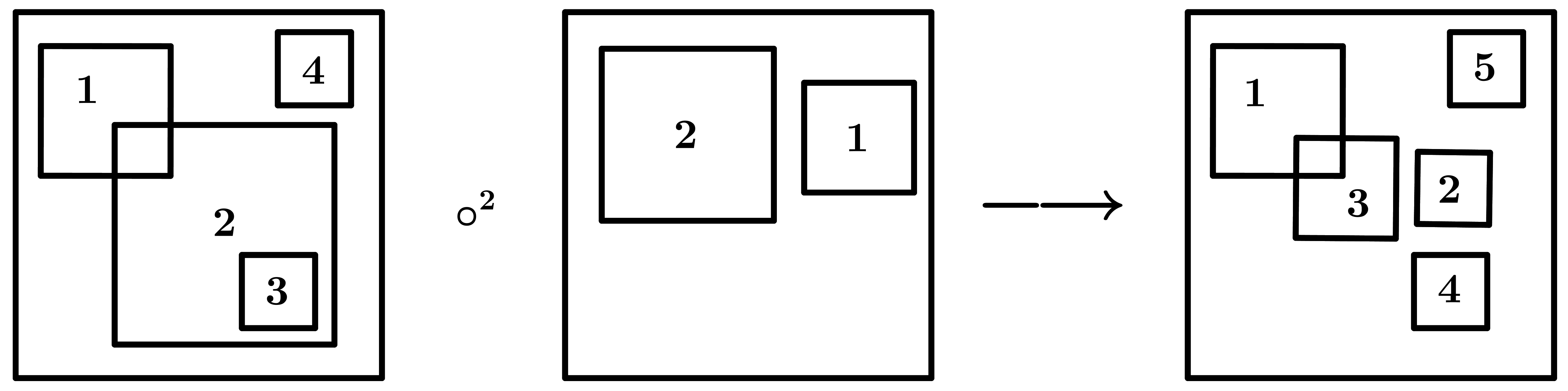}\vspace{-10pt}
\caption{The right module structure $\circ^{2}:\mathcal{C}_{2}^{(3)}(4) \times \mathcal{C}_{2}(2)\rightarrow \mathcal{C}_{2}^{(3)}(5)$.}\vspace{-35pt}
\end{center}
\end{figure}
\end{expl}

\newpage

\subsection{Infinitesimal bimodules over a coloured operad}\label{A7}

\begin{defi}\label{bimodule}%bimodule faible
Let $O$ be an $S$-operad. An \textit{infinitesimal bimodule} over the operad $O$, or \textit{$O$-Ibimodule}, is an $S$-sequence $N$ endowed with operations 
\begin{equation}\label{A3}
\begin{array}{ll}\vspace{7pt}
\circ_{i}:O(s_{1},\ldots,s_{n};s_{n+1})\times N(s'_{1},\ldots,s'_{m};s_{i}) 
\rightarrow N(s_{1},\ldots,s_{i-1},s'_{1},\ldots,s'_{m},s_{i+1},\ldots,s_{n};s_{n+1}), & \text{for } 1\leq i\leq n, \\ \vspace{7pt}
\circ^{i}:N(s_{1},\ldots,s_{m};s_{m+1})\times O(s'_{1},\ldots,s'_{n};s_{i})
\rightarrow N(s_{1},\ldots,s_{i-1},s'_{1},\ldots,s'_{n},s_{i+1},\ldots,s_{m};s_{m+1}), & \text{for } 1\leq i\leq n,
\end{array} 
\end{equation}
satisfying compatibility with the action of the symmetric group, associativity, commutativity and unit relations. A map between $O$-Ibimodules should respect the operations. We denote by $Ibimod_{O}$ the category of infinitesimal bimodules over $O$. By convention, $\circ_{i}$ and $\circ^{i}$ are called the left and the right infinitesimal operations respectively. 

Given an integer $k\geq 1$, we also consider the category of $k$-truncated infinitesimal bimodules $T_{k}Ibimod_{O}$. An object is a $k$-truncated $S$-sequence endowed with an infinitesimal bimodule structure (\ref{A3}) for $n+m-1\leq k$ and $m\leq k$.  One has an obvious functor
$$
T_{k}(-):Ibimod_{O}\longrightarrow T_{k}Ibimod_{O}.
$$
\end{defi}

\begin{notat}
We will use the following notation:
$$
\begin{array}{ll}\vspace{7pt}
x\circ_{i}y=\circ_{i}(x\,;\,y) & \text{for } x\in O(s_{1},\ldots,s_{n};s_{n+1}) \,\text{ and } y\in N(s'_{1},\ldots,s'_{m};s_{i}),  \\ 
x\circ^{i}y=\circ^{i}(x\,;\,y) & \text{for } x\in N(s_{1},\ldots,s_{n};s_{n+1}) \text{ and } y\in O(s'_{1},\ldots,s'_{m};s_{i}).
\end{array} 
$$
\end{notat}

\begin{expl}
Let $O$ be an $S$-operad and $\eta:O\rightarrow M$ be a map of $O$-bimodules. In that case, the map $\eta$ is also an infinitesimal bimodule map over $O$. The right operations and the right infinitesimal operations are the same. So, the left infinitesimal operations on $M$ are defined as follows:
$$
\begin{array}{rcl}\vspace{4pt}
\circ_{i}:O(s_{1},\ldots,s_{n};s_{n+1})\times M(s'_{1},\ldots,s'_{m};s_{i}) & \longrightarrow & M(s_{1},\ldots,s_{i-1},s'_{1},\ldots,s'_{m},s_{i+1},\ldots,s_{n};s_{n+1}); \\ 
(\,x\,;\,y) & \longmapsto & \gamma_{l}(\,x\,;\,\eta(\ast_{s_{1}}),\ldots,\eta(\ast_{s_{i-1}}),y,\eta(\ast_{s_{i+1}}),\ldots,\eta(\ast_{s_{n}})\,).
\end{array} 
$$ 
\end{expl}

\section{The Boardman-Vogt resolution for ($P$-$Q$) bimodules}\label{G0}

In \cite{Ducoulombier14}, we introduce a model category structure for bimodules over a non-symmetric $S$-operad $O$ satisfying the condition $O(\,;\,s)=\emptyset$ for any $s\in S$. For this purpose, we give a presentation of the left adjoint to the forgetful functor from bimodules to the category of non-symmetric $S$-sequences. In the present work, we extend this result to the category of ($P$-$Q$) bimodules and the category of $k$-truncated bimodules. Contrary to \cite{Ducoulombier14}, we have to take into account the action of the symmetric group as well as the continuous maps $\gamma_{s}$ in arity $0$. For this reason, we consider the following categories:

\begin{defi}
Let $P$ be an $S$-operad and $k\geq 0$ be an integer. We denote by $Seq(S)_{P_{0}}$ and $T_{k}Seq(S)_{P_{0}}$ the categories of $S$-sequences and $k$-truncated $S$-sequences $M$ endowed with continuous maps $\gamma_{s}:P(\,;\,s)\rightarrow M(\,;\,s)$. In other words, if $P_{0}$ is the $S$-sequence given by $P_{0}(\,;\,s)=P(\,;\,s)$, for $s\in S$, and the empty set otherwise, then
$$
Seq(S)_{P_{0}}:= P_{0}\downarrow Seq(S) \hspace{15pt}\text{and} \hspace{15pt} T_{k}Seq(S)_{P_{0}}:= T_{k}(P_{0})\downarrow T_{k}Seq(S).
$$
\end{defi}

The categories $Seq(S)$ and $T_{k}Seq(S)$ have a model category structure in which a map is a fibration (resp. a weak equivalence) if each of its components is a Serre fibration (resp. a weak homotopy equivalence). This model category structure is cofibrantly generated and each object is fibrant (see \cite{Berger03} or \cite[section 3]{Ducoulombier14}). As a consequence, $Seq(S)_{P_{0}}$ and $T_{k}Seq(S)_{P_{0}}$ inherit a model category structure with the same properties. By using the following adjunctions:
\begin{equation}\label{A8}
\mathcal{F}_{B}:Seq(S)_{P_{0}}\leftrightarrows Bimod_{P\text{-}Q}:\mathcal{U}\hspace{15pt}\text{and} \hspace{15pt} T_{k}\mathcal{F}_{B}:T_{k}Seq(S)_{P_{0}}\leftrightarrows T_{k}Bimod_{P\text{-}Q}:\mathcal{U},
\end{equation}
we are able to define a model category structure on $Bimod_{P\text{-}Q}$ and $T_{k}Bimod_{P\text{-}Q}$. The following theorem is a consequence of the transfer theorem \cite[section 2.5]{Berger03}. Its proof is similar to  \cite[Application 3.5]{Ducoulombier14}.

\begin{thm}\label{b4}
Let $P$ and $Q$ be two $S$-operads. The categories $Bimod_{P\text{-}Q}$ and $T_{k}Bimod_{P\text{-}Q}$ have a cofibrantly generated model category structure  in which every object is fibrant. In particular, a map $f$ in $Bimod_{P\text{-}Q}$ or $T_{k}Bimod_{P\text{-}Q}$ is a fibration (resp. weak equivalence) if and only if the map $\mathcal{U}(f)$ is a fibration (resp. weak equivalence) in $Seq(S)$ or $T_{k}Seq(S)$.  
\end{thm}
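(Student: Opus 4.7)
The plan is to apply the transfer principle of Berger--Moerdijk \cite[Section 2.5]{Berger03} to the adjunctions \eqref{A8}, using the fact that $Seq(S)_{P_{0}}$ and $T_{k}Seq(S)_{P_{0}}$ are already cofibrantly generated model categories with componentwise Serre fibrations and componentwise weak homotopy equivalences. Concretely, letting $I$ and $J$ denote the generating cofibrations and generating trivial cofibrations of $Seq(S)_{P_{0}}$, I would define the candidate generating (trivial) cofibrations in $Bimod_{P\text{-}Q}$ to be $\mathcal{F}_{B}(I)$ and $\mathcal{F}_{B}(J)$, and similarly for the truncated case. The transfer theorem then produces the desired model structure as soon as two conditions are met: (a) a smallness condition on the domains of $\mathcal{F}_{B}(I)$ and $\mathcal{F}_{B}(J)$ with respect to transfinite compositions of pushouts of $\mathcal{F}_{B}$-maps; (b) the acyclicity condition that every relative $\mathcal{F}_{B}(J)$-cell complex has underlying map a weak equivalence in $Seq(S)_{P_{0}}$. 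Fibrancy of every object is automatic, since fibrations in $Seq(S)$ are componentwise Serre fibrations and every topological space is fibrant.

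For condition (a), I would rely on the presentation of $\mathcal{F}_{B}$ announced in this section: the free $(P\text{-}Q)$-bimodule on an $S$-sequence admits an explicit combinatorial description indexed by a class of trees whose internal vertices are labelled by elements of $P$, whose leaves are grafted with elements of $Q$, and whose remaining vertices are decorated by the input $S$-sequence. Because $\mathcal{U}$ preserves filtered colimits (filtered colimits in $Seq(S)$ are computed componentwise, and the bimodule operations are continuous in each variable), the smallness hypotheses transferred from the base are satisfied in the category of compactly generated Hausdorff spaces. The truncated case is strictly simpler because the relevant trees have bounded size.

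The main obstacle is condition (b). Here I would argue as in \cite[Application 3.5]{Ducoulombier14}, but paying attention to the two new features present in the coloured symmetric setting: the action of the symmetric groups on the $S$-sequences, and the unary compatibility maps $\gamma_{s}\colon P(\,;\,s)\to M(\,;\,s)$ encoded by working over the under-category $P_{0}\downarrow Seq(S)$. Given a generating trivial cofibration $f\colon A\to B$ in $Seq(S)_{P_{0}}$ and a pushout diagram
\begin{equation*}
\xymatrix{
\mathcal{F}_{B}(A) \ar[r] \ar[d]_{\mathcal{F}_{B}(f)} & M \ar[d] \\
\mathcal{F}_{B}(B) \ar[r] & M'
}
\end{equation*}
in $Bimod_{P\text{-}Q}$, I would build $M\to M'$ as a countable transfinite composition of pushouts along maps indexed by an exhaustive filtration of the tree description; at each filtration stage the new attaching map is, componentwise, a pushout along an equivariant trivial cofibration of spaces, hence still a trivial cofibration. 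Transfinite composition of trivial cofibrations in $Top$ being a weak equivalence then gives the acyclicity condition. The delicate point is organising the filtration so that the symmetric group actions and the relations coming from the unit axioms and the $\gamma_{s}$ maps are respected simultaneously; once this is set up, the verification becomes routine. The truncated statement is deduced by the same argument with the tree filtration truncated at arity $k$.
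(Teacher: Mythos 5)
Your use of the transfer theorem along the adjunctions (\ref{A8}) is the same framework the paper invokes, and the fibrancy and smallness parts of your plan are fine. Where you diverge is in the verification of the acyclicity condition. The standard way to apply \cite[Section~2.5]{Berger03} in a topological setting --- and the route that the cited \cite[Application~3.5]{Ducoulombier14} takes --- is Quillen's path-object argument: every $(P\text{-}Q)$ bimodule is already fibrant, and $M\mapsto M^{[0,1]}$ (with the bimodule structure defined pointwise, which works because the operations are maps of spaces) is a functorial path object, so the acyclicity hypothesis of the transfer theorem holds with no combinatorics at all. This buys a one-line verification. Your alternative is a direct cellular/filtration analysis of $\mathcal{F}_{B}(J)$-cell attachments, which can be made to work but as written has a gap: you assert that each filtration stage is a pushout along a trivial cofibration and is ``hence still a trivial cofibration,'' yet a typical stage of the free-bimodule filtration involves quotienting an $n$-fold pushout product of the generating map $f\colon A\to B$ by a $\Sigma_{n}$-action mixing the $n$ copies of the attached generator. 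Such a quotient is a trivial cofibration only because the generating trivial cofibrations of $Seq(S)$ can be taken $\Sigma$-free, i.e.\ of the form $\Sigma_{n}\times(D^{k}\hookrightarrow D^{k}\times[0,1])$, so that the induced $\Sigma_{n}$-action on the pushout product is free; without stating this, the key assertion does not follow, and for a non-free action the claim would simply be false. If you prefer the filtration route --- which does pay off later, since the same kind of stratified decomposition is the engine behind the Boardman--Vogt analysis in Section~\ref{G0} --- you need to make the $\Sigma$-freeness of the generating (trivial) cofibrations explicit and track it through the filtration; otherwise, the path-object argument is both shorter and what the paper actually has in mind.
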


\begin{rmk}
It is important to mention that the model category structure on $Bimod_{P\text{-}Q}$ and $T_{k}Bimod_{P\text{-}Q}$ that one gets is the same as one would get from the classical adjunction between $Seq(S)$ and $Bimod_{P\text{-}Q}$ which is the usual model structure on $Bimod_{P\text{-}Q}$. Essentially because the classical adjunction can be factorized as follows: 
$$
Seq(S)\leftrightarrows Seq(S)_{P_{0}} \leftrightarrows Bimod_{P\text{-}Q}.
$$
\end{rmk}

In this section, we give a presentation of the left adjoint functors (\ref{A8}). Thereafter, we adapt the Boardman-Vogt resolution in the context of ($k$-truncated) bimodules to obtain functorial cofibrant replacements. As did Vogt in \cite{Boardman73} for operads (see also \cite{Berger03} and \cite{Vogt03}), we use the language of trees in order to obtain such resolutions. For this reason, we fix some notation.

\begin{defi}\label{C9}
A \textit{planar tree} $T$ is a finite planar tree with one output edge on the bottom and inputs edges on the top. The vertex connected to the output edge, called the root of $T$, is denoted by $r$. Such an element is endowed with an orientation from top to bottom. Furthermore, we introduce the following notation: 
\begin{itemize}[leftmargin=*]
\item[$\blacktriangleright$] The set of its vertices and the set of its edges are denoted by $V(T)$ and $E(T)$ respectively. The set of its internal edges $E^{int}(T)$ is formed by the edges connecting two vertices. Each edge is joined to the trunk by a unique path composed of edges.  
\item[$\blacktriangleright$] According to the orientation of the tree, if $e$ is an internal edge, then its vertex  $t(e)$ towards the trunk is called the \textit{target vertex} whereas the other vertex $s(e)$ is called the \textit{source vertex}. 
\item[$\blacktriangleright$] An edge with no source is called a \textit{leaf} and the leaves are ordered from  left to right. Let $in(T):=\{l_{1},\ldots, l_{|T|}\}$ denote the ordered set of leaves with $|T|$ the number of leaves. 
\item[$\blacktriangleright$]  The set of incoming edges of a vertex $v$ is ordered from left to right. This set is denoted by $in(v):=\{e_{1}(v),\ldots, e_{|v|}(v)\}$ with $|v|$ the number of incoming edges. Moreover, the unique output edge of $v$ is denoted by $e_{0}(v)$.
\item[$\blacktriangleright$] The vertices with no incoming edge are called \textit{univalent vertices} whereas the vertices with only one input are called \textit{bivalent vertices}.
\end{itemize}
Let $S$ be a set. A \textit{planar $S$-tree} is a pair $(T\,;\,f)$ where $T$ is a planar tree and $f:E(T)\rightarrow S$ is a map indexing the edges of $T$ by elements in $S$. If there is no ambiguity about $f$, we will denote by $e_{i}(v)$ the element in $S$ indexing the edge $e_{i}(v)$. 
 \vspace{-15pt}
\begin{center}
\begin{figure}[!h]
\begin{center}
\includegraphics[scale=0.67]{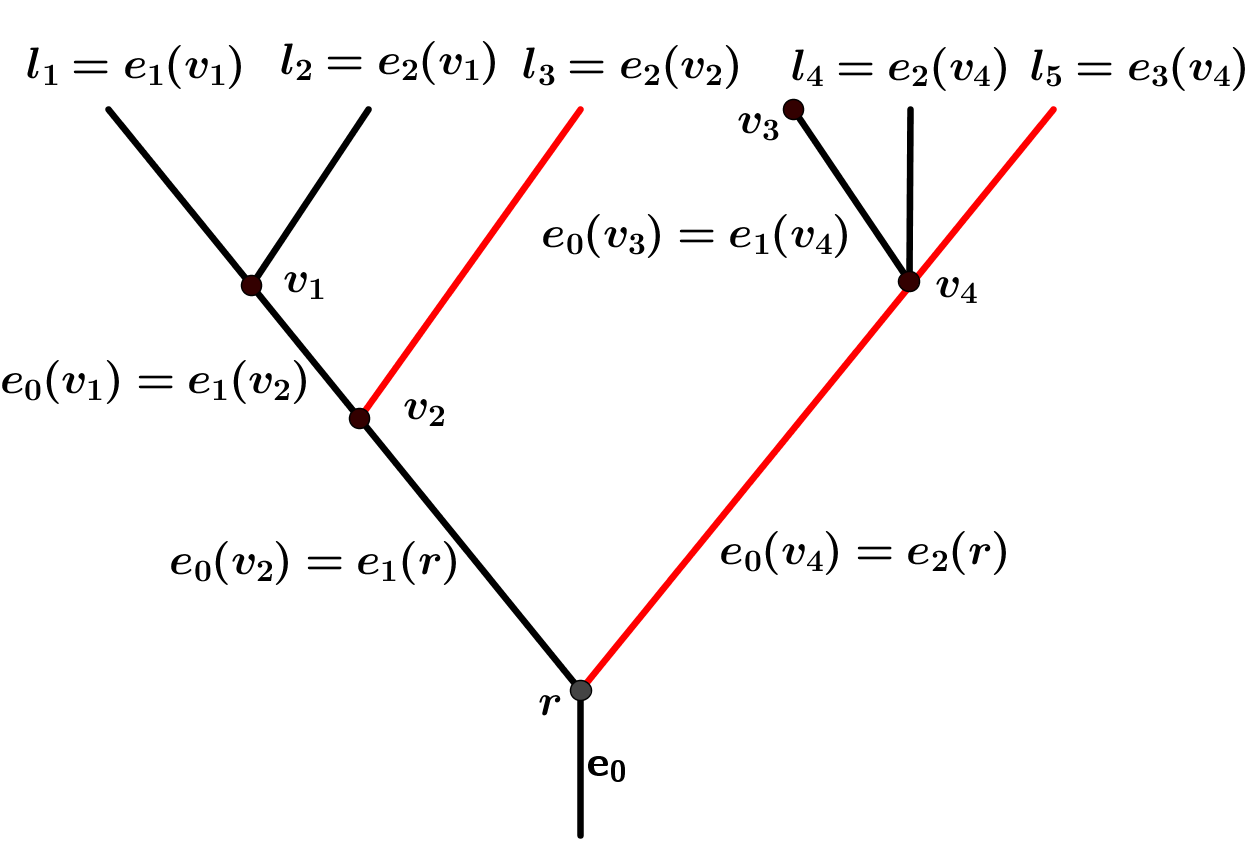}\vspace{-10pt}
\caption{Example of a planar $S$-tree with $S=\{o\,;\,c\}$.}\vspace{-7pt}
\end{center}
\end{figure}
\end{center}

\noindent The \textit{automorphism group} $Aut(T)$, associated to a planar $S$-tree $T$, can be described by induction on the number of vertices. If $|V(T)|=1$, then $Aut(T)$ is the group $\Sigma_{|T|}$. Otherwise, up to non-planar isomorphism, $T$ is of the form
\begin{equation}\label{B2}
T=t_{n}(T_{1}^{1},\ldots,T_{n_{1}}^{1},T_{1}^{2},\ldots,T_{n_{2}}^{2},\ldots, T_{1}^{l},\ldots,T_{n_{l}}^{l}), 
\end{equation}
where $t_{n}$ is a $n$-corolla, the uncoloured trees induced by $T_{1}^{i},\ldots,T_{n_{i}}^{i}$ are the same tree $T^{i}$ and $T^{i}$ is not isomorphic to $T^{j}$ if $i\neq j$. Since $\Sigma_{n_{i}}$ acts on the product $Aut(T^{i})^{\times n_{i}}$ by permuting the factors, the automorphism group of $T$ is a semi-direct product:
\begin{equation}\label{B0}
Aut(T)\cong \big(\, Aut(T^{1})^{\times n_{1}}\times \cdots \times Aut(T^{l})^{\times n_{l}}\big)\rtimes \big(\Sigma_{n_{1}}\times \cdots \Sigma_{n_{l}}\big):= \Gamma_{T}\rtimes \Sigma_{T}.
\end{equation}

An $S$-tree is a triplet $(T\,;\,f\,;\,\sigma)$ where $(T\,;\,f)$ is a planar $S$-tree and $\sigma:\{1,\ldots, |T|\}\rightarrow in(T)$ is a bijection labelling the leaves of $T$. Such an element will be denoted by $T$ if there is no ambiguity about the indexation $f$ and the bijection $\sigma$. We denote by \textbf{$S$-stree} the set of $S$-trees.  The bijection $\sigma$ can be interpreted as an element in the symmetric group $\Sigma_{|T|}$. 
\end{defi}

\subsection{The free ($P$-$Q$) bimodule functor}

\begin{defi}\label{b6}
In the following we introduce the set of trees used to build the free bimodule functor:
\begin{itemize}[leftmargin=*]
\item[$\blacktriangleright$] The \textit{join} $j(v_{1}\,;\,v_{2})$ of two vertices $v_{1}$ and $v_{2}$ is the first common vertex shared by the two paths joining $v_{1}$ and $v_{2}$ to the trunk. If $j(v_{1}\,;\,v_{2})=r$, then $v_{1}$ and $v_{2}$ are said to be \textit{connected to the root} and if $j(v_{1}\,;\,v_{2})\in \{ v_{1};v_{2}\}$, then they are said to be \textit{connected}. In Figure \ref{a9} the vertices
$v_{1}$ and $p_{1}$ are connected whereas the vertices $v_{1}$ and $p_{3}$ are connected to the root.
\item[$\blacktriangleright$] Let $\,d:V(T)\times V(T) \rightarrow \mathbb{N}$ be the distance defined as follows. The integer $d(v_{1}\,;\,v_{2})$ is the number of edges connecting  $v_{1}$ to $v_{2}$ if they are connected, otherwise $d(v_{1}\,;\,v_{2})=d(v_{1}\,;\,v_{3})+d(v_{3}\,;\,v_{2})$ with $v_{3}=j(v_{1}\,;\,v_{2})$. 
In Figure \ref{a9}, $d(v_{1}\,;\,r)=2\,\,$, $\,d(v_{1}\,;\,v_{2})=4$ and $d(v_{1}\,;\,p_{1})=1$.
\item[$\blacktriangleright$]  A \textit{reduced $S$-tree with section}  is a pair $(T,V^{p}(T))$ with $T$ in  \textbf{$S$-tree} and $V^{p}(T)$ a subset of $V(T)$, called the set of pearls, such that each path connecting a leaf or a univalent vertex to the trunk passes through a unique pearl. Furthermore, there is the following condition on the set of pearls:
\begin{equation}\label{g9}
\forall v\in V(T)\setminus V^{p}(T),\, \forall p\in V^{p}(T),\,\, j(v\,;\,p)\in \{v\,;\,p\}\Rightarrow d(v\,;\,p)=1.
\end{equation} 
\end{itemize}
The set of reduced $S$-trees with section is denoted by \textbf{$S$-rstree}. By definition, the set of pearls forms a section cutting the tree into two parts. We denote by $V^{u}(T)$ the set of vertices above the section and by $V^{d}(T)$ the one below the section. Given an integer $k\geq 1$, $\textbf{$S$-rstree}[k]$ is the set of reduced $S$-trees with section having at most $k$ leaves and such that each pearl has at most $k$ incoming edges. For instance, the reduced $S$-tree with section below is an element in the set $\textbf{$S$-rstree}[8]$.
\begin{figure}[!h]
\begin{center}
\includegraphics[scale=0.6]{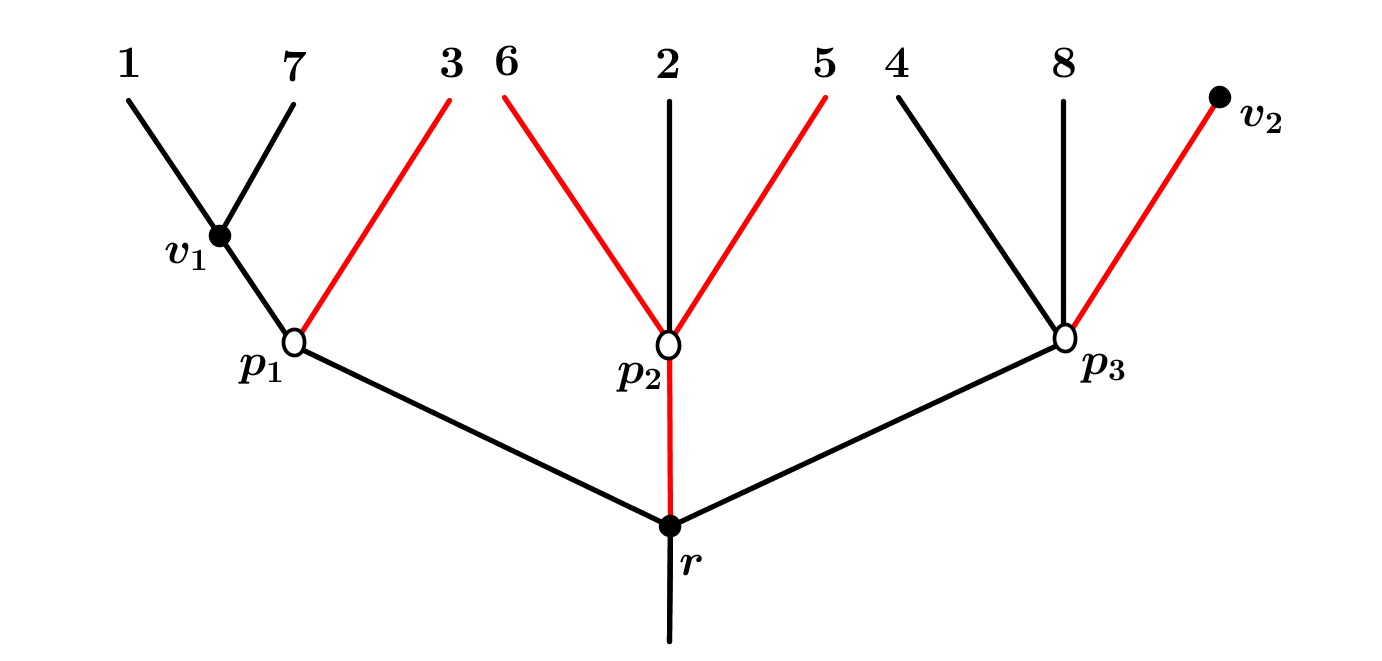}\vspace{-5pt}
\caption{A reduced $S$-tree with section with $S=\{o\,;\,c\}$.}\label{a9}\vspace{-15pt}
\end{center}
\end{figure}
\end{defi}

\begin{const}\label{b5}
From an $S$-sequence $M\in Seq(S)_{P_{0}}$, we build the ($P$-$Q$) bimodule $\mathcal{F}_{B}(M)$. The points are equivalence classes of pairs $[T\,;\,\{a_{v}\}]$ with $T\in S\textbf{-rstree}$ and $\{a_{v}\}$ a family of points labelling the vertices of $T$. The pearls are labelled by points in $M$, the vertices in $V^{u}(T)$ are labelled by points in $Q$ and the vertices in $V^{d}(T)$ are labelled by points in the operad $P$. More precisely, $\mathcal{F}_{B}(M)$ is given by the coproduct
$$
\left.\coprod\limits_{T\in S\textbf{\text{-rstree}}} \,\prod\limits_{v\in V^{d}(T)}\, P(e_{1}(v),\ldots,e_{|v|}(v);e_{0}(v)) \times \prod\limits_{v\in V^{p}(T)}\, M(e_{1}(v),\ldots,e_{|v|}(v);e_{0}(v))\times \prod\limits_{v\in V^{u}(T)}\, Q(e_{1}(v),\ldots,e_{|v|}(v);e_{0}(v))\right/\sim
$$
where $\sim$ is the equivalence relation generated by the following axioms:
\begin{itemize}[leftmargin=*]
\item[$i)$] If a vertex is indexed by a distinguished point $\ast_{s}$ in $P$ or $Q$, then \vspace{-5pt}
\begin{figure}[!h]
\begin{center}
\includegraphics[scale=0.2]{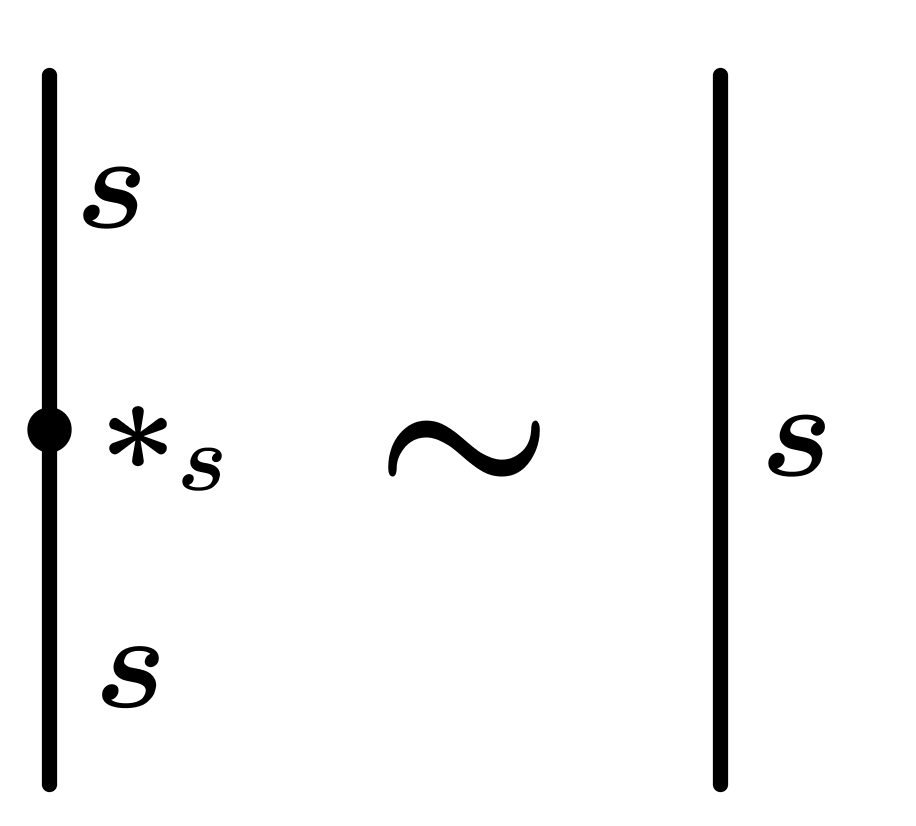}\vspace{-10pt}
\end{center}
\end{figure}
\item[$ii)$]If a vertex is indexed by $a\cdot \sigma$, with $\sigma\in \Sigma_{|v|}$, then \vspace{-5pt}
\begin{figure}[!h]
\begin{center}
\includegraphics[scale=0.35]{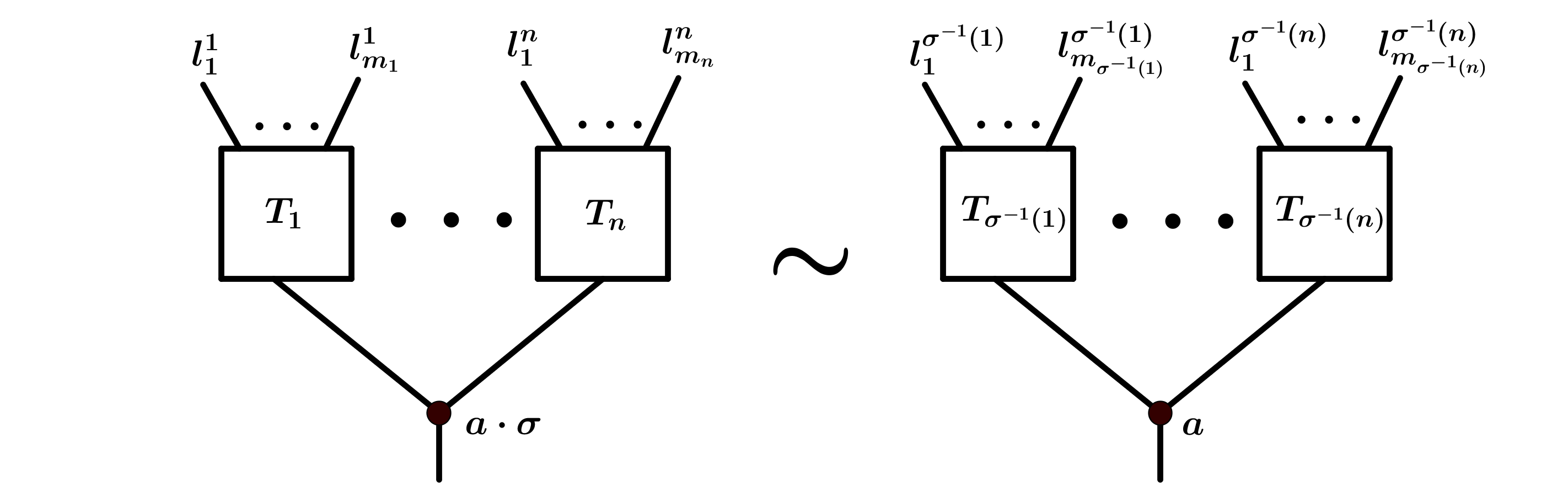}\vspace{-40pt}
\end{center}
\end{figure}
\newpage
\item[$iii)$] If a univalent pearl is indexed by a point of the form $\gamma_{s}(x)$, with $x\in P(\,;\,s)$, then we contract its output edge by using the operadic structure of $P$. In particular, if all the pearls are of the form $\gamma_{s_{v}}(x_{e_{i}(r)})$, then the point is identified with $\gamma_{s}(a_{r}(x_{e_{1}(r)},\ldots, x_{e_{|r|}(r)}))$ where $a_{r}\in P(e_{1}(r),\ldots, e_{|r|}(r)\,;\,s)$ labels the root.
\begin{figure}[!h]
\begin{center}
\includegraphics[scale=0.4]{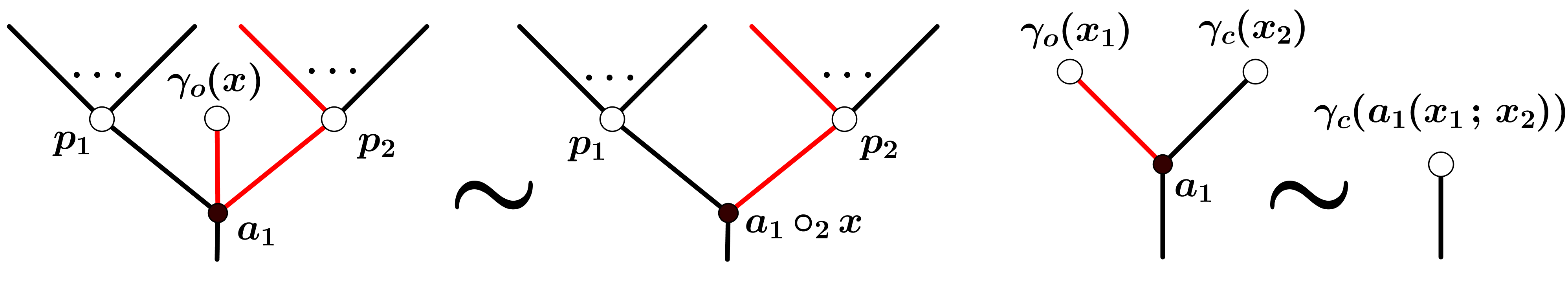}\vspace{-10pt}
\caption{Examples of the relation $(iii)$ for $S=\{o\,;\,c\}$.}\vspace{-10pt}
\end{center}
\end{figure}
\end{itemize}

\noindent A point $a\in Q(s_{1},\ldots,s_{n};s'_{i})$ is interpreted as an $n$-corolla (without pearl) whose incoming edges are indexed by $s_{1},\ldots, s_{n}$ respectively, the output by $s'_{i}$ and the vertex is labelled by the point $a$. If $[T\,;\,\{a_{v}\}]$ is a point in $\mathcal{F}_{B}(M)(s'_{1},\ldots,s'_{m};s_{m+1})$, then the composition $[T\,;\,\{a_{v}\}]\circ^{i}a$ consists in grafting the corolla labelled by $a$ to the $i$-th incoming edge of $T$. Then, we contract the inner edge so obtained if its target is not a pearl by using the operadic structure of $Q$.
\begin{figure}[!h]
\begin{center}
\includegraphics[scale=0.4]{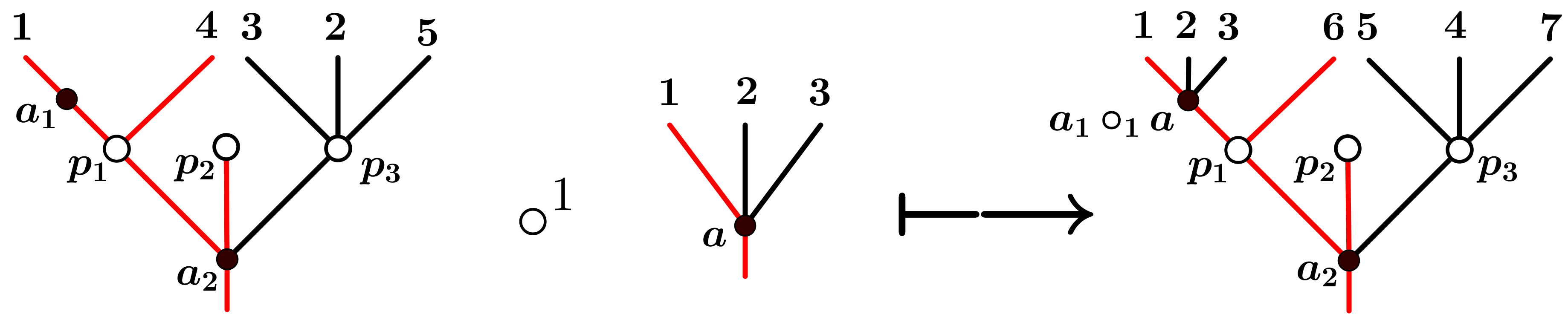}\vspace{-10pt}
\caption{The right module structure over $Q$.}\vspace{-10pt}
\end{center}
\end{figure}

\noindent Let $b\in P(s_{1},\ldots ,s_{n};s_{n+1})$ and $[T_{i}\,;\,\{a_{v}^{i}\}]$ be a family of points in $\mathcal{F}_{B}(M)(s_{1}^{i},\ldots,s_{m_{i}}^{i};s_{i})$. The left module operations over $P$ is defined as follows: each tree of the family is grafted to a leaf of the $n$-corolla from left to right. The inner edges obtained are contracted if their source are not pearl by using the operadic structure of $P$. Moreover, there is a morphism in the category $Seq(S)_{P_{0}}$,
$$
i:M\longrightarrow \mathcal{F}_{B}(M),
$$
sending $m\in M(s_{1},\ldots,s_{n};s_{n+1})$ to the point $[T\,;\,\{m\}]$ where $T$ is the pearl $n$-corolla labelled by $m$ whose leaves are indexed by the colours $s_{1},\ldots, s_{n}$ respectively and the trunk by $s_{n+1}$.
\end{const}

\begin{rmk}
Given an integer $k\geq 1$, the functor $T_{k}\mathcal{F}_{B}$ from $T_{k}Seq(S)$ to the category of $(k)$-truncated bimodules  is defined in the same way as the functor $\mathcal{F}_{B}$. We only have to consider the set $\textbf{$S$-rstree}[k]$ instead of the set of reduced trees with section in Construction \ref{b5}.
\end{rmk}

\begin{pro}\label{b3}
The functors $\mathcal{F}_{B}$ and $T_{k}\mathcal{F}_{B}$ are left adjoints to the forgetful functor $\mathcal{U}$:
\begin{equation}\label{b2}
\mathcal{F}_{B}:Seq(S)_{P_{0}}\leftrightarrows Bimod_{P\text{-}Q}:\mathcal{U} \hspace{15pt}\text{and}\hspace{15pt} T_{k}\mathcal{F}_{B}:T_{k}Seq(S)_{P_{0}}\leftrightarrows T_{k}Bimod_{P\text{-}Q}:\mathcal{U}.
\end{equation}
\end{pro}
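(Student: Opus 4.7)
The plan is to exhibit the adjunction by checking the universal property of the unit map $i: M \to \mathcal{U}\mathcal{F}_{B}(M)$ from Construction \ref{b5}. Fix a ($P$-$Q$) bimodule $N$ and a morphism $f: M \to \mathcal{U}(N)$ in $Seq(S)_{P_{0}}$. I will produce a unique ($P$-$Q$) bimodule map $\widetilde{f}: \mathcal{F}_{B}(M) \to N$ satisfying $\mathcal{U}(\widetilde{f}) \circ i = f$, which is exactly the datum required for the adjunction.

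The construction of $\widetilde{f}$ proceeds by induction on the number of vertices of a representative $[T;\{a_{v}\}]$. For a pearl corolla, $\widetilde{f}$ is forced to equal $f$ on labels. For a general tree, recursively read the tree from the leaves toward the root: vertices above the section are absorbed using the right $Q$-action on $N$ (the maps $\circ^{i}$), vertices below the section are absorbed using the left $P$-action $\gamma_{l}$ on $N$, and the pearls provide the $M$-labels transported into $N$ via $f$. Concretely, if $T$ decomposes as in (\ref{B2}) with root label $a_{r}$, then $\widetilde{f}[T;\{a_{v}\}]$ equals the left action of $a_{r}$ on the values of $\widetilde{f}$ applied to the subtrees sitting above $r$, where in turn the pearls feed into the right action of $Q$. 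Uniqueness is immediate from this description, because the bimodule axioms on $N$ dictate the value on every representative as soon as the pearls are fixed.

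Next, I would verify that $\widetilde{f}$ descends to equivalence classes. Relation (i), which contracts edges carrying distinguished points, is the unit axiom of the bimodule structure on $N$. Relation (ii), which reorders incoming edges by $\sigma \in \Sigma_{|v|}$, is equivariance of the $P$- and $Q$-actions together with the fact that $f$ is a morphism of $S$-sequences. The delicate point is relation (iii), stating that a univalent pearl carrying $\gamma_{s}(x)$ with $x \in P(\,;\,s)$ may be contracted into its output edge, and, in the terminal case where all pearls are of this form, the whole class collapses to $\gamma_{s}\big(a_{r}(x_{e_{1}(r)},\ldots,x_{e_{|r|}(r)})\big)$. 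This is exactly compatible with $\widetilde{f}$ because $f$ is a morphism in $Seq(S)_{P_{0}}$, i.e.\ it intertwines the maps $\gamma_{s}^{M}$ and $\gamma_{s}^{N}$, and because the compatibility between $\gamma_{s}^{N}$ and the left $P$-action on $N$ is part of the bimodule axioms on $N$.

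Finally, checking that $\widetilde{f}$ is a morphism of ($P$-$Q$) bimodules reduces to comparing the recipe for $\gamma_{r}$, $\gamma_{l}$, and the symmetric group action on $\mathcal{F}_{B}(M)$ (grafting trees and contracting inner edges whose endpoints avoid pearls) with the corresponding actions on $N$; each grafting/contraction step corresponds by definition to one application of an operadic composition in $P$ or $Q$, which the actions on $N$ are associative and equivariant with respect to. The main obstacle is bookkeeping relation (iii) together with the arity-zero case, and making sure that what happens when a pearl becomes univalent through a right $Q$-action at arity $0$ is handled consistently; this is precisely why the category $Seq(S)_{P_{0}}$ is introduced rather than plain $Seq(S)$, and the adjunction would fail without that extra datum. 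The truncated statement $T_{k}\mathcal{F}_{B} \dashv \mathcal{U}$ is obtained by repeating the argument verbatim on the set $\textbf{$S$-rstree}[k]$; since every tree in that set has at most $k$ leaves and each pearl has at most $k$ inputs, each operadic composition invoked in defining $\widetilde{f}$ stays within the truncation bound, so the recipe still lands in the $k$-truncated bimodule structure of $N$.
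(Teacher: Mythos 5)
Your proof is correct and follows essentially the same strategy as the paper's: verify the universal property by inductively constructing $\tilde{f}$ over the tree structure (pearls via $f$, vertices above the section via $\circ^{i}$, vertices below via $\gamma_{l}$), with uniqueness forced by the bimodule axioms and the truncated case handled verbatim on $\textbf{$S$-rstree}[k]$. You are somewhat more explicit than the paper in verifying that $\tilde{f}$ respects the equivalence relations (i)--(iii) — the paper leaves this as ``due to the ($P$-$Q$) bimodule axioms'' — and your emphasis on relation (iii) being the reason $Seq(S)_{P_{0}}$ replaces $Seq(S)$ is an accurate reading of the construction.
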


\begin{proof}
Let $M'$ be a ($P$-$Q$) bimodule and $f:M\rightarrow M'$ be a morphism in the category $Seq(S)_{P_{0}}$. We have to prove that there exists a unique map of ($P$-$Q$) bimodules $\tilde{f}:\mathcal{F}_{B}(M)\rightarrow M'$ such that the following diagram commutes:
\begin{equation}\label{b1}
\xymatrix{
M \ar[r]^{f} \ar[d]_{i} & M' \\
\mathcal{F}_{B}(M)\ar@{-->}[ru]_{\exists\, !\, \tilde{f}} & 
}
\end{equation}
We define the map $\tilde{f}$ by induction on the cardinal of the set $nb(T)=V(T)\setminus V^{p}(T)$. Let $[(T\,;\,\sigma)\,;\,\{a_{v}\}]$ be a point in $\mathcal{F}_{B}(M)$ such that $|nb(T)|=0$ and $\sigma$ is the permutation indexing the leaves of $T$. By construction, $T$ is necessarily a pearl corolla with only one vertex labelled by $a_{r}\in M$. In order to have the commutative  diagram (\ref{b1}), the following equality has to be satisfied:
$$
\tilde{f}([(T\,;\,\sigma)\,;\,\{a_{v}\}])=f(a_{r})\cdot \sigma.
$$
\newpage
Let $[(T\,;\,\sigma)\,;\,\{a_{v}\}]$ be a point in $\mathcal{F}_{B}(M)$ where $T$ has only one vertex $v$ which is not a pearl. There are two cases to consider. If $v$ is the root of the tree $T$, then the root is labelled by a point $a_{v}\in P$ and $[(T\,;\,id)\,;\,\{a_{v}\}]$ has a decomposition of the form $a_{v}(\,[(T_{1}\,;\,id)\,;\,\{a_{1}\}],\ldots, [(T_{|v|}\,;\,id)\,;\,\{a_{|v|}\}]\,)$ where $T_{i}$ is a pearl corolla labelled by $a_{i}\in M$. Since $\tilde{f}$ has to be a ($P$-$Q$) bimodule map, there is the equality
$$
\tilde{f}([(T\,;\,\sigma)\,;\,\{a_{v}\}])=a_{v}\big(\, f(a_{1}),\ldots, f(a_{|v|})\,\big)\cdot \sigma.
$$
If the root is a pearl, then there exists a unique inner edge $e$ such that $s(e)=v$ and $t(e)=r$. So, the point $[(T\,;\,id)\,;\,\{a_{v}\}]$ has a decomposition on the form $[(T_{1}\,;\,id)\,;\,\{a_{t(e)}\}]\circ^{i}a_{s(e)}$ with $a_{s(e)}\in Q$ and $a_{t(e)}\in M$. Since $\tilde{f}$ has to be a ($P$-$Q$) bimodule map, there is the equality
$$
\tilde{f}([(T\,;\,\sigma)\,;\,\{a_{v}\}])= \big(\,f(a_{t(e)})\circ^{i}a_{s(e)}\,\big)\cdot\sigma.
$$

Assume $\tilde{f}$ has been defined for $|nb(T)|\leq n$. Let $[(T\,;\,\sigma)\,;\,\{a_{v}\}]$ be a point in $\mathcal{F}_{B}(M)$ such that $|nb(T)|= n+1$. By definition, there is an inner edge $e$ whose target vertex is a pearl. So, the point $[(T\,;\,id)\,;\,\{a_{v}\}]$ has a decomposition of the form $[(T_{1}\,;\,id)\,;\,\{a_{v}\}\setminus \{a_{s(e)}\} ]\circ^{i} a_{s(e)}$ where $T_{1}$ is a planar $S$-tree with section such that $|nb(T_{1})|= n$. Since $\tilde{f}$ has to be a ($P$-$Q$) bimodule map, there is the equality
$$
\tilde{f}([(T\,;\,\sigma)\,;\,\{a_{v}\}])=\big(\,\tilde{f}([(T_{1}\,;\,id)\,;\,\{a_{v}\}\setminus \{a_{s(e)}\}])\circ^{i}a_{s(e)}\,\big)\cdot\sigma.
$$ 
Due to the ($P$-$Q$) bimodule axioms, $\tilde{f}$ does not depend on the choice of the decomposition and $\tilde{f}$ is a ($P$-$Q$) bimodule map. The uniqueness follows from the construction. Similarly, we can prove that the functor $T_{k}\mathcal{F}_{B}$ is the left adjoint to the forgetful functor. 
\end{proof}

\subsection{The Boardman-Vogt resolution for bimodules}

By convention, if $\mathcal{C}$ is a model category enriched over $Top$, then the derived mapping space $\mathcal{C}^{h}(A;B)$ is the space $\mathcal{C}(A^{c};B^{f})$ with $A^{c}$ a cofibrant replacement of $A$ and $B^{f}$ a fibrant replacement of $B$. Since every object is fibrant in $Bimod_{P\text{-}Q}$, it is sufficient to determine cofibrant replacements in order to compute derived mapping spaces. In this section, we give a functorial way to obtain such cofibrant replacements.

\begin{defi}\label{e6}
Let \textbf{$S$-stree} be the set of pairs $(T\,;\, V^{p}(T))$ where $T$ is an $S$-tree and $V^{p}(T)$ is a subset of $V(T)$, called the \textit{set of pearls}. Similarly to Definition \ref{b6}, each path connecting a leaf or univalent vertex to the trunk passes through a unique pearl. However, a tree in \textbf{$S$-stree} doesn't necessarily satisfy Condition (\ref{g9}). The set of pearls forms a section cutting the tree $T$ into two parts. We denote by $V^{u}(T)$ and $V^{d}(T)$ the set of vertices above and below the section respectively. Elements in \textbf{$S$-stree} are called \textit{$S$-trees with section}. Analogously to Definition \ref{C9}, one can talk about non-planar isomorphism for $S$-trees with section and more particularly about the automorphism group $Aut(T\,;\,V^{p}(T))$ associated to an element $(T\,;\,V^{p}(T))$.  
\begin{figure}[!h]
\begin{center}
\includegraphics[scale=0.38]{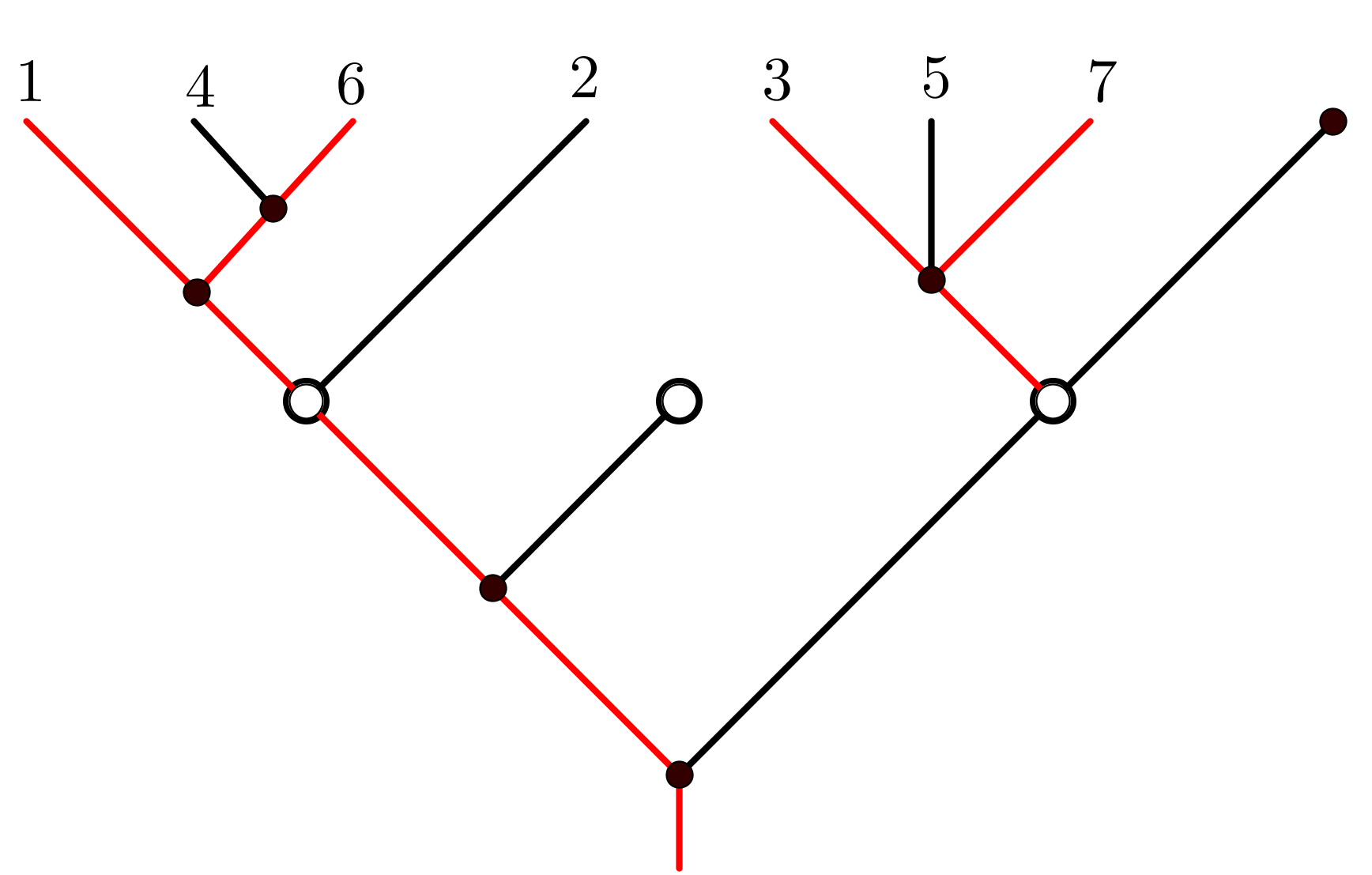}
\caption{An $S$-tree with section with $S=\{o\,;\,c\}$.}\vspace{-10pt}
\end{center}
\end{figure}
\end{defi}

\begin{const}\label{b7} 
Let $P$ and $Q$ be two $S$-operads. From a ($P$-$Q$) bimodule $M$, we build the ($P$-$Q$) bimodule $\mathcal{B}(M)$. The points are equivalence classes $[T\,;\, \{t_{v}\}\,;\, \{a_{v}\}]$ with $T\in S\textbf{-stree}$ and $\{a_{v}\}_{v\in V(T)}$ is a family of points labelling the vertices of $T$. The pearls are labelled by points in $M$ whereas the vertices in $V^{u}(T)$ (resp. the vertices in $V^{d}(T)$) are labelled by points in the operad $Q$ (resp. the operad $P$).  Furthermore, $\{t_{v}\}_{v\in V(T)\setminus V^{p}(T)}$ is a family of real numbers in the interval $[0\,,\,1]$ indexing the vertices which are not pearls. If $e$ is an inner edge above the section, then $t_{s(e)}\geq t_{t(e)}$. Similarly, if $e$ is an inner edge below the section, then $t_{s(e)}\leq t_{t(e)}$. In other words, $\mathcal{B}(M)$ is given by the quotient of the sub-$S$-sequence 
$$
\left.
\underset{\textbf{S\text{-stree}}}{\coprod}\,\underset{v\in V^{d}(T)}{\prod}\,\big[\,P(e_{1}(v),..,e_{|v|}(v);e_{0}(v))\times I\,\big]\times\!\!\!\!\underset{v\in V^{p}(T)}{\prod}\!M(e_{1}(v),..,e_{|v|}(v);e_{0}(v))\times\!\!\!\!\underset{ v\in V^{u}(T)}{\prod}\,\big[\,Q(e_{1}(v),..,e_{|v|}(v);e_{0}(v))\times I\big]
\right/ \!\sim 
$$
coming from the restrictions on the families $\{t_{v}\}$. The equivalence relation is generated by the following:

\begin{itemize}[ topsep=0pt, leftmargin=*]
\item[$i)$] If a vertex is labelled by a distinguished point $\ast_{s}$ in $P$ or $Q$, then 
%\begin{figure}[!h]
\begin{center}
\includegraphics[scale=0.2]{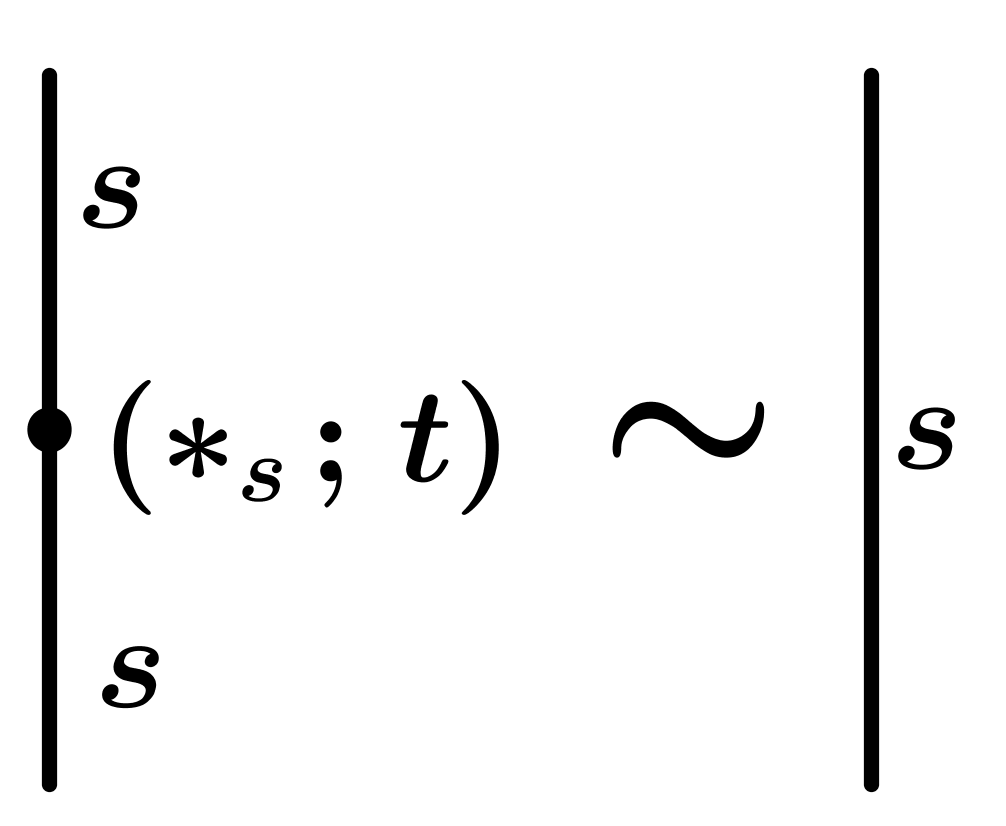}\vspace{-2pt}
\end{center}
%\end{figure}
\item[$ii)$] If a vertex is labelled by $a\cdot \sigma$, with $\sigma\in \Sigma_{|v|}$, then 
\begin{center}
\includegraphics[scale=0.35]{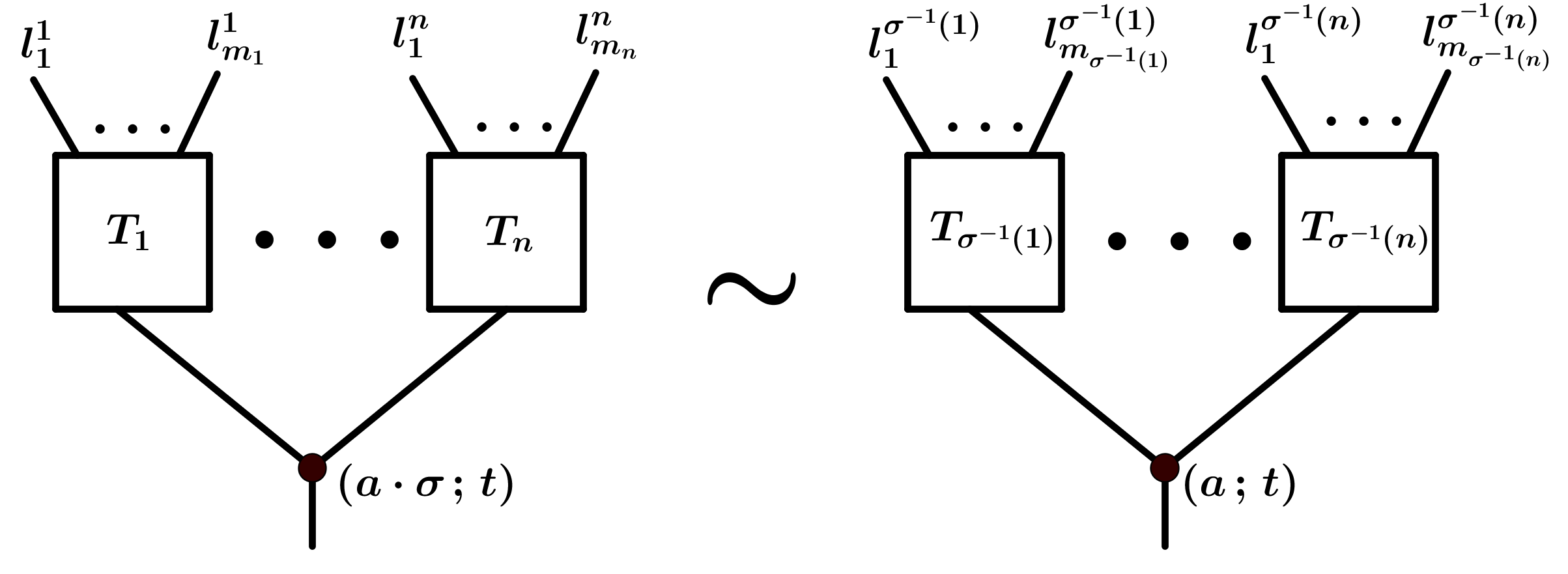}\vspace{-5pt}
\end{center}

\item[$iii)$] If an univalent pearl is indexed by a point of the form $\gamma_{s}(x)$, with $x\in P(\,;\,s)$, then we contract its output edge by using the operadic structure of $P$. In particular, if all the pearls connected to a vertex $v$ are univalent and of the form $\gamma_{s_{v}}(x_{v})$, then the vertex is identified to the pearl corolla with no input.
\begin{figure}[!h]
\begin{center}
\includegraphics[scale=0.4]{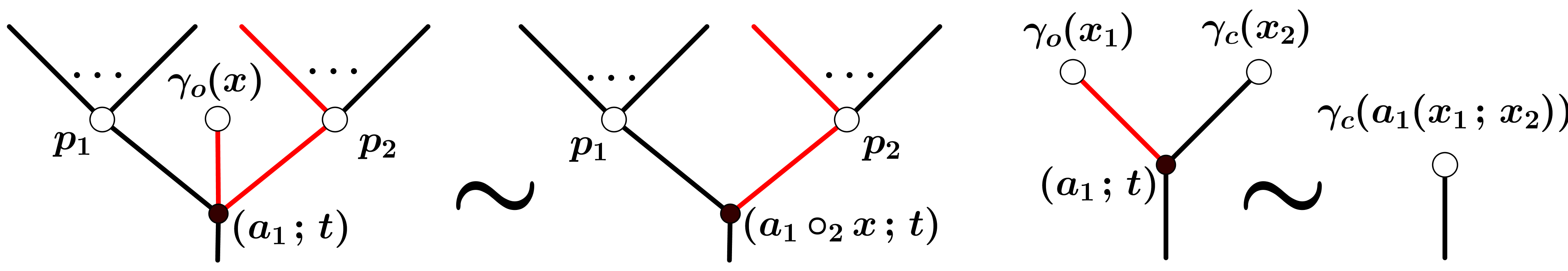}
\caption{Example of the relation $(iii)$ for $S=\{o\,;\,c\}$.}\vspace{-15pt}
\end{center}
\end{figure}
\item[$iv)$] If two consecutive vertices, connected by an edge $e$, are indexed by the same real number $t\in [0\,,\,1]$, then $e$ is contracted by using the operadic structures. The vertex so obtained is indexed by the real number $t$.\vspace{-5pt}

\begin{figure}[!h]
\begin{center}
\includegraphics[scale=0.48]{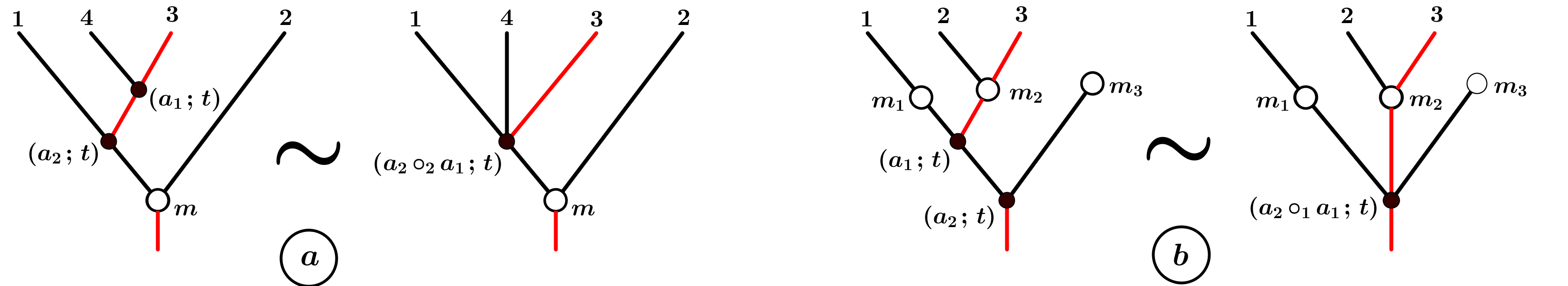}\vspace{-5pt}
\caption{Examples of the relation $(iii)$.}\vspace{-15pt}
\end{center}
\end{figure}

\item[$v)$]  If a vertex above the section is indexed by $0$, then its output edge is contracted by using the right module structures. Similarly, if a vertex below the section is indexed by $0$ then all its incoming edges are contracted by using the left module structure.
\begin{figure}[!h]
\begin{center}
\includegraphics[scale=0.48]{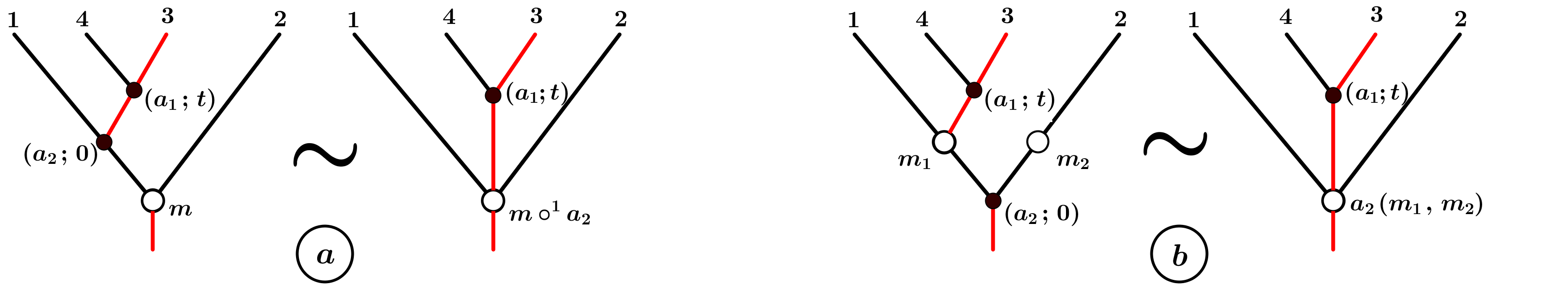}\vspace{-5pt}
\caption{Examples of the relation $(iv)$.}\vspace{-35pt}
\end{center}
\end{figure}
\end{itemize}

\newpage

\noindent Let us describe the ($P$-$Q$) bimodule structure. Let $a\in Q(s_{1},\ldots,s_{n};s'_{i})$ and $[T\,;\,\{t_{v}\}\,;\,\{a_{v}\}]$ be a point in $\mathcal{B}(M)(s'_{1},\ldots,s'_{m};s_{m+1})$. The composition $[T\,;\,\{t_{v}\}\,;\,\{a_{v}\}]\circ^{i}a$ consists in grafting the $n$-corolla labelled by $a$ to the $i$-th incoming edge of $T$ and indexing the new vertex by $1$. Similarly, let $b\in P(s_{1},\ldots,s_{n};s_{n+1})$ and $[T^{i}\,;\,\{t^{i}_{v}\}\,;\,\{a^{i}_{v}\}]$ be a family of points in the spaces $\mathcal{B}(M)(s^{i}_{1},\ldots,s^{i}_{n_{i}};s_{i})$. The left module structure over $P$ is defined as follows: each tree of the family is grafted to a leaf of the $n$-corolla labelled by $b$ from left to right. The new vertex, arising from the $n$-corolla, is indexed by $1$.\vspace{-7pt}
\begin{center}
\begin{figure}[!h]
\begin{center}
\includegraphics[scale=0.4]{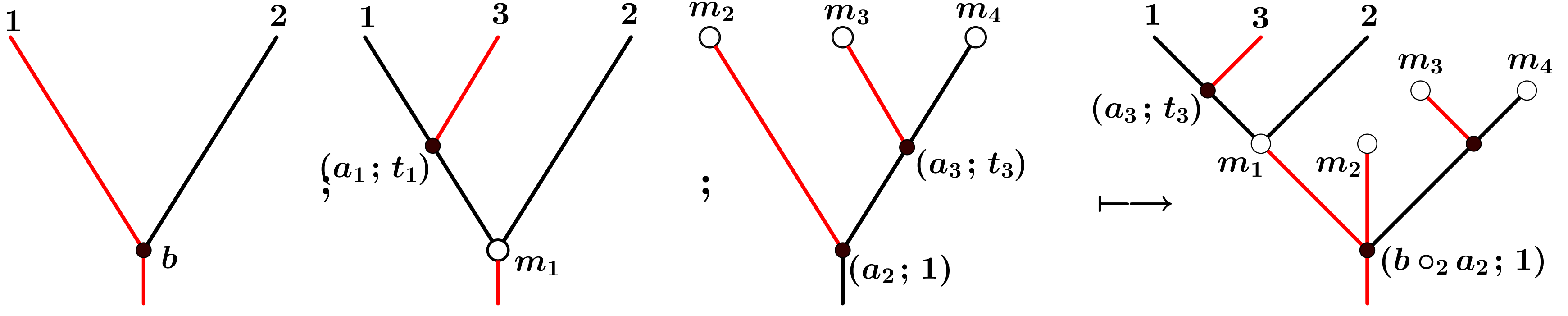}
\caption{Illustration of the left module structure.}\vspace{-7pt}
\end{center}
\end{figure}
\end{center}
\end{const}

\noindent Let us recall that the free ($P$-$Q$) bimodule $\mathcal{F}_{B}(M)$ is the space of equivalence classes $[T\,;\,\{a_{v}\}]$ with $T\in S\textbf{-rstree}$. Since \textbf{$S$-rstree} is a subset of \textbf{$S$-stree}, there is a map
\begin{equation}\label{b8}
\tau:\mathcal{F}_{B}(M)\rightarrow\mathcal{B}(M)\,\,;\,\, [T\,;\,\{a_{v}\}]\mapsto [T\,;\,\{1_{v}\}\,;\, \{a_{v}\}],
\end{equation}
indexing the vertices in $V^{u}(T)$ and $V^{d}(T)$ by $1$. Due to axioms $(iii)$ and $(iv)$ of Construction \ref{b7}, $\tau$ is a  ($P$-$Q$) bimodule map. Furthermore, one has the map
\begin{equation}\label{b9}
\mu:\mathcal{B}(M)\rightarrow M\,\,;\,\, [T\,;\,\{t_{v}\}\,;\,\{a_{v}\}]\mapsto [T\,;\,\{0_{v}\}\,;\, \{a_{v}\}],
\end{equation}
sending the real numbers indexing the vertices to $0$. The element so obtained is a pearl corolla labelled by a point in $M$. Due to axioms $(iv)$ and $(v)$ of Construction \ref{b7}, $\mu$ is a ($P$-$Q$) bimodule map.

\subsection{Cofibrant replacements for $k$-truncated bimodules}\label{B6}

In this section, $P$ and $Q$ are two $S$-operads whereas $M$ is a ($P$-$Q$) bimodule. In order to show that the Boardman-Vogt resolution $\mathcal{B}(M)$ is a cofibrant replacement of $M$, we introduce a filtration according to the number of geometrical inputs which is the number of leaves plus the number of univalent vertices above the section. As we will see, this filtration also produces cofibrant replacements for the truncated bimodules $T_{k}(M)$ with $k\geq 1$.

A point in the bimodule $\mathcal{B}(M)$ is said to be \textit{prime} if the real numbers labelling its vertices are strictly smaller than $1$. Besides, a point is said to be \textit{composite} if one of its vertex is labelled by $1$. A composite point can be decomposed into \textit{prime components} as shows the picture below. More precisely, the prime components of a point indexing by a planar $S$-tree with section are obtained by forgetting the vertices indexing by $1$. Otherwise, the prime components of a point of the form  $[(T\,;\,\sigma)\,;\,\{t_{v}\}\,;\,\{a_{v}\}]$ coincide with the prime components of $[(T\,;\,id)\,;\,\{t_{v}\}\,;\,\{a_{v}\}]$.
\begin{figure}[!h]
\begin{center}
\includegraphics[scale=0.38]{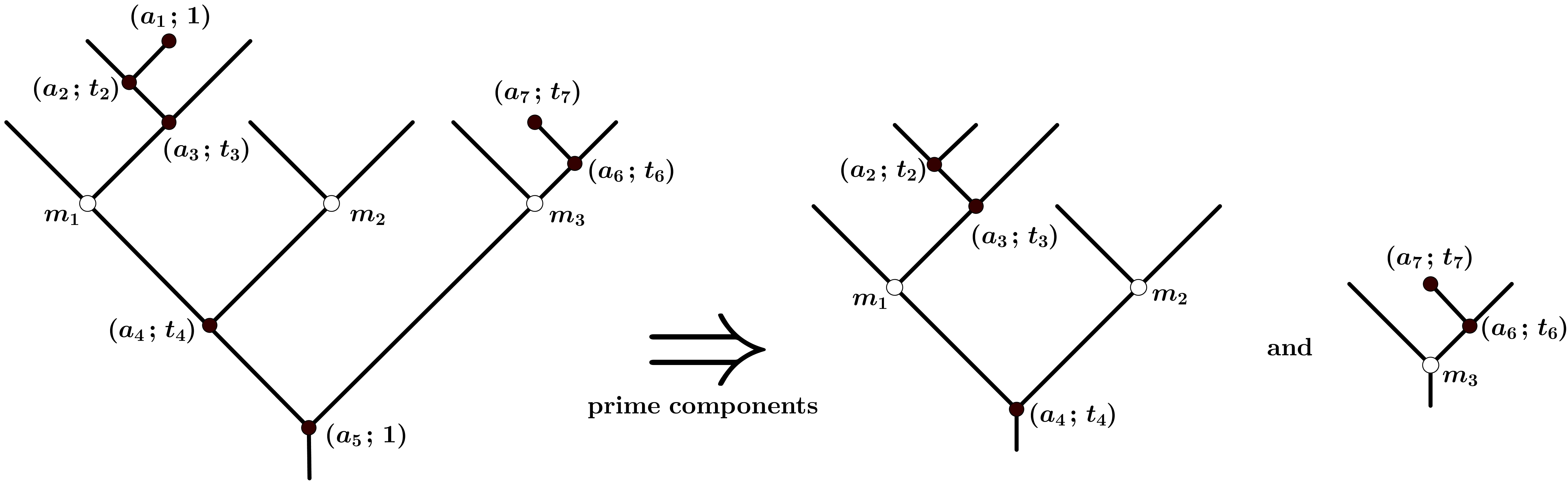}
\caption{A composite point and its prime components.}\label{A4}\vspace{-15pt}
\end{center}
\end{figure}
 
\noindent A prime point is in the $k$-th filtration term $\mathcal{B}_{k}(M)$ if the number of its geometrical inputs is smaller than $k$. Similarly, a composite point is in the $k$-th filtration term if all its prime components are in the $k$-th filtration term. For instance, the composite point in Figure \ref{A4} is in the $6$-th filtration term. For each $k\geq 0$, $\mathcal{B}_{k}(M)$ is a ($P$-$Q$) bimodule and they produce the following filtration of $\mathcal{B}(M)$:
\begin{equation}\label{A9}
\xymatrix{
P_{0}\ar[r] & \mathcal{B}_{0}(M) \ar[r] & \mathcal{B}_{1}(M)\ar[r] & \cdots \ar[r] & \mathcal{B}_{k-1}(M) \ar[r] & \mathcal{B}_{k}(M) \ar[r] & \cdots \ar[r] & \mathcal{B}(M)
}
\end{equation}

\begin{notat}
A pointed $S$-sequence $M$ is said to be well pointed if the maps $\ast_{s}\rightarrow M(s\,;\,s)$ are cofibrations. Furthermore, a bimodule or operadic map $f$ is said to be $\Sigma$-cofibrant if $\mathcal{U}(f)$ is cofibrant in the category of $S$-sequences. We also recall that the category of spaces together with a right action of a group $G$, denoted by $G\text{-}Top$, is endowed with a model category structure coming from the adjunction $G[-]:Top\leftrightarrows G\text{-}Top:\mathcal{U}$ where $G[-]$ sends the space $X$ to $G[X]=\coprod_{G}X$. By convention, a map in $G\text{-}Top$ is called a $G$-equivariant map whereas a $G$-cofibration is a cofibration in $G\text{-}Top$.   
\end{notat}

\begin{thm}\label{B7}
Assume that the maps $\gamma_{s}$ are cofibrations, the $S$-sequences $M$, $P$ and $Q$ are $\Sigma$-cofibrant and the operads $P$ and $Q$ are well pointed. Then, the objects $\mathcal{B}(M)$ and $T_{k}(\mathcal{B}_{k}(M))$ are cofibrant replacement of $M$ and $T_{k}(M)$ in the categories $Bimod_{P\text{-}Q}$ and $T_{k}Bimod_{P\text{-}Q}$ respectively. 
\end{thm}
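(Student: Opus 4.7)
The plan is to separately establish the two ingredients of a cofibrant replacement: (a) the augmentation $\mu:\mathcal{B}(M)\to M$ of (\ref{b9}) is a weak equivalence, and (b) $\mathcal{B}(M)$ is cofibrant in $Bimod_{P\text{-}Q}$. The truncated statement will then follow from the same proof by restricting the filtration to level $k$ and noticing that $T_{k}$ commutes with the pushouts involved.

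For (a), I would build an explicit strong deformation retraction. The inclusion $j:M\to \mathcal{B}(M)$ sending $m$ to the pearl corolla labelled by $m$ satisfies $\mu\circ j=\mathrm{id}_{M}$, and the continuous map
$$H:[0,1]\times \mathcal{B}(M)\longrightarrow \mathcal{B}(M),\qquad H(s,[T;\{t_{v}\};\{a_{v}\}])=[T;\{s\cdot t_{v}\};\{a_{v}\}]$$
is well defined on equivalence classes thanks to the relations of Construction~\ref{b7}. At $s=0$ relation $(v)$ collapses every non-pearl vertex, so $H_{0}=j\circ\mu$, whereas $H_{1}=\mathrm{id}$. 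Thus $\mu$ is a componentwise strong deformation retract, and by Theorem~\ref{b4} a weak equivalence in $Bimod_{P\text{-}Q}$.

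For (b), I would exploit the filtration (\ref{A9}) by number of geometrical inputs. Since cofibrations are stable under sequential colimits, it suffices to show that each inclusion $\iota_{k}:\mathcal{B}_{k-1}(M)\hookrightarrow\mathcal{B}_{k}(M)$ and the initial map $P_{0}\to\mathcal{B}_{0}(M)$ are cofibrations. The crucial claim is that $\iota_{k}$ fits into a pushout square in $Bimod_{P\text{-}Q}$
$$\xymatrix{
\mathcal{F}_{B}(\partial X_{k}) \ar[r]\ar[d] & \mathcal{B}_{k-1}(M) \ar[d] \\
\mathcal{F}_{B}(X_{k}) \ar[r]^-{} & \mathcal{B}_{k}(M)
}$$
where $X_{k}\in Seq(S)_{P_{0}}$ is the coproduct, over non-planar isomorphism classes of $S$-trees with section $(T;V^{p}(T))$ having exactly $k$ geometrical inputs, of the $Aut(T;V^{p}(T))$-orbit space of products of labels in $P$, $M$ and $Q$ together with a cube $[0,1]^{V(T)\setminus V^{p}(T)}$ parametrising the time coordinates, and $\partial X_{k}\subset X_{k}$ is the subspace on which relations $(i)$, $(iii)$, $(iv)$ and $(v)$ of Construction~\ref{b7} force the point to already lie in $\mathcal{B}_{k-1}(M)$ after applying $\mathcal{F}_{B}$ and taking the pushout. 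Verifying that the square is a pushout is a tree-by-tree diagram chase in Construction~\ref{b7}, separating each newly added prime cell of exactly $k$ geometrical inputs from its composite neighbours.

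The main technical obstacle is showing that $\partial X_{k}\hookrightarrow X_{k}$ is a cofibration in $Seq(S)_{P_{0}}$, which via the left Quillen functor $\mathcal{F}_{B}$ of Proposition~\ref{b3} will yield the desired cofibration in $Bimod_{P\text{-}Q}$. Using the semi-direct product decomposition (\ref{B0}) of $Aut(T;V^{p}(T))$, one reduces to handling the action of $\Gamma_{T}\rtimes \Sigma_{T}$ layer by layer: the $\Sigma$-cofibrancy of $M$, $P$ and $Q$ takes care of the $\Sigma_{T}$-action on isomorphic sub-branches via the product lemma for $\Sigma$-cofibrations; the well-pointedness of $P$ and $Q$ together with the Reedy-type inclusion $\partial[0,1]^{n}\hookrightarrow [0,1]^{n}$ handles the degenerate faces coming from relations $(i)$, $(iv)$ and $(v)$; and the hypothesis that each $\gamma_{s}$ is a cofibration is used precisely to absorb relation $(iii)$, so that identifying univalent pearls of the form $\gamma_{s}(x)$ with their $P$-reductions occurs along a cofibration. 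The base case $P_{0}\to \mathcal{B}_{0}(M)$ reduces to a small direct check using the cofibrancy of $\gamma_{s}$ and the unit inclusions $\ast_{s}\to Q(s;s)$. For the truncated version, the same pushouts restrict cleanly to $T_{k}Bimod_{P\text{-}Q}$, the filtration stabilises after finitely many steps since trees contributing beyond level $k$ disappear under $T_{k}$, and the weak equivalence $T_{k}(\mathcal{B}_{k}(M))\to T_{k}(M)$ is inherited from $H$ in arities $\leq k$.
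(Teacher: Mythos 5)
There are two genuine gaps, one in each half of your argument.

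\textbf{The truncated weak equivalence.} Your concluding claim that the homotopy $H(s,[T;\{t_v\};\{a_v\}])=[T;\{s\cdot t_v\};\{a_v\}]$ is ``inherited in arities $\leq k$'' fails: scaling the parameters does not preserve the filtration by geometrical inputs. If a point is composite (some $t_v=1$), its prime components can all have $\leq k$ geometrical inputs even though the full tree $T$ has more; scaling by $s<1$ turns it into a prime point whose geometrical input count is that of the whole of $T$, which can exceed $k$. So $H$ does not restrict to a self-map of $T_k(\mathcal{B}_k(M))$. The paper handles this by first applying a separate homotopy $C_k$ that slides the parameters of all vertices above the section that hang over univalent vertices down to $0$ (thereby deleting all univalent vertices above the section while keeping the filtration level fixed), and only then bringing the remaining parameters to $0$. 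Once univalent vertices above the section are gone, geometrical inputs equal the arity $\leq k$, and the naive scaling homotopy stays within $T_k(\mathcal{B}_k(M))$.

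\textbf{The cofibration pushout.} Your proposed square
$\mathcal{F}_B(\partial X_k)\to\mathcal{F}_B(X_k)$, $\mathcal{B}_{k-1}(M)\to\mathcal{B}_k(M)$
is not a pushout, because the boundary identifications of types $(i)$ (unit vertex) and $(iv)$ (two consecutive vertices sharing a time) collapse a point indexed by $T$ to a point indexed by a tree with \emph{fewer vertices} but the \emph{same} number of geometrical inputs $k$, hence landing in $\mathcal{B}_k(M)$ and \emph{not} in $\mathcal{B}_{k-1}(M)$. Consequently $\partial X_k$ cannot map to $\mathcal{B}_{k-1}(M)$. This forces the introduction of a secondary filtration by number of vertices: one defines $\mathcal{B}_k(M)[l]$, sets $X_k[l]$ to range only over trees with exactly $k$ geometrical inputs and exactly $l$ vertices, and establishes pushouts
$\mathcal{F}_B(\partial\tilde X_k[l])\to\mathcal{F}_B(\tilde X_k[l])$, $\mathcal{B}_k(M)[l-1]\to\mathcal{B}_k(M)[l]$.
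Only then is the boundary $\partial X_k[l]$ a finite-type subspace that maps compatibly to $\mathcal{B}_k(M)[l-1]$, and one can prove the pushout-product cofibration assertion tree by tree using Lemmas~\ref{B3} and~\ref{B4}. Without this inner filtration your argument does not yield a cell attachment at each stage, and the $\Sigma$-cofibration claim for $\partial X_k\hookrightarrow X_k$ is also out of reach since $X_k$ would be an infinite coproduct over unbounded vertex counts.

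The remaining ingredients of your plan (the role of $\Sigma$-cofibrancy via the semi-direct product decomposition, well-pointedness for the unit degeneracies, and cofibrancy of $\gamma_s$ for relation $(iii)$) are all in the right spirit and match the paper's argument, but they need to be run against the two-parameter filtration, not the one-parameter one.
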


In the first version of the paper, we show that the maps (\ref{b8}) and (\ref{b9}) are respectively a cofibration and a weak equivalence in the category of ($P$-$Q$) bimodules, under the assumptions of the theorem. Nevertheless, we give an alternative proof since we need to show that the filtration (\ref{A9}) is composed of cofibrations in the category of ($P$-$Q$) bimodules. In what follows, the method used is based on the paper of Turchin \cite[Section 11]{Tourtchine10} as well as the paper of Berger and Moerdijk \cite{Berger06}. In particular, we use the following two statements which are special cases of \cite[Lemma 2.5.3]{Berger06} and \cite[Lemma 2.5.2]{Berger06} respectively.

\begin{lmm}\label{B3}
Let $1\rightarrow G_{1} \rightarrow G_{1}\rtimes G_{2}\rightarrow G_{2}\rightarrow 1$ be a short exact sequence of groups. Let $A\rightarrow B$ be a $G_{2}$-cofibration and $X\rightarrow Y$ be a $G_{1}\rtimes G_{2}$-equivariant $G_{1}$-cofibration. The pushout product map $(A\times Y)\cup_{A\times X}(B\times X)\rightarrow B\times Y$ is a $G_{1}\rtimes G_{2}$-cofibration.
\end{lmm}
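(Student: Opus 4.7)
I would follow the standard pushout-product strategy: reduce to generating cofibrations in each variable using that $\square$ preserves colimits and retracts in each variable separately, then compute the pushout-product of generators explicitly. Generating $G_2$-cofibrations are $i_n = G_2 \times (S^{n-1} \hookrightarrow D^n)$. A $G$-equivariant $G_1$-cofibration $X \hookrightarrow Y$ admits a cellular presentation in terms of maps $j_{H,m} = G \times_H (S^{m-1} \hookrightarrow D^m)$, with $H \leq G := G_1 \rtimes G_2$ satisfying $H \cap G_1 = \{e\}$ and $H$ acting trivially on the disc. This intersection condition is precisely what ensures $G/H$ is a free $G_1$-set upon restriction, so that $j_{H,m}$ is genuinely a $G_1$-cofibration; conversely every free $G_1$-cell attached in $Y \setminus X$ sits in a $G$-orbit of this shape by the orbit--stabilizer theorem applied to points with trivial $G_1$-stabilizer.

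\textbf{Core computation.} For such generators, the pushout product $i_n \square j_{H,m}$ is the boundary-attachment of the block
\[
(G_2 \times G/H) \times D^{n+m},
\]
equipped with the diagonal $G$-action: $(g_1, g_2)$ acts on $G_2$ by left multiplication via the quotient $G \twoheadrightarrow G_2$ and on $G/H$ by left multiplication on the $G$-factor. A stabilizer computation at an arbitrary point $(h_2, gH)$ yields first $g_2 = e$ (from the free left $G_2$-action on itself), and then $(g_1, e) \in gHg^{-1}$; by normality of $G_1$ in $G$,
\[
gHg^{-1} \cap G_1 \;=\; g(H \cap G_1)g^{-1} \;=\; \{e\},
\]
forcing $g_1 = e$. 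Hence $G$ acts freely on $G_2 \times G/H$, which therefore decomposes $G$-equivariantly as $\bigsqcup_{|G_2|/|H|} G$ (the count being an integer because the projection $H \hookrightarrow G_2$ is injective, so $|H|$ divides $|G_2|$). Consequently $i_n \square j_{H,m}$ is a disjoint union of pushouts of the generating coarse $G$-cofibration $G \times (S^{n+m-1} \hookrightarrow D^{n+m})$, and is in particular a $G$-cofibration.

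\textbf{Main obstacle.} The subtle step is the initial one: justifying that an arbitrary $G$-equivariant $G_1$-cofibration admits a cellular presentation by the $j_{H,m}$ above. Abstractly, this amounts to showing that a $G$-space whose restriction to $G_1$ is $G_1$-free decomposes $G$-equivariantly as a disjoint union of orbits $G/H$ with $H \cap G_1 = \{e\}$, and that the $G_1$-cell filtration of $X \hookrightarrow Y$ can be refined $G$-equivariantly into such orbit-cell attachments. Once this cellular presentation is in hand, standard compatibility of $\square$ with pushouts, transfinite compositions, and retracts assembles the conclusion cell-by-cell from the core computation above.
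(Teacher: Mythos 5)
First, note that the paper itself offers no proof of Lemma~\ref{B3}: it is quoted as a special case of \cite[Lemma 2.5.3]{Berger06}, so the relevant comparison is with the standard argument from that source. Your ``core computation'' is correct as far as it goes (the stabilizer argument using normality of $G_1$ to show that $G$ acts freely on $G_2\times G/H$ when $H\cap G_1=\{e\}$ is fine, modulo the harmless point that the count $|G_2|/|H|$ presumes finite groups, which is not assumed). But the step you yourself flag as the ``main obstacle'' is a genuine gap, and it is the foundation of the whole cell-by-cell scheme: there is no justification for the claim that a $G$-equivariant map which is merely a $G_1$-cofibration admits a (retract of a) relative cellular presentation by cells $G\times_H(S^{m-1}\subset D^m)$ with $H\cap G_1=\{e\}$. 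The hypothesis only provides a $G_1$-cell structure on $X\to Y$ after restriction, and nothing forces the $G_1$-cells or their attaching maps to be organized into $G$-orbits; orbit--stabilizer reasoning on points says nothing about attaching maps. What you are asserting is a genuine (``fine'') equivariant cofibrancy statement, which is strictly stronger than the mixed hypothesis ``equivariant map that is a cofibration for a subgroup'': already in the degenerate case $G_1=\{e\}$ your claim would say that every $G$-map whose underlying map is a cofibration is a retract of a relative complex of cells $G/H\times D^m$, i.e.\ genuinely $G$-cofibrant, which is false at this level of generality (this is exactly why the paper needs the separate Lemma~\ref{B4}, which treats underlying cofibrations without any equivariant cell structure). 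So the proposal does not prove the lemma.

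The repair, which is essentially the Berger--Moerdijk argument, is to do cellular induction only in the variable where you actually have equivariant cells, namely $A\to B$. Since $(-)\,\square\, f$ preserves pushouts, transfinite compositions and retracts, reduce to the generator $G_2\times(S^{n-1}\subset D^n)$. Its pushout-product with $f:X\to Y$ is $G_2\times(i\,\square\, f)$ with the diagonal $G$-action, and the natural untwisting isomorphism $G\times_{G_1}(W|_{G_1})\xrightarrow{\ \cong\ }G_2\times W$, $[g,w]\mapsto(\bar g,\,gw)$, identifies this map with $G\times_{G_1}\bigl((i\,\square\, f)|_{G_1}\bigr)$. Now $(i\,\square\, f)|_{G_1}$ is a $G_1$-cofibration because $f|_{G_1}$ is one and $i$ carries the trivial action (check on generators: $i\,\square\,(G_1\times i')\cong G_1\times(i\,\square\, i')$, then saturate; alternatively invoke Lemma~\ref{B4} for the group $G_1$), and induction $G\times_{G_1}(-)$ is left Quillen---its right adjoint, restriction, preserves fibrations and weak equivalences since both are detected on underlying spaces---hence preserves cofibrations. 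This yields the generator case, and the general case follows by saturation in the first variable, with no structural claim about $X\to Y$ ever needed.
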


\begin{lmm}\label{B4}
Let $G$ be a group. Let $A\rightarrow B$ and $X\rightarrow Y$ be two maps in $G\text{-}Top$ which are cofibrations as continuous map. If one of them is cofibrant in  $G\text{-}Top$ then the pushout product map $(A\times Y)\cup_{A\times X}(B\times X)\rightarrow B\times Y$ is a cofibration in $G\text{-}Top$. Moreover the latter is acyclic if $A\rightarrow B$ or $X\rightarrow Y$ is.
\end{lmm}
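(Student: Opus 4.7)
The plan is to reduce the statement to the generating cofibrations of $G\text{-}Top$ and then invoke the pushout-product axiom of the monoidal model category $Top$. By the symmetry of the pushout-product construction in its two arguments, I may assume throughout that $X\to Y$ is the map which is cofibrant in $G\text{-}Top$. Fixing $A\to B$, let $\mathcal{C}$ denote the class of maps $j:X\to Y$ in $G\text{-}Top$ for which the pushout-product $(A\to B)\,\square\,j$ is a cofibration in $G\text{-}Top$. Since $(A\to B)\,\square\,-$ commutes with pushouts, transfinite compositions and retracts, the class $\mathcal{C}$ is saturated, and it suffices to check that $\mathcal{C}$ contains each generating cofibration $j_{n}:G\times S^{n-1}\to G\times D^{n}$.

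The key point is an equivariant change of variables. For any space $Z$ and any $G$-space $A$, the map
$$
A\times (G\times Z)\longrightarrow G\times (A\times Z),\qquad (a,h,z)\longmapsto (h,h^{-1}a,z),
$$
is a $G$-homeomorphism intertwining the diagonal action on the source with the free action on the first factor of the target. Applying this identification to each of the four terms of $(A\to B)\,\square\,j_{n}$ rewrites the pushout-product, in $G\text{-}Top$, as
$$
G\times \bigl[\,(A\times D^{n})\cup_{A\times S^{n-1}}(B\times S^{n-1})\longrightarrow B\times D^{n}\,\bigr].
$$
The bracketed arrow is a cofibration in $Top$ by the pushout-product axiom of the cofibrantly generated monoidal model category $Top$. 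The functor $G\times (-):Top\to G\text{-}Top$ is left adjoint to the forgetful functor $\mathcal{U}$, and $\mathcal{U}$ manifestly preserves Serre fibrations and weak equivalences, so $G\times (-)$ is left Quillen; it therefore sends the bracketed map to a cofibration in $G\text{-}Top$. This proves $j_{n}\in\mathcal{C}$ and hence the first assertion of the lemma.

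For the acyclicity part there are two subcases. If $A\to B$ is a weak equivalence, the bracketed map above is an acyclic cofibration in $Top$ by the pushout-product axiom, and the left Quillen functor $G\times(-)$ transports it to an acyclic cofibration in $G\text{-}Top$. If instead $X\to Y$ is a weak equivalence, then, being simultaneously a cofibration in $G\text{-}Top$, it is an acyclic $G$-cofibration, so I would rerun the saturation argument using the generating acyclic cofibrations $G\times D^{n}\times\{0\}\hookrightarrow G\times D^{n}\times[0,1]$ and reach the same conclusion. The only non-formal step is the equivariant identification $A\times(G\times Z)\cong G\times(A\times Z)$ trading the diagonal action for a free action on a single factor; once this rewriting is in place, both halves of the lemma follow from the pushout-product axiom in $Top$ together with the left Quillen-ness of $G\times(-)$.
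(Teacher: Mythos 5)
Your proof is correct. Note that the paper does not actually prove this lemma: it is quoted as a special case of \cite[Lemma 2.5.2]{Berger06}, so there is no internal argument to compare against. What you have written is essentially the standard proof of that cited statement, and all the steps check out: the reduction by saturation to the generating (acyclic) cofibrations of $G\text{-}Top$ is legitimate because $(A\to B)\,\square\,(-)$ carries cobase changes, transfinite compositions and retracts of its argument to the corresponding constructions on the pushout-products, and because in the transferred cofibrantly generated structure every (acyclic) $G$-cofibration is a retract of a relative cell complex on $G\times(S^{n-1}\to D^{n})$ (resp. $G\times(D^{n}\times\{0\}\to D^{n}\times[0,1])$); the untwisting homeomorphism $A\times(G\times Z)\cong G\times(A\times Z)$ is indeed $G$-equivariant and compatible with the structure maps precisely because $A\to B$ is equivariant, so the four corners assemble as claimed; and $G\times(-)=G[-]$ is left Quillen since the forgetful functor creates fibrations and weak equivalences. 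Two cosmetic remarks: the paper's convention is \emph{right} $G$-actions, so the untwisting formula should be written accordingly (e.g.\ $(a,h,z)\mapsto(h,ah^{-1},z)$), and in the second acyclicity case it is worth stating explicitly that the saturated class being used is that of maps $j$ with $(A\to B)\,\square\,j$ an acyclic $G$-cofibration; with these trivial adjustments your argument is a complete, self-contained substitute for the external citation.
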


\begin{proof}[Proof of Theorem \ref{B7}]
First, we show that the map $\mu:\mathcal{B}(M)\rightarrow M$, which sends the real numbers indexing the vertices to $0$, is a weak equivalence in the category of ($P$-$Q$) bimodules. Indeed, the map $\mu$ is a homotopy equivalence in the category of $S$-sequences where the homotopy consists in bringing the parameters to $0$. Unfortunately, we cannot do the same for the map $\mu_{k}:T_{k}(\mathcal{B}_{k}(M))\rightarrow T_{k}(M)$ since the previous homotopy doesn't necessarily preserve the number of geometrical inputs. For instance, the composite point $[T\,;\,\{t_{v}\}\,;\, \{a_{v}\}]$ in Figure \ref{A4} is in the $6$-th filtration term whereas the point $[T\,;\,\{tt_{v}\}\,;\,\{a_{v}\} ]$, with $t\in ]0\,,\,1[$, is in the $9$-th filtration term. To solve this problem, we start by contracting the output edge of univalent vertices. For this, we use the homotopy
$$
\begin{array}{crcl}\vspace{4pt}
C_{k}: & T_{k}(\mathcal{B}_{k}(M))\times [0\,,\,1] & \longrightarrow & T_{k}(\mathcal{B}_{k}(M)); \\ 
 & [T\,;\,\{t_{v}\}\,;\,\{a_{v}\} ]\,;\, t & \longmapsto & [T\,;\,\{tt_{D(v)}+(1-t)t_{v}\}\,;\, \{a_{v}\}],
\end{array} 
$$ 
where $D(v)=v$ for $v\notin V^{u}(T)$. Otherwise, $D(v)$ is the first vertex in the path joining $v$ to its pearl such that $D(v)$ is connected to a leaf. By convention, if such a vertex doesn't exist, then $D(v)$ is the pearl and $t_{D(v)}$ is fixed to $0$. So, the $k$-truncated $S$-sequence $T_{k}(\mathcal{B}_{k}(M))$ is weakly equivalent to the sub-object formed by points without univalent vertices above the section. Then, we use the homotopy bringing the parameters to $0$ in order to conclude that $\mu_{k}$ is a weak equivalence.   

In order to show that the map from $\mathcal{B}_{k-1}(M)$ to $\mathcal{B}_{k}(M)$ is a cofibration, we introduce another filtration according to the number of vertices. A prime point is said to be in $\mathcal{B}_{k}(M)[l]$ if it has at most $k-1$ geometrical inputs or if it has exactly $k$ geometrical inputs and at most $l$ vertices. A composite point is said to be in $\mathcal{B}_{k}(M)[l]$ if all its prime components are in $\mathcal{B}_{k}(M)[l]$. Thus, we get a family of ($P$-$Q$) bimodule maps
$$
\xymatrix{
\mathcal{B}_{k-1}(M) \ar[r] & \mathcal{B}_{k}(M)[1]\ar[r] & \cdots \ar[r] & \mathcal{B}_{k}(M)[l-1] \ar[r] & \mathcal{B}_{k}(M)[l] \ar[r] & \cdots \ar[r] & \mathcal{B}_{k}(M).
}
$$
In particular, the map $P_{0}\rightarrow \mathcal{B}_{0}(M)[1]$ is a cofibration. Indeed, let $M_{0}$ be the $S$-sequence given by $M_{0}(\,;\,s)=M(\,;\,s)$, for $s\in S$, and the empty set otherwise. Due to the axiom $(iii)$ of Construction \ref{b7}, $P_{0}$ and $\mathcal{B}_{0}(M)[1]$ are the free ($P$-$Q$) bimodules $\mathcal{F}_{B}(P_{0})$ and $\mathcal{F}_{B}(M_{0})$ respectively. Consequently, the map from $P_{0}$ to $\mathcal{B}_{0}(M)[1]$ coincides with $\mathcal{F}_{B}(\{\gamma_{s}\}):\mathcal{F}_{B}(P_{0})\rightarrow \mathcal{F}_{B}(M_{0})$ which is a cofibration in the category of ($P$-$Q$) bimodules since the maps $\gamma_{s}$ are cofibrations in the category of topological spaces. 

\newpage

In the general case, in order to prove that the map from $\mathcal{B}_{k}(M)[l-1]$ to $\mathcal{B}_{k}(M)[l]$ is a cofibration in the category of ($P$-$Q$) bimodules, we consider the set $S\text{-}\textbf{\text{stree}[k\,;\,l]}$ of $S$-trees with section having exactly $k$ geometrical inputs and $l$ vertices. Let $X_{k}[l]$ be the quotient of the sub-$S$-sequence 
$$
\left.\underset{S\text{-}\textbf{\text{stree}[k\,;\,l]}}{\coprod}\,\,\underset{v\in V^{d}(T)}{\prod}\,\big[\,P(e_{1}(v),..,e_{|v|}(v);e_{0}(v))\!\times\! I\,\big]\,\times\!\!\!\underset{v\in V^{p}(T)}{\prod}\,M(e_{1}(v),..,e_{|v|}(v);e_{0}(v))\,\times\!\!\!\underset{ v\in V^{u}(T)}{\prod}\,\big[\,Q(e_{1}(v),..,e_{|v|}(v);e_{0}(v))\!\times\! I\big]\right/ \sim
$$ 
obtained by taking the restriction on the set of real numbers indexing the vertices. The equivalent relation is generated by the axiom $(ii)$ of Construction \ref{b7}. Similarly, $\partial X_{k}[l]$ is the $S$-sequence formed by points in $X_{k}[l]$ satisfying one of the following conditions called \textit{the boundary conditions}:
\begin{enumerate}
\item \label{R1} there is a vertex indexed by $0$ or $1$,
\item \label{R2} there are two consecutive vertices indexed by the same real number,
\item \label{R3} there is a bivalent vertex labelled by a distinguished points in $P$ or $Q$,
\item \label{R4} there is a univalent pearl labelled by a point of the form $\gamma_{s}(x)$, with $x\in P(\,;\,s)$. 
\end{enumerate}
The $S$-sequences $X_{k}[l]$ and $\partial X_{k}[l]$ are not objects in the category $Seq(S)_{P_{0}}$. To solve this problem, we consider the following $S$-sequences which are obviously endowed with maps from $P_{0}$:
$$
\tilde{X}_{k}[l](s_{1},\ldots,s_{n}\,;\,s_{n+1}):=
\left\{
\begin{array}{ll}\vspace{4pt}
X_{k}[l](\,;\,s_{n+1})\sqcup P(\,;\,s_{n+1}) & \text{if } n=0, \\ 
X_{k}[l](s_{1},\ldots,s_{n}\,;\,s_{n+1}) & \text{otherwise},
\end{array} 
\right.
$$

$$
\partial\tilde{X}_{k}[l](s_{1},\ldots,s_{n}\,;\,s_{n+1}):=
\left\{
\begin{array}{ll}\vspace{4pt}
\partial X_{k}[l](\,;\,s_{n+1})\sqcup P(\,;\,s_{n+1}) & \text{if } n=0, \\ 
\partial X_{k}[l](s_{1},\ldots,s_{n}\,;\,s_{n+1}) & \text{otherwise}.
\end{array} 
\right.
$$ 

\noindent Furthermore, there is the  pushout diagram 
\begin{equation}\label{E6}
\xymatrix{
\mathcal{F}_{B}(\partial\tilde{X}_{k}[l]) \ar[r] \ar[d] & \mathcal{F}_{B}(\tilde{X}_{k}[l])\ar[d]\\
\mathcal{B}_{k}(M)[l-1] \ar[r] & \mathcal{B}_{k}(M)[l]
}
\end{equation}
where the left vertical map consists in: contracting the output edge (resp. incoming edges) of vertices above the section (resp. below the section) indexed by $0$ ; contracting the inner edges connecting vertices indexed by the same real number ; forgetting the bivalent vertices labelled by distinguished points ; contracting the output edge of univalent pearl labelled by $\gamma_{s}(x)$ with $x\in P_{0}$ ; taking the inclusion for points having a vertex indexed by $1$. Since the functor $\mathcal{F}_{B}$ and the pushout diagrams preserve the cofibrations, $\mathcal{B}_{k-1}(M)\rightarrow \mathcal{B}_{k}(M)$ is a cofibration in the category of ($P$-$Q$) bimodules if the inclusion from $\partial X_{k}[l]$ to $X_{k}[l]$ is a $\Sigma$-cofibration.  

Let $\textbf{$S$-stree}^{p}\textbf{[k\,;\,l]}$ be the set of planar $S$-trees with section having $k$ geometrical inputs and $l$ vertices (by planar we mean without the bijection labelling the leaves). If $T\in \textbf{$S$-stree}^{p}\textbf{[k\,;\,l]}$, then $H(T)$ is the space of parametrizations of the set $V(T)\setminus V^{p}(T)$ by real numbers in the internal $[0\,,\,1]$ satisfying the restriction of Construction \ref{b7}. So, $H(T)$ is a sub-polytope of $[0\,,\,1]^{m}$, with $m=|V(T)\setminus V^{p}(T)|$, arising from a gluing of simplices. For instance, if $T$ has only bivalent vertices, then $H(T)=\Delta^{|V^{u}(T)|}\times \Delta^{|V^{d}(T)|}$. We denote by $H^{-}(T)$ the sub-polytope of $H(T)$ formed by points satisfying the axioms (\ref{R1}) or (\ref{R2}) of the boundary conditions. In other words, $H^{-}(T)$ is formed by faces of the polytope $H(T)$. Consequently, the inclusion (\ref{B1}) is a cofibration in the category of spaces and preserves the action of the automorphism group $Aut(T\,;\,V^{p}(T))$:
\begin{equation}\label{B1}
H^{-}(T)\longrightarrow H(T).
\end{equation}

Similarly, let $\underline{M}(T)$ be the space of indexations of $V(T)$ by points in $P$, $Q$ and $M$ satisfying the relation of Construction \ref{b7}. Since the objects $P$, $Q$ and $M$ are $\Sigma$-cofibrant, the space $\underline{M}(T)$ is $Aut(T\,;\,V^{p}(T))$-cofibrant. To show that, we adapt the proof introduced by Berger and Moerdijk in \cite{Berger06} for operads. We prove the result by induction on the set of vertices. Assume that $T$ is of the form (\ref{B2}), then there are two cases to consider. If the root is a pearl, then the trees $T^{i}$ are elements in the set $S\text{-}\textbf{tree}$ and $\underline{M}(T^{i})$ is the space of indexations of $V(T^{i})$ by points in the operad $Q$. In \cite{Berger06}, the authors show that $\underline{M}(T^{i})$ is $Aut(T^{i})$-cofibrant. Consequently, $\underline{M}(T^{1})^{\times k_{1}}\times \cdots \times \underline{M}(T^{j})^{\times k_{j}}$  is $\Gamma_{T}$-cofibrant and is equipped with an action of $\Gamma_{T}\rtimes \Sigma_{T}$. Since $\underline{M}(t_{n})=P(e_{1}(r),\ldots,e_{|r|}(r);e_{0}(r))$ is $\Sigma_{T}$-cofibrant, Lemma \ref{B3}, applied to the short exact sequence $1\rightarrow \Gamma_{T} \rightarrow \Gamma_{T}\rtimes \Sigma_{T}\rightarrow \Sigma_{T}\rightarrow 1$, shows that $\underline{M}(T)$ is $Aut(T\,;\,V^{p}(T))$-cofibrant.\vspace{-10pt}

\newpage

In the second case, the root is not a pearl and the trees $T^{i}$ are $S$-trees with section. By induction, $\underline{M}(T^{i})$ is $Aut(T^{i}\,;\,V^{p}(T^{i}))$-cofibrant. Consequently, $\underline{M}(T^{1})^{\times k_{1}}\times \cdots \times \underline{M}(T^{j})^{\times k_{j}}$  is $\Gamma_{T}$-cofibrant and is equipped with an action of $\Gamma_{T}\rtimes \Sigma_{T}$. Since $\underline{M}(t_{n})=M(e_{1}(r),\ldots,e_{|r|}(r);e_{0}(r))$ is $\Sigma_{T}$-cofibrant, Lemma \ref{B3}, applied to the short exact sequence $1\rightarrow \Gamma_{T} \rightarrow \Gamma_{T}\rtimes \Sigma_{T}\rightarrow \Sigma_{T}\rightarrow 1$, shows that $\underline{M}(T)$ is also $Aut(T\,;\,V^{p}(T))$-cofibrant.

The space $\underline{M}^{-}(T)$ is the subspace of $\underline{M}(T)$ formed by points satisfying the axioms (\ref{R3}) or (\ref{R4}) of the boundary conditions. An invariant form of the pushout product lemma \ref{B3} together with an induction on trees shows that the inclusion from $\underline{M}^{-}(T)$ to $\underline{M}(T)$ is an $Aut(T\,;\,V^{p}(T))$-cofibration since the operads $P$ and $Q$ are well pointed and the maps $\gamma_{s}$ are cofibrations. From now on, we denote by $(H\times \underline{M})^{-}(T)$ the pushout product
$$
(H^{-}(T)\times \underline{M}(T))\, \underset{H^{-}(T)\,\times\, \underline{M}^{-}(T)}{\coprod}\,(H(T)\times \underline{M}^{-}(T)).
$$ 
Lemma \ref{B4} implies that the inclusion from $(H\times \underline{M})^{-}(T)$ to $H(T)\times \underline{M}(T)$ is an $Aut(T\,;\,V^{p}(T))$-cofibration. An element in $Aut(T\,;\,V^{p}(T))$ can be associated to a permutation of the leaves of the tree $T$. Thus, the group $Aut(T\,;\,V^{p}(T))$ acts on $\Sigma_{|T|}$. Consequently, the following map is a $\Sigma_{|T|}$-cofibration:
\begin{equation}\label{A5}
(H\times \underline{M})^{-}(T) \underset{Aut(T\,;\,V^{p}(T))}{\times} \Sigma'_{|T|} \longrightarrow (H(T)\times \underline{M}(T)) \underset{Aut(T\,;\,V^{p}(T))}{\times} \Sigma_{|T|}.
\end{equation}
Hence, the horizontal maps of the following diagram are $\Sigma$-cofibrations:
$$
\xymatrix{
\underset{[T\,;\,V^{p}(T)]}{\coprod} (H\times \underline{M})^{-}(T) \underset{Aut(T\,;\,V^{p}(T))}{\times} \Sigma_{|T|}  \ar[r]\ar@{=}[d] & \underset{[T\,;\,V^{p}(T)]}{\coprod} (H(T)\times \underline{M}(T)) \underset{Aut(T\,;\,V^{p}(T))}{\times} \Sigma_{|T|}\ar@{=}[d] \\
\partial X_{k}[l] \ar[r] & X_{k}[l]
}
$$
where the disjoint union is along the isomorphism classes of planar $S$-trees with section in $\textbf{$S$-stree}^{p}\textbf{[k\,;\,l]}$. Finally, the map $\mathcal{B}_{k-1}(M)\rightarrow \mathcal{B}_{k}(M)$ is a cofibration in the category of ($P$-$Q$) bimodules. In the same way, we can check that the truncated bimodule $T_{k}(\mathcal{B}_{k}(M))$ is cofibrant using the functor $T_{k}\mathcal{F}_{B}$ instead of $\mathcal{F}_{B}$ in the above arguments.
\end{proof}

\begin{rmk}
The Boardman-Vogt resolution induces an endofunctor in the category $Bimod_{P\text{-}Q}$ or $T_{k}Bimod_{P\text{-}Q}$. Note that this construction is also functorial in the operads $P$ and $Q$. Indeed, let $f_{p}:P_{1}\rightarrow P_{2}$ and $f_{q}:Q_{1}\rightarrow Q_{2}$ be two maps of $S$-operads. If $f$ is a map from a ($P_{1}$-$Q_{1}$) bimodule $M_{1}$ to a ($P_{2}$-$Q_{2}$) bimodule $M_{2}$ such that the following diagrams commute:
$$
\xymatrix{
P_{1}\,\times\, M_{1}^{1}\times \cdots \times M_{1}^{k} \ar[r]^{\,\,\,\,\,\,\,\,\,\,\,\,\,\,\,\,\,\,\,\,\,\,\,\,\,\,\,\,\gamma_{l}} \ar[d]^{f_{p}\times f\times \cdots \times f} & M_{1} \ar[d]^{f} \\
P_{2}\,\times\, M_{2}^{1}\times \cdots \times M_{2}^{k} \ar[r]_{\,\,\,\,\,\,\,\,\,\,\,\,\,\,\,\,\,\,\,\,\,\,\,\,\,\,\,\,\gamma_{l}} & M_{2}
}
\,\,\,\,\,\,\,\,\,\,\,\,\,\,\,\,\,\,\,\,\,\,\,\,\,\,\,\,
\xymatrix{
M_{1}\,\times \, Q_{1} \ar[r]^{\circ^{i}} \ar[d]^{f\times f_{q}} & M_{1} \ar[d]^{f}\\
M_{2}\,\times\, Q_{2} \ar[r]_{\circ^{i}} & M_{2}
}
$$
then $f$ induces a map of ($P_{1}$-$Q_{1}$) bimodules $\tilde{f}:\mathcal{B}(M_{1})\rightarrow \mathcal{B}(M_{2})$ where $\mathcal{B}(M_{1})$ and $\mathcal{B}(M_{2})$ are the Boardman-Vogt resolutions in the categories $Bimod_{P_{1}\text{-}Q_{1}}$ and $Bimod_{P_{2}\text{-}Q_{2}}$ respectively:
$$
\tilde{f}([T\,;\,\{t_{v}\}\,;\,\{a_{v}\}])=[T\,;\,\{t_{v}\}\,;\,\{a'_{v}\}]\hspace{15pt}\text{with} \hspace{15pt} a'_{v}:=\left\{
\begin{array}{ll}\vspace{3pt}
f_{q}(a_{v}) & \text{if } v\in V^{u}(T), \\ \vspace{3pt}
f(a_{v}) & \text{if } v\in V^{p}(T), \\ 
f_{p}(a_{v}) & \text{if } v\in V^{d}(T).
\end{array} 
\right.
$$
\end{rmk}

\begin{rmk}
From a $k$-truncated bimodule $M_{k}$, we consider the $k$-free bimodule $\mathcal{F}_{B}^{k}(M_{k})$ whose $k$ first components coincide with $M_{k}$. The functor $\mathcal{F}_{B}^{k}$, from truncated bimodules to bimodules, can be described using the set of reduced trees with section in which the sum of the incoming inputs of any two consecutive vertices is bigger than $k+2$. We can check that $\mathcal{F}_{B}^{k}$ is the left adjoint to the truncated functor $T_{k}$:
$$
\mathcal{F}_{B}^{k}:T_{k}Bimod_{P\text{-}Q}\leftrightarrows Bimod_{P\text{-}Q}:T_{k}.
$$
In particular, one has $\mathcal{F}_{B}^{k}(T_{k}(\mathcal{B}_{k}(M)))=\mathcal{B}_{k}(M)$ since $\mathcal{B}_{k}(M)$ is defined as the sub-bimodule of $\mathcal{B}(M)$ generated its $k$ first components. As a consequence of this adjunction together with Theorem \ref{B7}, one has the following identifications:
$$
T_{k}Bimod_{O}^{h}(T_{k}(M)\,;\,T_{k}(O'))\cong T_{k}Bimod_{O}(T_{k}(\mathcal{B}_{k}(M))\,;\,T_{k}(O'))\cong Bimod_{O}(\mathcal{B}_{k}(M)\,;\,O').
$$
\end{rmk}

\subsection{The Boardman-Vogt resolution for coloured operads}\label{C4}

In this section, we recall the Boardman-Vogt resolution for topological operads and we introduce the notation needed for the proof of the main theorem of the paper. Since this construction is similar to the Boardman-Vogt resolution considered in the previous subsections, we skip some details and we refer the reader to \cite{Berger06, Boardman73} for a complete description.

\begin{const}\label{e7}
From an $S$-operad $O$, we build the $S$-operad $\mathcal{BV}(O)$. The points are equivalent classes $[T\,;\,\{t_{e}\}\,;\,\{a_{v}\}]$ where $T$ is an $S$-tree, $\{a_{v}\}_{v\in V(T)}$ is a family of points in $O$ labelling the vertices of $T$ and $\{t_{e}\}_{e\in V^{int}(T)}$ is a family of real numbers in the interval $[0\,,\,1]$ indexing the inner edges. In other words, $\mathcal{BV}(O)$ is the quotient of the coproduct
$$
\left.
\underset{T\in \,S\text{-tree}}{\coprod} \,\,\underset{v\in V(T)}{\prod}\,O(e_{1}(v),\ldots,e_{|v|}(v);e_{0}(v)) \,\,\times \,\,\underset{e\in\, E^{int}(T)}{\prod}\, [0\,,\,1]\,\,
\right/\!\sim\,\,.
$$
The equivalence relation is generated by the following axioms: 
\begin{itemize}[itemsep=-10pt, topsep=3pt, leftmargin=*]
\item[$i)$] If a vertex is labelled by a distinguished point $\ast_{s}\in O(s;s)$, then

\begin{center}
\includegraphics[scale=0.25]{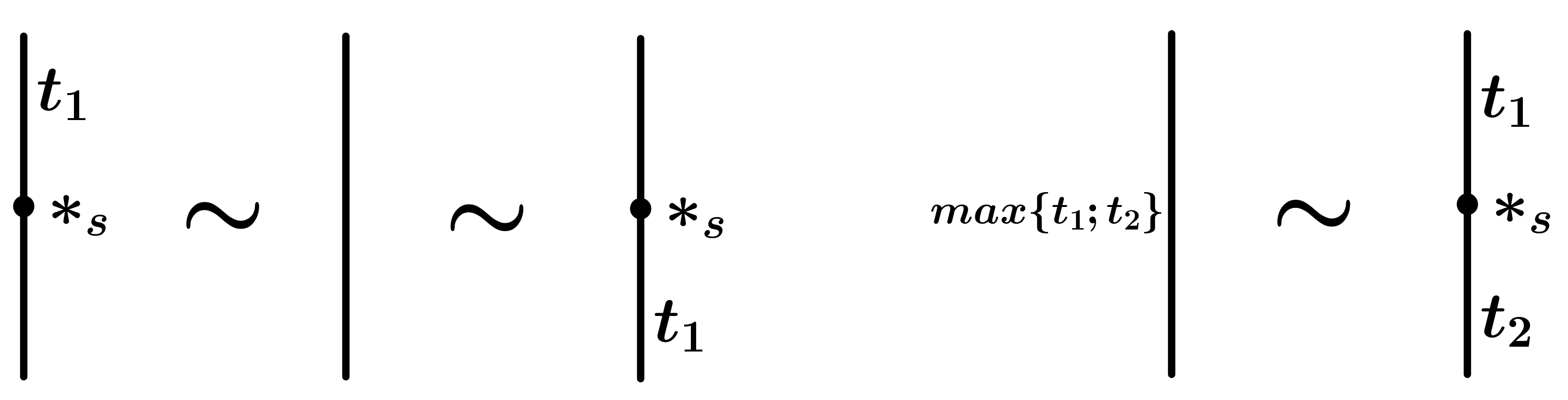}
\end{center}\vspace{9pt}

\item[$ii)$] If a vertex is labelled by $a\cdot \sigma$, with $\sigma\in \Sigma_{|v|}$, then 
\begin{center}
\includegraphics[scale=0.35]{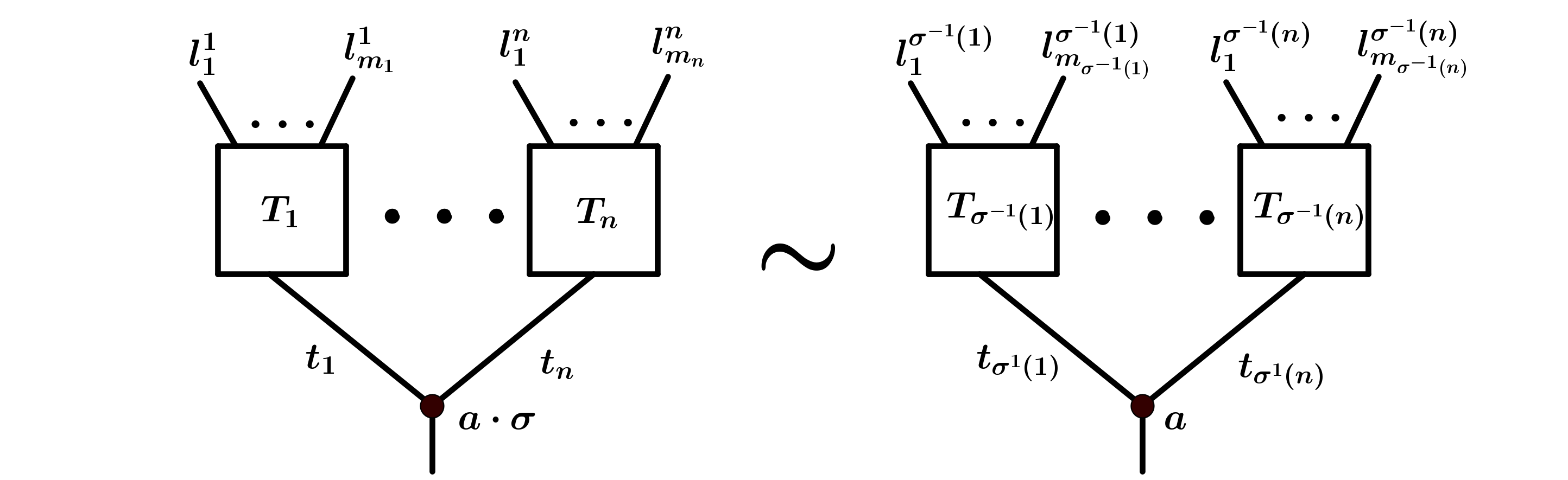}
\end{center}\vspace{10pt}

\item[$iii)$] If an inner edge is indexed by $0$, then we contract it by using the operadic structure of $O$.
\begin{figure}[!h]
\begin{center}
\includegraphics[scale=0.08]{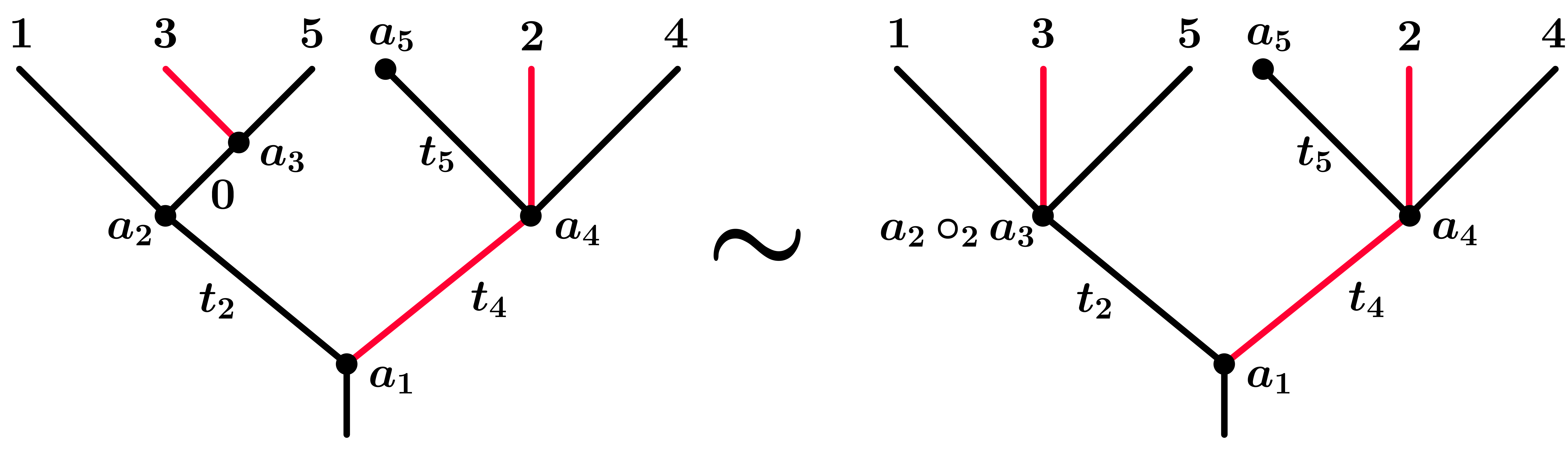}
\caption{Illustration of the relation $(iii)$.}
\end{center}
\end{figure}
\end{itemize}\vspace{-10pt}

\noindent Let $[T\,;\,\{t_{e}\}\,;\,\{a_{v}\}]$ be a point in $\mathcal{BV}(O)(s_{1},\ldots,s_{n};s_{n+1})$ and $[T'\,;\,\{t'_{e}\}\,;\,\{a'_{v}\}]$ be a point in $\mathcal{BV}(O)(s'_{1},\ldots,s'_{m};s_{i})$. The operadic composition $[T\,;\,\{t_{e}\}\,;\,\{a_{v}\}]\circ_{i}[T'\,;\,\{t'_{e}\}\,;\,\{a'_{v}\}]$ consists in grafting $T'$ to the $i$-th incoming edge of $T$ and indexing the new inner edge by $1$. Furthermore, there is a map of pointed $S$-sequences,
\begin{equation}\label{B8}
\iota:O\longrightarrow \mathcal{BV}(O)\,\,;\,\,a\longmapsto [t_{|a|}\,;\,\emptyset\,;\,\{a\}],
\end{equation} 
sending a point $a$ to the corolla labelled by $a$. There is also a map of operads sending the real numbers indexing the inner edges to $0$,
\begin{equation}\label{d8}
\mu:\mathcal{BV}(O)\rightarrow O\,\,;\,\, [T\,;\,\{t_{e}\}\,;\,\{a_{v}\}] \mapsto [T\,;\,\{0_{e}\}\,;\,\{a_{v}\}].
\end{equation}
\end{const}

From now on, we introduce a filtration of the resolution $\mathcal{BV}(O)$ according to the number of geometrical inputs which is the number of leaves plus the number of univalent vertices. Similarly to the bimodule case, a point in $\mathcal{BV}(O)$ is said to be prime if the real numbers indexing the set of inner edges are strictly smaller than $1$. Besides, a point is said to be composite if one of its inner edges is indexed by $1$ and such a point can be decomposed into prime components. More precisely, the prime components of a point indexed by a planar tree are obtained by cutting the inner edges indexed by $1$ as illustrated in Figure \ref{H0}. Otherwise, the prime components of a point of the form $[(T\,;\,\sigma)\,;\,\{t_{e}\}\,;\,\{a_{v}\}]$, with $\sigma\neq id$, coincide with the prime components of $[(T\,;\,id)\,;\,\{t_{e}\}\,;\,\{a_{v}\}]$.\\ \vspace{-10pt}

\begin{figure}[!h]
\begin{center}
\includegraphics[scale=0.27]{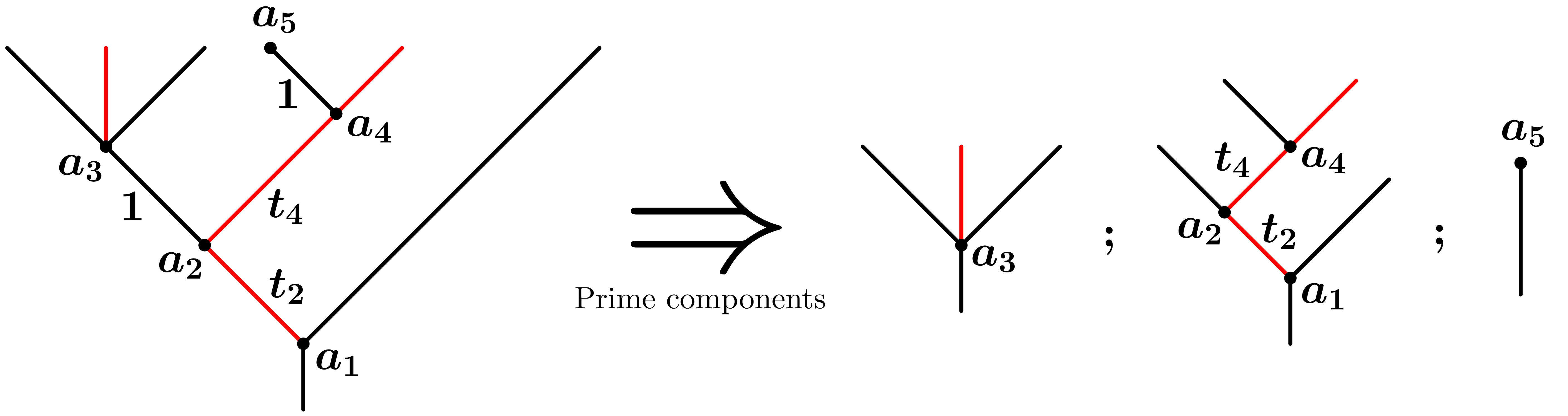}
\caption{Illustration of a composite point together with its prime components.}\label{H0}\vspace{-15pt}
\end{center}
\end{figure}

\noindent A prime point is in the $k$-th filtration term $\mathcal{BV}_{k}(O)$ if the number of its geometrical inputs is smaller than $k$. Then, a composite point is in the $k$-th filtration term if its prime components are in $\mathcal{BV}_{k}(O)$. For instance, the composite point in Figure \ref{H0} is an element in the filtration term $\mathcal{BV}_{4}(O)$. By convention, $\mathcal{BV}_{0}(O)$ is the initial object in the category of $S$-operads. For each $k\geq 0$, $\mathcal{BV}_{k}(O)$ is an $S$-operad and the family $\{\mathcal{BV}_{k}(O)\}$ gives rise to a filtration of $\mathcal{BV}(O)$, 
\begin{equation*}
\xymatrix{
\mathcal{BV}_{0}(O)\ar[r] & \mathcal{BV}_{1}(O) \ar[r] &  \cdots \ar[r] & \mathcal{BV}_{k-1}(O) \ar[r] & \mathcal{BV}_{k}(O) \ar[r] & \cdots \ar[r] & \mathcal{BV}(O).
}
\end{equation*}

\begin{thm}{\cite{Berger03,Vogt03}}\label{B9}
Assume that $O$ is a well pointed $\Sigma$-cofibrant $S$-operad. The objects $\mathcal{BV}(O)$ and $T_{k}(\mathcal{BV}_{k}(O))$ are cofibrant replacements of $O$ and $T_{k}(O)$ in the categories $Operad_{S}$ and $T_{k}Operad_{S}$ respectively. In particular, the map (\ref{d8}) is a weak equivalence. 
\end{thm}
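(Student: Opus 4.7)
The plan is to mirror, in the operadic setting, the two-part strategy already carried out in the proof of Theorem \ref{B7}: first establish that the augmentation $\mu:\mathcal{BV}(O)\to O$ from \eqref{d8} is a weak equivalence, then show that the filtration $\mathcal{BV}_{k-1}(O)\to\mathcal{BV}_k(O)$ consists of cofibrations of $S$-operads, so that in particular $\mathcal{BV}(O)=\mathrm{colim}_k\,\mathcal{BV}_k(O)$ is cofibrant and $T_k(\mathcal{BV}_k(O))$ is cofibrant in $T_kOperad_S$. Since each object is fibrant (as in Theorem \ref{b4}, one gets the same model structure on $Operad_S$ by transfer from $Seq(S)$ along the forgetful functor), these two facts together give the claim.

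For the weak equivalence, in the untruncated case the standard homotopy $[T;\{t_e\};\{a_v\}]\mapsto [T;\{st_e\};\{a_v\}]$, $s\in[0,1]$, is a deformation retraction in $Seq(S)$ onto the image of the inclusion $\iota$ from \eqref{B8}, so $\mu$ is a homotopy equivalence of $S$-sequences and therefore a weak equivalence of operads. In the $k$-truncated case, the same caveat as in the proof of Theorem \ref{B7} applies: sending $t_e\to 0$ may contract edges in a way that changes the number of geometrical inputs (producing, or destroying, univalent vertices), so one first applies a preliminary homotopy analogous to $C_k$ that collapses the parameters above each univalent vertex onto the value of its nearest non-univalent ancestor, reducing to points without univalent vertices, and only then deforms the remaining parameters to zero.

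For the cofibration part, I would introduce an auxiliary filtration $\mathcal{BV}_k(O)[l]$ by the number of vertices, exactly as in Theorem \ref{B7}: a prime point belongs to $\mathcal{BV}_k(O)[l]$ if it has at most $k-1$ geometrical inputs, or exactly $k$ geometrical inputs and at most $l$ vertices, and a composite point belongs if all its prime components do. For each $(k,l)$ one then builds a pushout square of $S$-operads
\[
\xymatrix{
\mathcal{F}_{\mathrm{Op}}(\partial X_k[l]) \ar[r]\ar[d] & \mathcal{F}_{\mathrm{Op}}(X_k[l])\ar[d]\\
\mathcal{BV}_k(O)[l-1] \ar[r] & \mathcal{BV}_k(O)[l]
}
\]
where $\mathcal{F}_{\mathrm{Op}}$ is the free operad functor, $X_k[l]$ is the $S$-sequence whose fibre over a planar $S$-tree $T$ with $k$ geometrical inputs and $l$ vertices is the parameter cube times the vertex-labelling space, and $\partial X_k[l]$ is the sub-$S$-sequence cut out by the boundary conditions (some inner edge indexed by $0$ or $1$, or some bivalent vertex labelled by a unit $\ast_s$). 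As $\mathcal{F}_{\mathrm{Op}}$ preserves cofibrations, it suffices to show that $\partial X_k[l]\to X_k[l]$ is a $\Sigma$-cofibration.

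The main obstacle, and the place where the hypotheses on $O$ are used, is showing this $\Sigma$-cofibrancy. For each planar $S$-tree $T$, the parameter polytope $H(T)\subset[0,1]^{|E^{int}(T)|}$ is a cube and the inclusion $H^-(T)\hookrightarrow H(T)$ of its appropriate faces is an $Aut(T)$-cofibration, while the labelling space $\underline{O}(T)=\prod_v O(e_1(v),\ldots,e_{|v|}(v);e_0(v))$ together with its sub-space $\underline{O}^-(T)$ of configurations with a bivalent vertex labelled by a unit is shown to be $Aut(T)$-cofibrant by induction on $|V(T)|$: decomposing $T$ as in \eqref{B2}, one uses the $\Sigma$-cofibrancy of $O$ for the root corolla and the inductive hypothesis on the subtrees, and splices them via Lemma \ref{B3} applied to the short exact sequence $1\to\Gamma_T\to Aut(T)\to\Sigma_T\to 1$. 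The well-pointedness of $O$ then feeds into Lemma \ref{B4} to upgrade $\underline{O}^-(T)\to\underline{O}(T)$ to an $Aut(T)$-cofibration; taking the pushout product with $H^-(T)\to H(T)$ and inducing up along $Aut(T)\hookrightarrow\Sigma_{|T|}$ exactly as in the bimodule case (cf.\ \eqref{A5}) produces the desired $\Sigma$-cofibration. The same argument, restricted to trees with at most $k$ leaves and at most $k$ incoming edges at each vertex, gives cofibrancy of $T_k(\mathcal{BV}_k(O))$ in $T_kOperad_S$.
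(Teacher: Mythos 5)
Your proposal takes essentially the same route as the paper's proof of Theorem~\ref{B9}: a secondary filtration by number of vertices, a pushout of free operads on $X_k[l]$ relative to a boundary subobject cut out by the relations, reduction to a $\Sigma$-cofibrancy claim for $\partial X_k[l]\hookrightarrow X_k[l]$, and the same induction on trees via Lemmas~\ref{B3} and~\ref{B4}. (The paper only recalls the cofibration half of the argument, deferring the weak-equivalence half, while you spell out the deformation retraction and the extra care needed in the truncated case.) One small technical point to repair: the free operad functor $\mathcal{F}$ the paper uses is defined on \emph{pointed} $S$-sequences, and for $(k;l)\neq(1;1)$ the sequences $X_k[l]$, $\partial X_k[l]$ are not pointed; the paper therefore replaces them by $\tilde Y_k[l]$, $\partial\tilde Y_k[l]$ obtained by freely adjoining basepoints in arity~$1$ before forming the pushout. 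Your square with $\mathcal{F}_{\mathrm{Op}}(\partial X_k[l])\to\mathcal{F}_{\mathrm{Op}}(X_k[l])$ should be adjusted accordingly, but this is a cosmetic fix that does not affect the substance of the argument.
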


\begin{proof}
In what follows, we recall only the proof that the inclusion from $\mathcal{BV}_{k-1}(O)$ to $\mathcal{BV}_{k}(O)$ is a cofibration in the category of $S$-operads. For this purpose, we consider another filtration according to the number of vertices. Let \textbf{$S$-tree[k\,;\,l]} be the set of $S$-trees having exactly $k$ geometrical inputs and $l$ vertices. Then, the $S$-sequence $Y_{k}[l]$ is the quotient of the coproduct 

$$
\left.
\underset{T\in \,S\textbf{-tree[k\,;\,l]}}{\coprod} \,\,\underset{v\in V(T)}{\prod}\,O(e_{1}(v),\ldots,e_{|v|}(v);e_{0}(v)) \,\,\times \,\,\underset{e\in\, E^{int}(T)}{\prod}\, [0\,,\,1]\,\,
\right/\!\sim 
$$ 
where the equivalent relation is generated by the axiom $(ii)$ of Construction \ref{e7}. The $S$-sequence $\partial Y_{k}[l]$ is formed by points in $Y_{k}[l]$ having a bivalent vertex labelled by the unit of the operad $O$ or having an inner edge indexed by $0$ or $1$. For $(k\,;\,l)\neq (1\,;\,1)$, the $S$-sequences $Y_{k}[l]$ and $\partial Y_{k}[l]$ are not pointed. In order to use the operad functor $\mathcal{F}$ from pointed $S$-sequences to $S$-operads, we consider the $S$-sequences $\tilde{Y}_{k}[l]$ and $\partial \tilde{Y}_{k}[l]$ obtained by adding based points in arity $1$:
$$
\tilde{Y}_{k}[l](s_{1},\ldots,s_{n}\,;\,s_{n+1}):=
\left\{
\begin{array}{ll}\vspace{4pt}
Y_{k}[l](s\,;\,s)\sqcup \ast_{s} & \text{if } n=1 \text{ and } s_{1}=s_{2}=s, \\ 
Y_{k}[l](s_{1},\ldots,s_{n}\,;\,s_{n+1}) & \text{otherwise},
\end{array} 
\right.
$$

$$
\partial\tilde{Y}_{k}[l](s_{1},\ldots,s_{n}\,;\,s_{n+1}):=
\left\{
\begin{array}{ll}\vspace{4pt}
\partial Y_{k}[l](s\,;\,s)\sqcup \ast_{s} & \text{if } n=1 \text{ and } s_{1}=s_{2}=s, \\ 
\partial Y_{k}[l](s_{1},\ldots,s_{n}\,;\,s_{n+1}) & \text{otherwise}.
\end{array} 
\right.\vspace{5pt}
$$ 

\noindent Then, we consider the following pushout diagrams: 
$$
\xymatrix{
\mathcal{F}(\mathcal{BV}_{0}(O)) \ar[r] \ar@{=}[d] & \mathcal{F}(Y_{1}[1]) \ar@{=}[d] \\
\mathcal{BV}_{0}(O) \ar[r] & \mathcal{BV}_{1}(O)[1]
}
\hspace{15pt}\text{and}\hspace{15pt}
\xymatrix{
\mathcal{F}(\partial \tilde{Y}_{k}[l]) \ar[r] \ar[d] & \mathcal{F}(\tilde{Y}_{k}[l]) \ar[d] \\
\mathcal{BV}_{k}(O)[l-1] \ar[r] & \mathcal{BV}_{k}(O)[l]
}
$$ 
Similarly to the proof of Theorem \ref{B7}, we can show that the inclusion from $\partial Y_{k}[l]$ to $Y_{k}[l]$ is a $\Sigma$-cofibration. As a consequence, the horizontal maps of the above diagrams are cofibrations in the category of $S$-operads. Since $lim_{l}\,\mathcal{BV}_{k}(O)[l]$ is the bimodule $\mathcal{BV}_{k}(O)$, the inclusion from $\mathcal{BV}_{k-1}(O)$ to $\mathcal{BV}_{k}(O)$ is also a cofibrations in the category of $S$-operads. 
\end{proof}

\begin{rmk}
Analogously to the bimodule case, from a $k$-truncated operad $O_{k}$, we consider the $k$-free operad $\mathcal{F}^{k}(O_{k})$ whose $k$ first components coincide with $O_{k}$. The functor $\mathcal{F}^{k}$, from truncated operads to operads, can be described using the set of trees in which the sum of the incoming inputs of any two consecutive vertices is bigger than $k+2$. We can check that $\mathcal{F}^{k}$ is the left adjoint to the truncated functor $T_{k}$:
$$
\mathcal{F}^{k}:T_{k}Operad^{h}\leftrightarrows Operad^{h}:T_{k}.
$$
In particular, one has $\mathcal{F}^{k}(T_{k}(\mathcal{BV}_{k}(O)))=\mathcal{BV}_{k}(O)$ since $\mathcal{BV}_{k}(O)$ is defined as the sub-operad of $\mathcal{BV}(O)$ generated its $k$ first components. As a consequence of this adjunction together with Theorem \ref{B9}, one has the following identifications:
$$
T_{k}Operad^{h}(T_{k}(O)\,;\,T_{k}(O'))\cong T_{k}Operad(T_{k}(\mathcal{BV}_{k}(O))\,;\,T_{k}(O'))\cong Operad(\mathcal{BV}_{k}(O)\,;\,O').
$$
\end{rmk}

%Besides, this filtration gives rise a tower of fibrations computing the loop space $\Omega Operad(\mathcal{BV}(O)\,;\, O')$. Let $\partial' Y_{k}[l]$ be the pushout product 
%$$
%\partial Y_{k}[l]\times [0\,,\,1] \underset{\partial Y_{k}[l]\times \{0\,,\,1\}}{\coprod} Y_{k}[l] \times \{0\,,\,1\},
%$$ 
%then, for $(k\,;\,l)\neq (1\,;\,1)$, the vertical maps of the following pullback diagram are fibrations:
%$$
%\xymatrix@R=17pt{
%\Omega Operad(\mathcal{BV}_{k}(O)[l]\,;\, O') \ar[r] \ar[d] & Seq(Y_{k}[l]\times [0\,,\,1]\,;\, O')\ar[d] \\
%\Omega Operad(\mathcal{BV}_{k}(O)[l-1]\,;\, O') \ar[r] & Seq( \partial' Y_{k}[l] \,;\, O')
%}
%$$
%Furthermore, if $g\in \Omega Operad(\mathcal{BV}_{k}(O)[l-1]\,;\, O')$, then the fiber over $g$ is homeomorphic to the mapping space from $Y_{k}[l]\times [0\,,\,1]$ to $O'$ such that the restriction to the sub-sequence $\partial'Y_{k}[l]$ coincides with the map induced by $g$:
%\begin{equation}\label{C6}
%Seq^{g}\big(\,(Y_{k}[l]\times [0\,,\,1]\,,\, \partial'Y_{k}[l])\,;\, O'\,\big).
%\end{equation}

\section{\texorpdfstring{$\mathcal{SC}_{1}$}{Lg}-algebra arising from a map of two-coloured operads}

From a map of operads $\eta:O\rightarrow O'$, we prove in \cite{Ducoulombier16} that the derived mapping space of bimodules $Bimod_{O}^{h}(O\,;\,O')$ is an algebra over the operad $\mathcal{C}_{1}$. Furthermore, we have been able to identify the corresponding loop space using an explicit cofibrant replacement of the operad $O$ in the model category $Bimod_{O}$ which differs from the cofibrant resolution introduced in this paper. More precisely, one has the theorem below which is a generalization of results obtained by Dwyer-Hess \cite{Dwyer12} and independently by Turchin \cite{Tourtchine10} in the context of non-symmetric operads and when the source object is the associative operad $\mathcal{A}s$.

\begin{thm}{\cite[Theorem 3.1]{Ducoulombier16}}\label{H1}
Let $O$ be a well pointed $\Sigma$-cofibrant operad and $\eta:O\rightarrow O'$ be a map of operads. If the spaces $O(1)$ and $O'(1)$ are contractible, then there are explicit weak equivalences of $\mathcal{C}_{1}$-algebras:
$$
\begin{array}{rcl}\vspace{5pt}
\xi:\Omega Operad^{h}(O\,;\,O') & \longrightarrow & Bimod_{O}^{h}(O\,;\,O'), \\ 
\xi_{k}:\Omega\big(\, T_{k}Operad^{h}(T_{k}(O)\,;\,T_{k}(O'))\,\big) & \longrightarrow & T_{k}Bimod_{O}^{h}(T_{k}(O)\,;\,T_{k}(O')).
\end{array} 
$$
\end{thm}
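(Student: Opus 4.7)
The plan is to work with the explicit cofibrant resolutions developed in this paper, $\mathcal{B}(O)$ in $Bimod_O$ and $\mathcal{BV}(O)$ in $Operad$ (Theorems \ref{B7} and \ref{B9}), construct $\xi$ by hand on these resolutions, and verify it is a weak equivalence by induction along the geometric-input filtrations $\{\mathcal{B}_k(O)\}$ and $\{\mathcal{BV}_k(O)\}$. The starting point is the identifications
$$
Bimod_O^h(O; O') \cong Bimod_O(\mathcal{B}(O); O'), \qquad Operad^h(O; O') \cong Operad(\mathcal{BV}(O); O'),
$$
together with their truncated analogues obtained from the adjunctions $\mathcal{F}_B^k \dashv T_k$ and $\mathcal{F}^k \dashv T_k$ recorded in the remarks following those theorems. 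Both mapping spaces carry a canonical basepoint corresponding to $\eta$, regarded either as a bimodule map via Example \ref{d1} or directly as an operad map.

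I would then construct $\xi$ explicitly as follows. A loop $\gamma:[0,1]\to Operad(\mathcal{BV}(O); O')$ based at $\eta$ is sent to the bimodule map which, on a generator $[T; \{t_v\}; \{a_v\}]$ of $\mathcal{B}(O)$, evaluates $\gamma(h)$ on the underlying $S$-tree $T$ with its vertex-parameters translated into inner-edge parameters compatibly with the monotonicity conditions of Construction \ref{b7}. Here $h \in [0,1]$ is a section-height extracted from the location of the pearls within the parameter cube of $T$, chosen so that the extreme cases $h=0$ and $h=1$ are precisely the bimodule degenerations where the section collapses onto the leaves or onto the root, matching the loop boundary condition $\gamma(0) = \gamma(1) = \eta$. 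The $\mathcal{C}_1$-action on the target, realized by stacking two bimodule representatives along their pearl rows, corresponds under $\xi$ to loop concatenation on the source, so $\xi$ is automatically a map of $\mathcal{C}_1$-algebras.

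To prove $\xi$ is a weak equivalence, I would argue by induction on $k$, using that the filtration inclusions $\mathcal{B}_{k-1}(O) \hookrightarrow \mathcal{B}_k(O)$ and $\mathcal{BV}_{k-1}(O) \hookrightarrow \mathcal{BV}_k(O)$ are cofibrations whose cofibers are expressible as pushouts of free bimodules (resp.\ free operads) along $k$-input tree data, as in the proofs of Theorems \ref{B7} and \ref{B9}. Applying $Bimod_O(-;O')$ and $Operad(-;O')$ turns both towers into towers of fibrations with identifiable layers. The inductive step reduces to comparing, for each fixed underlying $S$-tree $T$ with $k$ geometric inputs, the loop space of the operadic cell indexed by the edge-parameter cube of $T$ with the sum, over all admissible sections $V^p(T)$, of the bimodule cells indexed by $(T, V^p(T))$. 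The key combinatorial input is that summing the bimodule cells over admissible sections produces exactly one additional interval direction (the section height) on top of the operadic cube, which corresponds to one loop once one passes to mapping spaces.

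The main obstacle will be making this layerwise identification precise: the section-based combinatorics of $\mathcal{B}(O)$ are substantially more involved than the inner-edge combinatorics of $\mathcal{BV}(O)$, and one must match the various boundary degenerations (axioms $(i)$--$(v)$ of Construction \ref{b7}) with those of $\mathcal{BV}(O)$ plus the loop endpoints $\gamma(0)=\gamma(1)=\eta$. The contractibility hypotheses $O(1) \simeq * \simeq O'(1)$ are essential here to trivialize arity-one bivalent-vertex contributions that would otherwise obstruct the comparison, while well-pointedness and $\Sigma$-cofibrancy of $O$ guarantee that all the cells assemble as genuine cofibrations, so that the layerwise weak equivalences propagate to the limit. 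The truncated statement for $\xi_k$ follows from the same induction restricted to $k$-input trees.
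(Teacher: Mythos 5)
The first thing to say is that this paper does not prove Theorem~\ref{H1}: it is imported verbatim from \cite[Theorem~3.1]{Ducoulombier16} and used as a black box (together with Theorem~\ref{d9}) to deduce Theorem~\ref{f3}. Moreover, the paper explicitly remarks, just above the statement, that the proof in \cite{Ducoulombier16} uses ``an explicit cofibrant replacement of the operad $O$ in the model category $Bimod_{O}$ which \emph{differs} from the cofibrant resolution introduced in this paper.'' So your plan to build $\xi$ directly out of $\mathcal{B}(O)$ and $\mathcal{BV}(O)$ from Section~\ref{G0} is not the route the cited paper takes, and you cannot be matching its proof; what you are sketching is a new argument. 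The overall shape of your argument (explicit Boardman--Vogt-type resolutions, geometric-input filtrations, layerwise comparison of towers of fibrations) is, however, very much in the spirit of what this paper does for the \emph{relative} deloopings in Propositions~\ref{d5} and \ref{d6}, so the strategy is not unreasonable.

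That said, the central construction of $\xi$ has a genuine gap. You propose to send a loop $\gamma$ to the bimodule map that evaluates $\gamma(h)$ on ``the underlying $S$-tree $T$ with its vertex-parameters translated into inner-edge parameters,'' where $h$ is ``a section-height extracted from the location of the pearls.'' But a point of $\mathcal{B}(O)$ carries a discrete choice of section $V^p(T)$ and real numbers on the non-pearl vertices; there is no canonical continuous $h\in[0,1]$ attached to it, and you give no formula. The conversion from vertex parameters to edge parameters is not a change of coordinates either: the paper confronts exactly this difficulty in the proof of Proposition~\ref{d5}, where the natural map $i:\mathcal{B}(O)\to\mathcal{B}_{\emptyset}(O)$ defined by $t'_e=(t_{t(e)}-t_{s(e)})/(t_{t(e)}-1)$ is \emph{not continuous} and one must pass to a quotient of the target to repair it. Any construction of $\xi$ along your lines would have to address the same discontinuity and would need to make the $\mathcal{C}_1$-structure on $Bimod_O(\mathcal{B}(O);O')$ precise (your ``stacking along pearl rows'' is not obviously an associative or even well-defined operation on equivalence classes). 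Your layerwise cell-count heuristic, that summing bimodule cells over admissible sections produces one extra interval direction, is plausible in small cases (for a bivalent chain it is the decomposition of $\Delta^{n-1}\times[0,1]$ as $\coprod_{a+b=n-1}\Delta^a\times\Delta^b$), but it is not verified, and you yourself flag the layerwise identification as the main obstacle. As written, this is an outline with a plausible strategy rather than a proof, and it does not reproduce the argument the paper actually relies on.
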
 
\noindent In what follows, we give a similar statement in the relative case using the language of coloured operads with set of colours $S=\{o\,;\,c\}$. In particular, for $\eta_{1}:O\rightarrow O'$ an operadic map and $\eta_{2}:O'\rightarrow M$ a bimodule map over $O'$, we prove that the pair of  spaces  $(\,Bimod_{O}^{h}(O\,;\,O')\,;\,Bimod_{O}^{h}(O\,;\,M)\,)$ is weakly equivalent to an explicit $\mathcal{SC}_{1}$-algebra. For this purpose, we consider an adjunction between the category of ($P$-$Q$) bimodules and a subcategory of two-coloured operads described below.

\begin{notat}
Let $O$ be an $\{o\,;\,c\}$-operad. We denote by $O_{c}$ and $O_{o}$ the  operads coming from the restriction to the colour $c$ and $o$ respectively. In other words, $O_{c}$ and $O_{o}$ are defined as follows:
$$
O_{c}(n)=O(\,\,\underset{n}{\underbrace{c,\ldots,c}}\,\,;c)\, \text{ for } n\geq 0 \hspace{30pt} \text{and} \hspace{30pt} O_{o}(n)=O(\,\,\underset{n}{\underbrace{o,\ldots,o}}\,\,;o)\, \text{ for } n\geq 0.
$$
Conversely, from two operads $P$ and $Q$, we build the $\{o\,;\,c\}$-operad $P\oplus Q$ given by
$$
P\oplus Q(\,\,\underset{n}{\underbrace{c,\ldots,c}}\,\,;c)=Q(n)\, \text{ for } n\geq 0,   \hspace{30pt} P\oplus Q(\,\,\underset{n}{\underbrace{o,\ldots,o}}\,\,;o)=P(n)\, \text{ for } n\geq 0,
$$
and the empty set otherwise. Consequently, a map of two-coloured operads $f:P\oplus Q\rightarrow O$ is equivalent to a pair of operadic maps $f_{c}:Q\rightarrow O_{c}$ and $f_{o}:P\rightarrow O_{o}$.
\end{notat}

\begin{defi}\label{d7}
Let $P$ and $Q$ be two operads. We consider the category of $\{o\,;\,c\}$-operads under $P\oplus Q$
$$
Op[P\,;\,Q]:= (P\oplus Q) \downarrow Operad_{\{o\,;\,c\}}.
$$
An object $(O\,;\,\tau_{O})$ is given by an $\{o\,;\,c\}$-operad $O$ together with an $\{o\,;\,c\}$-operadic map $\tau_{O}:P\oplus Q \rightarrow O$. A morphism $f:(O\,;\,\tau_{O})\rightarrow (O'\,;\,\tau_{O'})$ is an $\{o\,;\,c\}$-operadic map $f:O\rightarrow O'$ such that the following diagram commutes:
$$
\xymatrix@R=10pt{
 & P\oplus Q \ar[dl]_{\tau_{O}} \ar[dr]^{\tau_{O'}} & \\
 O \ar[rr]_{f} & & O'
}\vspace{-20pt}
$$

\newpage

There is an obvious functor from the category of operads under $P\oplus Q$ to the category of ($P$-$Q$) bimodules. Given $(O\,;\,\tau_{O})\in Op[P\,;\,Q]$, the sequence $\mathcal{R}(O\,;\,\tau_{O})$ (also denoted by $\mathcal{R}(O)$ if there is no ambiguity on the maps $\tau_{O}$) is defined as follows:
$$
\mathcal{R}(O)(n):=O(\,\,\underset{n}{\underbrace{c,\ldots,c}}\,\,;o)\,  \text{for }n\geq 0. 
$$
By definition, $\mathcal{R}(O)$ is endowed with an ($O_{o}$-$O_{c}$) bimodule structure in which the map $\gamma:O_{o}(0)\rightarrow \mathcal{R}(O)(0)$ is the identity map. Due to the $\{o\,;\,c\}$-operadic map $\tau_{O}$, $\mathcal{R}(O)$ is also a ($P$-$Q$) bimodule. Thus, one has a functors
$$
\mathcal{R}:Op[P\,;\,Q]\longrightarrow Bimod_{P\text{-}Q}.
$$
\end{defi}

In Section \ref{f4}, we give a presentation of the left adjoint $\mathcal{L}$ to the functor $\mathcal{R}$. We also prove that the pair of functors so obtained is a Quillen adjunction. As a consequence, $\mathcal{L}$ preserves cofibrations and Construction \ref{b7} produces cofibrant objects in the category $Op[P\,;\,Q]$ in the particular case $P=Q=O$. Then, in Section \ref{h1}, we modify slightly Construction \ref{b7} in order to get cofibrant replacements in the category $Op[O\,;\,\emptyset]$ in which $\emptyset$ is the initial object in the category of operads. Section \ref{F2} is devoted to the proof of the main theorem which is the following one:
\begin{thm}\label{d9}
Let $O$ be a well pointed operad and $\eta:\mathcal{L}(O)\rightarrow O'$ be a map in the category $Op[O\,;\,O]$. If $O$ is $\Sigma$-cofibrant, then the following weak equivalence holds:
\begin{equation}\label{e3}
Bimod_{O}^{h}(O\,;\,\mathcal{R}(O'))  \simeq  \Omega\big(\, Operad^{h}(O\,;\,O'_{c})\,;\,Op[O\,;\,\emptyset]^{h}(\mathcal{L}(O)\,;\,O')\,\big).
\end{equation}
\end{thm}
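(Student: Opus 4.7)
The plan is to recast the right-hand side as the homotopy fiber of a natural fibration and identify that fiber with the left-hand side via the Quillen adjunction $\mathcal{L}\dashv\mathcal{R}$ of Section \ref{f4}. Specifically I would prove a fiber sequence
$$
Op[O;O]^h(\mathcal{L}(O); O') \longrightarrow Op[O;\emptyset]^h(\mathcal{L}(O); O') \longrightarrow Operad^h(O; O_c'),
$$
in which the right-hand map is restriction to the closed colour. The leftmost term coincides with $Bimod_O^h(O;\mathcal{R}(O'))$ by the Quillen adjunction together with the cofibrant replacement $\mathcal{B}(O)$ of $O$ in $Bimod_{O\text{-}O}$ given by Theorem \ref{B7}, since $\mathcal{L}(\mathcal{B}(O))$ is then a cofibrant model of $\mathcal{L}(O)$ in $Op[O;O]$.

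First I would fix compatible cofibrant resolutions. For the middle mapping space I use the modified Boardman-Vogt resolution $\widetilde{\mathcal{BV}}(\mathcal{L}(O))$ of $\mathcal{L}(O)$ in $Op[O;\emptyset]$ constructed in Section \ref{h1}, arranged so that its closed sub-operad is precisely the ordinary BV-resolution $\mathcal{BV}(O)$, which then simultaneously provides the cofibrant model for the rightmost mapping space. For the leftmost mapping space I use $\mathcal{L}(\mathcal{B}(O))$ obtained by applying the left Quillen functor $\mathcal{L}$ to $\mathcal{B}(O)$.

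Next I would prove that the closed-part restriction
$$
\rho:\, Op[O;\emptyset]\big(\widetilde{\mathcal{BV}}(\mathcal{L}(O)); O'\big) \longrightarrow Operad\big(\mathcal{BV}(O); O'_c\big)
$$
is a Serre fibration. This is the analogue, in $Op[O;\emptyset]$, of the pushout-filtration arguments used to prove Theorems \ref{B7} and \ref{B9}. One filters $\widetilde{\mathcal{BV}}(\mathcal{L}(O))$ relative to its closed sub-operad $\mathcal{BV}(O)$ by the number of strictly mixed cells; each successive attachment is a cofibration in $Op[O;\emptyset]$ that is the identity on the closed colour. Applying $Op[O;\emptyset](-; O')$ converts these, via the pushout-product axiom, into fibrations of topological spaces, realising $\rho$ as a limit of a tower of fibrations.

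Finally I would compute the strict fiber of $\rho$ over the basepoint $\eta_c:\mathcal{BV}(O)\to O'_c$ obtained by cofibrant replacement of the restriction of $\eta$. An element of this fiber is a morphism in $Op[O;\emptyset]$ whose closed-colour component is fixed to be $\eta_c$; pinning down the closed component is exactly what promotes an $Op[O;\emptyset]$-datum to an $Op[O;O]$-datum under $\eta$. By the construction of $\widetilde{\mathcal{BV}}(\mathcal{L}(O))$, the mixed cells remaining after this identification correspond precisely to those of $\mathcal{L}(\mathcal{B}(O))$, so the strict fiber is $Op[O;O](\mathcal{L}(\mathcal{B}(O)); O')\cong Bimod_O(\mathcal{B}(O); \mathcal{R}(O'))$, which is the LHS. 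The main difficulty is the design of the modified Boardman-Vogt resolution in $Op[O;\emptyset]$: one must adapt Constructions \ref{b7} and \ref{e7} so that the closed sub-operad is cleanly isolated as $\mathcal{BV}(O)$, so that $\rho$ is genuinely a fibration, and so that the strict $\eta_c$-fiber recovers exactly the $Op[O;O]$-resolution $\mathcal{L}(\mathcal{B}(O))$.
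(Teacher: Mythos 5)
Your outline tracks the paper's architecture closely: build two compatible cofibrant resolutions of $\mathcal{L}(O)$, observe that restriction to the closed colour is a fibration, and realise the relative loop space as a fiber over $(\tau_{O'})_{c}\circ\mu$. This is essentially Propositions \ref{d4} and \ref{d6}. But the final step contains a genuine gap that the paper devotes most of the proof (Proposition \ref{d5}) to closing. When you fix the closed-colour component of a morphism out of $\widetilde{\mathcal{BV}}(\mathcal{L}(O))$ to be $(\tau_{O'})_{c}\circ\mu$, the strict fiber you obtain is $Op[O\,;\,\mathcal{BV}(O)]\big(\mathcal{BV}_{\emptyset}(O)\,;\,O'\big)$, a mapping space of operads \emph{under $O\oplus\mathcal{BV}(O)$}, not $Op[O\,;\,O]\big(\mathcal{L}(\mathcal{B}(O))\,;\,O'\big)$. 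These are genuinely different spaces, and no redesign of the modified Boardman--Vogt resolution can make them coincide: for $\widetilde{\mathcal{BV}}(\mathcal{L}(O))$ to be cofibrant in $Op[O\,;\,\emptyset]$ its mixed part must be cofibrant as an $(O,\mathcal{BV}(O))$-bimodule, which forces length parameters on the \emph{internal edges} above the section (as in $\mathcal{B}_{\emptyset}(O)$ of Construction \ref{h4}); whereas $\mathcal{B}(O)$ carries lengths on the \emph{vertices}, and viewed as an $(O,\mathcal{BV}(O))$-bimodule its right action factors through $\mu:\mathcal{BV}(O)\to O$, so it is not cofibrant in $Bimod_{O\text{-}\mathcal{BV}(O)}$. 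You cannot simultaneously satisfy your constraints (a) and (c).

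Consequently the comparison $Op[O\,;\,\mathcal{BV}(O)]\big(\mathcal{BV}_{\emptyset}(O)\,;\,O'\big)\simeq Op[O\,;\,O]\big(\mathcal{L}(\mathcal{B}(O))\,;\,O'\big)\cong Bimod_{O}\big(\mathcal{B}(O)\,;\,\mathcal{R}(O')\big)$ is the real content, and it is delicate. The paper defines a set map $i:\mathcal{B}(O)\to\mathcal{B}_{\emptyset}(O)$ converting vertex lengths to edge lengths via $t'_{e}=(t_{t(e)}-t_{s(e)})/(t_{t(e)}-1)$; this formula is discontinuous as $t_{t(e)}\to 1$ and $i$ becomes a homeomorphism only onto a quotient $\mathcal{B}_{\emptyset}(O)/\!\!\sim$. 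The induced map of mapping spaces is a continuous bijection, but since the resolutions are not finite CW-complexes in each arity one cannot simply invert it; the paper establishes the weak equivalence through a morphism of towers of fibrations, using properness of the quotient map on each filtration stage. Your sketch contains no step that would produce this comparison, and asserting that ``the mixed cells correspond precisely to those of $\mathcal{L}(\mathcal{B}(O))$'' is exactly the point at which it fails.
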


\begin{proof}
It is a direct consequence of Theorem \ref{d3} together with Propositions \ref{d4}, \ref{d5} and \ref{d6}.
\end{proof}

\noindent In Section \ref{H2}, we prove a truncated version of the above theorem. Finally, the last subsection introduces the two-coloured operad $\mathcal{CC}_{d}$ and we identify explicit $\mathcal{SC}_{d+1}$-algebras from maps of coloured operads $\eta:\mathcal{CC}_{d}\rightarrow O$ using the Dwyer-Hess' conjecture.

\subsection{The left adjoint to the functors \texorpdfstring{$\mathcal{R}$}{Lg} }\label{f4}

Let $M$ be an $\{o\,;\,c\}$-sequence and $I_{c}$, $I_{o}$ be a partition of the set $\{1,\ldots,n\}$. In order to simplify the notation, we denote by $M(I_{c},I_{o};s_{n+1})$ the space
$$
M(s_{1},\ldots,s_{n};s_{n+1})\hspace{15pt}\text{in which}\hspace{15pt}
\left\{\begin{array}{ll}
s_{i}=c & \text{if } i\in I_{c}, \\ 
s_{i}=o & \text{if } i\in I_{o}.
\end{array} \right.
$$

\begin{const}\label{i7}
Let $P$ and $Q$ be two operads. From a ($P$-$Q$) bimodule $M$, we build the  $\{o\,;\,c\}$-operad $\mathcal{L}(M;P;Q)$ as follows:
$$\mathcal{L}(M;P;Q)(c,\ldots,c;c)=Q(n)\, \text{ for } n\geq 0 \hspace{30pt} \text{and} \hspace{30pt} \mathcal{L}(M;P;Q)(c,\ldots,c;o)=M(n)\, \text{ for } n\geq 0.$$
In order to describe the spaces $\mathcal{L}(M;P;Q)(I_{c},I_{o};o)$, with $|I_{c}|\geq 0$ and $|I_{o}|\geq 1$, we introduce the set $\Psi(I_{c};I_{o})$ formed by $\{o\,;\,c\}$-trees whose trunk has the colour $o$, the leaves indexed by the set $I_{c}$ have colour $c$ and the leaves indexed by the set $I_{o}$ have colour $o$. Moreover, the elements in $\Psi(I_{c};I_{o})$ are two-levels trees (i.e each path connecting a leaf or a univalent vertex to the trunk passes through at most two vertices) and satisfy the following conditions:
\begin{itemize}%[itemsep=-1pt, topsep=-1pt]
\item[$\blacktriangleright$] the incoming edges of the vertices distinct from the root are indexed by $c$,
\item[$\blacktriangleright$] the incoming edges of the root are indexed by the colour $o$,
\item[$\blacktriangleright$] the root has at least one incoming leaf.
\end{itemize}
\begin{figure}[!h]
\begin{center}
\includegraphics[scale=0.2]{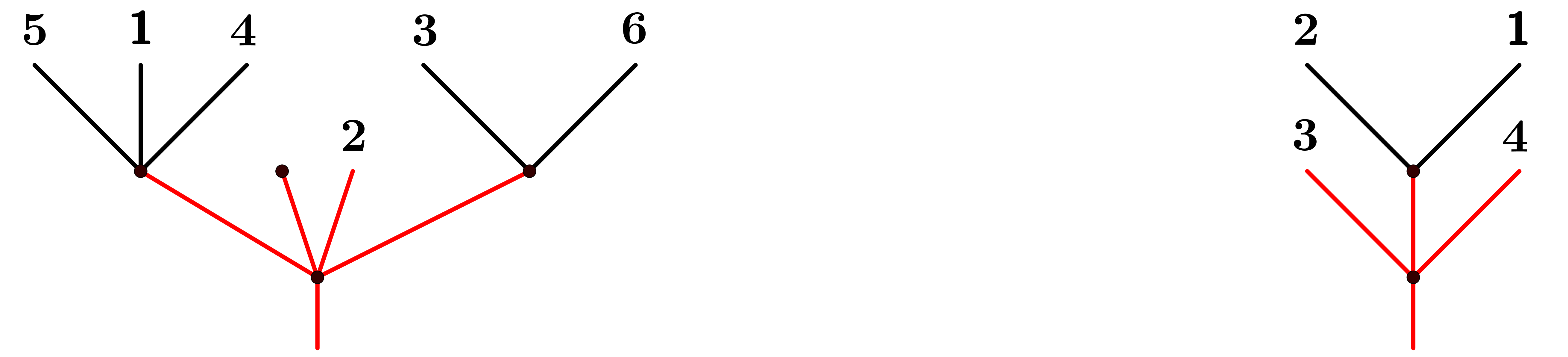}
\caption{Elements in $\Psi(\{1,3,4,5,6\}\,;\,\{2\})$ and $\Psi(\{1,2\}\,;\,\{3,4\})$ respectively.}\vspace{-30pt}
\end{center}
\end{figure}

\newpage

\noindent To define the $\{o\,;\,c\}$-sequence $\mathcal{L}(M;P;Q)$, we label the root by a point in the operad $P$ whereas the other vertices are labelled by points in the bimodule $M$. In other words, one has
\begin{equation}\label{E3}
\mathcal{L}(M;P;Q)(I_{c},I_{o};o):= \underset{T\in \Psi(I_{c},I_{o})}{\coprod}\,\,\left.\left[ \,P(|r|)\,\,\times\,\, \underset{v \neq r}{\prod} \,M(|v|)\,\,\right]\right/ \sim\,.
\end{equation}

\noindent The equivalence relation $\sim$ is generated by the compatibility with the symmetric group action (axiom $(ii)$ of Construction \ref{b5}), the contraction of edges the source of which are of the form $\gamma(x)$ with $x\in P(0)$ (axiom $(iii)$ of Construction \ref{b5}) as well as the relation defined as follows: if a vertex of a tree $T\in \Psi(I_{c}\,,\, I_{o})$ other than the root is labelled by a point of the form $a(m_{1},\ldots,m_{n})$, with $a\in P$ and $m_{i}\in M$, then one has the identification

\begin{figure}[!h]
\begin{center}
\includegraphics[scale=0.4]{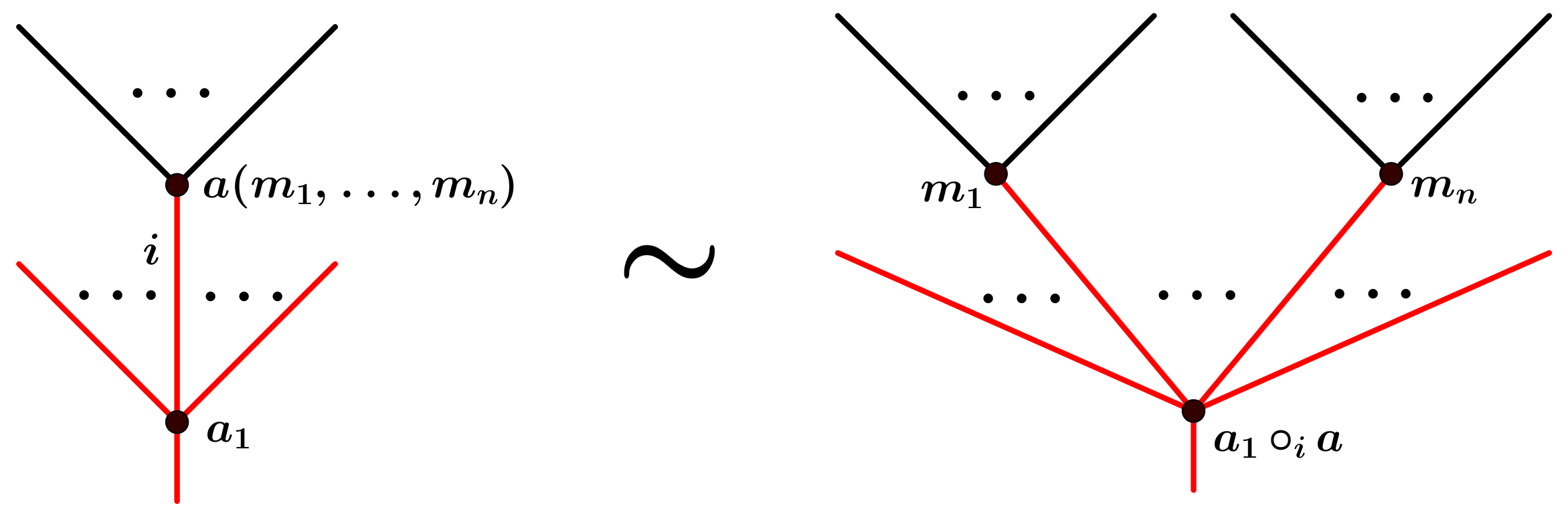}\vspace{-5pt}
\caption{Illustration of the additional relation.}\label{F3}\vspace{-10pt}
\end{center}
\end{figure}

\noindent By convention, a point $a\in\mathcal{L}(M;P;Q)(c,\ldots,c;c)$ is interpreted as a corolla whose edges are coloured by $c$ and the vertex is labelled by $a\in Q(n)$. Similarly, a point $a\in\mathcal{L}(M;P;Q)(c,\ldots,c;o)$ is interpreted as a corolla whose leaves have colour $c$, the trunk has colour $o$ and the vertex is labelled by $a\in P(n)$. We denote by $[T\,;\,\{a_{v}\}]$ a point in $\mathcal{L}(M;P;Q)$.

From now on, we describe the ($P$-$Q$) bimodule structure of  $\mathcal{L}(M;P;Q)$. Let  $[T\,;\,\{a_{v}\}]$ be a point in $\mathcal{L}(M;P;Q)(I_{c},I_{o};o)$ and $a$ be a point in $Q(n)=\mathcal{L}(M;P;Q)(c,\ldots,c;c)$. The operadic composition $[T\,;\,\{a_{v}\}]\circ_{i} a$, with $i\in I_{c}$, consists in grafting the corolla labelled by $a$ to the $i$-th incoming edge of $T$ and contracting the inner edge so obtained by using the right $Q$-module structure of $M$.   

\begin{figure}[!h]
\begin{center}
\includegraphics[scale=0.6]{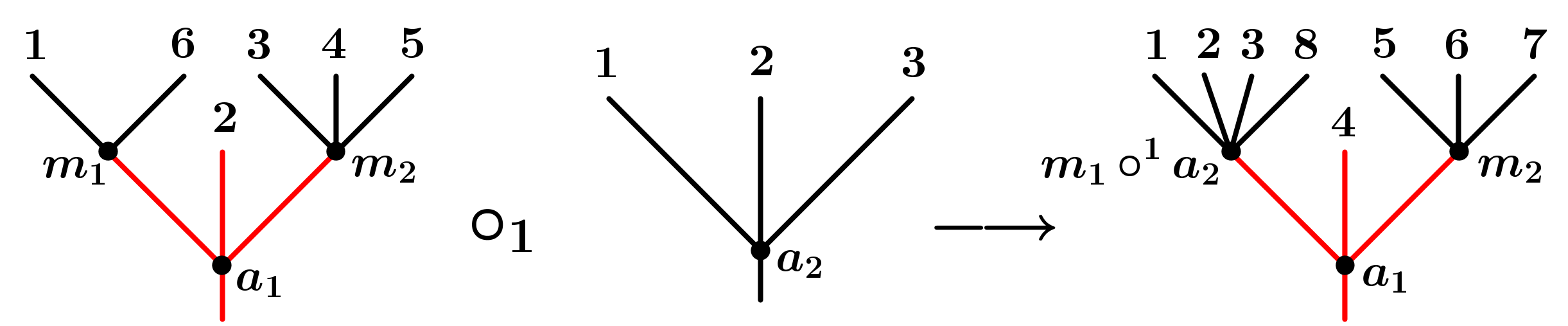}\vspace{-5pt}
\caption{Illustration of the operadic composition.}\vspace{-13pt}
\end{center}
\end{figure}

\noindent  Let $[T\,;\,\{a_{v}\}]$ be a point in $\mathcal{L}(M;P;Q)(I_{c},I_{o};o)$ and $[T'\,;\,\{a'_{v}\}]$ be a point in $\mathcal{L}(M;P;Q)(I'_{c},I'_{o};o)$. The operadic composition $[T\,;\,\{a_{v}\}]\circ_{i} [T'\,;\,\{a'_{v}\}]$, with $i\in I_{o}$, consists in grafting $T'$ to the $i$-th incoming edge of $T$ and contracting the inner edges connecting two vertices indexed by points in $P$ using its operadic structure. If all the incoming edges of the new root are inner edges, then we contract all of them by using the left $P$-module structure of $M$.

\begin{figure}[!h]
\begin{center}
\includegraphics[scale=0.49]{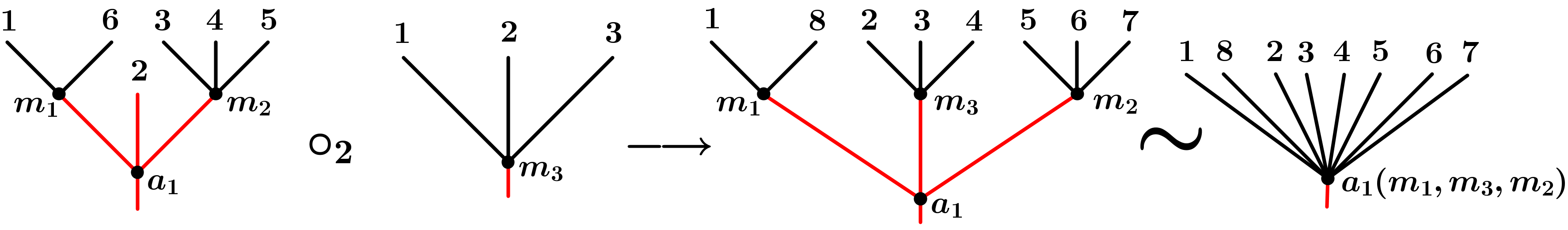}\vspace{-5pt}
\caption{Illustration of the operadic composition.}
\end{center}
\end{figure} \vspace{-15pt}

By construction, $\mathcal{L}(M;P;Q)$ is an $\{o\,;\,c\}$-operad, denoted by $\mathcal{L}(M)$ if there is no ambiguity about the operads $P$ and $Q$. Furthermore, there is an $\{o\,;\,c\}$-operadic map,
$$
\tau_{M}:P\oplus Q \rightarrow \mathcal{L}(M;P;Q),\vspace{-25pt}
$$
\newpage

\noindent induced by the identity map $(\tau_{O})_{c}:Q\rightarrow \mathcal{L}(M;P;Q)_{c}=Q$. The morphism $(\tau_{O})_{o}:P\rightarrow \mathcal{L}(M;P;Q)_{o}$ assigns a point $a\in P(0)$ to $\gamma_{o}(a)\in M(0)$ and a point $a\in P(n)$, with $n>0$, to the corolla labelled by $a$. Consequently, the pair $(\mathcal{L}(M;P;Q)\,;\, \tau_{M})$ is an object in the category $Op[P\,;\,Q]$. Finally, one has a functor
$$
\mathcal{L}(-;P;Q):Bimod_{P\text{-}Q}\longrightarrow Op[P\,;\,Q].
$$
\end{const}

\begin{rmk}\label{F8}
This construction is functorial with respect to the operads $P$ and $Q$. Let $f_{p}:P\rightarrow P'$ and $f_{q}:Q\rightarrow Q'$ be two operadic maps and $f_{m}:M\rightarrow M'$ be a map from a ($P$-$Q$) bimodule $M$ to a ($P'$-$Q'$) bimodule $M'$ such that the following diagrams commute:
\begin{equation}\label{f2}
\xymatrix{
P(n)\,\times\, M(m_{1})\times \cdots \times M(m_{n}) \ar[r]^{\,\,\,\,\,\,\,\,\,\,\,\,\,\,\,\,\,\,\,\,\,\,\,\,\,\,\gamma_{l}} \ar[d]^{f_{p}\times f_{m}\times \cdots \times f_{m}} & M(m_{1}+\cdots m_{n}) \ar[d]^{f_{m}} \\
P'(n)\,\times\, M'(m_{1})\times \cdots \times M'(m_{n}) \ar[r]^{\,\,\,\,\,\,\,\,\,\,\,\,\,\,\,\,\,\,\,\,\,\,\,\,\,\,\gamma_{l}}  & M'(m_{1}+\cdots m_{n})
}
\,\,\,\,\,\,\,\,\,\,\,\,\,\,\,\,
\xymatrix{
M(n)\,\times \, Q(m) \ar[r]^{\circ^{i}} \ar[d]^{f_{m}\times f_{q}} & M(n+m-1) \ar[d]^{f_{m}}\\
M'(n)\,\times \, Q'(m) \ar[r]^{\circ^{i}}  & M'(n+m-1) 
}
\end{equation}
Then, there is a map of $\{o\,;\,c\}$-operads
$$
\begin{array}{rcl}\vspace{3pt}
f:\mathcal{L}(M;P;Q) & \longrightarrow & \mathcal{L}(M';P';Q'); \\ 
 {}[ T\,;\,\{a'_{v}\} ] & \longmapsto & [T\,;\,\{a'_{v}\}],
\end{array} 
\hspace{15pt}\text{with}\hspace{15pt} 
a'_{v}=\left\{
\begin{array}{ll}\vspace{3pt}
f_{p}(a_{v}) & \text{if}\,\,a_{v}\in P, \\ \vspace{3pt}
f_{m}(a_{v}) & \text{if}\,\,a_{v}\in M,\\ 
f_{q}(a_{v}) & \text{if}\,\,a_{v}\in Q.
\end{array} 
\right.
$$ 
\end{rmk}

\begin{pro}
The pair of functors $(\mathcal{L}\,;\,\mathcal{R})$ form an adjoint pair.
\end{pro}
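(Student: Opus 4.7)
The plan is to construct, for each $(P$-$Q)$ bimodule $M$ and each object $(O,\tau_O)\in Op[P\,;\,Q]$, a natural bijection
$$
\Phi_{M,O}:Op[P\,;\,Q]\bigl(\,(\mathcal{L}(M),\tau_M)\,;\,(O,\tau_O)\,\bigr)\;\longrightarrow\;Bimod_{P\text{-}Q}\bigl(\,M\,;\,\mathcal{R}(O)\,\bigr),
$$
and then to verify its naturality in $M$ and $O$. The map $\Phi_{M,O}$ is the obvious restriction: since $\mathcal{L}(M)(c,\ldots,c;o)=M(n)$ by Construction \ref{i7} and $\mathcal{R}(O)(n)=O(c,\ldots,c;o)$ by Definition \ref{d7}, any $\{o\,;\,c\}$-operadic map $f:\mathcal{L}(M)\to O$ under $P\oplus Q$ restricts to an $S$-sequence map $\Phi(f):M\to\mathcal{R}(O)$. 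That $\Phi(f)$ is a $(P$-$Q)$ bimodule map follows from the fact that the right $Q$-action $\circ^i$ on $M\subset\mathcal{L}(M)$ coincides with the operadic composition at a colour $c$ input, while the left $P$-action on $M$ is recovered by composing at the $o$-coloured trunk with corollas labelled by points of $P$; both are preserved by $f$.

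For injectivity and surjectivity I would construct a two-sided inverse $\Psi_{M,O}$ explicitly. Given a bimodule map $g:M\to\mathcal{R}(O)$, define $\hat g=\Psi(g):\mathcal{L}(M)\to O$ colour by colour. On the $c$-part, $\mathcal{L}(M)_c=Q$, so set $\hat g_c:=(\tau_O)_c$. On a corolla in $\mathcal{L}(M)(\underset{n}{\underbrace{c,\ldots,c}};o)$ labelled by $m\in M(n)$, set $\hat g(m):=g(m)\in\mathcal{R}(O)(n)\subset O$. On a corolla in $\mathcal{L}(M)(\underset{n}{\underbrace{o,\ldots,o}};o)$ labelled by $a\in P(n)$, set $\hat g(a):=(\tau_O)_o(a)\in O_o(n)$. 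For a general class $[T;\{a_v\}]\in \mathcal{L}(M)(I_c,I_o;o)$, with $T$ a two-level tree whose root is labelled by some $a_r\in P$ and whose non-root vertices are labelled by $m_v\in M$, define
$$
\hat g([T;\{a_v\}]) := \gamma_l^{O}\Bigl(\,(\tau_O)_o(a_r)\,;\,\bigl\{\,g(m_v)\text{ or }(\tau_O)_o(\ast_o)\,\bigr\}\Bigr),
$$
where the slots corresponding to leaves in $I_o$ get the $o$-unit and the slots corresponding to subtrees get $g(m_v)$ composed via $\circ_i$ in $O$ with the $c$-units or identities as required by the planar structure of $T$. In other words, $\hat g([T;\{a_v\}])$ is obtained by applying $g$ to every pearl-vertex label, $\tau_O$ to the root label, and then reading off the iterated operadic composition in $O$ prescribed by the tree.

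The main verifications are then the following. First, $\hat g$ is well defined on equivalence classes: compatibility with the $\Sigma$-action and with contractions of edges whose source is $\gamma(x)$ for $x\in P(0)$ is immediate from the bimodule axioms for $g$ and the operadic axioms of $O$; the key relation, pictured in Figure \ref{F3}, identifies a vertex labelled by $a(m_1,\ldots,m_n)$ (a left $P$-action in $M$) with the grafting of the corolla $a\in P$ onto corollas labelled by the $m_i$. Since $g$ is a left $P$-module map, $g\bigl(a(m_1,\ldots,m_n)\bigr)=a(g(m_1),\ldots,g(m_n))$ in $\mathcal{R}(O)$, and this left $P$-action on $\mathcal{R}(O)$ is by definition obtained from $O$ by composing at the trunk with $(\tau_O)_o(a)$; hence both sides of the relation have the same image in $O$. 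Second, $\hat g$ is an $\{o\,;\,c\}$-operadic map: this is a direct inspection of the two types of operadic compositions in $\mathcal{L}(M)$ described in Construction \ref{i7} (grafting a $Q$-corolla at a $c$-leaf, respectively grafting another element at an $o$-leaf and contracting $P$-coloured inner edges), each of which is sent by $\hat g$ to the corresponding composition in $O$ by construction. Third, $\hat g$ lies under $P\oplus Q$, i.e.\ $\hat g\circ\tau_M=\tau_O$: this holds by definition on $Q=\mathcal{L}(M)_c$ and on corollas in $P$. I expect the bookkeeping around the exceptional relation of Figure \ref{F3} and the handling of arity-zero elements $\gamma_o(a)\in M(0)$ for $a\in P(0)$ (which require $(\tau_O)_o$ and $g$ to agree on $P(0)$, guaranteed by $g$ being a morphism in $Seq(S)_{P_0}$) to be the only delicate point; everything else is tree-combinatorial routine.

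Finally, the identities $\Phi\circ\Psi=\mathrm{id}$ and $\Psi\circ\Phi=\mathrm{id}$ are immediate from the definitions: restricting $\hat g$ to arity $(c,\ldots,c;o)$ recovers $g$, and conversely any operadic map $f$ under $P\oplus Q$ is determined by its values on the generators (the corollas labelled by $P$, $Q$ and $M$) together with the operadic compositions in $O$, which is exactly the recipe giving $\Psi(\Phi(f))$. Naturality in $M$ and in $(O,\tau_O)$ is a diagram chase using functoriality of $\mathcal{L}$ recalled in Remark \ref{F8} and functoriality of $\mathcal{R}$.
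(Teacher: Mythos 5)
Your proof is correct and takes essentially the same approach as the paper: both exhibit the bijection $Op[P\,;\,Q](\mathcal{L}(M)\,;\,O)\cong Bimod_{P\text{-}Q}(M\,;\,\mathcal{R}(O))$ by extending a bimodule map $g:M\to\mathcal{R}(O)$ to the two-level trees of $\Psi(I_c,I_o)$ using the operadic structure of $O$ and $\tau_O$, with well-definedness following from the relation in Figure \ref{F3} together with $g$ being a left $P$-module map and a morphism under $P_0$. The only difference is presentational: the paper builds $\tilde f$ by induction on the number of vertices (from which uniqueness is immediate), whereas you give a closed formula for $\hat g$ on a general tree and then verify well-definedness and the inverse identities separately.
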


\begin{proof}
Let $(O\,;\,\tau_{O})$ be an object in the category $Op[P\,;\,Q]$ and let $f:M\rightarrow \mathcal{R}(O)$ be a ($P$-$Q$) bimodule map. If the map of $\{o\,;\,c\}$-operads $\tilde{f}:\mathcal{L}(M;P;Q)\rightarrow O$ extends $f$ and satisfy the relation $\tilde{f}\circ \tau_{M} =\tau_{O}$, then the folloing conditions hold:
\begin{itemize}[itemsep=3pt, topsep=7pt]
\item[$i)$] $\tilde{f}:\mathcal{L}(M;P;Q)_{c}(n)=Q(n)\rightarrow O_{c}(n)$ coincides with the map $\tau_{O}:Q(n)\rightarrow O_{c}(n)$,
\item[$ii)$] the restriction of $\tilde{f}:\mathcal{L}(M;P;Q)_{o}(n)\rightarrow O_{o}(n)$ to $P(n)$ coincides with $\tau_{O}:P(n)\rightarrow O_{o}(n)$,
\item[$iii)$] $\tilde{f}:\mathcal{L}(M;P;Q)(c,\ldots,c;o)=M(n)\rightarrow \mathcal{R}(O)(n)$ coincides with the map $f$.
\end{itemize} 

We define the map $\tilde{f}$ on the spaces $\mathcal{L}(M;P;Q)(I_{c},I_{o};o)$ by induction on the number of vertices of the trees $T\in \Psi(I_{c},I_{o})$. Let $[T\,;\,\{a_{v}\}]$ be a point in $\mathcal{L}(M;P;Q)(I_{c},I_{o};o)$. If $T$ has only one vertex, then $|I_{c}|=0$ or $|I_{o}|=0$ since all adjacent edges (i.e edges with the same target vertex) have the same colour. In these cases, the map $\tilde{f}$ is defined by the conditions $(ii)$ and $(iii)$ above.   

Assume the map $\tilde{f}$ has been defined for the trees having at most $k\geq 1$ vertices. Let  $[(T\,;\,\sigma)\,;\,\{a_{v}\}]$ be a point in $\mathcal{L}(M;P;Q)(I_{c},I_{o};o)$ where $T$ has $k+1$ vertices and $\sigma$ is the permutation indexing the leaves of $T$. By construction, $[(T\,;\,id)\,;\,\{a_{v}\}]$ has a decomposition on the form  $[(T_{1}\,;\,id)\,;\,\{a_{v}\}\setminus \{a_{0}\}]\circ_{i}[(T_{2}\,;\,id)\,;\,a_{0}]$ where $T_{2}$ is the corolla whose leaves have colour $c$ and the vertex is labelled by $a_{0}\in M$. Since $\tilde{f}$ has to preserve the operadic structure, then one has
$$
\tilde{f}\big( \, [(T\,;\,\sigma)\,;\,\{a_{v}\}]\,\big) \,=\,\big(\, \tilde{f}\big( \,  [(T_{1}\,;\,id)\,;\,\{a_{v}\}\setminus \{a_{0}\}]\, \big) \,\circ_{i}\, f(a_{0})\,\big)\cdot\sigma,
$$
where $T_{1}$ is a tree with $k$ vertices. Due to the operadic and the bimodule axioms, the map $\tilde{f}$ doesn't depend on the choice of the decomposition and the uniqueness follows from the construction.
\end{proof}

The categories $Bimod_{P\text{-}Q}$ has a cofibrantly generated model structure whereas $Op[P\,;\,Q]$ inherits a model category structure from the category of  coloured operads. A map $f:(O\,;\,\tau_{o})\rightarrow (O'\,;\,\tau_{O'})$ is a weak equivalence (resp. fibration or cofibration) if the  operadic map $f:O\rightarrow O'$ is a weak equivalence (resp. fibration or cofibration). In both cases, every object is fibrant. The following proposition claims that the adjunction has good properties with respect to the model category structures. 

\begin{pro}
The pair $(\mathcal{L}\,;\,\mathcal{R})$ is a Quillen adjunction. 
\end{pro}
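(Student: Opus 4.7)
The plan is to verify directly that the right adjoint $\mathcal{R}$ preserves fibrations and acyclic fibrations; together with the adjunction already established, this is enough to conclude that $(\mathcal{L},\mathcal{R})$ is a Quillen pair.

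First I would recall the two characterizations of the relevant classes of maps. On the one hand, Theorem \ref{b4} says that a morphism in $Bimod_{P\text{-}Q}$ is a fibration (resp.\ weak equivalence) if and only if it is a Serre fibration (resp.\ weak homotopy equivalence) componentwise in $Seq(S)$. On the other hand, by the definition of the model structure on $Op[P;Q]$ given just before the statement, a morphism $f\colon (O,\tau_O)\to (O',\tau_{O'})$ is a fibration (resp.\ weak equivalence) iff the underlying $\{o;c\}$-operad map is one, which in the category of coloured operads is again detected componentwise.

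Next I would simply unwind the functor $\mathcal{R}$: by Definition \ref{d7} one has $\mathcal{R}(O)(n) = O(\underbrace{c,\ldots,c}_n;o)$, and on morphisms $\mathcal{R}(f)$ is given arity by arity by the corresponding component of $f$. Hence the components of $\mathcal{R}(f)$ are literally a subfamily of the components of $f$. If $f$ is a fibration (resp.\ acyclic fibration) in $Op[P;Q]$, then each component of $f$ is a Serre fibration (resp.\ acyclic Serre fibration), so in particular the spaces $f_{(c,\ldots,c;o)}$ are; applying Theorem \ref{b4} gives that $\mathcal{R}(f)$ is a fibration (resp.\ acyclic fibration) in $Bimod_{P\text{-}Q}$. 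The compatibility with the maps $\gamma_s \colon P(\,;s)\to\mathcal{R}(O)(\,;s)$ required by the category $Seq(S)_{P_0}$ is automatic because these maps are the corresponding components of the structure map $\tau_O\colon P\oplus Q\to O$.

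There is no genuine obstacle here: both model structures are determined by pointwise conditions on topological spaces, and $\mathcal{R}$ is essentially a projection onto a subset of arities, so the verification reduces to matching definitions. The only thing worth keeping in mind is that one must invoke Theorem \ref{b4} rather than the classical adjunction $Seq(S)\leftrightarrows Bimod_{P\text{-}Q}$, but the remark following Theorem \ref{b4} guarantees that the two model structures on $Bimod_{P\text{-}Q}$ coincide, so either viewpoint works.
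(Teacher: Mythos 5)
Your proposal is correct and follows essentially the same route as the paper: show $\mathcal{R}$ preserves fibrations and acyclic fibrations, observe that both model structures are transferred from sequence categories and hence detected componentwise, and note that $\mathcal{R}(f)$ is literally a subfamily of the components of $f$. The only cosmetic difference is that you spell out the acyclic-fibration case and the compatibility with the $\gamma_s$ structure maps, whereas the paper treats only fibrations explicitly and leaves the rest implicit.
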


\begin{proof}
It is sufficient to show that $\mathcal{R}$ preserves fibrations and acyclic fibrations. Let $f:(O\,;\,\tau_{O})\rightarrow (O'\,;\,\tau_{O'})$ be a fibration in the category $Op[P\,;\,Q]$, that is, $f:O\rightarrow O'$ is a fibration in the category of $\{o\,;\,c\}$-operads. We recall that the model category structure on $Operad_{\{o\,;\,c\}}$ is obtained from the adjunction
$$
\mathcal{F}:Seq(\{o\,;\,c\})\leftrightarrows Operad_{\{o\,;\,c\}}:\mathcal{U},
$$ 
where $\mathcal{U}$ is the forgetful functor and $\mathcal{F}$ is the free operad functor from pointed $\{o\,;\,c\}$-sequences (see  \cite{Berger03} or \cite{Ducoulombier14} for more details). As a consequence, the operad map $f:O\rightarrow O'$ is a fibration if and only if $\mathcal{U}(f)$ is a fibration. In particular, the map $f_{M}:=\{\mathcal{R}(O)(n)=O(c,\ldots, c;o)\rightarrow \mathcal{R}(O')(n)=O'(c,\ldots, c;o)\}$ is a fibration in the category of sequences. Furthermore, the model category structure on ($P$-$Q$) bimodules is obtained from the adjunction
$$
\mathcal{F}_{B}:Seq_{P_{0}}\leftrightarrows  Bimod_{P\text{-}Q}:\mathcal{U},
$$
where $\mathcal{F}_{B}$ is the free bimodule functor. So, a map in $Bimod_{P\text{-}Q}$ is a fibration if and only if it is a fibration in the category of  sequences. Consequently, $\mathcal{R}(f)=f_{M}$ is a fibration.
\end{proof}

\begin{rmk}
Let $O$ be an operad. Contrary to the general case, $\mathcal{L}(O\,;\,O\,;\,O)=\mathcal{L}(O)$ has a simple description. Since $\gamma:O(0)\rightarrow O(0)$ is the identity map, the axiom $(iii)$ of Construction \ref{b5} implies that there is no univalent vertices other than the root in $\mathcal{L}(O)$.  Furthermore, the $O$-bimodule $O$ has a distinguished point $\ast_{O}\in O(1)$. So, each point $x\in O(n)$, with $n\geq 1$, can be expressed as follows:
\begin{equation}\label{E8}
x=\gamma_{l}(x\,;\, \ast_{O},\ldots, \ast_{O}).
\end{equation}
As a consequence of the relation illustrated in Figure \ref{F3}, $\mathcal{L}(O)$ is the two-coloured operad given by
$$
\mathcal{L}(O)(I_{c}\,,\,I_{o}\,;\,o)\cong \mathcal{L}(O)(|I_{c}|+|I_{o}|\,;\,c)\cong O(|I_{c}|+|I_{o}|).
$$  
\end{rmk}

\begin{lmm}\label{F1}
Let $O$ be an operad. Let $\iota':O\rightarrow \mathcal{B}(O)$ be the map of sequences sending $x\in O(n)$ to $[T\,;\,1\,;\,\{a_{v}\}]$ where $T$ has a root indexed by the pair $(x\,;\,1)$ whereas the other vertices are bivalent pearl labelled by $\ast_{O}\in O(1)$. Then, the map $\iota'$ is a deformation retract and we denote by $\mu'$ the homotopy inverse.
\end{lmm}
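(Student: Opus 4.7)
The plan is to take $\mu'$ to be the standard retraction $\mu$ of (\ref{b9}), which sends every parameter $t_v$ to $0$, and then to verify first that $\mu'\circ \iota' = \mathrm{id}_O$ strictly and second that there is an explicit homotopy $H$ from $\mathrm{id}_{\mathcal{B}(O)}$ to $\iota' \circ \mu'$ obtained by concatenating two stages.

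For the retraction identity, I would observe that if $x\in O(n)$, then the tree underlying $\iota'(x)$ has a root $r$ sitting below the section, labelled by the pair $(x\,;\,1)$, with $n$ bivalent pearls above $r$ labelled by the unit $\ast_{O} \in O(1)$. Applying $\mu$ forces $t_r$ to $0$, after which axiom $(v)$ of Construction \ref{b7} contracts the incoming edges of the root via the left module structure, producing $\gamma_{l}(x\,;\,\ast_{O},\ldots,\ast_{O})=x$ by the unit axiom for $O$ viewed as an $O$-bimodule over itself.

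For the homotopy, I would define $H:\mathcal{B}(O) \times [0,1] \to \mathcal{B}(O)$ in two pieces. For $s\in [0,1/2]$, I scale parameters toward $0$ by
\[
 H_s([T\,;\,\{t_v\}\,;\,\{a_v\}]) \,=\, [T\,;\,\{(1-2s)t_v\}\,;\,\{a_v\}],
\]
and for $s\in [1/2, 1]$, I set
\[
 H_s(x) \,=\, [T_n\,;\,\{t_{r}=2s-1\}\,;\,\{a_{r}=\mu'(x),\, a_{p_i}=\ast_{O}\}],
\]
where $n$ is the arity of $x$ and $T_n$ is the tree-shape used by $\iota'$. At $s=0$ we recover the identity, and at $s=1$ we obtain $\iota'\circ \mu'$. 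The key verification is continuity at the seam $s=1/2$: the first formula yields $[T\,;\,\{0\}\,;\,\{a_v\}]$, which by iterated applications of axioms $(iv)$ and $(v)$ collapses to the pearl corolla labelled by $\mu'(x)$; the second formula yields $[T_n\,;\,\{t_r=0\}\,;\,\{\mu'(x),\ast_{O},\ldots,\ast_{O}\}]$, which by axiom $(v)$ likewise collapses to the pearl corolla labelled by $\gamma_{l}(\mu'(x)\,;\,\ast_{O},\ldots,\ast_{O})=\mu'(x)$; so both represent the same class in the quotient $\mathcal{B}(O)$.

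The main technical obstacle I anticipate is precisely this gluing check at $s = 1/2$: one must trace the defining equivalence relations of Construction \ref{b7} carefully enough to verify that the two closed-form expressions yield the same point in $\mathcal{B}(O)$. Once that is established, joint continuity in $(x,s)$ follows from the continuity of $\mu'$ and of the bimodule operations in $\mathcal{B}(O)$, while equivariance under the symmetric group action is immediate from the construction.
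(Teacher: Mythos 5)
Your proof is correct and follows essentially the same route as the paper's: take $\mu'=\mu$, the retraction of (\ref{b9}) that sends all interval parameters to $0$, and exhibit an explicit two-stage homotopy built by rescaling those parameters. The minor organizational difference is that the paper's homotopy first scales only the parameters above the section to $0$ and then scales the parameters below the section to $1$ (so the image of $\iota'$ is fixed pointwise throughout, giving a strong deformation retract), whereas yours collapses everything to the pearl corolla before re-inflating the root parameter, so $\iota'(O)$ moves during the first stage and only returns at $s=1$; both versions yield the required homotopy equivalence with homotopy inverse $\mu'$, which is all that is used downstream in Theorem~\ref{d3}.
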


\begin{proof}
We start by bringing the parameters above the section to $0$. By using the identification (\ref{E8}), we deduce that $\mathcal{B}(O)$ is homotopy equivalent to the sub-sequence formed by points without vertices above the section and such that the pearls are labelled by $\ast_{O}\in O(1)$. Thereafter, we bring the parameters below the section to $1$.
\begin{figure}[!h]
\begin{center}
\includegraphics[scale=0.5]{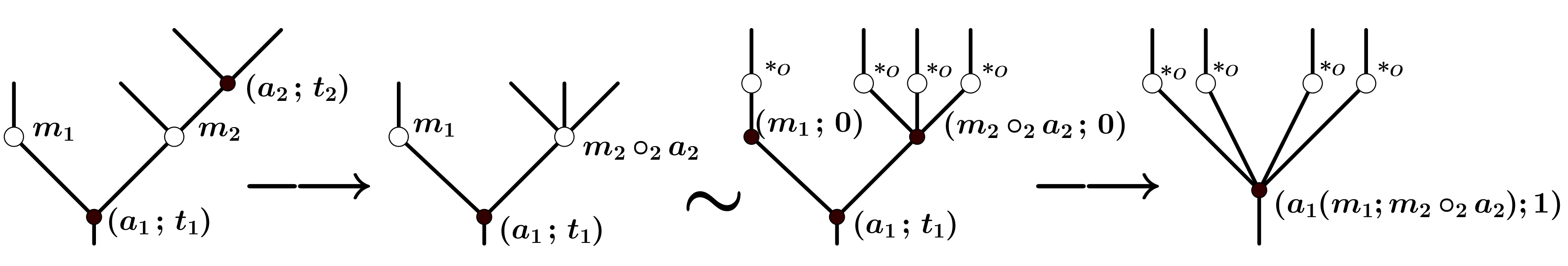}
\caption{Illustration of the homotopy.}\vspace{-15pt}
\end{center}
\end{figure}
\end{proof}

\begin{thm}\label{d3}
Let $O$ be a well pointed operad and $O'$ be an object in the category $Op[O\,;\,O]$. If  $O$ is $\Sigma$-cofibrant, then $\mathcal{L}(\mathcal{B}(O);O;O)$ is a cofibrant replacement of $\mathcal{L}(O;O;O)$ in the category $Op[O\,;\,O]$. Furthermore, the following weak equivalence holds:
\begin{equation}\label{d2}
Bimod_{O}^{h}\big(\,O\,;\,\mathcal{R}(O')\,\big)  \simeq  Op[O\,;\,O]^{h}\big(\,\mathcal{L}(O;O;O)\,;\,O'\,\big).
\end{equation}
\end{thm}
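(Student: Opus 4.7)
The plan is to reduce the theorem to showing that $\mathcal{L}(\mathcal{B}(O);O;O)$ is a cofibrant replacement of $\mathcal{L}(O;O;O)$ in $Op[O\,;\,O]$. Once this is established, since every object is fibrant in both $Bimod_{O}$ and $Op[O\,;\,O]$, the Quillen adjunction $(\mathcal{L},\mathcal{R})$ yields the chain
$$
Op[O\,;\,O]^{h}\big(\mathcal{L}(O)\,;\,O'\big)
\;\simeq\; Op[O\,;\,O]\big(\mathcal{L}(\mathcal{B}(O))\,;\,O'\big)
\;\cong\; Bimod_{O}\big(\mathcal{B}(O)\,;\,\mathcal{R}(O')\big)
\;\simeq\; Bimod_{O}^{h}\big(O\,;\,\mathcal{R}(O')\big),
$$
which is exactly the claimed weak equivalence (\ref{d2}); the middle isomorphism is the adjunction, while the two outer equivalences use cofibrant replacements in the respective model categories, together with the fact that $\mathcal{B}(O)$ is a cofibrant replacement of $O$ in $Bimod_{O}$ by Theorem \ref{B7}.

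The cofibrancy of $\mathcal{L}(\mathcal{B}(O);O;O)$ is then formal: the functor $\mathcal{L}$ is a left Quillen functor and therefore preserves cofibrations, and $\mathcal{B}(O)$ is cofibrant in $Bimod_{O}$ by Theorem \ref{B7}, so its image under $\mathcal{L}$ is cofibrant in $Op[O\,;\,O]$.

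The main obstacle is to show that the map $\mathcal{L}(\mu):\mathcal{L}(\mathcal{B}(O))\to \mathcal{L}(O)$ induced by $\mu:\mathcal{B}(O)\to O$ is a weak equivalence in $Op[O\,;\,O]$. Weak equivalences of $\{o\,;\,c\}$-operads are detected componentwise, so it suffices to prove that for each pair $(I_{c},I_{o})$ the induced map
$$
\mathcal{L}(\mu):\mathcal{L}(\mathcal{B}(O))(I_{c},I_{o};o)\;\longrightarrow\;\mathcal{L}(O)(I_{c},I_{o};o)\cong O(|I_{c}|+|I_{o}|)
$$
is a weak homotopy equivalence. The approach I would take is to filter the source by the number of non-root vertices of the two-level trees $T\in\Psi(I_{c},I_{o})$ indexing its points. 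At the lowest stage one has only the corolla indexing $O(|I_{c}|+|I_{o}|)$ itself and the map is an isomorphism, while at each successive stage the attaching map is built from products of copies of $\mu:\mathcal{B}(O)\to O$ along the non-root vertices of the trees. Using Lemma \ref{F1}, which exhibits $\mu$ as an explicit deformation retract at the level of sequences, together with the $\Sigma$-cofibrancy of $O$, the well-pointedness hypothesis, and the pushout-product lemmas \ref{B3} and \ref{B4} applied as in the proof of Theorem \ref{B7}, one checks that each stage of the filtration is a weak equivalence. Passing to the colimit and combining with the cofibrancy step then completes the proof.
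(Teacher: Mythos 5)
Your high-level outline is correct and coincides with the paper on the two formal steps: cofibrancy of $\mathcal{L}(\mathcal{B}(O);O;O)$ follows because $\mathcal{L}$ is left Quillen and $\mathcal{B}(O)$ is a cofibrant replacement (Theorem \ref{B7}), and the identification of derived mapping spaces then follows from the $(\mathcal{L};\mathcal{R})$ adjunction plus the fact that all objects are fibrant. Both of these agree with the paper.

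Where you diverge is the weak-equivalence step, and here the paper's route is considerably more direct than yours. The paper simply observes that a point of $\mathcal{L}(\mathcal{B}(O);O;O)(I_c,I_o;o)$ is a two-level tree with root labelled in $O$ and non-root vertices labelled in $\mathcal{B}(O)$, and then applies the explicit deformation retraction $\mu'$ of Lemma \ref{F1} simultaneously to every non-root vertex; together with axiom $(iii)$ of Construction \ref{b5} this contracts onto $\mathcal{L}(O;O;O)$, exhibiting the map as a weak equivalence of $\{o;c\}$-operads with no filtration needed. Your plan filters by the number of non-root vertices and invokes the pushout-product lemmas \ref{B3}, \ref{B4}. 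These lemmas are the paper's tool for establishing \emph{cofibrations} (and produce an acyclic map only when one of the pushout-product inputs is already acyclic), so they do not by themselves show that each filtration stage is a weak equivalence — the acyclicity half of your argument is where the mechanism is missing. The deformation retraction of Lemma \ref{F1} would have to enter at that point, but it does not respect a filtration by vertex count (it contracts edges and moves parameters between vertices), so the filtration is not only unnecessary but actively gets in the way. One further small point: Lemma \ref{F1} gives a homotopy inverse $\mu'$ to $\iota'$ at the level of sequences; this need not coincide with the bimodule map $\mu$ of equation (\ref{b9}), and the paper's argument deliberately proceeds via $\mu'$ rather than via $\mathcal{L}(\mu)$ applied arity-wise. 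I would suggest dropping the filtration entirely and arguing directly as the paper does.
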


\begin{proof}
Theorem \ref{B7} implies that $\mathcal{B}(O)$ is a cofibrant replacement of $O$ in the category $Bimod_{O}$. Moreover, $\mathcal{L}$ is a left adjoint functor in a Quillen adjunction, hence $\mathcal{L}$ preserves cofibrations and cofibrant objects. As a consequence, $\mathcal{L}(\mathcal{B}(O);O;O)$ is cofibrant in the category $Op[O\,;\,O]$. 

Moreover, a point $[T\,;\,\{a_{v}\}]$ in $\mathcal{L}(\mathcal{B}(O);O;O)(I_{c},I_{o};o)$ is given by a tree $T\in \Psi(I_{c};I_{o})$ such that $a_{r}\in O(|r|)$ and $a_{v}\in \mathcal{B}(O)(|v|)$ for $v\neq r$. So, the map $\mu'$ introduced in Lemma \ref{F1} together with axiom $(iii)$ of Construction \ref{b5} induce a weak equivalence of $\{o\,;\,c\}$-operads
$$
\mathcal{L}(\mathcal{B}(O);O;O) \longrightarrow \mathcal{L}(O;O;O).
$$
Finally, $\mathcal{L}(\mathcal{B}(O);O;O)$ is a cofibrant replacement of $\mathcal{L}(O;O;O)$ in $Op[O\,;\,O]$. Since every object is fibrant in the categories considered, there are the following weak equivalences:
$$
\begin{array}{rcl}\vspace{3pt}
Bimod_{O}^{h}(O\,;\,\mathcal{R}(O')) & \simeq & Bimod_{O}(\mathcal{B}(O)\,;\,\mathcal{R}(O')), \\ 
Op[O\,;\,O]^{h}(\mathcal{L}(O;O;O)\,;\,O') & \simeq & Op[O\,;\,O](\mathcal{L}(\mathcal{B}(O);O;O)\,;\,O').
\end{array} 
$$
The weak equivalence (\ref{d2}) arises from the adjunction $(\mathcal{L}\,;\,\mathcal{R})$ which induces a homeomorphism (see \cite{Kelly04}) 
$$
Bimod_{O}\big(\,\mathcal{B}(O)\,;\,\mathcal{R}(O')\,\big) \cong Op[O\,;\,O]\big(\,\mathcal{L}(\mathcal{B}(O);O;O)\,;\,O'\,\big).\vspace{-15pt}
$$
\end{proof}\vspace{-10pt}

\subsection{Cofibrant replacement in the category \texorpdfstring{$Op[O\,;\,\emptyset]$}{Lg}}\label{h1}

The category $Op[O\,;\,\emptyset]$ is a special case of Definition \ref{d7}. The objects are pairs $(O'\,;\,f_{o})$ in which $O'$ is an $\{o\,;\,c\}$-operad and $f_{o}:O\rightarrow O'_{o}$ is a map of operads. Consequently, the objects $(O'\,;\,\tau_{O'})$ in $Op[O\,;\,O]$ can be seen in the category $Op[O\,;\,\emptyset]$ by taking the restriction $(\tau_{O'})_{o}:O\rightarrow O'_{o}$. In particular $\mathcal{L}(O;O;O)$ and  $\mathcal{L}(\mathcal{B}(O);O;O)$ are objects in $Op[O\,;\,\emptyset]$. However,  $\mathcal{L}(\mathcal{B}(O);O;O)$ is not necessarily cofibrant in $Op[O\,;\,\emptyset]$ since $O$ is not necessarily cofibrant as an operad. To solve this issue, we change slightly Construction \ref{b7} by using the Boardman-Vogt resolution introduced in Section \ref{C4}.

\begin{const}\label{h4}
Let $P$ and $Q$ be two operads. From a ($P$-$Q$) bimodule $M$, we build the ($P$-$\mathcal{BV}(Q)$) bimodule $\mathcal{B}_{\emptyset}(M)$. The points are equivalence classes $[T\,;\,\{t_{u}\}\,;\,\{a_{v}\}]$ in which $T\in \textbf{stree}$ and $\{a_{v}\}$ is a family of points labelling the vertices in the same way as in Construction \ref{b7}. The family $\{t_{u}\}$  of real numbers in the interval $[0\,,\,1]$ indexes the vertices below the section and the inner edges above the section, with the condition $t_{s(e)}\leq t_{t(e)}$ for $e$ an inner edge below the section. In other words, $\mathcal{B}_{\emptyset}(M)$ is the quotient of the sub-sequence 
$$
\left.\underset{T\in \text{stree}}{\coprod}\,\,\,\underset{v\in V^{d}(T)}{\prod}\,\big[\,P(|v|)\times I\,\big]\times\underset{v\in V^{p}(T)}{\prod}\,M(|v|)\times\underset{ v\in V^{u}(T)}{\prod}\,\big[\,Q(|v|)\times I\big]\right/ \sim\,,
$$
coming from the restriction on the families $\{t_{u}\}$. The equivalence relation is generated by axioms $(ii)$, $(iii)$  $(iv.b)$ and $(v.b)$ of Construction \ref{b7} as well as the following relations:

\begin{itemize}[itemsep=-10pt, topsep=3pt, leftmargin=*]
\item[$i')$] If a vertex is labelled by a distinguished point $\ast_{P}\in P(1)$ or $\ast_{Q}\in Q(1)$, then\vspace{-5pt}
\begin{figure}[!h]
\begin{center}
\includegraphics[scale=0.23]{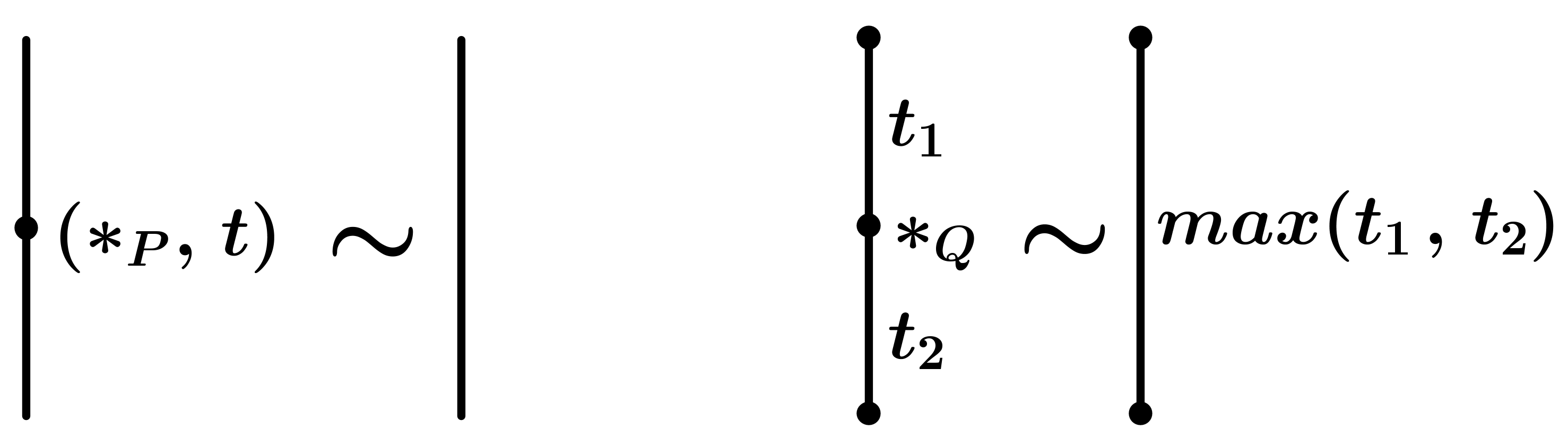}\vspace{-5pt}
\end{center}
\end{figure}
\item[$v)$] If an inner edge above the section indexed by $0$, then we contract it using the operadic structure of $Q$ or the right $Q$-bimodule structure of $M$:\vspace{-5pt}

\begin{figure}[!h]
\begin{center}
\includegraphics[scale=0.43]{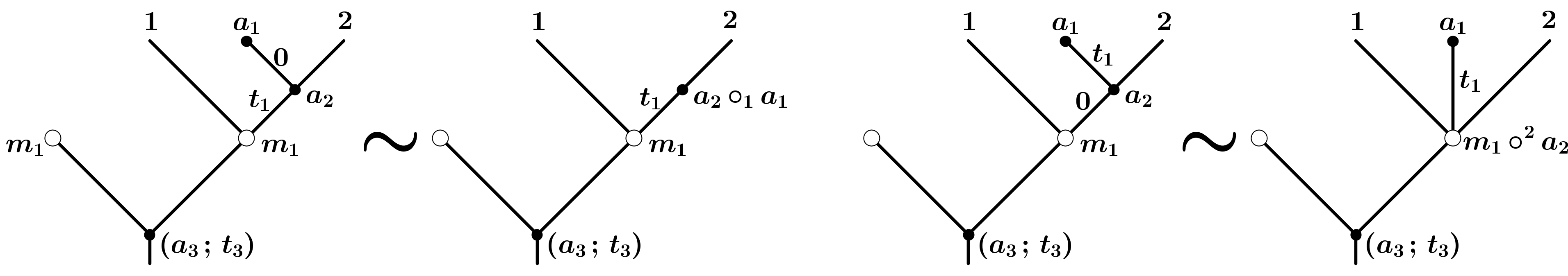}\vspace{-5pt}
\caption{Illustration of the relation $(v)$.}\vspace{-15pt}
\end{center}
\end{figure}
\end{itemize}

\noindent  The left $P$-bimodule structure on $\mathcal{B}_{\emptyset}(M)$ is similar to Construction \ref{b7}. Let $[T\,;\,\{t_{u}\}\,;\,\{a_{v}\}]$ be a point in $\mathcal{B}_{\emptyset}(M)(n)$ and $[T'\,;\,\{t'_{e}\}\,;\,\{a'_{v}\}]$ be a point in $\mathcal{BV}(Q)(m)$. The composition  $[T\,;\,\{t_{u}\}\,;\,\{a_{v}\}]\circ^{i}[T'\,;\,\{t'_{e}\}\,;\,\{a'_{v}\}]$ consists in grafting $T'$ to the $i$-th incoming edge of $T$ and indexing the new inner edge by $1$.
\end{const}

From now on, we introduce a filtration of the resolution $\mathcal{B}_{\emptyset}(M)$. Similarly to the bimodule case (see Section \ref{B6}), a point in $\mathcal{B}_{\emptyset}(M)$ is said to be prime if the real numbers indexing the set of inner edges above the section and the vertices below the section are strictly smaller than $1$. Besides, a point is said to be composite if one of its parameters is $1$ and such a point can be decomposed into prime components. A prime point is in the $k$-th filtration term $\mathcal{B}_{\emptyset}(M)_{k}$ if the number of its geometrical inputs (which is the number of leaves plus the number of univalent vertices above the section) is smaller than $k$. Then, a composite point is in the $k$-th filtration term if its prime components are in $\mathcal{B}_{\emptyset}(M)_{k}$. For each $k\geq 0$, $\mathcal{B}_{\emptyset}(M)_{k}$ is a ($P$-$\mathcal{BV}(Q)$) bimodule and the family $\{\mathcal{B}_{\emptyset}(M)_{k}\}$ gives rise a filtration of $\mathcal{B}_{\emptyset}(M)$, 
\begin{equation}\label{E4}
\xymatrix{
P_{0}\ar[r] & \mathcal{B}_{\emptyset}(M)_{0}\ar[r] & \mathcal{B}_{\emptyset}(M)_{1} \ar[r] &  \cdots \ar[r] & \mathcal{B}_{\emptyset}(M)_{k-1} \ar[r] & \mathcal{B}_{\emptyset}(M)_{k} \ar[r] & \cdots \ar[r] & \mathcal{B}_{\emptyset}(M).
}
\end{equation}  

\begin{lmm}\label{e1}
Let $P$ and $Q$ be two well pointed operads. Let $M$ be a ($P$-$Q$) bimodule  such that the map $\gamma:P(0)\rightarrow M(0)$ is a cofibration. If $P$, $Q$ and $M$ are $\Sigma$-cofibrant, then $\mathcal{B}_{\emptyset}(M)$ and $T_{k}(\mathcal{B}_{\emptyset}(M)_{k})$ are cofibrant replacements of $M$ and $T_{k}(M)$ in the category of $Bimod_{P\text{-}\mathcal{BV}(Q)}$ and $T_{k}Bimod_{P\text{-}\mathcal{BV}(Q)}$ respectively.
\end{lmm}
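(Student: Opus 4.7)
The plan is to mimic the proof of Theorem~\ref{B7} almost verbatim, with one modification: above the section a point of $\mathcal{B}_{\emptyset}(M)$ now carries Boardman--Vogt data in the sense of Construction~\ref{e7} (parameters on inner edges rather than on vertices), so the ``upper half'' of the tree must be handled by the arguments of Theorem~\ref{B9}. I view $M$ as a $(P\text{-}\mathcal{BV}(Q))$-bimodule via the augmentation $\mu:\mathcal{BV}(Q)\to Q$ of~(\ref{d8}).

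First I would construct the map $\mu_{\emptyset}:\mathcal{B}_{\emptyset}(M)\to M$ sending every vertex parameter below the section and every edge parameter above the section to $0$; the relations of Construction~\ref{h4} then collapse the result to a pearl corolla labelled by an element of $M$, and the homotopy scaling all parameters linearly to $0$ provides a deformation retraction of $S$-sequences. For the truncated case I would first apply the auxiliary homotopy $C_{k}$ used in the proof of Theorem~\ref{B7} to contract the univalent vertices above the section without increasing the number of geometric inputs, and only then scale the remaining parameters down.

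For cofibrancy, I would show that each inclusion $\mathcal{B}_{\emptyset}(M)_{k-1}\to\mathcal{B}_{\emptyset}(M)_{k}$ in the filtration~(\ref{E4}) is a cofibration in $Bimod_{P\text{-}\mathcal{BV}(Q)}$. Refining by the total number of vertices, I would realize each intermediate step as a pushout along a free $(P\text{-}\mathcal{BV}(Q))$-bimodule map $\mathcal{F}_{B}(\partial\tilde{Y}_{k}[l])\to\mathcal{F}_{B}(\tilde{Y}_{k}[l])$, where $Y_{k}[l]$ collects labellings of trees with section having exactly $k$ geometric inputs and $l$ vertices. The parameter polytope attached to such a tree $T$ now factors as a product of the vertex polytope below the section (as in the proof of Theorem~\ref{B7}) and an edge cube above it (as in the proof of Theorem~\ref{B9}).

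The main technical obstacle is to verify that the inclusion $\partial Y_{k}[l]\to Y_{k}[l]$ is a $\Sigma$-cofibration, where $\partial Y_{k}[l]$ encodes all the boundary conditions of Construction~\ref{h4}. The contribution below the section is a $\Sigma$-cofibration by the argument of Theorem~\ref{B7}, using the well-pointedness of $P$, the $\Sigma$-cofibrancy of $M$ and $P$, and the hypothesis that $\gamma:P(0)\to M(0)$ is a cofibration; the contribution above the section is a $\Sigma$-cofibration by the argument of Theorem~\ref{B9}, using the well-pointedness and $\Sigma$-cofibrancy of $Q$. Iterated application of Lemmas~\ref{B3} and~\ref{B4} along the short exact sequence controlling $Aut(T;V^{p}(T))$ combines the two into a single $Aut(T;V^{p}(T))$-cofibration, and inducing up to $\Sigma_{|T|}$ yields the required $\Sigma$-cofibration. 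Taking the colimit in $l$ and then in $k$, and using the adjunction $\mathcal{F}_{B}^{k}\dashv T_{k}$, concludes the argument in both the plain and truncated cases.
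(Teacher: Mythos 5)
Your proposal matches the paper's proof in both structure and technical content: the paper likewise reduces Lemma~\ref{e1} to the two-step filtration (geometrical inputs, then number of vertices), realizes each stage as a pushout of $\mathcal{F}_{B}$ applied to a boundary inclusion $\partial W_{k}[l]\to W_{k}[l]$ (your $\partial Y_{k}[l]\to Y_{k}[l]$), and verifies that inclusion is a $\Sigma$-cofibration by combining the below-the-section argument of Theorem~\ref{B7} with the above-the-section argument of Theorem~\ref{B9} via Lemmas~\ref{B3} and~\ref{B4}; the weak-equivalence part via parameter scaling and the auxiliary homotopy $C_{k}$ is also what the paper (implicitly, by reference to Theorem~\ref{B7}) relies on. The only difference is notational ($Y_{k}[l]$ for what the paper calls $W_{k}[l]$), so this is essentially the paper's proof.
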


\begin{proof}
The proof is similar to the one of Theorem \ref{B7}. In order to fix the notation and to introduce the tower of fibrations associated to the space $Bimod_{P\text{-}\mathcal{BV}(Q)}(\mathcal{B}_{\emptyset}(M)\,;\,M')$, we show that the filtration (\ref{E4}) is composed of cofibrations. For this purpose, we consider another filtration according to the number of vertices. We recall that $\textbf{\text{stree}[k\,;\,l]}$ is the set trees with section having exactly $k$ geometrical inputs and $l$ vertices. Then, the sequence $W_{k}[l]$ is the quotient of the sub-sequence
\begin{equation}
\left.\underset{T\in \textbf{\text{stree}[k\,;\,l]}}{\coprod}\,\,\,\underset{v\in V^{d}(T)}{\prod}\,\big[\,P(|v|)\times I\,\big]\times\underset{v\in V^{p}(T)}{\prod}\,M(|v|)\times\underset{ v\in V^{u}(T)}{\prod}\,\big[\,Q(|v|)\times I\big]\right/ \sim\,,
\end{equation}
coming from the restriction on the real numbers indexing the vertices below the section. The equivalent relation is generated by the compatibility with the symmetric group axioms of Construction \ref{b7}. The sequence $\partial W_{k}[l]$ is formed by points in $W_{k}[l]$ satisfying one of the following conditions:
\begin{itemize}
\item[$\blacktriangleright$] there is a vertex below the section indexed by $0$ or $1$,
\item[$\blacktriangleright$] there are two consecutive vertices below the section indexed by the same real number,
\item[$\blacktriangleright$] there is an inner edge above the section indexed by $0$ or $1$,
\item[$\blacktriangleright$] there is a univalent pearl labelled by a point on the form $\gamma(x)$ with $x\in P(0)$,
\item[$\blacktriangleright$] there is a bivalent vertex labelled by a distinguished point $\ast_{P}\in P(1)$ or $\ast_{Q}\in Q(1)$.
\end{itemize}
For $(k\,;\,l)\neq (0\,;\,0)$, the sequences $W_{k}[l]$ and $\partial W_{k}[l]$ are not objects in the category $Seq_{P_{0}}$. So, we denote by $\tilde{W}_{k}[l]$ and $\partial \tilde{W}_{k}[l]$ the sequences obtained as follows:
$$
\tilde{W}_{k}[l](n):=
\left\{
\begin{array}{ll}\vspace{4pt}
W_{k}[l](0)\sqcup P(0) & \text{if } n=0, \\ 
W_{k}[l](n) & \text{otherwise},
\end{array} 
\right.
\hspace{15pt}\text{and}\hspace{15pt}
\partial\tilde{W}_{k}[l](n):=
\left\{
\begin{array}{ll}\vspace{4pt}
\partial W_{k}[l](0)\sqcup P(0) & \text{if } n=0, \\ 
\partial W_{k}[l](n) & \text{otherwise}.
\end{array} 
\right.
$$ 
Then, we consider the following pushout diagrams: 
$$
\xymatrix@R=20pt{
\mathcal{F}_{B}(P_{0}) \ar[r] \ar@{=}[d] & \mathcal{F}_{B}(\tilde{W}_{0}[1]) \ar@{=}[d] \\
P_{0} \ar[r] & \mathcal{B}_{\emptyset}(M)_{0}[1] 
}
\hspace{30pt}
\xymatrix@R=20pt{
\mathcal{F}_{B}(\partial \tilde{W}_{k}[l]) \ar[r] \ar[d] & \mathcal{F}_{B}(\tilde{W}_{k}[l]) \ar[d] \\
\mathcal{B}_{\emptyset}(M)_{k}[l-1] \ar[r] & \mathcal{B}_{\emptyset}(M)_{k}[l] 
}
$$ 
Similarly to the proof of Theorem \ref{B7}, we can show that the inclusion from $\partial W_{k}[l]$ to $W_{k}[l]$ is a $\Sigma$-cofibration. Since the pushout diagrams preserve the cofibrations, the map $\mathcal{B}_{\emptyset}(M)_{k}[l-1]\rightarrow \mathcal{B}_{\emptyset}(M)_{k}[l]$ is a cofibration in the category of ($P$-$\mathcal{BV}(Q)$) bimodules. Furthermore, the limit of the sequences $\mathcal{B}_{\emptyset}(M)_{k}[l]$ is $\mathcal{B}_{\emptyset}(M)_{k}$. Thus, the inclusion $\mathcal{B}_{\emptyset}(M)_{k-1}\rightarrow\mathcal{B}_{\emptyset}(M)_{k}$ is a cofibration in the category of ($P$-$\mathcal{BV}(Q)$) bimodules. As a consequence, for $(k\,;\,l)\neq (0\,;\,0)$, the vertical maps of the the following pullback diagrams are fibrations:
$$
\xymatrix@R=20pt{
Bimod_{P\text{-}\mathcal{BV}(Q)}(\mathcal{B}_{\emptyset}(M)_{k}[l]\,;\, M') \ar[r] \ar[d] & Seq(W_{k}[l]\,;\, O')\ar[d] \\
Bimod_{P\text{-}\mathcal{BV}(Q)}(\mathcal{B}_{\emptyset}(M)_{k}[l-1]\,;\, M') \ar[r] & Seq( \partial W_{k}[l] \,;\, O')
}
$$
Furthermore, if $g\in Bimod_{P\text{-}\mathcal{BV}(Q)}(\mathcal{B}_{\emptyset}(M)_{k}[l-1]\,;\, M')$, then the fiber over $g$ is homeomorphic to the mapping space from $W_{k}[l]$ to $M'$ such that the restriction to $\partial W_{k}[l]$ coincides with the map induced by $g$:
\begin{equation}\label{E5}
Seq^{g}\big(\,(W_{k}[l]\,,\, \partial W_{k}[l])\,;\, M'\,\big).
\end{equation}
\end{proof}

\begin{notat}
Let $O$ be an operad. We denote by $\mathcal{BV}_{\emptyset}(O)$ the $\{o\,;\,c\}$-sequence
$$
\mathcal{BV}_{\emptyset}(O):=\mathcal{L}(\mathcal{B}_{\emptyset}(O)\,;\,O\,;\,\mathcal{BV}(O)).
$$
\end{notat}

\begin{lmm}\label{F0}
Let $O$ be an operad. Let $\iota'':O\rightarrow \mathcal{B}_{\emptyset}(O)$ be the map of sequences sending $x\in O(n)$ to $[T\,;\,1\,;\,\{a_{v}\}]$ where $T$ has a root indexed by the pair $(x\,;\,1)$ whereas the other vertices are bivalent pearl labelled by $\ast_{O}\in O(1)$. Then, the map $\iota''$ is a deformation retract and we denote by $\mu''$ the homotopy inverse.
\end{lmm}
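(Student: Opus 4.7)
The plan is to imitate the proof of Lemma \ref{F1}, adapted to the slightly different parametrization used in Construction \ref{h4} (edges above the section, vertices below). The homotopy inverse $\mu''$ will be produced as the endpoint of a two-stage deformation retract of $\mathcal{B}_{\emptyset}(O)$ onto the image of $\iota''$.

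In the first stage I would simultaneously bring the parameters $t_{e}$ indexing the inner edges above the section to $0$. By the new axiom $(v)$ of Construction \ref{h4}, each such edge is then contracted, using the operadic structure of $\mathcal{BV}(O)$ when both endpoints lie strictly above the section, and using the right $O$-module structure of the underlying bimodule $O$ when the target is a pearl. At the endpoint every above-section vertex has been absorbed into a pearl label, so the resolution retracts onto its sub-sequence of points carrying no vertex above the section. Applying the identification (\ref{E8}), $x = \gamma_{l}(x; \ast_{O}, \ldots, \ast_{O})$, in this sub-sequence rewrites every pearl of positive arity labelled by $x \in O(n)$ as a below-section vertex labelled by $x$ with $n$ bivalent pearls labelled by $\ast_{O}$ above it (this is an identification in the quotient, combining (\ref{E8}) with axiom $(v.b)$ of Construction \ref{b7}); the $0$-ary pearls are handled by axiom $(iii)$. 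This exhibits the sub-sequence as homeomorphic to the one in which every pearl is bivalent and labelled by $\ast_{O}$, and all the non-trivial data lives below the section.

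In the second stage I would, inside this simplified sub-sequence, deform all the below-section parameters $t_{v}$ monotonically to $1$. At the endpoint consecutive below-section vertices all carry the same parameter, so axiom $(iv.b)$ of Construction \ref{b7} contracts every inner edge below the section by the operadic structure of $O$, leaving a single below-section vertex labelled by the resulting composite element of $O$ and indexed by $1$, with bivalent $\ast_{O}$-pearls above. This is exactly the shape of the image of $\iota''$; reading off the label of the root defines $\mu''$, and concatenating the two stages yields the required homotopy $\iota'' \circ \mu'' \simeq \mathrm{id}$ (while $\mu'' \circ \iota'' = \mathrm{id}$ is obvious, since on the image of $\iota''$ both deformations are already stationary).

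The main point to check is that both deformations descend to the quotient defining $\mathcal{B}_{\emptyset}(O)$ and are compatible with the $(O\text{-}\mathcal{BV}(O))$-bimodule structure; as in Lemma \ref{F1} this is automatic, since each stage only rescales parameters monotonically and therefore commutes with the grafting operations that define the bimodule structure. The slightly less routine ingredient is the quotient-level identification in the middle, which relies on the fact that the $0$-parametrized below-section vertex introduced by expanding a pearl via (\ref{E8}) is genuinely absorbed by the unit $\ast_{O}$, i.e. does not add new topological content—this is precisely the content of axiom $(v.b)$ combined with unitality.
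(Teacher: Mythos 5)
Your argument tracks the paper's own proof — which simply refers back to Lemma~\ref{F1} — step for step: contract the above-section parameters to $0$, normalize the pearls via~(\ref{E8}) and axiom~$(v.b)$, then push the below-section parameters to $1$. Two small slips worth noting: in the first stage the edge contractions invoke the operadic structure of $O$ itself (the labels on $V^{u}(T)$ live in $Q=O$, not in $\mathcal{BV}(O)$); and the deformations are in fact \emph{not} $(O\text{-}\mathcal{BV}(O))$-bimodule homotopies, since the right $\mathcal{BV}(O)$-action inserts new inner edges at parameter~$1$ and the rescaling does not commute with this insertion — but this is harmless, because the lemma (exactly like Lemma~\ref{F1}) only asserts a deformation retract at the level of sequences.
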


\begin{proof}
The proof is similar to the one's of Lemma \ref{F1}.
\end{proof}

\begin{pro}\label{d4}
Let $O$ be a well pointed operad. If $O$ is $\Sigma$-cofibrant, then $\mathcal{BV}_{\emptyset}(O)$ is a cofibrant replacement of $\mathcal{L}(O;O;O)$  in the category $Op[O\,;\,\emptyset]$.
\end{pro}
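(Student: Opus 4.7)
The plan is to establish the two required properties, namely the weak equivalence $\mathcal{BV}_{\emptyset}(O)\to \mathcal{L}(O;O;O)$ and the cofibrancy of $\mathcal{BV}_{\emptyset}(O)$ in $Op[O\,;\,\emptyset]$, largely in parallel with the proofs of Theorem \ref{d3}, Theorem \ref{B7}, and Theorem \ref{B9}, only now combining the operadic Boardman-Vogt machinery of Section \ref{C4} with the bimodule machinery of Section \ref{B6}.

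For the weak equivalence, I would first invoke Lemma \ref{F0}: the map $\mu'': \mathcal{B}_{\emptyset}(O)\to O$ is a deformation retract in the category of sequences, and a direct check using axioms $(iv)$ and $(v)$ of Construction \ref{h4} shows that $\mu''$ is in fact a morphism of $(O\text{-}\mathcal{BV}(O))$-bimodules (where $\mathcal{BV}(O)$ acts on $O$ via the counit $\mu:\mathcal{BV}(O)\to O$ of Theorem \ref{B9}). Combining this with $\mu$, Remark \ref{F8} gives a morphism of $\{o;c\}$-operads
$$
\mathcal{BV}_{\emptyset}(O)=\mathcal{L}(\mathcal{B}_{\emptyset}(O);O;\mathcal{BV}(O))\longrightarrow \mathcal{L}(O;O;O),
$$
which is in each arity induced arity-wise by $\mu''$ and $\mu$, hence a weak equivalence of spaces, hence a weak equivalence in $Op[O\,;\,\emptyset]$.

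For the cofibrancy, the idea is to filter $\mathcal{BV}_{\emptyset}(O)$ by the total number of geometrical inputs (leaves plus univalent vertices above the section, counted jointly over the $c$-subtrees indexing a vertex of the ambient two-coloured tree) and, as in the proofs of Theorems \ref{B7} and \ref{B9}, to re-filter each piece by the total number of non-root vertices. For each pair $(k,l)$ one introduces a pointed $\{o;c\}$-sequence $Z_{k}[l]$ under $O\oplus \emptyset$, parametrized by the relevant two-level trees of Construction \ref{i7} whose $c$-colored subtrees are objects of $S\text{-}\mathbf{tree}[k;l]$ and whose root data comes from the filtration on $\mathcal{B}_{\emptyset}(O)$, together with its boundary $\partial Z_{k}[l]$ encoding all the relations of Construction \ref{h4} (vertex below the section at $0$ or $1$, two consecutive vertices at the same time, inner edge above the section at $0$ or $1$, univalent pearl of the form $\gamma(x)$ with $x\in O(0)$, bivalent vertex labelled by $\ast_O$, and the additional identification of Figure \ref{F3}). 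One then writes $\mathcal{BV}_{\emptyset}(O)_{k}[l]$ as the pushout
$$
\xymatrix{
\mathcal{F}_{O,\emptyset}(\partial Z_{k}[l]) \ar[r]\ar[d] & \mathcal{F}_{O,\emptyset}(Z_{k}[l])\ar[d]\\
\mathcal{BV}_{\emptyset}(O)_{k}[l-1]\ar[r] & \mathcal{BV}_{\emptyset}(O)_{k}[l]
}
$$
in $Op[O\,;\,\emptyset]$, where $\mathcal{F}_{O,\emptyset}$ denotes the free $\{o;c\}$-operad functor under $O\oplus\emptyset$, which is the left adjoint to the forgetful functor from $Op[O\,;\,\emptyset]$ to pointed $\{o;c\}$-sequences under $O\oplus\emptyset$. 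Since $\mathcal{F}_{O,\emptyset}$ is a left Quillen functor, it suffices to check that $\partial Z_{k}[l]\to Z_{k}[l]$ is a $\Sigma$-cofibration of pointed $\{o;c\}$-sequences; this reduces, tree by tree, to a pushout-product statement combining the polytopal part (a cofibration of $[0,1]^{m}$-polytopes indexing the parameters above and below the section) with the labelling part (a cofibration of spaces of vertex labels in $O$), exactly as in the use of Lemmas \ref{B3}-\ref{B4} in the proof of Theorem \ref{B7}.

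The main obstacle I expect is bookkeeping rather than any new homotopical input: one has to set up $Z_{k}[l]$ so that (i) the additional relation of Figure \ref{F3} is absorbed into the cofibration $\partial Z_{k}[l]\hookrightarrow Z_{k}[l]$, and (ii) the interaction between the polytopes $H(T)$ controlling the parameters of $\mathcal{B}_{\emptyset}(O)$ and the polytopes from $\mathcal{BV}(O)$ (sitting above the pearls) is correctly packaged so that the automorphism groups act freely enough for the pushout-product and invariant cofibration arguments to apply. Once this is done, the cofibrancy of the initial step (which, as in the proofs of Theorems \ref{B7} and \ref{B9}, reduces to $\mathcal{F}_{O,\emptyset}$ applied to the cofibration $\gamma:O(0)\to \mathcal{B}_{\emptyset}(O)(0)$) and the transfinite composition yield the cofibrancy of $\mathcal{BV}_{\emptyset}(O)$ in $Op[O\,;\,\emptyset]$, completing the proof.
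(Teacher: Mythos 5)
Your weak-equivalence argument is essentially the paper's: both use the retraction $\mu''$ from Lemma \ref{F0} together with $\mu:\mathcal{BV}(O)\to O$ and the functoriality of Remark \ref{F8} to produce a levelwise weak equivalence $\mathcal{BV}_{\emptyset}(O)\to\mathcal{L}(O;O;O)$, and you correctly note the bimodule-compatibility that Remark \ref{F8} requires (which the paper leaves implicit).

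For cofibrancy you take a genuinely different and considerably heavier route. The paper's argument is short and modular: Lemma \ref{e1} already shows $\mathcal{B}_{\emptyset}(O)$ is cofibrant in $Bimod_{O\text{-}\mathcal{BV}(O)}$; since $(\mathcal{L};\mathcal{R})$ is a Quillen adjunction, $\mathcal{L}(\mathcal{B}_{\emptyset}(O);O;\mathcal{BV}(O))=\mathcal{BV}_{\emptyset}(O)$ is cofibrant in $Op[O\,;\,\mathcal{BV}(O)]$; finally, because $\mathcal{BV}(O)$ is a cofibrant operad (Theorem \ref{B9}), the composite $O\oplus\emptyset\to O\oplus\mathcal{BV}(O)\to\mathcal{BV}_{\emptyset}(O)$ is a cofibration, giving cofibrancy in $Op[O\,;\,\emptyset]$. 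You instead rebuild a filtration of $\mathcal{BV}_{\emptyset}(O)$ from scratch directly in $Op[O\,;\,\emptyset]$, with pushouts along free maps $\mathcal{F}_{O,\emptyset}(\partial Z_{k}[l])\to\mathcal{F}_{O,\emptyset}(Z_{k}[l])$. That strategy is sound in principle --- the paper sets up precisely this kind of filtration of $\mathcal{BV}_{\emptyset}(O)$ (the $V_{k}[l]$, $\partial V_{k}[l]$ and associated pushout diagrams) in the proof of Proposition \ref{d6}, for the purpose of producing towers of fibrations --- but for cofibrancy it amounts to inlining the proof of Lemma \ref{e1} and transporting it across $\mathcal{L}$ by hand, with all the attendant bookkeeping about automorphism groups, the relation of Figure \ref{F3}, and the interaction of the two polytope families. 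What your approach buys is logical independence from Lemma \ref{e1} and a single self-contained cell attachment picture in the target category; what the paper's approach buys is that all the combinatorial work is done once (in Lemma \ref{e1} and Theorem \ref{B9}) and the Quillen adjunction does the rest. One small imprecision: the "initial step" you describe is not governed by $\gamma:O(0)\to\mathcal{B}_{\emptyset}(O)(0)$ (which is the identity when $M=O$); as in the paper's Proposition \ref{d6}, the base of the filtration is rather the pushout identifying $\mathcal{F}(O\oplus\emptyset)\to O\oplus\emptyset$.
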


\begin{proof}
From Lemma \ref{e1}, $\mathcal{B}_{\emptyset}(O)$ is a cofibrant replacement of $O$ in the category of ($O$-$\mathcal{BV}(O)$) bimodules. Since the functor $\mathcal{L}$ preserves  cofibrant objects, $\mathcal{BV}_{\emptyset}(O)$ is cofibrant in the category $Op[O\,;\,\mathcal{BV}(O)]$. Consequently, $\mathcal{BV}_{\emptyset}(O)$ is also cofibrant in $Op[O\,;\,\emptyset]$ because the following maps are cofibrations:
$$
O \oplus \emptyset \longrightarrow O\oplus \mathcal{BV}(O) \longrightarrow \mathcal{BV}_{\emptyset}(O).
$$
A point in $\mathcal{BV}_{\emptyset}(O)$ is given by a tree in $\Psi$ vertices of which are indexed by points in $\mathcal{BV}(O)$, $\mathcal{B}_{\emptyset}(O)$ or $O$. So, the homotopy retracts $\mu''$ (introduced in Lemma \ref{F0})  and $\mu:\mathcal{BV}(O)\rightarrow O$ (see (\ref{d8})) induce a weak equivalence from $\mathcal{BV}_{\emptyset}(O)$ to $\mathcal{L}(O;O;O)$. 
\end{proof}

\subsection{Relative delooping between operad and bimodule mapping spaces}\label{F2}

As shown in the previous section, the $\{o\,;\,c\}$-sequence $\mathcal{BV}_{\emptyset}(O)$ is an object in the category $Op[O\,;\,\emptyset]$. Its restriction to the colour $c$ coincides with the operad $\mathcal{BV}(O)$. Hence, one has a continuous map coming from the restriction to the colour $c$,
\begin{equation}\label{e2}
h:Op[O\,;\,\emptyset]\big(\,\mathcal{BV}_{\emptyset}(O) \,;\,O'\,\big)  \longrightarrow  Operad\big( \,\mathcal{BV}(O) \,;\,O'_{c}\,\big).
\end{equation}
A model for the relative loop spaces (\ref{e3}) is given by the homotopy fiber of the maps (\ref{e2}) over the composite $(\tau_{O'})_{c}\circ \mu:\mathcal{BV}(Q)\rightarrow O'_{c}$. In the following definition, we give an explicit description of the homotopy fiber. 

\begin{defi}\label{e5}
A point in the homotopy fiber of (\ref{e2}) over $(\tau_{O'})_{c}\circ \mu$ is a family of continuous maps:
$$
\begin{array}{cllll} \vspace{7pt}
f[n\,;\,c] & :\,\mathcal{BV}(O)(n)\times [0\,,\,1] & \longrightarrow & O'(c,\ldots,c;c), & \text{for} \,\,n\geq 0, \\ 
f[I_{c},I_{o}\,;\,o] & :\,\mathcal{BV}_{\emptyset}(O)(I_{c},I_{o};o)\times \{1\} & \longrightarrow & O'(I_{c},I_{o};o), & \text{for}\,\, |I_{c}|\geq 0 \,\,\text{and}\,\,|I_{o}|\geq 0,
\end{array} 
$$
satisfying relations coming from the operadic structure:
\begin{itemize}[leftmargin=*, itemsep=7pt]
\item[$\blacktriangleright$] $f[n+m-1\,;\,c](x\circ_{i}y\,;\,t)=f[n\,;\,c](x\,;\,t)\circ_{i}f[m\,;\,c](y\,;\,t),$\,\, for  $x\in \mathcal{BV}(O)(n),$  and  $y\in \mathcal{BV}(O)(m)$,
\item[$\blacktriangleright$] $f[I''_{c},I''_{o}\,;\,o](x\circ_{i}y\,;\,1)=f[I_{c},I_{o}\,;\,o](x\,;\,1)\circ_{i}f[I'_{c},I'_{o}\,;\,o](y\,;\,1),$\hspace{1pt} for  $x\in \mathcal{BV}_{\emptyset}(O)(I_{c},I_{o}\,;\,o)$  and  $y\in \mathcal{BV}_{\emptyset}(O)(I'_{c},I'_{o}\,;\,o)$,
\item[$\blacktriangleright$] $f[I''_{c},I''_{o}\,;\,o](x\circ_{i}y\,;\,1)=f[I_{c},I_{o}\,;\,o](x\,;\,1)\circ_{i}f[I'_{c},\emptyset\,;\,c](y\,;\,1),$\hspace{1pt} for  $x\in \mathcal{BV}_{\emptyset}(O)(I_{c},I_{o}\,;\,o)$  and  $y\in \mathcal{BV}_{\emptyset}(O)(I'_{c},\emptyset\,;\,c)$,
\end{itemize}\vspace{4pt}
and relations coming from the based point:
\begin{itemize}[leftmargin=*]
\item[$\blacktriangleright$] $f[n\,;\,c](x\,;\,0)=(\tau_{O'})_{c}\circ\mu (x),$ \hspace{1pt} for \hspace{4pt} $x\in \mathcal{BV}(O)(n)$.
\end{itemize}
\end{defi}

In order to prove Theorem \ref{d9}, we use a method introduced by the author in \cite{Ducoulombier14} for the cosimplicial case. We consider the topological space $Op[O\,;\,\mathcal{BV}(O)](\mathcal{BV}_{\emptyset}(O)\,;\,O')$ as an intermediate space between the relative loop space and $Op[O\,;\,O]^{h}(\mathcal{L}(O)\,;\,O')$. More precisely, $Op[O\,;\,\mathcal{BV}(O)](\mathcal{BV}_{\emptyset}(O)\,;\,O')$ is the subspace of $Op[O\,;\,\emptyset](\mathcal{BV}_{\emptyset}(O)\,;\,O')$ formed by maps $f$ satisfying the relation: 
\begin{equation}\label{e4}
f(x) = (\tau_{O'})_{c}\circ \mu(x) \,\,\,\,\,\,\,\,\,\forall x\in \mathcal{BV}(O).
\end{equation}
%The commutativity of the diagram:
%$$
%\xymatrix{
% & O\oplus \mathcal{BV}(O) \ar[dr]^{\hspace{3pt}(\tau_{O'})_{o}\,\oplus \,((\tau_{O'})_{c}\circ \mu)} \ar[dl]_{\tau} & \\
% \mathcal{BV}_{\emptyset}(O) \ar[rr] & & O'
%}
%$$
%implies that $Op[O\,;\,\mathcal{BV}(O)](\mathcal{BV}_{\emptyset}(O)\,;\,O')$ is the subspace of $Op[O\,;\,\emptyset](\mathcal{BV}_{\emptyset}(O)\,;\,O')$ of maps $f$ satisfying the relation: 
%\begin{equation}\label{e4}
%f(x) = (\tau_{O'})_{c}\circ \mu(x) \,\,\,\,\,\,\,\,\,\forall x\in \mathcal{BV}(O).
%\end{equation}

\begin{pro}\label{d5}
Under the assumptions of Theorem \ref{d9}, the following weak equivalence holds:
$$
Op[O\,;\,\mathcal{BV}(O)](\,\mathcal{BV}_{\emptyset}(O)\,;\,O'\,)  \simeq  Op[O\,;\,O](\mathcal{L}(\,\mathcal{B}(O);O;O)\,;\,O'\,).
$$
\end{pro}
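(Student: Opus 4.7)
The plan is to apply the $(\mathcal{L},\mathcal{R})$-adjunction established in Section \ref{f4} to convert both sides into bimodule mapping spaces, and then to compare the two resulting cofibrant resolutions of $O$ directly. Since $\mathcal{BV}_\emptyset(O)=\mathcal{L}(\mathcal{B}_\emptyset(O);O;\mathcal{BV}(O))$ and $\mathcal{L}(\mathcal{B}(O);O;O)$ are both images of the left adjoint $\mathcal{L}$, the adjunction yields homeomorphisms
$$
Op[O\,;\,O](\mathcal{L}(\mathcal{B}(O);O;O)\,;\,O')\cong Bimod_O(\mathcal{B}(O)\,;\,\mathcal{R}(O')),
$$
$$
Op[O\,;\,\mathcal{BV}(O)](\mathcal{BV}_\emptyset(O)\,;\,O')\cong Bimod_{O\text{-}\mathcal{BV}(O)}(\mathcal{B}_\emptyset(O)\,;\,\mathcal{R}(O')),
$$
where in the second line $\mathcal{R}(O')$ carries the $(O,\mathcal{BV}(O))$-bimodule structure obtained by restricting its right $O$-action along $\mu:\mathcal{BV}(O)\to O$. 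The claim is therefore equivalent to an equivalence of these two bimodule mapping spaces.

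Next, I would construct an explicit weak equivalence between these two mapping spaces. The constructions of $\mathcal{B}(O)$ and $\mathcal{B}_\emptyset(O)$ agree below the section and on the pearls; they differ only above the section, where $\mathcal{B}(O)$ carries $\mathcal{B}$-style data (vertices labelled by $O\times I$ with the order constraint $t_{s(e)}\geq t_{t(e)}$) while $\mathcal{B}_\emptyset(O)$ carries $\mathcal{BV}$-style data (inner edges labelled by $I$). Both are cofibrant replacements of $O$ in their respective categories by Theorem \ref{B7} and Lemma \ref{e1}, and both come equipped with the common filtrations by number of geometric inputs and number of vertices built from the pushout diagrams in those proofs. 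I would compare the resulting towers of fibrations term by term. At stage $(k,l)$ the fiber over an already-defined map $g$ is identified, via (\ref{E5}), with a relative mapping space of the form $Seq^g((X_k[l],\partial X_k[l]);\mathcal{R}(O'))$ on the $\mathcal{B}$-side, and with $Seq^g((W_k[l],\partial W_k[l]);\mathcal{R}(O'))$ on the $\mathcal{B}_\emptyset$-side.

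The hard part will be showing that these two fibers are weakly equivalent for each $(k,l)$. The parameter polytope $H(T)$ attached to a planar tree $T\in\textbf{$S$-stree}^{p}\textbf{[k\,;\,l]}$ on the $\mathcal{B}$-side differs from its $\mathcal{B}_\emptyset$-counterpart by the choice of parametrization above the section; both are however contractible polytopes modulo their boundaries, and the boundary inclusions are $\mathrm{Aut}(T;V^p(T))$-cofibrations by the same pushout-product arguments as in the proofs of Theorem \ref{B7} and Lemma \ref{e1}. Applying $\mu$ term by term above the section yields a natural, $\mathrm{Aut}(T;V^p(T))$-equivariant comparison map from one parameter datum to the other, whose acyclicity follows from Lemma \ref{B4} together with the well-pointed and $\Sigma$-cofibrant hypothesis on $O$ (used to pass from the polytope comparison to the full space of vertex labellings $\underline{M}(T)$). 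The bookkeeping needed to identify the fibers simultaneously on both sides, and to check that the induced maps on fibers assemble into a morphism of towers inducing a weak equivalence in the limit, is the technical core; once it is carried out, the weak equivalence of mapping spaces follows by the standard Milnor-type argument for inverse limits of fibrations.
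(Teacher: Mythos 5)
Your reduction via the $(\mathcal{L},\mathcal{R})$-adjunction to comparing $Bimod_{O}(\mathcal{B}(O);\mathcal{R}(O'))$ with $Bimod_{O\text{-}\mathcal{BV}(O)}(\mathcal{B}_\emptyset(O);\mathcal{R}(O'))$ is correct and matches the opening of the paper's proof, and the decision to then run a tower-of-fibrations argument is also the right framework. The gap is in how you actually construct the comparison. You write that ``applying $\mu$ term by term above the section yields a natural, $\mathrm{Aut}(T;V^p(T))$-equivariant comparison map from one parameter datum to the other'' whose acyclicity follows from Lemma~\ref{B4}; but $\mu$ sends parameters to $0$ and does not give a map between the two polytopes $H(T)$ (vertex-ordered above the section) and $H'(T)$ (edge-labelled above the section), nor between $\mathcal{B}(O)$ and $\mathcal{B}_\emptyset(O)$. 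Moreover, merely knowing both polytopes are contractible modulo boundary does not produce a relative comparison of the fibers $Seq^g\bigl((X_k[l],\partial X_k[l]);\mathcal{R}(O')\bigr)$ and $Seq^g\bigl((W_k[l],\partial W_k[l]);\mathcal{R}(O')\bigr)$; you need an actual map between the parametrizing objects and an argument that it induces an equivalence on fibers.

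The missing ideas are exactly what the paper supplies. The genuine comparison is a reparametrization map $i:\mathcal{B}(O)\rightarrow\mathcal{B}_\emptyset(O)$, given on inner edges above the section by $t'_e=(t_{t(e)}-t_{s(e)})/(t_{t(e)}-1)$ when $t_{t(e)}<1$ and $t'_e=1$ otherwise, which your proposal does not produce. Crucially, this $i$ is \emph{not} continuous as $t_{t(e)}\to 1$, so one cannot just quote Lemma~\ref{B4}: the paper instead introduces an equivalence relation $\sim$ on $\mathcal{B}_\emptyset(O)$ (identifying different $\mathcal{BV}(O)$-factors grafted on top), shows that $i:\mathcal{B}(O)\to\mathcal{B}_\emptyset(O)/\!\!\sim$ is a homeomorphism of $O$-bimodules, and then decomposes the resulting continuous bijection $\xi_1$ along the tower, using properness of the quotient map $W_k[l]\to W_k[l]/\!\!\sim$ (which uses that $H'(T)$ is a finite $CW$-complex and $\textbf{stree}^{p}\textbf{[k;l]}$ is finite) to show each $\xi^{k;l}$ is a weak equivalence. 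Without the reparametrization formula, the quotient construction, and the properness argument, the proof does not go through.
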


\begin{proof}
Let us notice that this proposition is inspired by constructions introduced by Fresse in \cite[Chapter 7]{Fresse09} and the author in \cite[Proposition 4.4]{Ducoulombier14}.  From Corollary \ref{d3}, to obtain the result it is sufficient to build a weak equivalence
\begin{equation}\label{h9}
\xi:  Bimod_{O}\big(\,\mathcal{B}(O)\,;\,\mathcal{R}(O')\,\big)\longrightarrow Bimod_{O\text{-}\mathcal{BV}(O)}\big(\,\mathcal{B}_{\emptyset}(O)\,;\,\mathcal{R}(O')\,\big).
\end{equation}
Firstly, we build a set map $i:\mathcal{B}(O)\rightarrow \mathcal{B}_{\emptyset}(O)$ sending  a point $[T\,;\,\{t_{v}\}\,;\,\{a_{v}\}]$ to  $[T\,;\,\{t'_{u}\}\,;\,\{a_{v}\}]$ where the tree with section, the indexation of vertices below the section and the family $\{a_{v}\}$ labelling the vertices are still the same. If $e$ is an inner edge above the section, then this edge is indexed by the real number $t'_{e}\in [0\,,\,1]$ defined as follows: 
$$
t'_{e}=\left\{
\begin{array}{cc}\vspace{3pt}
(t_{t(e)}-t_{s(e)})/(t_{t(e)}-1) & \text{if} \,\, t_{t(e)}<1, \\ 
1 & \text{if} \,\, t_{t(e)}=1.
\end{array} 
\right.
$$ 
By convention, we assume that the pearls are indexed by $0$. The function $i$ so obtained doesn't depend on the choice of the point in the equivalence class. Unfortunately, the function $i$ is not a continuous map. Indeed, if $e$ is an inner edge connecting two vertices in the set $V^{u}(T)$, then $t'_{e}$ is not well defined as $t_{t(e)}$ approaches $1$.\vspace{-10pt}

\begin{center}
\begin{figure}[!h]
\begin{center}
\includegraphics[scale=0.5]{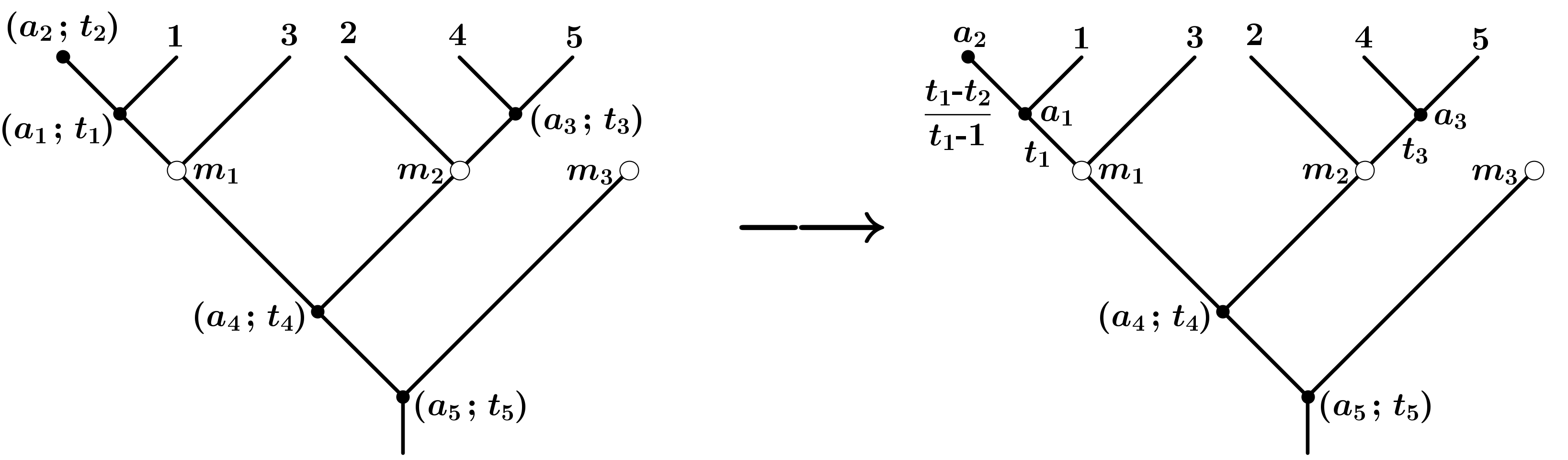}\\
\caption{Illustration of the map $i:\mathcal{B}(O)\rightarrow \mathcal{B}_{\emptyset}(O)$.}\vspace{-10pt}
\end{center}
\end{figure}
\end{center}

\noindent To solve this issue, we introduce an equivalence relation on the sequence $\mathcal{B}_{\emptyset}(O)$. Let $\sim$ be the equivalence relation generated by $x\sim x'$ if and only if there exist $[T\,;\,\{t^{1}_{e}\}\,;\,\{a_{v}\}]$, $[T\,;\,\{t^{2}_{e}\}\,;\,\{a_{v}\}]\in\mathcal{BV}(O)$ and $y\in\mathcal{B}_{\emptyset}(O)$ such that:
\begin{equation}\label{E7}
x=y \circ^{i}[T\,;\,\{t^{1}_{e}\}\,;\,\{a_{v}\}] \hspace{30pt} \text{and} \hspace{30pt} x'=y \circ^{i}[T\,;\,\{t^{2}_{e}\}\,;\,\{a_{v}\}].
\end{equation}
We denote by $\mathcal{B}_{\emptyset}(O)/\!\!\!\sim\!\!(n)$ the quotient space $\mathcal{B}_{\emptyset}(O)(n)/\!\!\sim$. The ($O$-$\mathcal{BV}(O)$) bimodule structure on $\mathcal{B}_{\emptyset}(O)$ induces a $O$-bimodule structure on the sequence $\mathcal{B}_{\emptyset}(O)/\!\!\sim$\,:\vspace{5pt}
\begin{itemize}[itemsep=4pt]
\item[$\blacktriangleright$] $\circ^{i} \,:\, \mathcal{B}_{\emptyset}(O)/\!\!\!\sim\!\!(n) \times O(m) \longrightarrow \mathcal{B}_{\emptyset}(O)/\!\!\!\sim\!\!(n+m-1);$\\
$\phantom{.}\hspace{36pt}(x\,;\,y)\hspace{28pt}\longmapsto \hspace{28pt} x\,\circ^{i}\,y,$
\item[$\blacktriangleright$] $\gamma_{l} \,:\, O(n)\times \mathcal{B}_{\emptyset}(O)/\!\!\!\sim\!\!(m_{1})\times \cdots \times \mathcal{B}_{\emptyset}(O)/\!\!\!\sim\!\!(m_{n})\longrightarrow \mathcal{B}_{\emptyset}(O)/\!\!\!\sim\!\!(m_{1}+\cdots + m_{n});$\\
$\phantom{.}\hspace{54pt}(x\,,\,y_{1}\,,\ldots,y_{n})\hspace{60pt}\longmapsto \hspace{28pt} \gamma_{l}(x;y_{1}\,,\ldots,y_{n}).$
\end{itemize}
The $O$-bimodule axioms are satisfied thanks to the equivalence relation. The left $\mathcal{BV}(O)$-bimodule structure on $O'$ arises from the map of operads $\mu:\mathcal{BV}(O)\rightarrow O$ (see (\ref{d8})). As a consequence, each bimodule map $f\in Bimod_{O\text{-}\mathcal{BV}(O)}(\mathcal{B}_{\emptyset}(O)\,;\,\mathcal{R}(O'))$ preserves  equivalence classes and induces a map $\tilde{f}\in Bimod_{O}(\mathcal{B}_{\emptyset}(O)/\!\!\!\sim\,;\,\mathcal{R}(O'))$. By using the universal property of the quotient, one has a continuous bijection 
$$
\xi_{1}: Bimod_{O}(\mathcal{B}_{\emptyset}(O)/\!\!\!\sim\,;\,\mathcal{R}(O'))\longrightarrow Bimod_{O\text{-}\mathcal{BV}(O)}(\mathcal{B}_{\emptyset}(O)\,;\,\mathcal{R}(O')).
$$
Let $e$ be an inner edge above the section connecting two vertices in $V^{u}(T)$. If $t_{t(e)}$ is equal to $1$, then all the parameters $t'_{e}$ are identified in the quotient space. So, the function $i:\mathcal{B}(O)\rightarrow  \mathcal{B}_{\emptyset}(O)/\!\!\!\sim$ is a continuous map. By construction, the map $i$ is a homeomorphism and preserves the $O$-bimodule structures.
Consequently, we obtain the following homeomorphism coming from the composition with $i$:
$$
\xi_{2}: Bimod_{O}(\mathcal{B}_{\emptyset}(O)/\!\!\!\sim\,;\,\mathcal{R}(O')) \longrightarrow Bimod_{O}(\mathcal{B}(O)\,;\,\mathcal{R}(O')).
$$
 
It is not obvious that $\xi_{1}$ is a homeomorphism since our cofibrant replacements are not necessarily finite $CW$-complexes in each arity. So, we will prove that $\xi=\xi_{1}\circ \xi_{2}^{-1}$ is a weak equivalence by using towers of fibrations associated to the filtrations (\ref{A8}) and (\ref{E4}). Indeed, $\xi$ induces a morphism between the following towers of fibrations:
$$
\xymatrix{
Bimod_{O}(\mathcal{B}_{0}(O)[1]\,;\,\mathcal{R}(O')) \ar[d]^{\xi^{0\,;\,1}}& \ar[l] \cdots  & \ar[l] Bimod_{O}(\mathcal{B}_{k}(O)[l]\,;\,\mathcal{R}(O'))\ar[d]^{\xi^{k\,;\,l}} &\ar[l] \cdots \\
 Bimod_{O\text{-}\mathcal{BV}(O)}(\mathcal{B}_{\emptyset}(O)_{0}[1]\,;\,\mathcal{R}(O')) & \ar[l]\cdots  & \ar[l] Bimod_{O\text{-}\mathcal{BV}(O)}(\mathcal{B}_{\emptyset}(O)_{k}[l]\,;\,\mathcal{R}(O')) &\ar[l] \cdots
}
$$
Consequently, $\xi$ is a weak equivalence if the maps $\xi^{k\,;\,l}$ are weak equivalences. Since the points in $\mathcal{B}_{0}(O)[1]$ and $\mathcal{B}_{\emptyset}(O)_{0}[1]$ are indexed by trees with section without vertices above the section, the map $\xi^{0\,;\,1}$ is the identity map which is a weak equivalence. Assume that $\xi^{k\,;\,l-1}$ is a weak equivalence and $g$ is a point in $Bimod_{O}(\mathcal{B}_{k}(O)[l-1]\,;\,\mathcal{R}(O'))$. We consider the following diagram:
$$
\xymatrix{
Bimod_{O}(\mathcal{B}_{k}(O)[l-1]\,;\,\mathcal{R}(O')) \ar[d]^{\xi^{k\,;\,l-1}}_{\simeq} & \ar[l] Bimod_{O}(\mathcal{B}_{k}(O)[l]\,;\,\mathcal{R}(O'))\ar[d]^{\xi^{k\,;\,l}} &\ar[l] F_{1} \ar[d]^{\xi^{g}} \\
 Bimod_{O\text{-}\mathcal{BV}(O)}(\mathcal{B}_{\emptyset}(O)_{k}[l-1]\,;\,\mathcal{R}(O')) & \ar[l] Bimod_{O\text{-}\mathcal{BV}(O)}(\mathcal{B}_{\emptyset}(O)_{k}[l]\,;\,\mathcal{R}(O')) &\ar[l] F_{2}
}
$$
where $F_{1}$ is the fiber over $g$ and $F_{2}$ is the fiber over $\xi^{k\,;\,l-1}(g)$. Since the left horizontal maps are fibrations, $\xi^{k\,;\,l}$ is a weak equivalence if the map between the fiber is a weak equivalence. By using the identifications (\ref{E6}) and (\ref{E5}), one has
$$
\xymatrix@R=15pt{
F_{1} \ar[dd]^{\xi^{g}} & \ar[l]_{\hspace{-60pt}\cong}  Seq^{g}\big( \, (X_{k}[l]\,,\, \partial X_{k}[l]) \,;\, \mathcal{R}(O')  \,\big)\\
& Seq^{\tilde{g}}\big(\,(W_{k}[l]/\!\sim \,,\, \partial W_{k}[l]/\!\sim)\,;\, \mathcal{R}(O')\,\big) \ar[u]^{\xi_{2}^{\ast}}_{\cong} \ar[d]_{\xi_{1}^{\ast}}\\
F_{2} & \ar[l]_{\hspace{-60pt}\cong} Seq^{\xi^{k\,;\,l-1}(g)}\big(\,(W_{k}[l]\,,\, \partial W_{k}[l])\,;\, \mathcal{R}(O')\,\big)
}
$$
where $\sim$ is the equivalence relation generated by (\ref{E7}). Since we are in the fiber over $\xi^{k\,;\,l-1}(g)$ the induced map $\tilde{g}$ is well defined. In the same way as $X_{k}[l]$ in the proof of Theorem \ref{B7}, $W_{k}[l]$ can also be expressed as follows:
$$
W_{k}[l]:=\underset{[T\,;\,;V^{p}(T)]}{\coprod} (H'(T)\times \underline{M}(T)) \underset{Aut(T\,;\,V^{p}(T))}{\times} \Sigma_{|T|},
$$
where $H'(T)$ is the space of parametrization of the vertices below the section and the inner edges above the section of $T$ by real numbers satisfying the restriction introduced in Construction \ref{h4}. Thus, the quotient map $W_{k}[l]\rightarrow W_{k}[l]/\!\sim$ is proper since its restriction to $\underline{M}(T)$ is the identity map, $H'(T)$ is a finite $CW$-complex and $\textbf{stree}^{p}\textbf{[k\,;\,l]}$ is a finite set. In that case, $\xi_{1}^{\ast}$ is a homeomorphism and $\xi^{k\,;\,l}$ is a weak equivalence.
\end{proof}

\begin{pro}\label{d6}
Under the assumptions of Theorem \ref{d9}, the following weak equivalence holds:
$$
Op[O\,;\,\mathcal{BV}(O)](\,\mathcal{BV}_{\emptyset}(O)\,;\,O'\,)  \simeq  \Omega\big(\,\,Operad(\,\mathcal{BV}(O)\,;\,O'_{c}\,)\,\,;\,\, Op[O\,;\,\emptyset](\,\mathcal{BV}_{\emptyset}(O)\,\,;\,\,O'\,)\,\,\big).
$$ 
\end{pro}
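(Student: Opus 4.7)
The plan is to recognise the left-hand side as the strict fiber of the restriction map $h$ of (\ref{e2}) over the basepoint $(\tau_{O'})_{c}\circ\mu$, to observe that the right-hand side is by definition the corresponding homotopy fiber, and to conclude by proving that $h$ is a Serre fibration.

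First I would unpack both sides. By the relation (\ref{e4}), the space $Op[O\,;\,\mathcal{BV}(O)](\mathcal{BV}_{\emptyset}(O)\,;\,O')$ is exactly the strict fiber of $h$ over $(\tau_{O'})_{c}\circ\mu$. On the other hand, since $\Omega(X\,;\,A)$ stands for $hofib(A\to X)$ in the conventions of Example \ref{a5}, the right-hand side is by definition the homotopy fiber of $h$ over that same point. Hence the proposition reduces to showing that the canonical comparison from the strict fiber of $h$ to its homotopy fiber is a weak equivalence, which holds as soon as $h$ is a Serre fibration.

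To prove that $h$ is a fibration, I would use the natural identification
\[
Op[O\,;\,\emptyset]\bigl(O\oplus\mathcal{BV}(O)\,;\,O'\bigr)\;\cong\;Operad\bigl(\mathcal{BV}(O)\,;\,O'_{c}\bigr),
\]
coming from the universal property of the coproduct in the coslice category: a two-coloured operad map out of $O\oplus\mathcal{BV}(O)$ whose $o$-component is pinned to $(\tau_{O'})_{o}$ is the same datum as an operadic map $\mathcal{BV}(O)\to O'_{c}$. Under this identification, $h$ is precisely the map induced by precomposition with the canonical inclusion
\[
\iota:O\oplus\mathcal{BV}(O)\longrightarrow \mathcal{BV}_{\emptyset}(O),
\]
and $\iota$ was shown to be a cofibration in $Op[O\,;\,\emptyset]$ in the proof of Proposition \ref{d4}. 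Because the coloured operad model structure is topologically enriched and $O'$ is fibrant (every object is), the pushout-product axiom promotes $\iota$ to a Serre fibration on $Top$-valued mapping spaces, and this fibration is exactly $h$.

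The remaining step is formal: the strict fiber of a Serre fibration is weakly equivalent to its homotopy fiber over the same base point, which yields the stated equivalence. The only genuinely delicate ingredient is the enriched model-categorical passage that turns the cofibration $\iota$ into a Serre fibration on hom-spaces; this rests on the SM7 axiom for the coloured operad model structure, which is classical and already implicit in the constructions of the paper, so I would treat it as a black box rather than the main content of the proof.
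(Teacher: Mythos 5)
Your argument is correct, but it takes a genuinely different route from the paper's. You observe that the left-hand side is by construction the strict fiber of the restriction map $h$ of (\ref{e2}) over $(\tau_{O'})_{c}\circ\mu$, that the right-hand side is the homotopy fiber over the same point, and you reduce the statement to showing that $h$ is a Serre fibration. For that you use the identification $Op[O\,;\,\emptyset](O\oplus\mathcal{BV}(O)\,;\,O')\cong Operad(\mathcal{BV}(O)\,;\,O'_{c})$, which holds since the $o$-component of such a map is pinned by the structure map, together with the fact (extracted from the proof of Proposition \ref{d4}) that $O\oplus\mathcal{BV}(O)\to\mathcal{BV}_{\emptyset}(O)$ is a cofibration, so that $h$ is a base change of the pre-composition map $Operad_{\{o;c\}}(\mathcal{BV}_{\emptyset}(O)\,;\,O')\to Operad_{\{o;c\}}(O\oplus\mathcal{BV}(O)\,;\,O')$. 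The paper instead never asserts that $h$ is a fibration: it constructs the comparison map $\iota$ from the strict to the homotopy fiber explicitly, builds compatible towers of fibrations out of the cell filtrations $\mathcal{BV}_{\emptyset}(O)_{k}[l]$ and $\mathcal{BV}_{k}(O)[l]$ on both sides, and proves by induction on $(k,l)$ that each layer map $\iota_{k\,;\,l}$ is a weak equivalence via hand-written retracting homotopies $H$ and $H'$. Your route is shorter and conceptually cleaner, but it places the weight of the argument on the $Top$-enriched SM7 property of the coloured-operad model structure (concretely, that $Operad(B\,;\,X)\to Operad(A\,;\,X)$ is a Serre fibration when $A\to B$ is a cofibration and $X$ is fibrant). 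That fact is indeed true and not hard, because fibrations of operads are created levelwise and the cotensor $X^{K}$ is levelwise; but it is not established in the paper, which prefers to avoid invoking it and instead uses only its pushout-product lemmas \ref{B3}--\ref{B4} at the level of $\Sigma$-sequences. In exchange, the paper's cell-by-cell proof produces exactly the filtrations and homotopies reused verbatim for the truncated statement in Theorem \ref{G3}. If you adopt your route you should either cite a reference for, or spell out, the cotensored form of SM7 for $Operad_{\{o;c\}}$, since it is doing the real work and is not merely ``already implicit''.
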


\begin{proof}
Let us recall that $Op[O\,;\,\mathcal{BV}(O)](\,\mathcal{BV}_{\emptyset}(O)\,;\,O'\,)$ is the subspace of $Op[O\,;\,\emptyset](\,\mathcal{BV}_{\emptyset}(O)\,\,;\,\,O'\,)$ formed by maps $f$ satisfying the additional condition (\ref{e4}). In particular, the space $Op[O\,;\,\mathcal{BV}(O)](\,\mathcal{BV}_{\emptyset}(O)\,;\,O'\,)$ is a subspace of the homotopy fiber (\ref{e2}), described in Definition \ref{e5}, where the inclusion is given by
$$
\begin{array}{ccl}\vspace{10pt}
\iota  :\,Op[O\,;\,\mathcal{BV}(O)](\,\mathcal{BV}_{\emptyset}(O)\,;\,O'\,) & \longrightarrow & \Omega\big(\,\,Operad(\,\mathcal{BV}(O)\,;\,O'_{c}\,)\,\,;\,\, Op[O\,;\,\emptyset](\,\mathcal{BV}_{\emptyset}(O)\,\,;\,\,O'\,)\,\,\big); \\ 
  f & \longmapsto & \left\{
 \begin{array}{cl}\vspace{3pt}
 \tilde{f}(n\,;\,c)(x\,;\,t)  =  (\tau_{O'})_{c}\circ\mu (x) & \text{for}\,\, x\in \mathcal{BV}(O)(n), \\ 
 \tilde{f}(I_{c},I_{o}\,;\,o)(x\,;\,1) =  f(I_{c},I_{o}\,;\,o)(x) & \text{for}\,\, x\in \mathcal{BV}_{\emptyset}(O)(I_{c},I_{o}\,;\,o).
 \end{array} 
 \right.
\end{array} 
$$
Let us prove that $\iota$ is a weak equivalence. As shown in the previous sections, the operad $\mathcal{BV}(O)$ and the bimodule $\mathcal{B}_{\emptyset}(O)$ have a filtration according to the number of geometrical inputs and the number of vertices. So, we will define a similar filtration for the operad $\mathcal{BV}_{\emptyset}(O)$ in order to build towers of fibrations. For this purpose, we introduce the $\{o\,;\,c\}$-sequences $V_{k}[l]$ and $\partial V_{k}[l]$ defined as follows:
$$
\left\{
\begin{array}{lcl}\vspace{4pt}
V_{k}[l](\emptyset\,,\,I_{o}\,;\,o) & = & O(|I_{o}|), \text{ if } |I_{o}|\geq 1, \\ \vspace{4pt}
V_{k}[l](I_{c}\,,\,\emptyset\,;\,o) & = & W'_{k}[l](|I_{c}|), \\ 
V_{k}[l](I_{c}\,,\,\emptyset\,;\,c) & = & Y_{k}[l](|I_{c}|),
\end{array} 
\right.
\hspace{15pt}\text{and}\hspace{15pt}
\left\{
\begin{array}{lcl}\vspace{4pt}
\partial V_{k}[l](\emptyset \,,\,I_{o}\,;\,o) & = & O(|I_{o}|),\text{ if } |I_{o}|\geq 1,\\ \vspace{4pt}
\partial V_{k}[l](I_{c}\,,\,\emptyset\,;\,o) & = & \partial W'_{k}[l](|I_{c}|), \\ 
\partial V_{k}[l](I_{c}\,,\,\emptyset\,;\,c) & = & \partial Y_{k}[l](|I_{c}|),
\end{array} 
\right.
$$
where $Y_{k}[l]$ and $\partial Y_{k}[l]$ are the sequences introduced in the proof of Theorem \ref{B9} whereas $W'_{k}[l]$ and $\partial W'_{k}[l]$ are the sub-sequences of $W_{k}[l]$ and $\partial W_{k}[l]$, introduced in the proof of Theorem \ref{e1}, formed by points indexed by trees with section without univalent pearl other than the root. We consider these sub-sequences because $\mathcal{B}_{\emptyset}(O)$ doesn't have univalent pearl due to axiom $(iii)$ of Construction \ref{b7}.

For $(k\,;\,l)\neq (1\,;\,1)$, the $\{o\,;\,c\}$-sequences  $V_{k}[l]$ and $\partial V_{k}[l]$ are not pointed. So, we denote by $\tilde{V}_{k}[l]$ and $\partial \tilde{V}_{k}[l]$ the $\{o\,;\,c\}$-sequences obtained from $V_{k}[l]$ and $\partial V_{k}[l]$ by taking $\tilde{Y}_{k}[l]$ and $\partial \tilde{Y}_{k}[l]$ instead of $Y_{k}[l]$ and $\partial Y_{k}[l]$ respectively. Then, we consider the following pushout diagrams:
$$
\xymatrix{
\mathcal{F}(O\oplus \emptyset) \ar[r]\ar@{=}[d] & \mathcal{F}(\tilde{V}_{1}[1])\ar[d] \\
O\oplus \emptyset \ar[r] & \mathcal{BV}_{\emptyset}(O)_{1}[1] 
}
\hspace{30pt}
\xymatrix{
\mathcal{F}(\partial \tilde{V}_{k}[l]) \ar[r]\ar[d] & \mathcal{F}(\tilde{V}_{k}[l])\ar[d] \\
\mathcal{BV}_{\emptyset}(O)_{k}[l-1] \ar[r] & \mathcal{BV}_{\emptyset}(O)_{k}[l] 
}
$$
By construction, $\{\mathcal{BV}_{\emptyset}(O)_{k}[l]\}$ gives rise a filtration of $\mathcal{BV}_{\emptyset}(O)$ in which $\mathcal{BV}_{\emptyset}(O)_{k}[l]$ is a two-coloured operad endowed with a map from $O\oplus \mathcal{BV}_{k}(O)[l]$. 

Now, we are able to introduce the towers of fibrations showing that the map $\iota$ is a weak equivalence. For the space $Op[O\,;\,\mathcal{BV}(O)](\,\mathcal{BV}_{\emptyset}(O)\,;\,O'\,)$, the additional relation (\ref{e4}) implies that we have to consider the $\{o\,;\,c\}$-sequence
$$
\left\{
\begin{array}{lcl}\vspace{4pt}
\partial V^{1}_{k}[l](\emptyset \,,\,I_{o}\,;\,o) & = & O(|I_{o}|)\\ \vspace{4pt}
\partial V^{1}_{k}[l](I_{c}\,,\,\emptyset\,;\,o) & = & \partial W'_{k}[l](|I_{c}|), \\ 
\partial V^{1}_{k}[l](I_{c}\,,\,\emptyset\,;\,c) & = & Y_{k}[l](|I_{c}|).
\end{array} 
\right.
$$
Since the inclusion from $\partial V^{1}_{k}[l]$ to $V_{k}[l]$ is a $\Sigma$-cofibration, the vertical maps of the following pullback diagram are fibrations:
\begin{equation}\label{F4}
\xymatrix{
Op[O\,;\,\mathcal{BV}_{k}(O)[l]](\,\mathcal{BV}_{\emptyset}(O)_{k}[l]\,;\,O'\,)\ar[d]\ar[r] & Seq(\,V_{k}[l]\,;\,O'\,)\ar[d]\\
Op[O\,;\,\mathcal{BV}_{k}(O)[l-1]](\,\mathcal{BV}_{\emptyset}(O)_{k}[l-1]\,;\,O'\,) \ar[r] & Seq(\,\partial V^{1}_{k}[l]\,;\,O'\,)
}
\end{equation}
For the space $\Omega (\,Operad(\mathcal{BV}(O)\,;\,O'_{c}\,)\,;\, Op[O\,;\,\emptyset](\,\mathcal{BV}_{\emptyset}(O)\,\,;\,\,O'\,)\,)$, the relations introduced in Definition \ref{e5} implies that we have to consider the $\{o\,;\,c\}$-sequences
$$
\left\{\hspace{-2pt}
\begin{array}{l}\vspace{4pt}
V^{2}_{k}[l](\emptyset\,,\,I_{o}\,;\,o)  =  O(|I_{o}|) \\ \vspace{4pt}
V^{2}_{k}[l](I_{c}\,,\,\emptyset\,;\,o)  =  W'_{k}[l](|I_{c}|), \\ 
V^{2}_{k}[l](I_{c}\,,\,\emptyset\,;\,c)  =  Y_{k}[l](|I_{c}|)\times [0\,,\,1],
\end{array} 
\right.
\hspace{-1pt}\text{and}\hspace{5pt}
\left\{\hspace{-2pt}
\begin{array}{l}\vspace{4pt}
\partial V^{2}_{k}[l](\emptyset \,,\,I_{o}\,;\,o)  =  O(|I_{o}|)\\ \vspace{4pt}
\partial V^{2}_{k}[l](I_{c}\,,\,\emptyset\,;\,o)  =  \partial W'_{k}[l](|I_{c}|), \\ 
\partial V^{2}_{k}[l](I_{c}\,,\,\emptyset\,;\,c)  =  (Y_{k}[l](|I_{c}|)\times \{0\})\hspace{-10pt} \underset{\partial Y_{k}[l](|I_{c}|)\times \{0\}}{\displaystyle \coprod}\hspace{-10pt} (\partial Y_{k}[l](|I_{c}|)\times [0\,,\,1]).
\end{array} 
\right.
$$
Since the inclusion from $\partial V^{2}_{k}[l]$ to $V^{2}_{k}[l]$ is a $\Sigma$-cofibration, the vertical maps of the following pullback diagram are fibrations:
\begin{equation}\label{F5}
\xymatrix{
\Omega (\,Operad(\mathcal{BV}_{k}(O)[l]\,;\,O'_{c}\,)\,;\, Op[O\,;\,\emptyset](\,\mathcal{BV}_{\emptyset}(O)_{k}[l]\,\,;\,\,O'\,)\,)\ar[d]\ar[r] & Seq(\,V^{2}_{k}[l]\,;\,O'\,)\ar[d]\\
\Omega (\,Operad(\mathcal{BV}_{k}(O)[l-1]\,;\,O'_{c}\,)\,;\, Op[O\,;\,\emptyset](\,\mathcal{BV}_{\emptyset}(O)_{k}[l-1]\,\,;\,\,O'\,)\,)\ar[r] & Seq(\,\partial V^{2}_{k}[l]\,;\,O'\,)
}
\end{equation}
The map $\iota$ induces a morphism between the two towers of fibrations:
$$
\xymatrix@R=15pt{
\vdots\ar[d] & \vdots\ar[d] \\
Op[O\,;\,\mathcal{BV}_{k}(O)[l]](\,\mathcal{BV}_{\emptyset}(O)_{k}[l]\,;\,O'\,) \ar[r]^{\hspace{-40pt}\iota_{k\,;\,l}}\ar[d] & \Omega (\,Operad(\mathcal{BV}_{k}(O)[l]\,;\,O'_{c}\,)\,;\, Op[O\,;\,\emptyset](\,\mathcal{BV}_{\emptyset}(O)_{k}[l]\,\,;\,\,O'\,)\,)\ar[d]\\
\vdots\ar[d] & \vdots\ar[d] \\
Op[O\,;\,\mathcal{BV}_{1}(O)[1]](\,\mathcal{BV}_{\emptyset}(O)_{1}[1]\,;\,O'\,) \ar[r]^{\hspace{-40pt}\iota_{1\,;\,1}} & \Omega (\,Operad(\mathcal{BV}_{1}(O)[1]\,;\,O'_{c}\,)\,;\, Op[O\,;\,\emptyset](\,\mathcal{BV}_{\emptyset}(O)_{1}[1]\,\,;\,\,O'\,)\,)
}
$$
Since the vertical maps are fibrations, in order to prove that the map $\iota$ is a weak equivalence, it is sufficient to show by induction that the maps $\iota_{k\,;\,l}$ are weak equivalences. By using the notation of Definition \ref{e5}, a point in the relative loop space $\Omega (\,Operad(\mathcal{BV}_{1}(O)[1]\,;\,O'_{c}\,)\,;\, Op[O\,;\,\emptyset](\,\mathcal{BV}_{\emptyset}(O)_{1}[1]\,\,;\,\,O'\,)\,)$ is determined by a family $\{f\}$ of continuous maps 
$$
f[c\,;\,o]:O(1)\times \{1\}\longrightarrow O'(c\,;\,o) \hspace{15pt}\text{and}\hspace{15pt}
\left\{
\begin{array}{l}\vspace{4pt}
f[0\,;\,c]:O(0)\times [0\,,\,1]\longrightarrow O'(\,;\,c), \\ 
f[1\,;\,c]:O(0)\times [0\,,\,1]\longrightarrow O'(1\,;\,c),
\end{array} 
\right.
$$
satisfying the relations $f[0\,;\,c](x\,;\,0)=\eta(x)$ and $f[1\,;\,c](x\,;\,0)=\eta(x)$. On the other hand, the image of $\iota_{1\,;\,1}$ is formed by maps satisfying also the conditions $f[0\,;\,c](x\,;\,t)=\eta(x)$ and $f[1\,;\,c](x\,;\,t)=\eta(x)$ with $t\in [0\,,\,1]$. So, $\iota_{1\,;\,1}$ is a homotopy equivalence and the homotopy $H$ is defined as follows:
$$
\left\{
\begin{array}{l}\vspace{4pt}
H(\{f\}\,;\,u)[c\,;\,o](x\,;\,1)=f[c\,;\,o](x\,;\,1),  \\ \vspace{4pt}
H(\{f\}\,;\,u)[0\,;\,c](x\,;\,t)=f[0\,;\,c](x\,;\,(1-u)t),  \\ 
H(\{f\}\,;\,u)[1\,;\,c](x\,;\,t)=f[1\,;\,c](x\,;\,(1-u)t),
\end{array} 
\right.
$$
with $u\in [0\,,\,1]$ and $\{f\}$ a point in the relative loop space.

Assume that the map $\iota_{k\,;\,l-1}$ is a weak equivalence and $g$ is a point in the space $\Omega (\,Operad(\mathcal{BV}_{k}(O)[l-1]\,;\,O'_{c}\,)\,;\, Op[O\,;\,\emptyset](\,\mathcal{BV}_{\emptyset}(O)_{k}[l-1]\,\,;\,\,O'\,)\,)$. Let $F_{1}$ and $F_{2}$ be the fiber over $g$ and $\iota_{k\,;\,l-1}(g)$ respectively. So, the map $\iota_{k\,;\,l}$ is a weak equivalence if the map between the fibers $\iota_{g}$ is a weak equivalence. By using the pullback diagrams \ref{F4} and \ref{F5}, one has 
$$
\xymatrix{
F_{1} \ar[d]^{\iota_{g}}  & Seq^{g}((V_{k}[l]\,;\,\partial V^{1}_{k}[l])\,;\,O')\ar[d]^{\alpha}\ar[l]_{\hspace{-50pt}\cong}\\
F_{2} & Seq^{\iota_{k\,;\,l-1}(g)}((V^{2}_{k}[l]\,;\,\partial V^{2}_{k}[l])\,;\,O')\ar[l]_{\hspace{-50pt}\cong}
}
$$
The image of $\alpha$ is the subspace formed by maps $\{f\}$ satisfying the relation $f[|I_{c}|\,;\,c](x\,;\,t)=f[|I_{c}|\,;\,c](x\,;\,0)$ for $x\in V_{k}[l](I_{c}\,,\,\emptyset\,;\,c)$ and $t\in [0\,,\,1]$. Since $F_{2}$ is the fiber over $\iota_{k\,;\,l-1}(g)$, a point $\{f\}\in  Seq^{\iota_{k\,;\,l-1}(g)}((V^{2}_{k}[l]\,;\,\partial V^{2}_{k}[l])\,;\,O')$ satisfies the relation 
$$
f[|I_{c}|\,;\,c](x\,;\,t)=f[|I_{c}|\,;\,c](x\,;\,0), \hspace{15pt}\text{for } x\in \partial V_{k}[l](I_{c}\,,\,\emptyset\,;\,c).
$$
So, $\alpha$ is a homotopy equivalence and the homotopy $H'$ below is well defined:
$$
\left\{
\begin{array}{l}\vspace{4pt}
H'(\{f\}\,;\,u)[I_{c}\,,\,I_{o}\,;\,c](x\,;\,1)=f[I_{c}\,,\,I_{o}\,;\,c](x\,;\,1),  \\ 
H'(\{f\}\,;\,u)[|I_{c}|\,;\,c](x\,;\,1)=f[|I_{c}|\,;\,c](x\,;\,(1-u)t),
\end{array} 
\right.
$$
with $u\in [0\,,\,1]$ and $\{f\}$ a point in the space $Seq^{\iota_{k\,;\,l-1}(g)}((V^{2}_{k}[l]\,;\,\partial V^{2}_{k}[l])\,;\,O')$. Thus proves that the map $\iota$ is a weak equivalence. 
\end{proof}

The next theorem is the main result of this paper. It is a direct consequence of Theorem \ref{d9} and Theorem \ref{H1}. Indeed, the weak equivalence in \ref{H1} identifies explicit loop spaces from maps between operads. Similarly, we identify relative loop spaces in Theorem \ref{d9} which are compatible with the loops spaces in the sense that they form typical algebras over the Swiss-Cheese operad $\mathcal{SC}_{1}$.

\begin{thm}\label{f3}
Let $O$ be a well pointed $\Sigma$-cofibrant operad. Let $\eta:\mathcal{L}(O)\rightarrow O'$ be a map in the category $Op[O\,;\,O]$ in which the spaces $O(1)$ and $O'_{c}(1)$ are contractible. Then, the pair of topological spaces
$$
\big(\,Bimod^{h}_{O}(O\,;\,O'_{c})\,\,;\,\,Bimod^{h}_{O}(O\,;\,\mathcal{R}(O'))\,\big)
$$ 
is weakly equivalent to the $\mathcal{SC}_{1}$-algebra
$$
\big(\,\, \Omega\big(\,Operad^{h}(O\,;\,O'_{c})\,\big)\,\,;\,\,
\Omega\big(\,\,Operad^{h}(\,O\,;\,O'_{c}\,)\,\,;\,\, Op[O\,;\,\emptyset]^{h}(\,\mathcal{L}(O;O;O)\,\,;\,\,O'\,)\,\,\big)\,\,\big).
$$
\end{thm}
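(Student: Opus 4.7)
The plan is to combine the two ingredients already at hand, namely Theorem \ref{H1} for the first component of the pair and Theorem \ref{d9} for the second, and then to verify that the resulting pair of weak equivalences respects the $\mathcal{SC}_{1}$-algebra structure on both sides.

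First I would apply Theorem \ref{H1} to the operadic map $\eta_{c}:O\to O'_{c}$ obtained from $\eta$ by restricting to the colour $c$. Since $O$ is well-pointed and $\Sigma$-cofibrant and the spaces $O(1)$ and $O'_{c}(1)$ are contractible, that theorem delivers an explicit weak equivalence of $\mathcal{C}_{1}$-algebras
$$
\xi:\Omega\, Operad^{h}(O\,;\,O'_{c}) \xrightarrow{\,\simeq\,} Bimod^{h}_{O}(O\,;\,O'_{c})
$$
identifying the first components. For the second component, I would feed the map $\eta:\mathcal{L}(O;O;O)\to O'$ into Theorem \ref{d9}; its hypotheses coincide with those of the present statement, and it yields
$$
Bimod^{h}_{O}(O\,;\,\mathcal{R}(O'))\simeq \Omega\bigl(Operad^{h}(O\,;\,O'_{c})\,;\,Op[O\,;\,\emptyset]^{h}(\mathcal{L}(O;O;O)\,;\,O')\bigr).
$$
The basepoint of the relative loop space, namely the composite $(\tau_{O'})_{c}\circ\mu$ fixed in Definition \ref{e5}, is precisely the one classifying $\eta_{c}$, so the two applications are based over a common point and the two weak equivalences assemble componentwise into the desired equivalence of pairs of spaces.

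The remaining, delicate task is to upgrade this assembly to a weak equivalence of $\mathcal{SC}_{1}$-algebras rather than merely of pairs of spaces. On the target side, the pair carries the canonical $\mathcal{SC}_{1}$-structure of a loop space acting on a relative loop space, as in Example \ref{a5}. On the source side, the monoid structure on $Bimod^{h}_{O}(O\,;\,O'_{c})$ comes from the co-$H$-structure on the cofibrant replacement of $O$ exploited in \cite{Ducoulombier16}, and its left action on $Bimod^{h}_{O}(O\,;\,\mathcal{R}(O'))$ is induced by the left $O$-action on $\mathcal{R}(O')$ through $(\tau_{O'})_{o}:O\to O'_{o}$. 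To match these structures, I would route both equivalences through the intermediate space $Op[O\,;\,\mathcal{BV}(O)](\mathcal{BV}_{\emptyset}(O)\,;\,O')$ introduced in Section \ref{F2}. That model carries two natural juxtaposition-type operations: one in the Boardman-Vogt time parameters of $\mathcal{BV}(O)$, which realises the $\mathcal{C}_{1}$-multiplication, and one in the parameters below the section of $\mathcal{B}_{\emptyset}(O)$, which realises the module action of the former on the whole.

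The main obstacle is precisely this last matching: promoting the zigzags of Propositions \ref{d5} and \ref{d6} into maps of $\mathcal{SC}_{1}$-algebras, by showing that on the intermediate model the two families of time parameters realise the Swiss-Cheese operations compatibly with bimodule composition on the source and with loop concatenation on the target. No new cofibrant replacements or homotopical inputs are needed, but the verification amounts to a careful geometric tracking of the Boardman-Vogt and bimodule parameters through the equivalences $\xi_{1}$, $\xi_{2}$ and $\iota$ of Propositions \ref{d5}--\ref{d6}. Once this $\mathcal{SC}_{1}$-equivariance is in hand, Theorem \ref{f3} follows from the componentwise assembly above.
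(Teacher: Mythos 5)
Your proposal is correct and takes essentially the same approach as the paper: combine Theorem \ref{H1} applied to $\eta_{c}:O\to O'_{c}$ for the first component with Theorem \ref{d9} applied to $\eta:\mathcal{L}(O)\to O'$ for the second, then observe that the resulting equivalences assemble into an equivalence of $\mathcal{SC}_{1}$-algebras. The paper's own proof is likewise just the remark that the statement is a direct consequence of Theorems \ref{H1} and \ref{d9}, with the same informal appeal to compatibility of the loop and relative loop structures; if anything, you are more explicit than the paper about what the $\mathcal{SC}_{1}$-equivariance verification entails and where it would be carried out (the intermediate model $Op[O\,;\,\mathcal{BV}(O)](\mathcal{BV}_{\emptyset}(O)\,;\,O')$).
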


\begin{cor}\label{f6}
Let $\eta_{1}:O\rightarrow O'$ be a map of operads and $\eta_{2}:O'\rightarrow M$ be a map of $O'$-bimodules. Assume that $O$ is a well pointed $\Sigma$-cofibrant operad and the spaces $O(1)$ and $O'(1)$ are contractible. Then, the pair of topological spaces
$$
\big(\, Bimod^{h}_{O}(O\,;\, O')\,\,;\,\,  Bimod^{h}_{O}(O\,;\, M)\,\big)
$$  
is weakly equivalent to the $\mathcal{SC}_{1}$-algebra
$$
\left(\,\,
\Omega\big(\,  Operad^{h}(O\,;\,O')\,\big)\,\,;\,\,\Omega\big(\,\, Operad^{h}(\,O\,;\,O'\,)\,\,;\,\, Op[O\,;\,\emptyset]^{h}(\,\mathcal{L}(O;O;O)\,\,;\,\,\mathcal{L}(M;O';O')\,)\,\,\big)\,\, 
\right).
$$
\end{cor}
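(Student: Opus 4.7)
The plan is to deduce the corollary from Theorem \ref{f3} by producing an appropriate object of $Op[O\,;\,O]$ whose restriction to the colour $c$ is $O'$ and whose image under $\mathcal{R}$ is $M$. The natural candidate is $O'' := \mathcal{L}(M\,;\,O'\,;\,O')$, built via Construction \ref{i7}. By construction one has $O''_{c}(n) = \mathcal{L}(M\,;\,O'\,;\,O')(c,\ldots,c\,;\,c) = O'(n)$ and $\mathcal{R}(O'')(n) = \mathcal{L}(M\,;\,O'\,;\,O')(c,\ldots,c\,;\,o) = M(n)$, with the expected $(O'\text{-}O')$-bimodule structure arising from the operadic compositions. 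Thus the $c$-restriction recovers $O'$ and the functor $\mathcal{R}$ recovers $M$.

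Next I need to promote $O'' = \mathcal{L}(M\,;\,O'\,;\,O')$ to an object of $Op[O\,;\,O]$ and exhibit the required map from $\mathcal{L}(O) = \mathcal{L}(O\,;\,O\,;\,O)$. The canonical structure map $\tau_{M}: O'\oplus O' \to \mathcal{L}(M\,;\,O'\,;\,O')$ together with the operad map $\eta_{1}\oplus \eta_{1}: O\oplus O \to O'\oplus O'$ gives $O''$ the structure of an object in $Op[O\,;\,O]$. For the morphism $\mathcal{L}(O)\to O''$, I apply the functoriality described in Remark \ref{F8}: taking $f_{p}=f_{q}=\eta_{1}$ and $f_{m} = \eta_{2}\circ\eta_{1}: O \to M$, one must check the two commutative squares \eqref{f2}. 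Commutativity of the left square expresses that $\eta_{2}\circ\eta_{1}$ intertwines the left $O$-action on $O$ (viewed as an $O$-bimodule over itself) with the left $O'$-action on $M$, which follows since $\eta_{1}$ is an operad map and $\eta_{2}$ is an $O'$-bimodule map; commutativity of the right square is analogous for the right action. Thus Remark \ref{F8} supplies a morphism $\eta: \mathcal{L}(O\,;\,O\,;\,O) \to \mathcal{L}(M\,;\,O'\,;\,O')$ in $Op[O\,;\,O]$.

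Finally I verify the hypotheses of Theorem \ref{f3} for this $\eta$. The operad $O$ is well pointed and $\Sigma$-cofibrant by assumption. The space $O(1)$ is contractible by assumption, and $O''_{c}(1) = O'(1)$ is contractible by assumption. Therefore Theorem \ref{f3} applies and yields a weak equivalence of $\mathcal{SC}_{1}$-algebras between
$$
\big(\,Bimod^{h}_{O}(O\,;\,O''_{c})\,;\,Bimod^{h}_{O}(O\,;\,\mathcal{R}(O''))\,\big)
$$
and
$$
\big(\,\Omega\, Operad^{h}(O\,;\,O''_{c})\,;\,\Omega\big(\,Operad^{h}(O\,;\,O''_{c})\,;\,Op[O\,;\,\emptyset]^{h}(\mathcal{L}(O\,;\,O\,;\,O)\,;\,O'')\,\big)\,\big).
$$
Substituting $O''_{c} = O'$, $\mathcal{R}(O'') = M$, and $O'' = \mathcal{L}(M\,;\,O'\,;\,O')$ gives exactly the statement of the corollary.

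The only non-routine point is the verification that the coloured operad $\mathcal{L}(M\,;\,O'\,;\,O')$ deserves the identifications $O''_{c} = O'$ and $\mathcal{R}(O'') = M$ \emph{as $O'$-operad and $(O'\text{-}O')$-bimodule respectively}, i.e.\ that the operadic compositions of $\mathcal{L}(M\,;\,O'\,;\,O')$ restricted to mixed colours reproduce the original left and right actions of $O'$ on $M$; this follows directly from the definition of the operadic composition in Construction \ref{i7} and the identification illustrated in Figure \ref{F3}. No further delooping arguments are required, since Theorem \ref{f3} has already absorbed all the homotopical input.
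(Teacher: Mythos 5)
Your proposal is correct and follows exactly the route taken in the paper: apply Remark \ref{F8} with $f_p = f_q = \eta_1$ and $f_m = \eta_2\circ\eta_1$ to produce the morphism $\mathcal{L}(O;O;O)\to\mathcal{L}(M;O';O')$ in $Op[O\,;\,O]$, observe that the $c$-restriction and $\mathcal{R}$ of the target recover $O'$ and $M$ respectively, and then invoke Theorem \ref{f3}. The paper states this in two sentences and leaves the verification of the diagrams \eqref{f2} and the identifications $O''_c=O'$, $\mathcal{R}(O'')=M$ implicit; you have filled in exactly those routine checks.
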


\begin{proof}
As shown in Remark \ref{F8}, the maps $\eta_{1}:O\rightarrow O'$ and $\eta_{2}:O'\rightarrow M$ induce a map of $\{o\,;\,c\}$-operads $f:\mathcal{L}(O;O;O)\rightarrow\mathcal{L}(M;O';O')$. So, the corollary is a consequence of Theorem \ref{f3} applied to the map $f$.
\end{proof}

\subsection{Truncated version of the relative delooping}\label{H2}

In what follows, we denote by $T_{k}Op[P\,;\,Q]$ the category of $k$-truncated $\{o\,;\,c\}$-operads under $T_{k}(P\oplus Q)$. This section is devoted to the proof of the following theorem which is a truncated version of Theorem \ref{d9}:

\begin{thm}\label{G3}
Let $O$ be a well pointed operad and $\eta:\mathcal{L}(O)\rightarrow O'$ be a map in the category $Op[O\,;\,O]$. If $O$ is $\Sigma$-cofibrant, then the following weak equivalence holds:
\begin{equation*}
T_{k}Bimod_{O}^{h}(T_{k}(O)\,;\,T_{k}(\mathcal{R}(O')))  \simeq  \Omega\big(\, T_{k}Operad^{h}(T_{k}(O)\,;\,T_{k}(O'_{c}))\,;\,T_{k}Op[O\,;\,\emptyset]^{h}(T_{k}(\mathcal{L}(O))\,;\,T_{k}(O'))\,\big).
\end{equation*}
\end{thm}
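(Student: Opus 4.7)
The plan is to mimic the four-step structure used for Theorem \ref{d9} by systematically replacing every resolution by its $k$-truncated analogue. First I would construct the truncated adjunction
$$T_{k}\mathcal{L}:T_{k}Bimod_{P\text{-}Q}\leftrightarrows T_{k}Op[P\,;\,Q]:T_{k}\mathcal{R},$$
by running Construction \ref{i7} on $T_{k}Seq(\{o\,;\,c\})$; one only keeps planar two-level trees $T\in\Psi(I_c,I_o)$ with $|I_c|+|I_o|\leq k$ and with each non-root vertex having at most $k$ incoming edges, which is exactly what the truncation of the operad structure sees. The same argument as in Section \ref{f4} shows that $(T_{k}\mathcal{L},T_{k}\mathcal{R})$ is a Quillen pair once we endow $T_{k}Op[P\,;\,Q]$ with the model structure transferred from $T_{k}Operad_{\{o\,;\,c\}}$ (this transfer is completely parallel to Theorem \ref{b4}).

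Next I would introduce the truncated Boardman-Vogt resolutions $T_{k}(\mathcal{B}_{k}(O))$ and $T_{k}(\mathcal{B}_{\emptyset}(O)_{k})$, which are cofibrant replacements of $T_{k}(O)$ in $T_{k}Bimod_{O}$ and $T_{k}Bimod_{O\text{-}\mathcal{BV}(O)}$ respectively, by Theorem \ref{B7} and the truncated version of Lemma \ref{e1} (the proofs go through unchanged because the filtrations $\mathcal{B}_{k}(O)[l]$ and $\mathcal{B}_{\emptyset}(O)_{k}[l]$ are built only from trees with at most $k$ geometric inputs and at most $k$ incoming edges per pearl, so they live in the truncated categories). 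Applying the left adjoint $T_{k}\mathcal{L}$ then produces a cofibrant replacement
$$T_{k}\mathcal{BV}_{\emptyset}(O):=T_{k}\mathcal{L}\bigl(T_{k}(\mathcal{B}_{\emptyset}(O)_{k});T_{k}(O);T_{k}(\mathcal{BV}(O))\bigr)$$
of $T_{k}(\mathcal{L}(O;O;O))$ in $T_{k}Op[O\,;\,\emptyset]$; the truncated analogue of Lemma \ref{F0}, namely that $T_{k}(\iota''):T_{k}(O)\to T_{k}(\mathcal{B}_{\emptyset}(O)_{k})$ is a deformation retract (the contracting homotopy from Lemma \ref{F0} and from the proof of Theorem \ref{B7} respects geometric inputs only after first killing the parameters above the section through the map $C_{k}$), yields the required weak equivalence. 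This establishes the truncated counterpart of Theorem \ref{d3}:
$$T_{k}Bimod_{O}^{h}(T_{k}(O)\,;\,T_{k}(\mathcal{R}(O')))\simeq T_{k}Op[O\,;\,O]^{h}\bigl(T_{k}(\mathcal{L}(O;O;O))\,;\,T_{k}(O')\bigr).$$

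For the identification with a relative loop space, I would introduce the intermediate space $T_{k}Op[O\,;\,\mathcal{BV}(O)](T_{k}\mathcal{BV}_{\emptyset}(O);T_{k}(O'))$ and prove, exactly as in Proposition \ref{d5}, that it is weakly equivalent to $T_{k}Op[O\,;\,O](T_{k}\mathcal{L}(T_{k}\mathcal{B}_{k}(O));T_{k}(O'))$ via the bijection $\xi=\xi_{1}\circ\xi_{2}^{-1}$ induced by the coordinate change $t'_{e}=(t_{t(e)}-t_{s(e)})/(t_{t(e)}-1)$ and the equivalence relation \eqref{E7}; the tower-of-fibrations argument then runs identically, using the filtrations $T_{k}(\mathcal{B}_{k}(O)[l])$ and $T_{k}(\mathcal{B}_{\emptyset}(O)_{k}[l])$ which are now finite in each arity. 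Then, as in Proposition \ref{d6}, I would define a filtration $T_{k}\mathcal{BV}_{\emptyset}(O)_{k}[l]$ of $T_{k}\mathcal{BV}_{\emptyset}(O)$ and build the map
$$\iota_{k}:T_{k}Op[O\,;\,\mathcal{BV}(O)](T_{k}\mathcal{BV}_{\emptyset}(O)\,;\,T_{k}(O'))\longrightarrow\Omega\bigl(T_{k}Operad(T_{k}\mathcal{BV}(O)\,;\,T_{k}(O'_{c}))\,;\,T_{k}Op[O\,;\,\emptyset](T_{k}\mathcal{BV}_{\emptyset}(O)\,;\,T_{k}(O'))\bigr),$$
and verify it is a weak equivalence on each stage of the induced towers of fibrations by the same fibrewise homotopies $H$ and $H'$ of Proposition \ref{d6}.

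The main obstacle I expect is bookkeeping at the boundary of the truncation: the homotopy $C_{k}$ used in the proof of Theorem \ref{B7} to collapse univalent vertices above the section must be combined with the coordinate change $\xi$ without leaving $T_{k}\mathcal{B}_{\emptyset}(O)_{k}$, because grafting two truncated trees can push the total number of geometric inputs above $k$; one therefore needs to check that every boundary pattern in $\partial V_{k}[l]$, $\partial W'_{k}[l]$ and $\partial Y_{k}[l]$ still has at most $k$ geometric inputs, so that the pushout diagrams defining $T_{k}\mathcal{BV}_{\emptyset}(O)_{k}[l]$ compose correctly. Once this compatibility is verified, the rest of the proof is a routine transcription of Sections \ref{f4}--\ref{F2} to the truncated setting.
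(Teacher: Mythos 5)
Your overall strategy (mimic the four-step proof of Theorem \ref{d9}, replace each resolution by a truncated analogue, and rerun the tower-of-fibrations arguments of Propositions \ref{d5} and \ref{d6}) is the right one, and the obstacle you flag at the end is real. But you do not resolve it, and it is precisely the resolution of this obstacle that constitutes the new content of Section \ref{H2}.

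Concretely, you propose to use the object $T_{k}(\mathcal{B}_{\emptyset}(O)_{k})$ with the filtration $\{\mathcal{B}_{\emptyset}(O)_{n}\}$ of Section \ref{h1}, in which a composite point is placed in level $n$ according to the sizes of \emph{all} of its prime components. This filtration does not work for the truncated theorem: when you run the induction over the number of vertices in Lemma \ref{G2}, the levels $\mathcal{B}_{\emptyset}(O)_{n}$ for $n<k$ are not stable under the right $\mathcal{BV}_{k}(O)$-action (grafting a tree from $\mathcal{BV}_{k}(O)$ can add prime components with up to $k$ geometric inputs, pushing the point out of $\mathcal{B}_{\emptyset}(O)_{n}$). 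The paper therefore introduces the modified filtration $\mathcal{B}^{k}_{\emptyset}(O)_{n}$ in which composite points are allowed whenever each prime component lies either in $\mathcal{B}^{k}_{\emptyset}(O)_{n}$ \emph{or} in $\mathcal{BV}_{k}(O)$. This makes each $\mathcal{B}^{k}_{\emptyset}(O)_{n}$ a genuine $(O\text{-}\mathcal{BV}_{k}(O))$-bimodule, so the pushout squares (\ref{G1}) and the corresponding towers of fibrations live in the right category, and so that the restriction to the closed colour of $\mathcal{BV}_{\emptyset}^{k}(O)=\mathcal{L}(\mathcal{B}^{k}_{\emptyset}(O)_{k};O;\mathcal{BV}_{k}(O))$ is exactly $\mathcal{BV}_{k}(O)$, matching the model $Operad(\mathcal{BV}_{k}(O);O'_{c})$ of $T_{k}Operad^{h}(T_{k}(O);T_{k}(O'_{c}))$. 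You would also need the separate argument of Lemma \ref{G2} that the contracting homotopy preserves the $k$-th filtration term: the homotopy $C_{k}$ can only be used after observing that univalent vertices above the section do not change the geometric-input count, and that the vertices of $H_{2}(T)$ can first be brought to $1$; a naive transcription of the unfiltered retraction would leave $T_{k}(\mathcal{B}^{k}_{\emptyset}(O)_{k})$.

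A secondary point is one of economy rather than correctness: you propose to build a genuine truncated Quillen adjunction $(T_{k}\mathcal{L},T_{k}\mathcal{R})$ and carry out the entire proof inside the truncated categories. The paper instead exploits the adjunctions $\mathcal{F}^{k}_{B}\dashv T_{k}$ and $\mathcal{F}^{k}\dashv T_{k}$ (see the remarks after Theorems \ref{B7} and \ref{B9}) to identify each truncated derived mapping space with an \emph{untruncated} strict mapping space out of the filtration object $\mathcal{B}_{k}(O)$, $\mathcal{BV}_{k}(O)$ or $\mathcal{BV}^{k}_{\emptyset}(O)$. This lets it reuse the untruncated adjunction $(\mathcal{L},\mathcal{R})$ verbatim (step $(2)$ of the proof of Theorem \ref{G3}) and avoid rebuilding the functor $\mathcal{L}$ in each truncated category. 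Your route is not wrong, but it creates exactly the kind of bookkeeping difficulties you worry about at the end, whereas the paper sidesteps them.
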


For this purpose, we need a cofibrant replacement of $T_{k}(\mathcal{L}(O;O;O))$ in the category $T_{k}Op[O\,;\,\mathcal{BV}_{k}(O)]$. So, we change slightly the filtration of $\mathcal{B}_{\emptyset}(O)$ introduced in Section \ref{h1}. The definition of prime points and composite points in $\mathcal{B}_{\emptyset}(O)$ are still the same. Nevertheless, we consider two kinds of prime component. First, there are prime components in $\mathcal{B}_{\emptyset}(O)$ considered in Section \ref{h1}. Then, there are prime components in $\mathcal{BV}(O)$ coming from sub-trees above the section whose trunks are indexed by $1$. So, a prime point is said to be in the $n$-th filtration term $\mathcal{B}^{k}_{\emptyset}(O)_{n}$ if the number of its geometrical inputs is smaller than $n$. Besides, a composite point is said to be in the $n$-th filtration term if its prime components are in $\mathcal{B}^{k}_{\emptyset}(O)_{n}$ or $\mathcal{BV}_{k}(O)$. Finally, $\mathcal{B}^{k}_{\emptyset}(O)_{n}$ is a ($O\text{-}\mathcal{BV}_{k}(O)$) bimodule and we denote by $\mathcal{BV}_{\emptyset}^{k}(O)$ the following $\{o\,;\,c\}$-operad:
$$
\mathcal{BV}_{\emptyset}^{k}(O):=\mathcal{L}(\mathcal{B}^{k}_{\emptyset}(O)_{k}\,;\,O\,;\,\mathcal{BV}_{k}(O)).
$$

\begin{lmm}\label{G2}
$T_{k}(\mathcal{B}_{\emptyset}^{k}(O)_{k})$ is a cofibrant replacement of $T_{k}(O)$ in the category of $k$ truncated ($O$-$\mathcal{BV}_{k}(O)$) bimodules.
\end{lmm}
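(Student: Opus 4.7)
The plan is to parallel the proof of Lemma \ref{e1}, adapted to the truncated setting with the modified filtration, by performing two tasks: (i) constructing an explicit weak equivalence $\mu^k : T_k(\mathcal{B}^k_{\emptyset}(O)_k) \to T_k(O)$ in $T_k Bimod_{O\text{-}\mathcal{BV}_k(O)}$, and (ii) establishing cofibrancy via a double filtration argument.

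For the weak equivalence, I would define $\mu^k$ on a representative $[T\,;\,\{t_u\}\,;\,\{a_v\}]$ by first applying the map $\mu:\mathcal{BV}_k(O) \to T_k(O)$ from (\ref{d8}) on each prime component above the section (equivalently, setting the parameters of all inner edges above the section to $0$), and then bringing the parameters $t_v$ indexing the vertices below the section to $0$, thereby collapsing the tree to a pearl corolla labelled by the composed point in $T_k(O)$. To show this is a weak equivalence, I would first use the homotopy $C_k$ from the proof of Theorem \ref{B7}, which replaces the parameter at each univalent vertex above the section by $t_{D(v)}$, reducing to the subspace of points without univalent vertices above the section; this step is essential because the naive homotopy bringing parameters to $0$ does not preserve the number of geometrical inputs, while $C_k$ does. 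Once univalent vertices are eliminated, the remaining homotopies (bringing parameters below the section to $0$, and the homotopy underlying the weak equivalence of Theorem \ref{B9}) stay within the $k$-truncated category.

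For cofibrancy, I would introduce a secondary filtration by number of vertices within each geometrical-input stratum, exactly as in the proof of Lemma \ref{e1}. Specifically, define sub-sequences $W^k_n[l]$ and $\partial W^k_n[l]$ analogous to $W_n[l]$ and $\partial W_n[l]$ of the proof of Lemma \ref{e1}, but restricted to trees with section satisfying $n\leq k$ geometrical inputs total and such that the subtrees above each pearl are coloured by points in $\mathcal{BV}_k(O)$, a restriction that is automatic from the bound on total leaves. After adjoining basepoints over $O_0$ to obtain pointed sequences $\tilde W^k_n[l]$ and $\partial \tilde W^k_n[l]$, the filtration steps arise as pushout diagrams
$$
\xymatrix@R=18pt{
T_k\mathcal{F}_B(\partial \tilde W^k_n[l]) \ar[r]\ar[d] & T_k\mathcal{F}_B(\tilde W^k_n[l])\ar[d] \\
T_k(\mathcal{B}^k_{\emptyset}(O)_n[l-1]) \ar[r] & T_k(\mathcal{B}^k_{\emptyset}(O)_n[l])
}
$$
in $T_k Bimod_{O\text{-}\mathcal{BV}_k(O)}$. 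The left vertical map is a cofibration provided the inclusion $\partial W^k_n[l] \to W^k_n[l]$ is a $\Sigma$-cofibration, which follows from the same pushout product arguments (Lemmas \ref{B3} and \ref{B4}) used in the proof of Theorem \ref{B7}, exploiting the well-pointedness and $\Sigma$-cofibrancy of $O$ and hence of $\mathcal{BV}_k(O)$ (which is $\Sigma$-cofibrant as a cofibrant operad by Theorem \ref{B9}). Iterating over $l$ and then $n\leq k$, starting from the initial $(O\text{-}\mathcal{BV}_k(O))$-bimodule, yields cofibrancy of $T_k(\mathcal{B}^k_{\emptyset}(O)_k)$.

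The main obstacle is to ensure that this filtration genuinely produces cofibrations in the $k$-truncated bimodule category while simultaneously handling the additional prime components lying in $\mathcal{BV}_k(O)$ above the section: one must verify that restricting to trees with total leaf count $\leq k$ is compatible with both the free functor $T_k\mathcal{F}_B$ and the right action of $\mathcal{BV}_k(O)$, so that the generating cofibrations never require arities beyond $k$. This compatibility is built into the construction of $\mathcal{B}^k_{\emptyset}(O)_k$, where the prime components above the section are by definition in $\mathcal{BV}_k(O)$, but its verification at each stage of the filtration is where the argument must be carried out most carefully.
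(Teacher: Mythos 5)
Your cofibrancy argument via a double filtration by geometrical inputs and vertex count parallels what the paper invokes by deferring to Lemma \ref{e1}, and is sound. The gap is in the weak equivalence argument. You propose to first apply the homotopy $C_{k}$ from the proof of Theorem \ref{B7} to eliminate univalent vertices above the section, and then run the naive collapse. But $C_{k}$ is defined by the formula $t_{v}\mapsto t\,t_{D(v)}+(1-t)\,t_{v}$ acting on the \emph{vertex} parameters $\{t_{v}\}_{v\in V^{u}(T)}$, and these exist only in the resolution $\mathcal{B}(M)$ of Construction \ref{b7}. In $\mathcal{B}^{k}_{\emptyset}(O)$, built from Construction \ref{h4}, the vertices above the section carry no parameter at all; the parameters above the section live on the inner edges, $\mathcal{BV}$-style, with no monotonicity constraint. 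So $C_{k}$ cannot be applied as stated, and the obvious surrogate (push $t_{e_{0}(v)}$ to $0$ for each univalent $v$) does not iterate to a single homotopy: after contracting $e_{0}(v)$ the vertex $t(e_{0}(v))$ may itself become univalent, and pushing cut edges (those at $1$) downward merges prime components of $\mathcal{BV}_{k}(O)$ and can change the modified filtration stratum. This adaptation would need to be spelled out and is not automatic.

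The paper sidesteps this entirely. It does not build a global homotopy to $T_{k}(O)$; instead it takes the restriction $T_{k}(\mu''_{k})$ of the retraction $\mu''$ from Lemma \ref{F0} (landing on the image of $\iota''$, i.e.\ parameters $1$ below the section and nothing above, not on pearl corollas as your $\mu^{k}$ does), and proves it is a weak equivalence indirectly: it filters $T_{k}(\mathcal{B}^{k}_{\emptyset}(O)_{k})$ by the number $l$ of vertices above the section, identifies $T_{k}(\mathcal{B}^{k}_{\emptyset}(O)_{k})[0]$ with $T_{k}(O)$ via the identification (\ref{E8}), shows each step $[l-1]\to[l]$ is an acyclic $\Sigma$-cofibration by exhibiting the pushout (\ref{G1}) and checking that the polytope pair $(H_{2}(T)\,;\,H_{2}^{-}(T))$ is a pair of contractible CW-complexes — with the univalent-vertex case handled locally, on a fixed tree $T$, by zeroing the parameter on the output edge of the univalent vertex before sending the rest to $1$ — and then concludes by the two-out-of-three axiom. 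You should either carry out the nontrivial adaptation of $C_{k}$ to the edge-parametrized setting, or adopt this local filtration argument.
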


\begin{proof}
Similarly to the proof of Lemma \ref{e1}, $\mathcal{B}^{k}_{\emptyset}(O)_{k}$ is cofibrant in the category of ($O$-$\mathcal{BV}_{k}(O)$) bimodules. So, $T_{k}(\mathcal{B}_{\emptyset}^{k}(O)_{k})$ is cofibrant in the category of $k$ truncated ($O$-$\mathcal{BV}_{k}(O)$) bimodules. Furthermore, the restriction of the homotopy inverse introduced in Lemma \ref{F0} gives rise a map $\mu'': \mathcal{B}^{k}_{\emptyset}(O)_{k} \rightarrow O$. In what follows, we prove that $T_{k}(\mu'')$ is a weak equivalence.

Let $T_{k}(\mathcal{B}_{\emptyset}^{k}(O)_{k})[l]$ be the sub-sequence formed by points having at most $l$ vertices above the section. Due to the axiom $(iii)$ of Construction \ref{b7}, $T_{k}(\mathcal{B}_{\emptyset}^{k}(O)_{k})[l]$ doesn't have univalent pearl. From the identification (\ref{E8}), $T_{k}(\mathcal{B}_{\emptyset}^{k}(O)_{k})[0]$ is equivalent to its sub-sequence formed by points whose pearls are labelled by the unit $\ast_{O}\in O(1)$. Consequently,  $T_{k}(\mathcal{B}_{\emptyset}^{k}(O)_{k})[0]$ is homotopy equivalent to $T_{k}(O)$ and the homotopy consists in bringing the real numbers indexing the vertices below the section to $1$.

According to the notation introduced in Section  \ref{B6}, let $\textbf{stree}_{k\,;\,l}$ be the set of planar trees with section having at most $k$ leaves and exactly $l$ vertices above the section. Similarly to the proof of Theorem \ref{B7}, on has the pushout diagram 
\begin{equation}\label{G1}
\xymatrix{
\underset{[T\,;\,V^{p}(T)]}{\coprod} (H_{2}\times \underline{M})^{-}(T) \underset{Aut(T\,;\,V^{p}(T))}{\times} \Sigma_{|T|}  \ar[r]\ar[d] & \underset{[T\,;\,V^{p}(T)]}{\coprod} (H_{2}(T)\times \underline{M}(T)) \underset{Aut(T\,;\,V^{p}(T))}{\times} \Sigma_{|T|}\ar[d] \\
T_{k}(\mathcal{B}_{\emptyset}^{k}(O)_{k})[l-1] \ar[r] & T_{k}(\mathcal{B}_{\emptyset}^{k}(O)_{k})[l] 
}
\end{equation}
where the disjoint union is along the isomorphism classes of $\textbf{stree}_{k\,;\,l}$. The space $H_{2}(T)$ is the space of parametrizations of the vertices below the section and the inner edges above the section by real numbers satisfying the restriction coming from the construction of $T_{k}(\mathcal{B}_{\emptyset}^{k}(O)_{k})$. Similarly, $H_{2}^{-}(T)$ is the subspace of $H_{2}(T)$ having at least one inner edge above the section indexed by $0$.

Similar arguments used in the proof of Theorem \ref{B7} imply that the horizontal maps of Diagram (\ref{G1}) are acyclic $\Sigma$-cofibrations if the inclusion from $H_{2}^{-}(T)$ to $H_{2}(T)$ is an acyclic cofibration. Since $H_{2}^{-}(T)\rightarrow H_{2}(T)$ is an inclusion of $CW$-complex, this map is a cofibration. Moreover, $H_{2}(T)$ is obviously contractible and the homotopy consists in bringing the parameters to $1$. For the space $H_{2}^{-}(T)$, there are two cases to consider. First, there is no univalent vertex above the section. In that case, $H_{2}^{-}(T)$ is contractible and the homotopy is the same as the previous one. 

In the second case, there is at least one univalent vertex above the section. Hence, the homotopy consists in bringing the real number indexing the output edge of the univalent vertex to $0$. This homotopy is well define since  univalent vertices doesn't change the number of geometrical inputs. Thereafter, we bring all the other vertices to $1$. 
Finally, we proved that the map $T_{k}(\mathcal{B}_{\emptyset}^{k}(O)_{k})[l-1]\rightarrow T_{k}(\mathcal{B}_{\emptyset}^{k}(O)_{k})[l]$ is an acyclic $\Sigma$-cofibration. In particular, one has 
$$
\xymatrix{
T_{k}(\mathcal{B}_{\emptyset}^{k}(O)_{k})[0] \ar[r]^{\simeq} \ar@/^-1pc/[rr]_{\simeq} & T_{k}(\mathcal{B}_{\emptyset}^{k}(O)_{k}) \ar[r]^{\hspace{10pt}T_{k}(\mu''_{k})} & T_{k}(O).
}
$$
From the \textbf{2to3} axiom in model category theory, $T_{k}(\mu''_{k})$ is a weak equivalence in the category of $k$ truncated sequences. Finally, it is also a weak equivalence in the category of $k$ truncated ($O$-$\mathcal{BV}_{k}(O)$) bimodules. 
\end{proof}

\begin{lmm}\label{G4}
$T_{k}(\mathcal{BV}_{\emptyset}^{k}(O))$ is a cofibrant replacement of $T_{k}(\mathcal{L}(O))$ in the category $T_{k}Op[O\,;\,\emptyset]$.
\end{lmm}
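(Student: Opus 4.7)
The plan is to mimic Proposition \ref{d4} in the truncated setting, with Lemma \ref{G2} playing the role that Lemma \ref{e1} played before. First I would verify that Construction \ref{i7} behaves well under truncation: because the trees in $\Psi(I_{c},I_{o})$ are two-level and carry at most one vertex (the root) labelled by a point of $P$, the arity of $\mathcal{L}(M;P;Q)$ in colour $o$ is controlled by the arities of $M$, while its restriction to colour $c$ equals $Q$. Consequently the functor $\mathcal{L}(-;O;\mathcal{BV}_{k}(O))$ descends to a left Quillen functor $T_{k}Bimod_{O\text{-}\mathcal{BV}_{k}(O)}\rightarrow T_{k}Op[O\,;\,\mathcal{BV}_{k}(O)]$. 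Applying this to the cofibrant object $T_{k}(\mathcal{B}_{\emptyset}^{k}(O)_{k})$ provided by Lemma \ref{G2} yields that $T_{k}(\mathcal{BV}_{\emptyset}^{k}(O))$ is cofibrant in $T_{k}Op[O\,;\,\mathcal{BV}_{k}(O)]$.

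Second, to descend cofibrancy from $T_{k}Op[O\,;\,\mathcal{BV}_{k}(O)]$ to $T_{k}Op[O\,;\,\emptyset]$, I would factor the structural map as
$$
T_{k}(O\oplus \emptyset) \longrightarrow T_{k}(O\oplus \mathcal{BV}_{k}(O)) \longrightarrow T_{k}(\mathcal{BV}_{\emptyset}^{k}(O)).
$$
By Theorem \ref{B9}, $T_{k}(\mathcal{BV}_{k}(O))$ is a cofibrant replacement of $T_{k}(O)$ in $T_{k}Operad$, so the first arrow is a cofibration of $k$-truncated $\{o\,;\,c\}$-operads. The second arrow is a cofibration by the previous step, and the composite exhibits $T_{k}(\mathcal{BV}_{\emptyset}^{k}(O))$ as cofibrant in $T_{k}Op[O\,;\,\emptyset]$, exactly as in the non-truncated Proposition \ref{d4}.

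Third, for the weak equivalence $T_{k}(\mathcal{BV}_{\emptyset}^{k}(O))\rightarrow T_{k}(\mathcal{L}(O;O;O))$, I would assemble it from the pieces already available. A point in $\mathcal{BV}_{\emptyset}^{k}(O)$ is indexed by a tree in $\Psi$ whose root is labelled by a point of $O$, whose non-root vertices in colour $o$ are labelled by points of $\mathcal{B}_{\emptyset}^{k}(O)_{k}$, and whose vertices in colour $c$ are labelled by points of $\mathcal{BV}_{k}(O)$. The truncated map $T_{k}(\mu''):T_{k}(\mathcal{B}_{\emptyset}^{k}(O)_{k})\rightarrow T_{k}(O)$ shown to be a weak equivalence in the course of Lemma \ref{G2}, together with the truncation of the weak equivalence $\mu:\mathcal{BV}_{k}(O)\rightarrow O$ coming from Theorem \ref{B9}, induce vertex-wise a map of $\{o\,;\,c\}$-operads
$$
T_{k}(\mathcal{BV}_{\emptyset}^{k}(O))\longrightarrow T_{k}(\mathcal{L}(O;O;O))
$$
compatible with the structural maps from $T_{k}(O\oplus \emptyset)$. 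Checking that this map is a weak equivalence reduces, via the filtration of $\mathcal{BV}_{\emptyset}^{k}(O)$ by number of geometrical inputs and of vertices introduced in Section \ref{F2}, to the two input weak equivalences.

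The main technical obstacle is the bookkeeping involved in verifying compatibility of the truncation with the functor $\mathcal{L}$ and with the two Boardman--Vogt filtrations. Concretely, one must check that the pushout squares used to build $\mathcal{BV}_{\emptyset}^{k}(O)$ one cell at a time remain pushouts after applying $T_{k}$, and that a composite vertex labelled by $\ast_{O}$ or a univalent pearl above the section does not increase the number of geometrical inputs beyond $k$. These verifications run along the same lines as the proofs of Theorems \ref{B7} and \ref{B9}, so once the compatibility of $\mathcal{L}$ with $T_{k}$ is pinned down, the remainder of the argument is a direct transcription of Proposition \ref{d4}.
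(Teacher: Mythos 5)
Your argument is correct and takes essentially the same route as the paper: use Lemma \ref{G2} together with the Quillen adjunction $(\mathcal{L}\,;\,\mathcal{R})$ to establish cofibrancy, use Theorem \ref{B9} and the factorization $T_{k}(O\oplus\emptyset)\to T_{k}(O\oplus\mathcal{BV}_{k}(O))\to T_{k}(\mathcal{BV}^{k}_{\emptyset}(O))$ to descend cofibrancy to $T_{k}Op[O\,;\,\emptyset]$, and build the weak equivalence from $T_{k}(\mu'')$ and $T_{k}(\mu_{k})$ via the functoriality of Remark \ref{F8}, verified by the filtration argument of Lemma \ref{G2}. The only noticeable difference is the order of operations: you truncate first and then apply a truncated version of $\mathcal{L}$, whereas the paper first establishes cofibrancy of the untruncated $\mathcal{BV}^{k}_{\emptyset}(O)$ in $Op[O\,;\,\emptyset]$ and then passes to $T_{k}$; both depend on the same compatibility of the Boardman--Vogt cell filtration with arity truncation, which you rightly flag as the main bookkeeping point.
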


\begin{proof}
Since the functor $\mathcal{L}$ preserves cofibrant objects, $\mathcal{BV}_{\emptyset}^{k}(O)$ is cofibrant in the categories $Op[O\,;\,\mathcal{BV}_{k}(O)]$ and $Op[O\,;\,\emptyset]$. Consequently, $T_{k}(\mathcal{BV}_{\emptyset}^{k}(O))$ is also cofibrant in $T_{k}Op[O\,;\,\emptyset]$. Furthermore, the restriction of the homotopy inverse introduced in Lemma \ref{F0} gives rise a map $\mu'': \mathcal{B}^{k}_{\emptyset}(O)_{k} \rightarrow O$. There is also a map $\mu_{k}:\mathcal{BV}_{k}(O) \rightarrow O$ which sends the real numbers indexing the inner edges to $0$. As shown in Remark \ref{F8}, the maps $\mu''$ and $\mu_{k}$ induce an $\{o\,;\,c\}$-operadic map $\beta: \mathcal{BV}_{\emptyset}^{k}(O)\rightarrow \mathcal{L}(O)$. Similarly to the proof of Lemma \ref{G2}, the reader can check that the map $T_{k}(\beta)$ is a weak equivalence. 
\end{proof}

\begin{proof}[Proof of Theorem \ref{G3}]
Since the arguments are the same as in the previous sections, we only give the main steps of the proof. One has the following weak equivalences:
$$
\begin{array}{rclc}\vspace{4pt}
T_{k}Bimod_{O}^{h}(T_{k}(O)\,;\,T_{k}(\mathcal{R}(O'))) & \simeq & Bimod_{O}(\mathcal{B}_{k}(O)\,;\,\mathcal{R}(O')), & (1) \\ \vspace{4pt}
 & \simeq & Op[O\,;\,O](\mathcal{L}(\mathcal{B}_{k}(O);O;O)\,;\,O'), & (2)\\ \vspace{4pt}
 & \simeq & Op[O\,;\,\mathcal{BV}_{k}(O)](\mathcal{BV}_{\emptyset}^{k}(O)\,;\,O'), & (3)\\ \vspace{4pt}
 & \simeq & \Omega \big(\, Operad(\mathcal{BV}_{k}(O)\,;\,O_{c})\,\,;\,\, Op[O\,;\,\emptyset](\mathcal{BV}_{\emptyset}^{k}(O)\,;\,O')\,\big), & (4)\\ 
 & \simeq & \Omega\big(\, T_{k}Operad^{h}(T_{k}(O)\,;\,T_{k}(O'_{c}))\,;\,T_{k}Op[O\,;\,\emptyset]^{h}(T_{k}(\mathcal{L}(O))\,;\,T_{k}(O'))\,\big).& (5)
\end{array} 
$$ 
Steps $(1)$ and $(5)$ are consequences of the cofibrant replacements introduced in Theorem \ref{B7} and Lemma\ref{G4} respectively. Step $(2)$ is a consequence of the adjunction $(\mathcal{L}\,;\,\mathcal{R})$. Finally, the proofs of steps $(3)$ and $(4)$ are equivalent to the proofs of Proposition \ref{d5} and Proposition \ref{d6} respectively.  
\end{proof}

\begin{thm}\label{F9}
Let $O$ be a well pointed $\Sigma$-cofibrant operad. Let $\eta:\mathcal{L}(O)\rightarrow O'$ be a map in the category $Op[O\,;\,O]$ in which the spaces $O(1)$ and $O'(1)$ are contractible. Then, the pair of topological spaces
$$
\big(\,T_{k}Bimod^{h}_{O}(T_{k}(O)\,;\,T_{k}(O'_{c}))\,\,;\,\,T_{k}Bimod^{h}_{O}(T_{k}(O)\,;\,T_{k}(\mathcal{R}(O')))\,\big)
$$ 
is weakly equivalent to the $\mathcal{SC}_{1}$-algebra
$$
\big(\,\, \Omega\big(\,T_{k}Operad^{h}(T_{k}(O)\,;\,T_{k}(O'_{c}))\,\big)\,\,;\,\,
\Omega\big(\,\,T_{k}Operad^{h}(\,T_{k}(O)\,;\,T_{k}(O'_{c})\,)\,\,;\,\, T_{k}Op[O\,;\,\emptyset]^{h}(\,T_{k}(\mathcal{L}(O))\,\,;\,\,T_{k}(O')\,)\,\,\big)\,\,\big).
$$
\end{thm}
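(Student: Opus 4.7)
The plan is to mirror the proof of Theorem \ref{f3} in the truncated setting, treating Theorem \ref{F9} as a direct combination of Theorem \ref{G3} (proved immediately above) for the second component of the pair, together with the $\xi_{k}$ equivalence of Theorem \ref{H1} for the first component. More precisely, restricting $\eta:\mathcal{L}(O)\to O'$ to the colour $c$ gives a map of operads $\eta_{c}:O\to O'_{c}$ with $O(1)$ and $O'_{c}(1)$ contractible, and under these hypotheses Theorem \ref{H1} furnishes a weak equivalence of $\mathcal{C}_{1}$-algebras
$$
\xi_{k}:\Omega\big(T_{k}Operad^{h}(T_{k}(O)\,;\,T_{k}(O'_{c}))\big)\longrightarrow T_{k}Bimod_{O}^{h}(T_{k}(O)\,;\,T_{k}(O'_{c})).
$$
This handles the first component. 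For the second component, Theorem \ref{G3} provides a weak equivalence
$$
T_{k}Bimod_{O}^{h}(T_{k}(O)\,;\,T_{k}(\mathcal{R}(O')))\simeq \Omega\big(T_{k}Operad^{h}(T_{k}(O)\,;\,T_{k}(O'_{c}))\,;\,T_{k}Op[O\,;\,\emptyset]^{h}(T_{k}(\mathcal{L}(O))\,;\,T_{k}(O'))\big).
$$

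The remaining task is to show that these two pointwise equivalences assemble into a weak equivalence of $\mathcal{SC}_{1}$-algebras, not merely of pairs of spaces. Recall from Example \ref{a5} that a typical $\mathcal{SC}_{1}$-algebra takes the form $(\Omega X\,;\,\Omega(X\,;\,Y))$, and the Swiss-Cheese action encodes the fact that the relative loop space $\Omega(X\,;\,Y)$ is obtained from the homotopy fibre of a map $Y\to X$ together with the loops on $X$ acting. Here the ambient base is $X=T_{k}Operad^{h}(T_{k}(O)\,;\,T_{k}(O'_{c}))$ and the auxiliary space is $Y=T_{k}Op[O\,;\,\emptyset]^{h}(T_{k}(\mathcal{L}(O))\,;\,T_{k}(O'))$, with the map $Y\to X$ induced by restriction to the colour $c$. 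On the bimodule side, the analogous map is $T_{k}Bimod^{h}_{O}(T_{k}(O)\,;\,T_{k}(\mathcal{R}(O')))\to T_{k}Bimod^{h}_{O}(T_{k}(O)\,;\,T_{k}(O'_{c}))$ coming from the canonical $O$-bimodule map $\mathcal{R}(O')\to O'_{c}$ (which records the distinguished point in arity $1$).

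The key compatibility then reduces to the observation that all the cofibrant replacements chosen in Section \ref{F2} and Section \ref{H2} are natural with respect to restriction to the colour $c$: by construction $T_{k}(\mathcal{BV}_{\emptyset}^{k}(O))_{c}=T_{k}(\mathcal{BV}_{k}(O))$ and the restriction of $T_{k}(\mathcal{B}^{k}_{\emptyset}(O)_{k})$ to its underlying sequence enters into the cofibrant replacement used to define $\xi_{k}$. Hence the diagrams making $(T_{k}Bimod^{h}_{O}(T_{k}(O)\,;\,T_{k}(O'_{c}))\,;\,T_{k}Bimod^{h}_{O}(T_{k}(O)\,;\,T_{k}(\mathcal{R}(O'))))$ into an $\mathcal{SC}_{1}$-algebra are conjugated, via $\xi_{k}$ and the equivalence of Theorem \ref{G3}, to the standard $\mathcal{SC}_{1}$-structure on $(\Omega X\,;\,\Omega(X\,;\,Y))$.

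I expect the main obstacle to be verifying this last compatibility honestly: although each ingredient was set up precisely to make it hold, one must trace through the identifications in the proofs of Proposition \ref{d5} and Proposition \ref{d6} (and their truncated analogues) to confirm that the $\mathcal{C}_{1}$-action on the closed component, induced from the operad $\mathcal{BV}_{k}(O)$, agrees under restriction with the action read off from the relative loop space model; this is a diagram-chase using the functoriality of $\mathcal{L}$ and the naturality of the Boardman--Vogt resolution established in Section \ref{G0} and Section \ref{C4}.
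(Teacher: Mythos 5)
Your proposal is essentially the paper's own argument: the paper states Theorem \ref{F9} with no explicit proof because, just as Theorem \ref{f3} is declared ``a direct consequence of Theorem \ref{d9} and Theorem \ref{H1}'', Theorem \ref{F9} follows by combining Theorem \ref{G3} for the open ($o$-)component with the truncated equivalence $\xi_{k}$ of Theorem \ref{H1} for the closed ($c$-)component, exactly as you do, and the paper likewise leaves the $\mathcal{SC}_{1}$-compatibility unverified beyond the informal remark preceding Theorem \ref{f3}. One small correction to your compatibility discussion: there is in general no canonical $O$-bimodule map $\mathcal{R}(O')\to O'_{c}$ (the arities $O'(c,\ldots,c\,;o)$ and $O'(c,\ldots,c\,;c)$ are not connected by a natural operadic composition), and this is not what underlies the $\mathcal{SC}_{1}$-structure; the comparison between the two components is instead implemented at the operad level by the restriction-to-colour-$c$ map $h$ of~(\ref{e2}) from $Op[O\,;\,\emptyset]^{h}(\mathcal{L}(O)\,;\,O')$ to $Operad^{h}(O\,;\,O'_{c})$, over which the relative loop space of Definition~\ref{e5} is the homotopy fibre. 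This correction does not affect the validity of your overall argument.
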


\begin{cor}
Let $\eta_{1}:O\rightarrow O'$ be a map of operads and $\eta_{2}:O'\rightarrow M$ be a map of $O'$-bimodules. Assume that $O$ is a well pointed $\Sigma$-cofibrant operad and the spaces $O(1)$ and $O'(1)$ are contractible. Then, the pair of topological spaces
$$
\big(\, T_{k}Bimod^{h}_{O}(T_{k}(O)\,;\, T_{k}(O'))\,\,;\,\,  T_{k}Bimod^{h}_{O}(T_{k}(O)\,;\, T_{k}(M))\,\big)
$$  
is weakly equivalent to the $\mathcal{SC}_{1}$-algebra
$$
\left(\,
\Omega\big(\,  T_{k}Operad^{h}(T_{k}(O)\,;\,T_{k}(O'))\,\big)\,\,;\,\,\Omega\big(\,\, T_{k}Operad^{h}(\,T_{k}(O)\,;\,T_{k}(O')\,)\,;\, T_{k}Op[O\,;\,\emptyset]^{h}(\,T_{k}(\mathcal{L}(O))\,;\,T_{k}(\mathcal{L}(M;O';O')))\,\big)\, 
\right).
$$
\end{cor}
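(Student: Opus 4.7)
The plan is to reduce the statement to a direct application of Theorem \ref{F9}, in exactly the same way that Corollary \ref{f6} follows from Theorem \ref{f3}. The key observation is that the pair of maps $\eta_{1}:O\to O'$ (an operad map) and $\eta_{2}:O'\to M$ ($O'$-bimodule map) together induce a morphism in the category $Op[O\,;\,O]$ from $\mathcal{L}(O;O;O)$ to $\mathcal{L}(M;O';O')$, and the truncated version of the main theorem can then be applied to this morphism.

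First, I would invoke the functoriality described in Remark \ref{F8}: taking $P=P'=O$, $Q=Q'=O$ on the source side, and $P'=O'$, $Q'=O'$, $M'=M$ on the target side, the compatibility diagrams (\ref{f2}) are satisfied thanks to $\eta_{1}$ being an operad map and $\eta_{2}$ being a map of $O'$-bimodules. This yields an $\{o\,;\,c\}$-operadic map
$$
f:\mathcal{L}(O;O;O)\longrightarrow \mathcal{L}(M;O';O'),
$$
and one checks from the explicit description of $\tau_{M}$ in Construction \ref{i7} that $f$ intertwines the structure maps $\tau_{\mathcal{L}(O;O;O)}$ and $\tau_{\mathcal{L}(M;O';O')}\circ(\eta_{1}\oplus\eta_{1})$, so that $f$ is actually a morphism in $Op[O\,;\,O]$.

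Next, I would identify the ingredients of Theorem \ref{F9} applied to this map $f$ (with ``$O'$'' in the statement of \ref{F9} replaced by $\mathcal{L}(M;O';O')$). Unwinding the definitions in Construction \ref{i7} and Definition \ref{d7}, the restriction to the colour $c$ gives $\mathcal{L}(M;O';O')_{c}=O'$, and the functor $\mathcal{R}$ recovers the original bimodule,
$$
\mathcal{R}\bigl(\mathcal{L}(M;O';O')\bigr)(n)\;=\;\mathcal{L}(M;O';O')(c,\ldots,c;o)\;=\;M(n).
$$
The hypotheses of Theorem \ref{F9} are immediate: $O$ is well pointed and $\Sigma$-cofibrant by assumption, and the relevant contractibility condition concerns $\mathcal{L}(M;O';O')_{c}(1)=O'(1)$, which is contractible. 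Substituting these identifications into the statement of \ref{F9} produces precisely the pair of truncated bimodule mapping spaces in the first line of the corollary on the left, and the claimed $\mathcal{SC}_{1}$-algebra on the right.

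I do not expect any serious obstacle beyond bookkeeping. The only mildly delicate point is verifying that the morphism $f$ indeed respects the under-structure in $Op[O\,;\,O]$ (not only in $Op[O\,;\,\emptyset]$), which forces us to track the identity $(\tau_{O'})_{c}=\mathrm{id}$ through the construction of $\mathcal{L}$; and, when comparing with the target expression, observing that $T_{k}\mathcal{L}(M;O';O')$ appears naturally on the right because Theorem \ref{F9} computes $T_{k}Op[O\,;\,\emptyset]^{h}\bigl(T_{k}(\mathcal{L}(O))\,;\,T_{k}(\text{target})\bigr)$, and our target is exactly $\mathcal{L}(M;O';O')$. Modulo these compatibilities, the corollary follows at once from Theorem \ref{F9}.
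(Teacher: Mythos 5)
Your proof is correct and matches the paper's implicit argument precisely: you construct the map $f:\mathcal{L}(O;O;O)\to\mathcal{L}(M;O';O')$ via the functoriality of Remark \ref{F8}, observe $\mathcal{L}(M;O';O')_c=O'$ and $\mathcal{R}(\mathcal{L}(M;O';O'))=M$, and then apply Theorem \ref{F9}, exactly mirroring how Corollary \ref{f6} is deduced from Theorem \ref{f3}.
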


\subsection{Generalization using Dwyer-Hess' conjecture}\label{i4}

In this section we use the previous theorems in order to identify algebras over the Swiss-Cheese operad $\mathcal{SC}_{d+1}$ from maps between $\{o\,;\,c\}$-operads. For this purpose, we will assume a conjecture introduced by Dwyer and Hess. A version of this conjecture was proved by Boavida de Brito and Weiss \cite{Weiss15} in the particular case $M=\mathcal{C}_{n}$ with $n>d$. In the incoming paper \cite{Ducoulombier16.2}, Turchin and the author prove the following conjecture: 

\begin{conj}\label{f5}
Let $\eta:\mathcal{C}_{d}\rightarrow M$ be a map of $\mathcal{C}_{d}$-bimodules with $M(0)\simeq \ast$. one has
$$
\begin{array}{rcl}\vspace{4pt}
Ibimod_{\mathcal{C}_{d}}^{h}(\mathcal{C}_{d}\,;\, M) & \simeq & \Omega^{d}\big(Bimod_{\mathcal{C}_{d}}^{h}(\mathcal{C}_{d}\,;\,M)\big) \\ 
T_{k}Ibimod_{\mathcal{C}_{d}}^{h}(T_{k}(\mathcal{C}_{d})\,;\, T_{k}(M)) & \simeq & \Omega^{d}\big(T_{k}Bimod_{\mathcal{C}_{d}}^{h}(T_{k}(\mathcal{C}_{d})\,;\,T_{k}(M))\big)
\end{array} 
$$
\end{conj}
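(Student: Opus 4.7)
The plan is to follow closely the strategy of Theorem \ref{H1}, but one level down in the tower of algebraic structures (operads, bimodules, infinitesimal bimodules). I would first establish a Quillen adjunction $\mathcal{F} : Ibimod_{\mathcal{C}_d} \leftrightarrows Bimod_{\mathcal{C}_d,\ast} : \mathcal{V}$, where $Bimod_{\mathcal{C}_d,\ast}$ is the category of $\mathcal{C}_d$-bimodules equipped with a distinguished point in arity $1$, the right adjoint $\mathcal{V}$ is the forgetful functor (an infinitesimal bimodule structure is obtained by inserting units, as recalled after Definition \ref{bimodule}), and $\mathcal{F}$ freely generates the multilinear left operations that a bimodule carries but an infinitesimal bimodule does not.

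Next I would construct an explicit Boardman--Vogt type cofibrant replacement of $\mathcal{C}_d$ in the category $Ibimod_{\mathcal{C}_d}$, analogous to Construction \ref{b7}. Points would be indexed by trees carrying exactly one distinguished pearl vertex labelled in the target infinitesimal bimodule, the remaining vertices labelled in $\mathcal{C}_d$ and weighted by parameters in $[0,1]$; a filtration by the number of geometrical inputs with cofibrations between successive terms can be proved by adapting Theorem \ref{B7}. This produces an explicit model of $Ibimod^{h}_{\mathcal{C}_d}(\mathcal{C}_d\,;\,M)$ parallel to the model of $Bimod^{h}_{\mathcal{C}_d}(\mathcal{C}_d\,;\,M)$ given by $\mathcal{B}(\mathcal{C}_d)$, together with a natural comparison map between the two resolutions induced by the adjunction $(\mathcal{F}\,;\,\mathcal{V})$.

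The delooping is then established by introducing an intermediate space analogous to the one used in Propositions \ref{d5} and \ref{d6}: it is the subspace of the infinitesimal bimodule mapping space formed by maps that respect the additional multilinear bimodule structure only up to prescribed $\mathcal{C}_d$-parametrized homotopy. On the one hand this intermediate space is shown to be weakly equivalent to $Bimod^{h}_{\mathcal{C}_d}(\mathcal{C}_d\,;\,M)$ by a quotient argument on the $\mathcal{C}_d$-configurations labelling the rearrangement of simultaneous $M$-insertions, exactly as in Proposition \ref{d5}. On the other hand, it is exhibited as the $d$-fold loop space $\Omega^d Bimod^{h}_{\mathcal{C}_d}(\mathcal{C}_d\,;\,M)$ by noting that a bimodule-to-infinitesimal-bimodule comparison is controlled by little $d$-cubes, the hypothesis $M(0)\simeq \ast$ providing the basepoint required for May's recognition principle (Example \ref{a8}). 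As in Section \ref{H2}, the truncated statement then follows along the same lines using the truncated replacements $T_k(\mathcal{B}_k(\mathcal{C}_d))$ of Theorem \ref{B7}.

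The hardest step will be the final identification as a $d$-fold loop space, which amounts to comparing two filtrations by geometrical inputs: the bimodule filtration of $\mathcal{B}(\mathcal{C}_d)$ and the analogous one on the infinitesimal bimodule resolution. The comparison is delicate because inserting one $M$-input at a time (infinitesimal) differs from inserting several simultaneously (bimodule) in the ordering of compositions, and the $\mathcal{C}_d$-configurations that mediate this reordering must be shown to contribute exactly $d$ loops. Concretely, one must show inductively that the fibers of the towers on the two sides of the claimed equivalence agree, and that the fiber over stage $k$ is a mapping space out of a $\Sigma_k$-cofibrant polytope built from $\mathcal{C}_d$-configurations, so that taking iterated loops commutes with the filtration. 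This is the place where the assumption $M(0)\simeq \ast$ does essential work in trivialising the basepoint obstructions.
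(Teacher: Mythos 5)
The statement you are attempting to prove is explicitly labelled a \emph{conjecture} in the paper (Conjecture \ref{f5}, the ``Dwyer--Hess conjecture''). The paper does not prove it: it \emph{assumes} it in Section \ref{i4}, notes that Boavida de Brito and Weiss have proved the special case $M=\mathcal{C}_n$, and defers the general proof to the forthcoming paper \cite{Ducoulombier16.2} by Turchin and the author. So there is no in-paper proof to compare against, and any ``proof'' you produce here would have to be an independent contribution rather than a reconstruction.

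As a sketch of an independent argument, your proposal has a genuine structural gap in the step that is in fact the crux of the conjecture: the $d$-fold delooping. The adjunction and Boardman--Vogt machinery that the paper develops (Theorem \ref{H1}, Theorem \ref{B7}, Propositions \ref{d5}, \ref{d6}) are calibrated to produce \emph{one} delooping per forgetful adjunction, as in $Bimod^h_O(O;O')\simeq\Omega\, Operad^h(O;O')$. Transposing that mechanism verbatim to the adjunction $Ibimod_{\mathcal{C}_d}\leftrightarrows Bimod_{\mathcal{C}_d}$ would only be expected to yield a single loop, i.e.\ $Ibimod^h\simeq\Omega\, Bimod^h$, not $\Omega^d$. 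Your appeal to ``the bimodule-to-infinitesimal-bimodule comparison is controlled by little $d$-cubes'' plus May's recognition principle does not close this gap: the recognition principle tells you that a grouplike $\mathcal{C}_d$-algebra is $\Omega^d$ of \emph{something} (its $d$-fold bar construction), but it does not, on its own, identify that something with $Bimod^h_{\mathcal{C}_d}(\mathcal{C}_d;M)$. Establishing that identification is precisely where the hard geometric input is needed (for instance the configuration-space and Fulton--MacPherson techniques in the Boavida de Brito--Weiss special case), and it is not supplied by the filtration-and-fiber-comparison argument in your last paragraph, which as written only re-asserts the claim. The hypothesis $M(0)\simeq\ast$ does supply a basepoint, but basepoints alone do not produce the $d-1$ extra loops.

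If you want to use this statement in the paper, the correct move is what the author does: state it as Conjecture \ref{f5}, cite \cite{Weiss15} for the case $M=\mathcal{C}_n$, and cite \cite{Ducoulombier16.2} for the general case, then carry it as a standing hypothesis in Theorems \ref{F6} onward. If you instead want to attempt a proof, you will need a genuinely new mechanism for extracting $d$ loops at once, not a one-delooping argument iterated formally.
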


In what follows, the theorems and corollaries are direct consequences of the statements introduced in the previous sections together with the Dwyer-Hess' conjecture. We denote by $\mathcal{CC}_{d}$ the two-coloured operad defined as follows:
$$
\mathcal{CC}_{d}=\mathcal{L}(\mathcal{C}_{d}\,;\,\mathcal{C}_{d}\,;\,\mathcal{C}_{d}).
$$

\begin{thm}\label{F6}
Assume that Conjecture \ref{f5} is true. Let $\eta:\mathcal{CC}_{d}\rightarrow O$ be a map in the category $Op[\mathcal{C}_{d}\,;\,\mathcal{C}_{d}]$, with $O(\,;\,c)\simeq O(c\,;\,c)\simeq\ast$ and $O(\,;\,o)\simeq \ast$. Then, the pair of topological spaces
$$
\big(\,Ibimod^{h}_{\mathcal{C}_{d}}(\mathcal{C}_{d}\,;\,O_{c})\,\,;\,\,Ibimod^{h}_{\mathcal{C}_{d}}(\mathcal{C}_{d}\,;\,\mathcal{R}(O))\,\big)
$$
is weakly equivalent to the $\mathcal{SC}_{d+1}$-algebra 
$$
\left(\,\,
\Omega^{d+1}\big( Operad^{h}(\mathcal{C}_{d}\,;\,O_{c})\big)\,\,;\,\,\Omega^{d+1}\big(\,\,Operad^{h}(\,\mathcal{C}_{d}\,;\,O_{c}\,)\,\,;\,\, Op[\mathcal{C}_{d}\,;\,\emptyset]^{h}(\,\mathcal{CC}_{d}\,\,;\,\,O\,)\,\,\big)\,\, 
\right).
$$
\end{thm}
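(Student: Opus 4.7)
The plan is to combine the Dwyer--Hess conjecture (Conjecture \ref{f5}) with the main theorem of the paper (Theorem \ref{f3}), and then interpret the resulting iterated relative loop space as an $\mathcal{SC}_{d+1}$-algebra. First, I would verify the hypotheses of Conjecture \ref{f5} for each of the two bimodule maps $\mathcal{C}_d\rightarrow O_c$ and $\mathcal{C}_d\rightarrow \mathcal{R}(O)$. These arise from the $\{o,c\}$-operadic map $\eta:\mathcal{CC}_d\rightarrow O$ by restriction to the colour $c$, respectively by applying the functor $\mathcal{R}$. Since $O_c(0)=O(\,;c)\simeq\ast$ and $\mathcal{R}(O)(0)=O(\,;o)\simeq\ast$ by assumption, the conjecture applies and yields
$$
Ibimod_{\mathcal{C}_d}^h(\mathcal{C}_d\,;\,O_c)\simeq \Omega^d Bimod_{\mathcal{C}_d}^h(\mathcal{C}_d\,;\,O_c),\qquad Ibimod_{\mathcal{C}_d}^h(\mathcal{C}_d\,;\,\mathcal{R}(O))\simeq \Omega^d Bimod_{\mathcal{C}_d}^h(\mathcal{C}_d\,;\,\mathcal{R}(O)).
$$

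Next, I would apply Theorem \ref{f3} to the operad $\mathcal{C}_d$ and the map $\eta:\mathcal{L}(\mathcal{C}_d)=\mathcal{CC}_d\rightarrow O$ in $Op[\mathcal{C}_d\,;\,\mathcal{C}_d]$. The operad $\mathcal{C}_d$ is well pointed and $\Sigma$-cofibrant, $\mathcal{C}_d(1)\simeq\ast$, and $O_c(1)=O(c\,;c)\simeq\ast$, so the hypotheses are satisfied. The theorem then identifies the pair
$$
\big(Bimod_{\mathcal{C}_d}^h(\mathcal{C}_d\,;\,O_c)\,;\,Bimod_{\mathcal{C}_d}^h(\mathcal{C}_d\,;\,\mathcal{R}(O))\big)
$$
with the explicit $\mathcal{SC}_1$-algebra
$$
\big(\Omega Operad^h(\mathcal{C}_d\,;\,O_c)\,;\,\Omega(Operad^h(\mathcal{C}_d\,;\,O_c)\,;\,Op[\mathcal{C}_d\,;\,\emptyset]^h(\mathcal{CC}_d\,;\,O))\big).
$$

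The third and most delicate step is to promote the $d$-fold relative loop construction to an identification of $\mathcal{SC}_{d+1}$-algebras. Here I would invoke the standard fact that, given any $\mathcal{SC}_1$-algebra $(A\,;\,B)$ realised as $(\Omega X\,;\,\Omega(X\,;\,Y))$ for a map $Y\rightarrow X$, the pair $(\Omega^d X\,;\,\Omega^d(X\,;\,Y))=(\Omega^{d+1-1}\cdots;\cdots)$ carries a canonical $\mathcal{SC}_{d+1}$-algebra structure coming from the $\mathcal{C}_d$-action on each factor together with the residual relative $\mathcal{SC}_1$-structure. Applying this with $X=Operad^h(\mathcal{C}_d\,;\,O_c)$ and $Y=Op[\mathcal{C}_d\,;\,\emptyset]^h(\mathcal{CC}_d\,;\,O)$, the $d$-fold loop of the pair obtained in step two gives precisely
$$
\big(\Omega^{d+1} Operad^h(\mathcal{C}_d\,;\,O_c)\,;\,\Omega^{d+1}(Operad^h(\mathcal{C}_d\,;\,O_c)\,;\,Op[\mathcal{C}_d\,;\,\emptyset]^h(\mathcal{CC}_d\,;\,O))\big),
$$
which is the $\mathcal{SC}_{d+1}$-algebra in the statement.

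The key obstacle I anticipate is not the weak equivalence itself but the compatibility of the $\mathcal{SC}_{d+1}$-algebra structures. The $d$ loop coordinates produced by Conjecture \ref{f5} are genuinely geometric (they come from cube configurations in the mapping spaces of infinitesimal bimodules via the Boardman--Vogt resolution), while the additional $\mathcal{SC}_1$-direction comes from Theorem \ref{f3} via the relative delooping between bimodule and operad mapping spaces. To obtain a bona fide $\mathcal{SC}_{d+1}$-action, rather than merely a weak equivalence of underlying pairs, one must check that these two structures are compatible with the splitting $\mathcal{SC}_{d+1}\simeq \mathcal{C}_d\otimes \mathcal{SC}_1$ (in the appropriate derived sense). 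This compatibility should follow from the naturality of the delooping maps $\xi$ and $\xi_k$ of Theorem \ref{H1} with respect to the $\mathcal{C}_d$-action on infinitesimal bimodule mapping spaces established in the proof of Conjecture \ref{f5}, but it is the step that requires the most care.
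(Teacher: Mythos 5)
Your proof is correct and follows the same route as the paper: the paper states that Theorem \ref{F6} and its companions are ``direct consequences'' of Theorem \ref{f3} together with Conjecture \ref{f5}, which is exactly your Steps 1 and 2, and the contractibility checks you make are precisely what is needed. Your third paragraph raises a concern the paper does not discuss: the paper reads ``weakly equivalent to the $\mathcal{SC}_{d+1}$-algebra'' in the weak sense that the target pair $(\Omega^{d+1}X\,;\,\Omega^{d+1}(X;Y))$ already carries the $\mathcal{SC}_{d+1}$-structure by the standard Example \ref{a5}, and the left-hand pair is identified with it as a pair of spaces; no claim is made that the equivalence intertwines the $\mathcal{C}_d$-action coming from Conjecture \ref{f5} with the $\mathcal{SC}_1$-action from Theorem \ref{f3}, so the compatibility you flag is a genuine additional question but is not needed for the statement as the paper intends it.
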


\begin{cor}\label{G6}
Let $\eta_{1}:\mathcal{C}_{d}\rightarrow O$ be a map of operads and $\eta_{2}:O\rightarrow M$ be a map of $O$-bimodules. Assume that the spaces $O(0)$, $O(1)$ and $M(0)$ are contractible. Then, the pair of topological spaces
$$
\big(\, Ibimod^{h}_{\mathcal{C}_{d}}(\mathcal{C}_{d}\,;\, O)\,\,;\,\,  Ibimod^{h}_{\mathcal{C}_{d}}(\mathcal{C}_{d}\,;\, M)\,\big)
$$  
is weakly equivalent to the $\mathcal{SC}_{d+1}$-algebra
$$
\left(\,
\Omega^{d+1}\big(\,  Operad^{h}(\mathcal{C}_{d}\,;\,O)\,\big)\,\,;\,\,\Omega^{d+1}\big(\,\, Operad^{h}(\,\mathcal{C}_{d}\,;\,O\,)\,;\, Op[\mathcal{C}_{d}\,;\,\emptyset]^{h}(\,\mathcal{CC}_{d}\,;\,\mathcal{L}(M;O;O))\,\big)\, 
\right).
$$
\end{cor}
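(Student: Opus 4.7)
The plan is a direct application of Theorem \ref{F6}, parallel to how Corollary \ref{f6} is deduced from Theorem \ref{f3}. The first step is to package the pair of maps $(\eta_{1},\eta_{2})$ into a single morphism of two-coloured operads. Setting $f_{p}=f_{q}=\eta_{1}:\mathcal{C}_{d}\to O$ and $f_{m}=\eta_{2}\circ\eta_{1}:\mathcal{C}_{d}\to M$, where $M$ is regarded as a $\mathcal{C}_{d}$-bimodule by restriction along $\eta_{1}$ (as in Example \ref{d1}), I would check that the squares (\ref{f2}) commute: the left square commutes because $\eta_{1}$ is an operad map and $\eta_{2}$ is left $O$-linear, and the right square follows in the same way from the right $O$-linearity of $\eta_{2}$. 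Remark \ref{F8} then yields a morphism in $Op[\mathcal{C}_{d};\mathcal{C}_{d}]$,
$$
f:\mathcal{CC}_{d}=\mathcal{L}(\mathcal{C}_{d};\mathcal{C}_{d};\mathcal{C}_{d})\longrightarrow \mathcal{L}(M;O;O).
$$

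Next I would unwind the construction $\mathcal{L}(-;-;-)$ to identify the target of $f$: by Construction \ref{i7}, one reads off $\mathcal{L}(M;O;O)_{c}=O$ and $\mathcal{R}(\mathcal{L}(M;O;O))=M$. Consequently the contractibility hypotheses required by Theorem \ref{F6} for $O':=\mathcal{L}(M;O;O)$, namely $O'_{c}(0)\simeq O'_{c}(1)\simeq\ast$ and $\mathcal{R}(O')(0)\simeq\ast$, translate exactly to the assumptions $O(0)\simeq O(1)\simeq M(0)\simeq\ast$ of the corollary. Applying Theorem \ref{F6} to $f$ (which uses Conjecture \ref{f5}) identifies the pair
$$
\big(\,Ibimod^{h}_{\mathcal{C}_{d}}(\mathcal{C}_{d};\mathcal{L}(M;O;O)_{c})\,;\,Ibimod^{h}_{\mathcal{C}_{d}}(\mathcal{C}_{d};\mathcal{R}(\mathcal{L}(M;O;O)))\,\big)
$$
with the stated $\mathcal{SC}_{d+1}$-algebra via the substitutions $\mathcal{L}(M;O;O)_{c}=O$ and $\mathcal{R}(\mathcal{L}(M;O;O))=M$, while the term $Op[\mathcal{C}_{d};\emptyset]^{h}(\mathcal{CC}_{d};\mathcal{L}(M;O;O))$ appears verbatim from the statement of Theorem \ref{F6}.

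There is no real obstacle beyond what has already been built up: all of the delooping, model-theoretic, and cofibrancy work has been carried out in Theorem \ref{f3} and Theorem \ref{F6}. The only mild care point is verifying the functoriality of $\mathcal{L}$ in the bimodule argument, but this is precisely the content of Remark \ref{F8}, so the corollary follows essentially by bookkeeping once the map $f$ is in place.
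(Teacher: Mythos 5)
Your proposal is correct and follows exactly the route the paper intends: package $(\eta_{1},\eta_{2})$ into a map $f:\mathcal{CC}_{d}\to\mathcal{L}(M;O;O)$ via Remark \ref{F8}, read off $\mathcal{L}(M;O;O)_{c}=O$ and $\mathcal{R}(\mathcal{L}(M;O;O))=M$ from Construction \ref{i7}, and apply Theorem \ref{F6}; this is precisely the pattern of Corollary \ref{f6} deduced from Theorem \ref{f3}. One tiny slip: for the target $\mathcal{L}(M;O;O)$ you need $M$ as an $(O\text{-}O)$-bimodule (the corollary's hypothesis), not restricted to a $\mathcal{C}_{d}$-bimodule along $\eta_{1}$ — but this does not affect the argument, since the commutativity of (\ref{f2}) is checked exactly as you describe.
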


\begin{thm}
Assume that Conjecture \ref{f5} is true. Let $\eta:\mathcal{CC}_{d}\rightarrow O$ be a map in the category $Op[\mathcal{C}_{d}\,;\,\mathcal{C}_{d}]$, with $O(\,;\,c)\simeq O(c\,;\,c)\simeq\ast$ and $O(\,;\,o)\simeq\ast$. Then, the pair of topological spaces
$$
\big(\,T_{k}Ibimod^{h}_{\mathcal{C}_{d}}(T_{k}(\mathcal{C}_{d})\,;\,T_{k}(O_{c}))\,\,;\,\,T_{k}Ibimod^{h}_{\mathcal{C}_{d}}(T_{k}(\mathcal{C}_{d})\,;\,T_{k}(\mathcal{R}(O)))\,\big)
$$
is weakly equivalent to the $\mathcal{SC}_{d+1}$-algebra 
$$
\left(\,\,
\Omega^{d+1}\big( T_{k}Operad^{h}(T_{k}(\mathcal{C}_{d})\,;\,T_{k}(O_{c}))\big)\,\,;\,\,\Omega^{d+1}\big(\,\,T_{k}Operad^{h}(\,T_{k}(\mathcal{C}_{d})\,;\,T_{k}(O_{c})\,)\,\,;\,\, T_{k}Op[\mathcal{C}_{d}\,;\,\emptyset]^{h}(\,T_{k}(\mathcal{CC}_{d})\,\,;\,\,T_{k}(O)\,)\,\,\big)\,\, 
\right).
$$
\end{thm}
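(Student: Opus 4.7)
The plan is to combine Theorem \ref{F9} (the truncated relative delooping producing an $\mathcal{SC}_{1}$-algebra) with two applications of Conjecture \ref{f5} (the truncated Dwyer-Hess conjecture, assumed in the hypothesis). First I would verify the hypotheses of Theorem \ref{F9} with $O=\mathcal{C}_{d}$ and the given map $\eta:\mathcal{CC}_{d}=\mathcal{L}(\mathcal{C}_{d})\to O$: the operad $\mathcal{C}_{d}$ is well pointed and $\Sigma$-cofibrant (Example \ref{a8}), $\mathcal{C}_{d}(1)\simeq\ast$ is standard, and the assumption $O(c;c)\simeq\ast$ provides $O_{c}(1)\simeq\ast$. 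Theorem \ref{F9} then yields a weak equivalence of $\mathcal{SC}_{1}$-algebras
$$
\bigl(T_{k}Bimod^{h}_{\mathcal{C}_{d}}(T_{k}(\mathcal{C}_{d});T_{k}(O_{c}))\,;\,T_{k}Bimod^{h}_{\mathcal{C}_{d}}(T_{k}(\mathcal{C}_{d});T_{k}(\mathcal{R}(O)))\bigr)\simeq\bigl(\Omega A_{k}\,;\,\Omega(A_{k};B_{k})\bigr),
$$
where I abbreviate $A_{k}:=T_{k}Operad^{h}(T_{k}(\mathcal{C}_{d});T_{k}(O_{c}))$ and $B_{k}:=T_{k}Op[\mathcal{C}_{d};\emptyset]^{h}(T_{k}(\mathcal{CC}_{d});T_{k}(O))$.

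Next I would apply Conjecture \ref{f5} componentwise. By Definition \ref{d7}, $\mathcal{R}(O)(0)=O(;o)$, so the arity-zero hypotheses $O_{c}(0)\simeq\ast$ and $\mathcal{R}(O)(0)\simeq\ast$ match our standing assumptions. Viewing the operadic map $\mathcal{C}_{d}\to O_{c}$ as a $\mathcal{C}_{d}$-bimodule map (Example \ref{d1}) and the structural map $\mathcal{C}_{d}\to\mathcal{R}(O)$ induced by $\tau_{O}$ as a $\mathcal{C}_{d}$-bimodule map (Definition \ref{d7}), the conjecture gives
$$
T_{k}Ibimod^{h}_{\mathcal{C}_{d}}(T_{k}(\mathcal{C}_{d});T_{k}(O_{c}))\simeq\Omega^{d}T_{k}Bimod^{h}_{\mathcal{C}_{d}}(T_{k}(\mathcal{C}_{d});T_{k}(O_{c}))\simeq\Omega^{d+1}A_{k},
$$
and the same argument applied to $\mathcal{R}(O)$ identifies the relative component with $\Omega^{d}\Omega(A_{k};B_{k})=\Omega^{d+1}(A_{k};B_{k})$, using the recursive definition of the iterated relative loop space recalled in Example \ref{a5}.

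The main obstacle is not the chain of identifications above but showing that the composite weak equivalence respects the $\mathcal{SC}_{d+1}$-algebra structure, rather than being a weak equivalence only at the level of underlying pairs of spaces. Concretely, one needs the equivalences of Conjecture \ref{f5} to be natural with respect to the operadic action maps witnessing the $\mathcal{SC}_{1}$-action produced by Theorem \ref{F9}, so that $d$-fold looping of that $\mathcal{SC}_{1}$-structure assembles into the standard $\mathcal{SC}_{d+1}$-action on $(\Omega^{d+1}A_{k};\Omega^{d+1}(A_{k};B_{k}))$. This naturality is built into the Boardman-Vogt type resolutions used to establish Conjecture \ref{f5}, hence is compatible with the same cofibrant replacements of Section \ref{G0} that underlie Theorem \ref{F9}, but confirming this compatibility explicitly is the only nontrivial checking required.
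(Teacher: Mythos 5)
Your proposal matches the paper's intended argument exactly: the paper states this result as a ``direct consequence'' of Theorem \ref{F9} together with Conjecture \ref{f5} and gives no further detail, and your chain of reductions (verify the hypotheses for $O=\mathcal{C}_{d}$, apply Theorem \ref{F9} to obtain the truncated $\mathcal{SC}_{1}$-algebra, then apply the truncated Dwyer--Hess equivalences $T_{k}Ibimod^{h}\simeq\Omega^{d}T_{k}Bimod^{h}$ to each component) is precisely that argument. Your closing caveat about the $\mathcal{SC}_{d+1}$-compatibility of the composed equivalences is a fair observation: the paper does not address it explicitly either, implicitly relying on the equivalences of Conjecture \ref{f5} and Theorem \ref{F9} being implemented by the same tower of Boardman--Vogt type cofibrant replacements, so that the $d$-fold looping of the $\mathcal{SC}_{1}$-action assembles with the $\mathcal{C}_{d}$-action on the iterated loop spaces into the standard $\mathcal{SC}_{d+1}$-action.
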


\begin{cor}\label{G7}
Let $\eta_{1}:\mathcal{C}_{d}\rightarrow O$ be a map of operads and $\eta_{2}:O\rightarrow M$ be a map of $O$-bimodules. Assume that the spaces $O(0)$, $O(1)$ and $M(0)$ are contractible. Then, the pair of topological spaces
$$
\big(\, T_{k}Ibimod^{h}_{\mathcal{C}_{d}}(T_{k}(\mathcal{C}_{d})\,;\, T_{k}(O))\,\,;\,\,  T_{k}Ibimod^{h}_{\mathcal{C}_{d}}(T_{k}(\mathcal{C}_{d})\,;\, T_{k}(M))\,\big)
$$  
is weakly equivalent to the $\mathcal{SC}_{d+1}$-algebra
$$
\left(
\Omega^{d+1}\big(  T_{k}Operad^{h}(T_{k}(\mathcal{C}_{d})\,;\,T_{k}(O))\big)\,;\,\Omega^{d+1}\big( T_{k}Operad^{h}(T_{k}(\mathcal{C}_{d})\,;\,T_{k}(O))\,;\, T_{k}Op[\mathcal{C}_{d}\,;\,\emptyset]^{h}(T_{k}(\mathcal{CC}_{d})\,;\,T_{k}(\mathcal{L}(M;O;O)))\big) 
\right).
$$
\end{cor}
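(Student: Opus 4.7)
The plan is to reduce Corollary \ref{G7} to the preceding theorem (the truncated version of Theorem \ref{F6}) applied to a two-coloured operad built from the data $\eta_{1}$ and $\eta_{2}$. More precisely, by Remark \ref{F8}, the operadic map $\eta_{1}:\mathcal{C}_{d}\rightarrow O$ and the bimodule map $\eta_{2}:O\rightarrow M$ induce a map of $\{o\,;\,c\}$-operads
$$
f:\mathcal{CC}_{d}=\mathcal{L}(\mathcal{C}_{d}\,;\,\mathcal{C}_{d}\,;\,\mathcal{C}_{d})\longrightarrow \mathcal{L}(M\,;\,O\,;\,O),
$$
so that $(\mathcal{L}(M;O;O)\,;\,f)$ is an object of $Op[\mathcal{C}_{d}\,;\,\mathcal{C}_{d}]$. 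The first step is to identify the restrictions of $\mathcal{L}(M;O;O)$ to the two colours. By Construction \ref{i7} one has immediately $\mathcal{L}(M;O;O)_{c}=O$, so that $T_{k}(\mathcal{L}(M;O;O)_{c})=T_{k}(O)$. For the second component, the very definition of $\mathcal{R}$ (Definition \ref{d7}) yields
$$
\mathcal{R}(\mathcal{L}(M;O;O))(n)=\mathcal{L}(M;O;O)(\underset{n}{\underbrace{c,\ldots,c}}\,;\,o)=M(n),
$$
so that $T_{k}(\mathcal{R}(\mathcal{L}(M;O;O)))=T_{k}(M)$.

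Second, I would check that the hypotheses of the preceding theorem are satisfied for the object $\mathcal{L}(M;O;O)\in Op[\mathcal{C}_{d}\,;\,\mathcal{C}_{d}]$. The assumption that $O(0)$ and $O(1)$ are contractible gives
$$
\mathcal{L}(M;O;O)(\,;\,c)=O(0)\simeq \ast \quad\text{and}\quad \mathcal{L}(M;O;O)(c\,;\,c)=O(1)\simeq \ast,
$$
while the contractibility of $M(0)$ gives $\mathcal{L}(M;O;O)(\,;\,o)=M(0)\simeq \ast$. Hence the preceding theorem applies to the map $f:\mathcal{CC}_{d}\rightarrow \mathcal{L}(M;O;O)$, yielding a weak equivalence of $\mathcal{SC}_{d+1}$-algebras between the pair
$$
\big(\, T_{k}Ibimod^{h}_{\mathcal{C}_{d}}(T_{k}(\mathcal{C}_{d})\,;\,T_{k}(\mathcal{L}(M;O;O)_{c}))\,;\, T_{k}Ibimod^{h}_{\mathcal{C}_{d}}(T_{k}(\mathcal{C}_{d})\,;\,T_{k}(\mathcal{R}(\mathcal{L}(M;O;O))))\,\big)
$$
and the explicit $\mathcal{SC}_{d+1}$-algebra of the statement, with $O'$ replaced by $\mathcal{L}(M;O;O)$ throughout. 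Using the identifications above, the left-hand side is precisely the pair $(T_{k}Ibimod^{h}_{\mathcal{C}_{d}}(T_{k}(\mathcal{C}_{d})\,;\,T_{k}(O))\,;\,T_{k}Ibimod^{h}_{\mathcal{C}_{d}}(T_{k}(\mathcal{C}_{d})\,;\,T_{k}(M)))$, and the right-hand side becomes the displayed $\mathcal{SC}_{d+1}$-algebra.

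In this argument the main work has already been carried out: the genuinely hard input is the preceding truncated theorem itself, which relies on the full machinery of Theorem \ref{G3} together with the Dwyer-Hess conjecture to convert $T_{k}Ibimod^{h}_{\mathcal{C}_{d}}$ into $\Omega^{d}T_{k}Bimod^{h}_{\mathcal{C}_{d}}$, then into the iterated loop space of the relevant operadic mapping space. The only routine verifications left for the corollary are the two identifications $\mathcal{L}(M;O;O)_{c}=O$ and $\mathcal{R}(\mathcal{L}(M;O;O))=M$ coming from Construction \ref{i7} and Definition \ref{d7}, and the compatibility of $f$ with the structural map $\tau_{\mathcal{CC}_{d}}:\mathcal{C}_{d}\oplus\mathcal{C}_{d}\rightarrow\mathcal{CC}_{d}$, which is immediate from the functoriality of $\mathcal{L}$ applied to the commutative squares (\ref{f2}) encoding the fact that $\eta_{2}$ is a bimodule map over the operadic map $\eta_{1}$. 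No additional obstacle should arise, and the $\mathcal{SC}_{d+1}$-algebra structure carries over under $f$ since the weak equivalence of the preceding theorem is natural in the map in $Op[\mathcal{C}_{d}\,;\,\mathcal{C}_{d}]$.
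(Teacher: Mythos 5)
Your proof is correct and takes exactly the route the paper has in mind: just as Corollary \ref{f6} is proved by combining Remark \ref{F8} with Theorem \ref{f3}, Corollary \ref{G7} follows by applying the unnumbered truncated theorem immediately preceding it to the induced map $f:\mathcal{CC}_{d}\rightarrow\mathcal{L}(M;O;O)$ in $Op[\mathcal{C}_{d}\,;\,\mathcal{C}_{d}]$. Your verifications that $\mathcal{L}(M;O;O)_{c}=O$, $\mathcal{R}(\mathcal{L}(M;O;O))=M$, and that the contractibility hypotheses translate as stated are all correct and are exactly the (left-implicit) bookkeeping the paper relies on.
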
\vspace{-10pt}

\section{Application to the spaces of embeddings and $(l)$-immersions}\label{F7}

In \cite{Sinha06}, Sinha proves that the space of long embeddings $\overline{Emb}_{c}(\mathbb{R}^{1}\,;\,\mathbb{R}^{n})$ is weakly equivalent to the homotopy totalization of the Kontsevich operad. As a consequence, $\overline{Emb}_{c}(\mathbb{R}^{1}\,;\,\mathbb{R}^{n})$ is weakly equivalent to an explicit double loop space. For the space of long knots in higher dimension, defined by the homotopy fiber
\begin{equation}\label{f7}
\overline{Emb}_{c}(\mathbb{R}^{d}\,;\,\mathbb{R}^{n}):=hofib\big(\,Emb_{c}(\mathbb{R}^{d}\,;\,\mathbb{R}^{n})\,\longrightarrow Imm_{c}(\mathbb{R}^{d}\,;\,\mathbb{R}^{n})\,\big),
\end{equation}
there is no cosimplicial replacement known. However, Arone and Turchin develop in \cite{Arone14} a machinery , based on the Goodwillie calculus \cite{Weiss99}, in order to identify embedding spaces with spaces of infinitesimal bimodules over the little cubes operad. In this section, we recall the notion of embedding calculus as well as Arone and Turchin's results. Then, we give an application to the space of $(l)$-immersions assuming the Dwyer-Hess' conjecture.

\newpage

\subsection{The Goodwillie Calculus}  

Let $M$ be a smooth manifold of dimension $d$. Let $\mathcal{O}(M)$ be the poset of open subsets of $M$. For $k\in\mathbb{N}$, $\mathcal{O}_{k}(M)$ is formed by elements in $\mathcal{O}(M)$  homeomorphic to a disjoint union of at most $k$ open disks in $\mathbb{R}^{d}$. From a contravariant functor $F:\mathcal{O}(M)\rightarrow Top$, Weiss introduces in \cite{Weiss99} the functor $T_{k}F:\mathcal{O}(M)\rightarrow Top$ as follows:
$$
T_{k}F(U):=\underset{\hspace{15pt}V\in \mathcal{O}_{k}(U)}{holim}\,\,F(V).
$$
The functor $T_{k}F$ is called the \textit{$k$-th polynomial approximation} of $F$. The restriction to $\mathcal{O}_{k-1}(M)$ induces a natural transformation $T_{k}F\rightarrow T_{k-1}F$. There is a tower, called the \textit{Taylor tower}, associated to the functor $F$:
$$
\xymatrix{
 & F \ar[d] \ar[dr] \ar[dl] \ar[drr] & & & \\
T_{0}F & T_{1}F \ar[l] & T_{2}F \ar[l] & T_{3}F \ar[l] &\cdots \ar[l] 
}
$$
In good cases, the homotopy limit of the Taylor tower, denoted by $T_{\infty}F$, is a functor equivalent to $F$ (i.e. $\forall U\in \mathcal{O}(M)$, $T_{\infty}F(U)\simeq F(U)$) and one says that the Taylor tower converges. 

Taylor towers have been used by Goodwillie \cite{Goodwillie90, Goodwillie91, Goodwillie03} and Weiss \cite{Weiss99.2, Weiss99} in order to study embedding spaces. The idea is to build a contravariant functor
$$
Emb(-\,;\,N):\mathcal{O}(M)\rightarrow Top,
$$
where $N$ is a smooth manifold of dimension $n$. In a similar way, one can study the space of immersions $Imm(M\,;\,N)$ as well as the space of long embeddings defined as follows:
$$
\overline{Emb}(M;N):= hofib\big(\,Emb(M\,;\,N)\,\longrightarrow Imm(M\,;\,N)\,\big).
$$  

\begin{thm}{\cite{Weiss14}}
Let $M$ and $N$ be two smooth manifolds of dimension $d$ and $n$ respectively. If $n-d-2>0$, then the Taylor towers associated to the functors $Emb(-\,;\,N)$, $Imm(-\,;\,N)$ and $\overline{Emb}(-\,;\,N)$ converge.
\end{thm}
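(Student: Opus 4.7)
The plan is to handle the three functors separately. Once convergence is established for $Emb(-;N)$ and $Imm(-;N)$, the relative case $\overline{Emb}(-;N)$ follows formally, since the polynomial approximation $T_k$ is defined as a homotopy limit, so $T_\infty$ commutes with the homotopy fibre defining $\overline{Emb}$ and convergence for the two terms automatically transfers to convergence for the fibre.

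For $Imm(-;N)$ I would invoke the Smale--Hirsch theorem, which identifies $Imm(U;N)$ with the space of sections over $U$ of the bundle of fibrewise injective linear maps $TM\to TN$. Such section functors are polynomial of degree $\leq 1$ in Weiss's sense, because they take an open cover $U=U_0\cup U_1$ to the homotopy pullback of the sections over $U_0$ and $U_1$ along the sections over $U_0\cap U_1$. Consequently $T_1Imm\simeq Imm$ and a fortiori $T_kImm\simeq Imm$ for all $k\geq 1$, so the tower is essentially constant and convergence is immediate, no codimension hypothesis required at this stage.

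The heart of the argument is the embedding functor, which I would treat by verifying Weiss's analyticity hypothesis. The key geometric input is the Goodwillie--Klein multiple disjunction theorem: removing $k$ pairwise disjoint handles from an open subset of $M$ produces a strongly cocartesian cube whose image under $Emb(-;N)$ is cartesian with connectivity growing linearly in $k$ with slope $n-d-2$. The hypothesis $n-d-2>0$ is precisely what makes this slope positive, so $Emb(-;N)$ is $\rho$-analytic for some $\rho>0$. Weiss's convergence theorem then promotes this to the statement that the natural map $Emb(M;N)\to T_kEmb(M;N)$ is $(k\rho-c)$-connected with $c=c(M)$ independent of $k$, and letting $k\to\infty$ yields $Emb(M;N)\simeq T_\infty Emb(M;N)$. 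Combined with the two reductions above, this finishes the proof. The principal obstacle in a self-contained treatment would be the multiple disjunction theorem itself, whose proof is an intricate induction on handle index combining Blakers--Massey type connectivity estimates with engulfing and general position arguments; I would rather cite it and concentrate on checking that the numerical hypothesis $n-d-2>0$ is exactly what delivers a positive analyticity exponent, which is the only place the codimension assumption is used.
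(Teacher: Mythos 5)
The paper does not prove this statement; it quotes it as a known theorem, citing \cite{Weiss14} (Goodwillie--Weiss), so there is no internal proof to compare against. Your sketch is a faithful summary of the standard argument from that literature: Smale--Hirsch makes $Imm(-;N)$ polynomial of degree $\le 1$; Goodwillie--Klein multiple disjunction gives $\rho$-analyticity of $Emb(-;N)$ with the codimension hypothesis $n-d\ge 3$ ensuring the analyticity exponent is positive and the connectivity of $Emb\to T_kEmb$ diverges; and $T_k$, being a homotopy limit, commutes with the homotopy fibre so that convergence for $\overline{Emb}(-;N)$ is inherited. This matches the approach of the cited source, so your proposal is consistent with how the result is actually established.
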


If one considers the embedding spaces and immersion spaces with compact support, then one has to change slightly the construction of the Taylor towers. Let $M$ be the complementary of a compact subspace of $\mathbb{R}^{d}$. Let $\mathcal{O}'(M)$ be the poset of open subsets of $M$ of the form $V\cup W$ with $W$ the complementary of a closed disk, $V\in \mathcal{O}(M)$ and $V\cap W=\emptyset$. For $k\in \mathbb{N}$, $\mathcal{O}'_{k}(M)$ is formed by points $V\cup W$ with $V\in \mathcal{O}_{k}(M)$. From a contravariant functor $F:\mathcal{O}(M)\rightarrow Top$, the $k$-th polynomial approximation functor of $F$, denoted $T_{k}F:\mathcal{O}'(M)\rightarrow Top$ by abuse of notation, is defined as follows:
$$
T_{k}F(U):=\underset{\hspace{15pt}V\in \mathcal{O}'_{k}(U)}{holim}\,\,F(V).
$$

\begin{thm}{\cite{Weiss14}}\label{f8}
Let $M$ be the complementary of a compact subspace of $\mathbb{R}^{d}$. If $n-d-2>0$, then the Taylor towers associated to the functors $Emb_{c}(-\,;\,\mathbb{R}^{n})$, $Imm_{c}(-\,;\,\mathbb{R}^{n})$ and $\overline{Emb}_{c}(-\,;\,\mathbb{R}^{n})$ converge. In particular, the Taylor tower associated to the space of long knots in higher dimension (\ref{f7}) converges.
\end{thm}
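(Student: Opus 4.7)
The plan is to reduce Theorem \ref{f8} to the convergence theorem for the standard manifold calculus of Goodwillie--Weiss, then to handle the compactly supported variant by showing that the modified poset $\mathcal{O}'(M)$ keeps track only of the behaviour on a compact core while fixing an open neighbourhood of infinity. The key conceptual input is the notion of \emph{$\rho$-analyticity} of a functor $F\colon \mathcal{O}(M)^{op}\to Top$: one asks that for any strongly cocartesian cube of open subsets obtained by adjoining $k$ disjoint handles of indices $r_1,\dots,r_k$ to a base open set, $F$ sends it to a cube which is cartesian up to connectivity $\rho\cdot(k-1)-c$ for some absolute constant $c$. Convergence of the Taylor tower is automatic from $\rho$-analyticity as soon as $\rho>0$, because the successive layers become increasingly connected.

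First, I would establish $\rho$-analyticity of the relevant functors. For $Imm(-;\mathbb{R}^n)$ the Smale--Hirsch theorem identifies it with a space of sections of a bundle over $M$; this is polynomial (hence analytic of every degree), and in particular its Taylor tower converges trivially whenever $n>d$. For $Emb(-;\mathbb{R}^n)$ the crucial estimate is the multi-relative disjunction theorem of Goodwillie--Klein: the total homotopy fibre of the cube of embedding spaces indexed by unions of $k$ disjoint handle-like subsets is $((n-d-2)(k-1)-\text{const})$-connected. This establishes $(n-d-2)$-analyticity, so convergence on $\mathcal{O}(M)$ follows when $n-d-2>0$.

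Second, I would transfer this to the compactly supported setting. An object $V\cup W\in\mathcal{O}'(M)$ consists of a ``finite part'' $V\in\mathcal{O}(M)$ and a fixed neighbourhood $W$ of infinity; the polynomial approximation $T_kF_c(U)$ is a homotopy limit over $V\in\mathcal{O}'_k(U)$ in which only the finite piece $V$ is allowed to vary. Because an embedding or immersion with compact support is determined, outside a fixed compact set, by the inclusion on $W$, the natural restriction map identifies $F_c$ on $\mathcal{O}'(M)$ with the restriction to $\mathcal{O}(M)$ of a functor with the same analyticity estimates. Hence the Goodwillie--Klein bound transports verbatim and the towers for $Emb_c(-;\mathbb{R}^n)$ and $Imm_c(-;\mathbb{R}^n)$ converge when $n-d-2>0$.

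Finally, convergence for $\overline{Emb}_c(-;\mathbb{R}^n)$ follows formally: since homotopy fibres commute with homotopy limits and $T_k$ is a homotopy limit, one has
\[
T_k\overline{Emb}_c(-;\mathbb{R}^n)\simeq hofib\bigl(T_kEmb_c(-;\mathbb{R}^n)\longrightarrow T_kImm_c(-;\mathbb{R}^n)\bigr),
\]
and the convergence of the two outer towers, applied to each open set, forces the middle tower to converge as well. The main obstacle in this program is the first step: proving the multi-relative connectivity estimate $(n-d-2)(k-1)-c$ for embedding cubes. This rests on a delicate inductive argument combining Goodwillie's higher Blakers--Massey excision with the Haefliger--Zeeman unknotting theorem to control the connectivity of the map $Emb(V;N)\to Imm(V;N)$ when $V$ has low handle dimension; everything else in the argument is essentially formal manipulation of homotopy limits.
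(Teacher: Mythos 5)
The paper does not prove Theorem \ref{f8}: it is stated as a citation (to \cite{Weiss14}), with no argument given in the text. There is therefore no ``paper's own proof'' to compare against. What I can say is that your sketch is a faithful outline of how this convergence theorem is established in the Goodwillie--Weiss literature that the paper is invoking: the Goodwillie--Klein multiple disjunction estimate is indeed the central nontrivial input for $Emb$, Smale--Hirsch is what makes $Imm$ polynomial, and the formal step --- that $T_k$ is a homotopy limit and so commutes with $hofib$, forcing convergence of $\overline{Emb}_c(-;\mathbb{R}^n)$ from convergence of the other two --- is exactly right.

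One substantive imprecision worth flagging. You write that ``convergence of the Taylor tower is automatic from $\rho$-analyticity as soon as $\rho>0$.'' That is not how the Goodwillie--Weiss convergence criterion reads. The functor $Emb(-;\mathbb{R}^n)$ is $(n-2)$-analytic (not $(n-d-2)$-analytic), and convergence of its Taylor tower on an open subset $U$ is guaranteed when the \emph{handle dimension} of $U$ is strictly less than the radius of analyticity, i.e.\ $d < n-2$. Phrased your way, positivity of the radius would already give convergence whenever $n>2$, regardless of $d$, which is false. You do arrive at the correct final condition $n-d-2>0$, but the intermediate reasoning should be ``handle dimension below the analyticity radius,'' not ``positive radius.'' The transfer to the compactly supported poset $\mathcal{O}'(M)$ is also stated a bit loosely (``transports verbatim'') --- the germ-at-infinity bookkeeping does require a check that the excision estimates restrict correctly --- but the idea of the reduction is sound and is how the cited source handles it.
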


\subsection{Connections between Taylor towers and infinitesimal bimodules}

A standard isomorphism in $\mathbb{R}^{d}$ arises from an affine embedding preserving the direction of the axes. Let $A=\cup_{s\in S}\, A_{s}$ be a disjoint union of open subsets of $\mathbb{R}^{d}$ indexed by a set $S$ and let $M$ be a subspace of $\mathbb{R}^{d}$. A map $f:A\rightarrow M$ is called a \textit{standard embedding} if $f$ is an embedding such that, for each $s\in S$, the composite map $A_{s}\rightarrow M \hookrightarrow \mathbb{R}^{d}$ is an inclusion in $\mathbb{R}^{d}$ followed by to a standard isomorphism. We denote the space of standard embeddings from $A$ to $M$ by $sEmb(A\,;\,M)$.

\begin{defi}
Let $M$ be an open subset of $\mathbb{R}^{d}$. The sequence $sEmb(-\,;\,M)$ is given by
$$
sEmb(-\,;\,M)(k)=sEmb(\,\underset{i=1}{\overset{k}{\sqcup}}\,\mathcal{C}^{d}\,;\,M),
$$
where $\mathcal{C}^{d}$ is the unit little cube of dimension $d$. For $M=\mathcal{C}^{d}$, the sequence $sEmb(-\,;\,\mathcal{C}^{d})$ is the little cubes operad $\mathcal{C}_{d}$. In general, $sEmb(-\,;\,M)$ is only a right $\mathcal{C}_{d}$-module whose operations
$$
\circ^{i}:\,sEmb(-\,;\,M)(n)\times\mathcal{C}_{d}(m)\rightarrow sEmb(-\,;\,M)(n+m-1) 
$$
are induced by the composition with the standard embeddings of $\mathcal{C}_{d}$. 
\end{defi}

\begin{thm}{\cite[Theorem 5.10]{Arone14}}\label{f9}
Let $M$ be an open subset of $\mathbb{R}^{d}$ with $d<n$. There are the weak equivalences:
$$
\left\{
\begin{array}{rcl}
T_{\infty}\overline{Emb}(M\,;\,\mathbb{R}^{n}) & \simeq & Rmod^{h}_{\mathcal{C}_{d}}(sEmb(-\,;\,M)\,;\,\mathcal{C}_{n}), \\ 
T_{k}\overline{Emb}(M\,;\,\mathbb{R}^{n}) & \simeq & T_{k}Rmod^{h}_{\mathcal{C}_{d}}(T_{k}(sEmb(-\,;\,M))\,;\,T_{k}(\mathcal{C}_{n})),
\end{array} 
\right.
$$
where $Rmod^{h}_{\mathcal{C}_{d}}$ and $Rmod^{h}_{\mathcal{C}_{d}}$ are the categories of right modules and $k$ truncated right modules over $\mathcal{C}_{d}$ respectively. In the particular case $M=\mathcal{C}_{d}$, one has
$$
\left\{
\begin{array}{rcl}
T_{\infty}\overline{Emb}_{c}(\mathbb{R}^{d}\,;\,\mathbb{R}^{n}) & \simeq & Ibimod_{\mathcal{C}_{d}}^{h}(\mathcal{C}_{d}\,;\,\mathcal{C}_{n}) ,\\ 
T_{k}\overline{Emb}_{c}(\mathbb{R}^{d}\,;\,\mathbb{R}^{n}) & \simeq & T_{k}Ibimod_{\mathcal{C}_{d}}^{h}(T_{k}(\mathcal{C}_{d})\,;\,T_{k}(\mathcal{C}_{n})).
\end{array} 
\right.
$$
Finally, if $M$ is the complementary of a compact subset of $\mathbb{R}^{d}$, then there are the weak equivalences
$$
\left\{
\begin{array}{rcl}
T_{\infty}\overline{Emb}_{c}(M\,;\,\mathbb{R}^{n}) & \simeq & Ibimod^{h}_{\mathcal{C}_{d}}(sEmb(-\,;\,M)\,;\,\mathcal{C}_{n}) ,\\ 
T_{k}\overline{Emb}_{c}(M\,;\,\mathbb{R}^{n}) & \simeq & T_{k}Ibimod^{h}_{\mathcal{C}_{d}}(T_{k}(sEmb(-\,;\,M))\,;\,T_{k}(\mathcal{C}_{n})).
\end{array} 
\right.
$$
\end{thm}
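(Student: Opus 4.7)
The strategy is to bootstrap the Arone--Turchin identification through three stages: reduce Weiss's indexing poset to a category of standard disk embeddings, compute the values of $\overline{Emb}(-;\mathbb{R}^n)$ on disjoint disks, and assemble the resulting homotopy limit as a derived mapping space of (infinitesimal) bimodules. Throughout we use convergence of the Taylor tower, guaranteed by Theorem \ref{f8}.

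First, I would replace the Weiss category $\mathcal{O}_k(M)$ (or $\mathcal{O}'_k(M)$ in the compactly supported case) by the Grothendieck-type category $\mathcal{E}_k(M)$ whose objects are standard embeddings $\phi : \sqcup^j \mathcal{C}^d \hookrightarrow M$ with $j \leq k$, morphisms being standard embeddings $\psi : \sqcup^i \mathcal{C}^d \hookrightarrow \sqcup^j \mathcal{C}^d$ compatible with the $\phi$'s. A Quillen Theorem A style cofinality argument, using the contractibility of the space of standard disk atlases on any object of $\mathcal{O}_k(M)$, shows that the forgetful map induces a weak equivalence on homotopy limits:
$$
\holim_{V \in \mathcal{O}_k(M)} \overline{Emb}(V ; \mathbb{R}^n) \;\simeq\; \holim_{\phi \in \mathcal{E}_k(M)} \overline{Emb}(\mathrm{source}(\phi) ; \mathbb{R}^n).
$$

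Next, I would compute the value on disjoint disks. The defining fiber sequence $\overline{Emb}(\sqcup^j \mathcal{C}^d ; \mathbb{R}^n) \to Emb(\sqcup^j \mathcal{C}^d ; \mathbb{R}^n) \to Imm(\sqcup^j \mathcal{C}^d ; \mathbb{R}^n)$ reduces, after the scanning map and Smale--Hirsch, to the homotopy fiber of the evaluation from the framed configuration space of $j$ points in $\mathbb{R}^n$ to the product of individual tangential data; the Gaussian normalisation identifies this homotopy fiber with the space of $j$-tuples of little $n$-cubes with disjoint interiors, i.e.\ with $\mathcal{C}_n(j)$. The right $\mathcal{C}_d$-action on the indexing category, by precomposition of standard embeddings, then corresponds exactly to the right $\mathcal{C}_d$-module structure on $\mathcal{C}_n$ coming from the operad map $\mathcal{C}_d \to \mathcal{C}_n$.

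Having identified the diagram, the third step is to rewrite the homotopy limit as a derived Hom. The functor $\phi \mapsto \overline{Emb}(\mathrm{source}(\phi) ; \mathbb{R}^n) \simeq \mathcal{C}_n(|\phi|)$ on $\mathcal{E}_k(M)$ is, by construction, the restriction of $\mathcal{C}_n$ viewed through the right $\mathcal{C}_d$-module $sEmb(-;M)$; accordingly a bar/Boardman--Vogt resolution of $sEmb(-;M)$ in the category of right $\mathcal{C}_d$-modules (or of its $k$-truncation in truncated right modules) identifies the above homotopy limit with $T_k Rmod_{\mathcal{C}_d}^h(T_k(sEmb(-;M)) ; T_k(\mathcal{C}_n))$, and taking $k \to \infty$ with $Rmod_{\mathcal{C}_d}^h(sEmb(-;M) ; \mathcal{C}_n)$. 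For $M = \mathcal{C}^d$ or $M$ the complement of a compact set, the ambient $\mathbb{R}^d$ geometry provides room at infinity so that one can precompose embeddings by cubes lying outside $M$; this furnishes left infinitesimal operations that refine the right $\mathcal{C}_d$-module structure to an infinitesimal $\mathcal{C}_d$-bimodule structure, and the derived Hom can correspondingly be promoted from $Rmod^h$ to $Ibimod^h$, yielding the more precise statements in the little cube and compactly supported cases.

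The principal obstacle is the cofinality comparison in step one: one must check that the functor $\mathcal{E}_k(M) \to \mathcal{O}_k(M)$ sending $\phi$ to its image is homotopy initial in the sense of homotopy limits, which requires a careful verification that, for each $V \in \mathcal{O}_k(M)$, the over-category of standard atlases refining $V$ is weakly contractible. A secondary but nontrivial technical step is to match the bar resolution of $sEmb(-;M)$ in truncated right $\mathcal{C}_d$-modules (or the Boardman--Vogt resolution of $\mathcal{C}_d$ as infinitesimal bimodule built in Sections \ref{G0} and \ref{A7} of the present paper) with the Weiss tower stage-by-stage; this is where the explicit cofibrant replacements developed earlier are genuinely required, and where one recovers compatibility with the delooping statements of Theorems \ref{f3} and \ref{F6} used in the subsequent application.
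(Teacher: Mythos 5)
The paper does not prove this statement: it is imported verbatim, with the citation tag {\cite[Theorem 5.10]{Arone14}} built into the theorem header, and is used downstream purely as a black box. There is therefore no ``paper's own proof'' against which to compare your attempt; the comparison can only be against what Arone and Turchin actually do in \cite{Arone14} (and Turchin in \cite{Turchin13}).

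With that caveat, your sketch is a plausible reconstruction of the broad architecture of the Arone--Turchin argument: replace the Weiss poset by a category of standard multi-disk (or multi-cube) embeddings by a homotopy-cofinality argument, compute the value of $\overline{Emb}(-;\mathbb{R}^n)$ on a disjoint union of disks and identify it with $\mathcal{C}_n(j)$, and then recognize the homotopy limit as a derived mapping space of right modules, respectively infinitesimal bimodules, over $\mathcal{C}_d$. But two steps are materially understated. First, the passage from $Rmod^h_{\mathcal{C}_d}$ to $Ibimod^h_{\mathcal{C}_d}$ in the little-cube and compactly-supported cases is not a ``promotion''; Arone--Turchin prove a genuine comparison theorem showing the two derived mapping spaces agree under their hypotheses, and it is precisely to formulate this that they develop the theory of infinitesimal (``weak'') bimodules. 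Second, matching the Weiss tower stage $T_kF$ with the truncated derived mapping space is the real content: it relies on a careful model for the polynomial approximation in terms of a homotopy end/limit over a suitable category of cube embeddings and on identifying the resulting diagram as the restriction of the right-module diagram, not merely on choosing a convenient cofibrant replacement of $sEmb(-;M)$. In the present paper, the Boardman--Vogt machinery of Sections \ref{G0}--\ref{C4} is used to prove Theorems \ref{f3}, \ref{F9} and \ref{F6}, not to reprove Theorem \ref{f9}; invoking it here conflates two independent ingredients of the eventual Section \ref{F7} application.
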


\begin{cor}\label{g4}
Assume Conjecture \ref{f5} is true. If $n-d-2>0$, then one has
$$
\overline{Emb}_{c}(\mathbb{R}^{d}\,;\,\mathbb{R}^{n})\simeq \Omega^{d+1}Operad^{h}(\mathcal{C}_{d}\,;\,\mathcal{C}_{n})\hspace{15pt}\text{and}\hspace{15pt} T_{k}\overline{Emb}_{c}(\mathbb{R}^{d}\,;\,\mathbb{R}^{n})\simeq \Omega^{d+1}( T_{k}Operad^{h}(T_{k}(\mathcal{C}_{d})\,;\,T_{k}(\mathcal{C}_{n})).
$$
\end{cor}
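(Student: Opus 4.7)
The plan is to chain together three ingredients that have already been developed in the paper: the convergence of the embedding calculus tower, the Arone--Turchin identification of the limit with a mapping space of infinitesimal bimodules, and the (truncated) Dwyer--Hess double delooping, which the conjecture and Theorem~\ref{H1} together furnish. The hypothesis $n-d-2>0$ enters only at the first step.

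First I would invoke Theorem~\ref{f8}, whose hypothesis $n-d-2>0$ guarantees that the Taylor tower for $\overline{Emb}_{c}(-\,;\,\mathbb{R}^{n})$ evaluated on $\mathbb{R}^{d}$ converges, so that
\[
\overline{Emb}_{c}(\mathbb{R}^{d}\,;\,\mathbb{R}^{n}) \simeq T_{\infty}\overline{Emb}_{c}(\mathbb{R}^{d}\,;\,\mathbb{R}^{n}).
\]
Then Theorem~\ref{f9}, applied to the case $M=\mathcal{C}^{d}$, rewrites the right-hand side as the derived mapping space $Ibimod_{\mathcal{C}_{d}}^{h}(\mathcal{C}_{d}\,;\,\mathcal{C}_{n})$ of infinitesimal bimodules. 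At the truncated level the same theorem provides $T_{k}\overline{Emb}_{c}(\mathbb{R}^{d}\,;\,\mathbb{R}^{n}) \simeq T_{k}Ibimod_{\mathcal{C}_{d}}^{h}(T_{k}(\mathcal{C}_{d})\,;\,T_{k}(\mathcal{C}_{n}))$, without needing the convergence hypothesis for the individual levels of the tower.

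Next I would apply Conjecture~\ref{f5} to the inclusion $\mathcal{C}_{d}\hookrightarrow \mathcal{C}_{n}$, which is a map of $\mathcal{C}_{d}$-bimodules (Example~\ref{d1}) whose target satisfies $\mathcal{C}_{n}(0)\simeq \ast$. This delivers
\[
Ibimod_{\mathcal{C}_{d}}^{h}(\mathcal{C}_{d}\,;\,\mathcal{C}_{n}) \simeq \Omega^{d}\bigl(Bimod_{\mathcal{C}_{d}}^{h}(\mathcal{C}_{d}\,;\,\mathcal{C}_{n})\bigr),
\]
together with the analogous $k$-truncated statement. Finally I would feed this into Theorem~\ref{H1}: since $\mathcal{C}_{d}$ is well pointed and $\Sigma$-cofibrant and both $\mathcal{C}_{d}(1)$ and $\mathcal{C}_{n}(1)$ are contractible, one obtains
\[
Bimod_{\mathcal{C}_{d}}^{h}(\mathcal{C}_{d}\,;\,\mathcal{C}_{n}) \simeq \Omega\, Operad^{h}(\mathcal{C}_{d}\,;\,\mathcal{C}_{n}),
\]
and similarly $T_{k}Bimod_{\mathcal{C}_{d}}^{h}(T_{k}(\mathcal{C}_{d})\,;\,T_{k}(\mathcal{C}_{n}))\simeq \Omega\bigl(T_{k}Operad^{h}(T_{k}(\mathcal{C}_{d})\,;\,T_{k}(\mathcal{C}_{n}))\bigr)$. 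Concatenating the three weak equivalences yields the desired formulas.

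There is no real obstacle here since every ingredient has been stated in the paper. The only content is organisational: one must simply verify that the conjectured and proved weak equivalences compose, i.e. that the bimodule map $\mathcal{C}_{d}\to\mathcal{C}_{n}$ used in Conjecture~\ref{f5} is the same map whose operadic delooping is identified by Theorem~\ref{H1}, and that the truncated version of Conjecture~\ref{f5} matches the truncation convention used in Theorem~\ref{H1}. Both are immediate from Example~\ref{d1} and the fact that the truncation functor $T_{k}(-)$ is applied uniformly throughout.
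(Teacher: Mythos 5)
Your proof is correct and follows essentially the same route the paper takes: convergence of the Taylor tower (Theorem~\ref{f8}), identification of the limit with an infinitesimal bimodule mapping space (Theorem~\ref{f9}), the conjectured $d$-fold delooping to a bimodule mapping space (Conjecture~\ref{f5}), and then the operadic single delooping (Theorem~\ref{H1}). In fact you are slightly more careful than the paper's one-line proof, which cites only Conjecture~\ref{f5} together with Theorems~\ref{f8} and~\ref{f9} and leaves implicit the final appeal to Theorem~\ref{H1} needed to convert $\Omega^{d}Bimod^{h}_{\mathcal{C}_{d}}(\mathcal{C}_{d};\mathcal{C}_{n})$ into $\Omega^{d+1}Operad^{h}(\mathcal{C}_{d};\mathcal{C}_{n})$; your explicit verification of the hypotheses ($\mathcal{C}_{d}$ well pointed and $\Sigma$-cofibrant, $\mathcal{C}_{d}(1)$ and $\mathcal{C}_{n}(1)$ contractible, $\mathcal{C}_{n}(0)\simeq\ast$) is the right thing to do.
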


\begin{proof}
It is a consequence of Conjecture \ref{f5} together with Theorem \ref{f8} and \ref{f9}.
\end{proof}

\subsection{The space of \texorpdfstring{$(l)$}{Lg}-immersions}

The space of $(l)$-immersions, denoted by $Imm_{c}^{(k)}(\mathbb{R}^{d}\,;\,\mathbb{R}^{n})$, is the subspace of $Imm_{c}(\mathbb{R}^{d}\,;\,\mathbb{R}^{n})$ of immersions $f$ such that for each subset of $l$ distinct elements $K\subset \mathbb{R}^{d}$, the restriction $f_{|K}$ is non constant. In particular, the space of $(2)$-immersions is the embedding space $Emb_{c}(\mathbb{R}^{d}\,;\,\mathbb{R}^{n})$. The spaces of $(l)$-immersions give rise a filtration of the inclusion from the space of embeddings to the space of immersions: 
$$
\xymatrix{
Emb_{c}(\mathbb{R}^{d}\,;\,\mathbb{R}^{n}) \ar[r] & Imm_{c}^{(3)}(\mathbb{R}^{d}\,;\,\mathbb{R}^{n}) \ar[r] & 
\cdots \ar[r] & Imm_{c}^{(l)}(\mathbb{R}^{d}\,;\,\mathbb{R}^{n}) \ar[r] & \cdots \ar[r] & Imm_{c}(\mathbb{R}^{d}\,;\,\mathbb{R}^{n}) 
}
$$
The space of long $(l)$-immersions is defined by the following homotopy fiber:
$$
\overline{Im}m^{(l)}_{c}(\mathbb{R}^{d}\,;\,\mathbb{R}^{n}):=hofib\big(\,Imm^{(l)}_{c}(\mathbb{R}^{d}\,;\,\mathbb{R}^{n})\longrightarrow Imm_{c}(\mathbb{R}^{d}\,;\,\mathbb{R}^{n})\,\big).
$$
\begin{center}
\begin{figure}[!h]
\begin{center}
\includegraphics[scale=0.5]{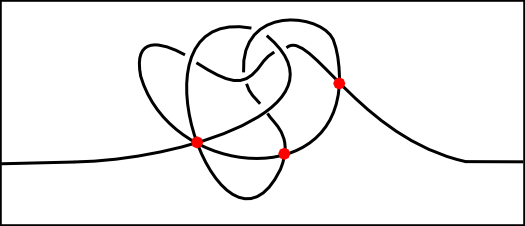}
\caption{Illustration of a point in $\overline{Im}m^{(4)}_{c}(\mathbb{R}^{1}\,;\,\mathbb{R}^{3})$.}
\end{center}
\end{figure}
\end{center}
Contrary to the space of long knots in higher dimension, we don't know if the Taylor tower associated to the space of long $(l)$-immersions converges. Nevertheless, Dobrinskaya and Turchin have been able to identify the homotopy limit of the Taylor tower with a space of infinitesimal bimodule maps by using the non-$(l)$-overlapping little cubes bimodule. 

\begin{thm}{\cite[Theorem 11.2]{Turchin14}} \label{g5}
If $n>d$, then on has the following weak equivalences:
$$
T_{k}\overline{Im}m_{c}^{(l)}(\mathbb{R}^{d}\,;\,\mathbb{R}^{n})\simeq T_{k}Ibimod_{\mathcal{C}_{d}}^{h}(T_{k}(\mathcal{C}_{d})\,;\,T_{k}(\mathcal{C}_{n}^{(k)}))
\hspace{15pt}\text{and}\hspace{15pt}
T_{\infty}\overline{Im}m_{c}^{(l)}(\mathbb{R}^{d}\,;\,\mathbb{R}^{n})\simeq Ibimod_{\mathcal{C}_{d}}^{h}(\mathcal{C}_{d}\,;\,\mathcal{C}_{n}^{(k)}).
$$
\end{thm}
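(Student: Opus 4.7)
The plan is to follow the Arone--Turchin strategy that underlies Theorem \ref{f9}, adapted from embeddings to $(l)$-immersions. Since the statement concerns only the polynomial stages $T_k$ and their homotopy inverse limit $T_\infty$, convergence of the Taylor tower is not needed; the hypothesis $n>d$ should suffice to carry out the rectifications below.

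First I would evaluate the functor $\overline{Im}m_c^{(l)}(-\,;\,\mathbb{R}^n)$ on objects of the Weiss poset $\mathcal{O}'_k(\mathbb{R}^d)$ of the form $V \cong \bigsqcup_j \mathcal{C}^d$. On such a disjoint union of small open cubes, a standard rectification argument — analogous to the one producing the equivalence $\overline{Emb}_c(\sqcup_j \mathcal{C}^d;\mathbb{R}^n) \simeq \mathcal{C}_n(j)$ in the embedding case — identifies the homotopy fiber defining $\overline{Im}m_c^{(l)}$ with the space of $j$ affine little cubes in $\mathbb{R}^n$ no $l$ of which share a common interior point. By definition this space is $\mathcal{C}_n^{(l)}(j)$, yielding a pointwise weak equivalence
$$
\overline{Im}m_c^{(l)}\Bigl(\,\bigsqcup_j \mathcal{C}^d\,;\,\mathbb{R}^n\,\Bigr) \simeq \mathcal{C}_n^{(l)}(j),
$$
which specializes at $l=2$ to the embedding identification.

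Second, I would upgrade this pointwise equivalence to an equivalence of diagrams over the truncated Weiss category. Its morphisms are generated by standard embeddings $\bigsqcup_{j'} \mathcal{C}^d \hookrightarrow \bigsqcup_j \mathcal{C}^d$; when pulled back to $\overline{Im}m_c^{(l)}$, these encode the right $\mathcal{C}_d$-action $\circ^i$ on $\mathcal{C}_n^{(l)}$ (insertion of a standard embedding inside a cube), together with the left infinitesimal $\mathcal{C}_d$-action $\circ_i$ coming from the canonical bimodule inclusion $\mathcal{C}_d = \mathcal{C}_n^{(2)} \hookrightarrow \mathcal{C}_n^{(l)}$. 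Applying the Arone--Turchin identification of the homotopy limit over this diagram with a derived mapping space of $k$-truncated infinitesimal bimodules then produces
$$
T_k \overline{Im}m_c^{(l)}(\mathbb{R}^d\,;\,\mathbb{R}^n) \;\simeq\; T_k Ibimod_{\mathcal{C}_d}^h\bigl(T_k(\mathcal{C}_d)\,;\,T_k(\mathcal{C}_n^{(l)})\bigr).
$$
The $T_\infty$ statement follows by taking the homotopy limit over $k$, using that the derived mapping space of infinitesimal bimodules commutes with the tower of homotopy limits in question.

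The main obstacle is the compatibility asserted in the second step. Matching the objects of the two diagrams pointwise is straightforward from the first step, but one must verify that the full categorical structure — iterated compositions of standard embeddings together with the coherences implicit in the Weiss--Goodwillie homotopy limit — is transported to exactly the infinitesimal bimodule operations of $\mathcal{C}_n^{(l)}$ over $\mathcal{C}_d$, rather than to a twisted or projectively modified variant. This is a careful combinatorial check that rests on the explicit nature of the rectification used in the first step and on the fact that $\mathcal{C}_n^{(l)}$ inherits its bimodule structure from the ambient operad $\mathcal{C}_n^{\infty}$.
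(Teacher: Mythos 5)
The paper does not prove this statement itself; Theorem~\ref{g5} is quoted directly from Dobrinskaya--Turchin \cite[Theorem~11.2]{Turchin14}. Your sketch reconstructs that source's argument along the same Arone--Turchin lines (rectify $\overline{Im}m^{(l)}_c$ on disjoint unions of cubes to $\mathcal{C}_n^{(l)}$, then match the Weiss-poset structure maps with the infinitesimal $\mathcal{C}_d$-bimodule operations inherited from $\mathcal{C}_n^{\infty}$), so this is essentially the same approach as the cited proof.
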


As shown in Section \ref{G5}, the non-$(l)$-overlapping little cubes bimodule is not an operad. Nevertheless, the inclusion $\eta_{2}:\mathcal{C}_{n}\rightarrow \mathcal{C}_{l}^{(k)}$ preserves the $\mathcal{C}_{n}$-bimodule structures. So, Corollaries \ref{G6} and \ref{G7}, applied to the bimodule map $\eta_{2}$ and the operadic map $\eta_{1}:\mathcal{C}_{d}\rightarrow \mathcal{C}_{n}$, imply the following theorem:

\begin{thm}
Assume that Dwyer-Hess' conjecture \ref{f5} is true. If $n-d-2>0$, then the pair of spaces 
$$
(\overline{Emb}_{c}(\mathbb{R}^{d}\,;\,\mathbb{R}^{n})\,;\,T_{\infty}\overline{Im}m_{c}^{(l)}(\mathbb{R}^{d}\,;\,\mathbb{R}^{n}))$$ 
is weakly equivalent to the $\mathcal{SC}_{d+1}$-algebra
$$
\left(\,\,
\Omega^{d+1}Operad^{h}(\mathcal{C}_{d}\,;\,\mathcal{C}_{n})\,\,;\,\,\Omega^{d+1}\big(\,\,Operad^{h}(\,\mathcal{C}_{d}\,;\,\mathcal{C}_{n}\,)\,\,;\,\, Op[\mathcal{C}_{d}\,;\,\emptyset]^{h}(\,\mathcal{CC}_{d}\,\,;\,\,\mathcal{L}(\mathcal{C}_{n}^{(l)}\,;\,\mathcal{C}_{n}\,;\,\mathcal{C}_{n})\,)\,\,\big)\,\, 
\right).
$$
Similarly, the pair of $k$ polynomial approximation
$$
(T_{k}\overline{Emb}_{c}(\mathbb{R}^{d}\,;\,\mathbb{R}^{n})\,;\,T_{k}\overline{Im}m_{c}^{(l)}(\mathbb{R}^{d}\,;\,\mathbb{R}^{n}))
$$
is weakly equivalent to the $\mathcal{SC}_{d+1}$-algebra
$$
\left(
\Omega^{d+1}\big(T_{k}Operad^{h}(T_{k}(\mathcal{C}_{d});T_{k}(\mathcal{C}_{n}))\big);\Omega^{d+1}\big(T_{k}Operad^{h}(T_{k}(\mathcal{C}_{d});T_{k}(\mathcal{C}_{n})); T_{k}Op[\mathcal{C}_{d};\emptyset]^{h}(T_{k}(\mathcal{CC}_{d});T_{k}(\mathcal{L}(\mathcal{C}_{n}^{(l)};\mathcal{C}_{n};\mathcal{C}_{n})))\big) 
\right).
$$
\end{thm}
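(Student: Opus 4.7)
The plan is to assemble the statement from three independent ingredients that have been established (or assumed) earlier in the paper: the Arone--Turchin/Dobrinskaya--Turchin identification of embedding-type spaces with derived mapping spaces of (truncated) infinitesimal bimodules, the (relative) delooping of Corollary \ref{G6} (resp.~\ref{G7}), and the hypothesis that Dwyer--Hess' conjecture \ref{f5} holds. Throughout I will verify that the data $(\eta_1,\eta_2)$ coming from the inclusion $\mathcal{C}_d\hookrightarrow \mathcal{C}_n$ and $\mathcal{C}_n\hookrightarrow \mathcal{C}_n^{(l)}$ meet the hypotheses needed.

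First, the convergence hypothesis $n-d-2>0$ of Theorem \ref{f8} is exactly what lets me invoke Theorems \ref{f9} and \ref{g5} simultaneously: they produce weak equivalences
$$
\overline{Emb}_c(\mathbb{R}^d;\mathbb{R}^n)\simeq Ibimod^h_{\mathcal{C}_d}(\mathcal{C}_d;\mathcal{C}_n),\qquad T_\infty\overline{Im}m_c^{(l)}(\mathbb{R}^d;\mathbb{R}^n)\simeq Ibimod^h_{\mathcal{C}_d}(\mathcal{C}_d;\mathcal{C}_n^{(l)}),
$$
and their $k$-truncated analogues. The point is that these equivalences are natural with respect to the inclusion $\mathcal{C}_n\hookrightarrow \mathcal{C}_n^{(l)}$, so the pair $(\overline{Emb}_c,T_\infty\overline{Im}m_c^{(l)})$ is identified with the pair $(Ibimod^h_{\mathcal{C}_d}(\mathcal{C}_d;\mathcal{C}_n),Ibimod^h_{\mathcal{C}_d}(\mathcal{C}_d;\mathcal{C}_n^{(l)}))$ compatibly.

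Second, I would check that the input data satisfy the hypotheses of Corollary \ref{G6}. Take $O=\mathcal{C}_d$ (well pointed and $\Sigma$-cofibrant, with $O(1)\simeq \ast$), $O'=\mathcal{C}_n$ (again $O'(0)$ and $O'(1)$ are contractible), and $M=\mathcal{C}_n^{(l)}$. The map $\eta_1\colon \mathcal{C}_d\to \mathcal{C}_n$ is the standard operad map induced by the codimension-$(n-d)$ inclusion. The map $\eta_2\colon \mathcal{C}_n\to \mathcal{C}_n^{(l)}$ comes from the inclusion of configuration spaces (every $2$-fold disjoint configuration is automatically $l$-fold disjoint for $l\geq 2$); the fact that $\eta_2$ respects the $\mathcal{C}_n$-bimodule structure is immediate because both structures are restrictions of the operadic structure on $\mathcal{C}_n^\infty$. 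Finally $M(0)=\mathcal{C}_n^{(l)}(0)=\ast$, so all hypotheses are met.

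Third, applying Corollary \ref{G6} to this data produces exactly the claimed $\mathcal{SC}_{d+1}$-algebra equivalence, because Corollary \ref{G6} is itself derived from the main Theorem \ref{f3} combined with Dwyer--Hess' conjecture \ref{f5} (which is what converts the relative $\mathcal{SC}_1$-delooping in Theorem \ref{f3} into the $\mathcal{SC}_{d+1}$-delooping one needs here). The truncated statement follows identically by replacing Theorems \ref{f9}, \ref{g5} and Corollary \ref{G6} with their $k$-truncated versions and Corollary \ref{G7}. In this sense there is essentially no new obstacle beyond what has already been established: the only genuine technical content not present earlier is the naturality needed to identify the \emph{pair} rather than the two spaces individually, and this is automatic from the functoriality of Construction \ref{i7} noted in Remark \ref{F8} applied to the commuting square formed by $\eta_1$ and $\eta_2$. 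Thus the statement is proved by concatenating the identifications and invoking Corollaries \ref{G6} and \ref{G7}.
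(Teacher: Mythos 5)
Your proof is correct and takes essentially the same route as the paper: identify $\overline{Emb}_c$ and $T_\infty\overline{Im}m_c^{(l)}$ with derived infinitesimal bimodule mapping spaces via Theorems \ref{f9} and \ref{g5} (using $n-d-2>0$ for convergence), verify the hypotheses of Corollaries \ref{G6} and \ref{G7} for the data $\eta_1\colon \mathcal{C}_d\to\mathcal{C}_n$, $\eta_2\colon\mathcal{C}_n\to\mathcal{C}_n^{(l)}$, and conclude. You are in fact slightly more careful than the paper, which simply asserts that Corollaries \ref{G6} and \ref{G7} applied to $\eta_1,\eta_2$ imply the theorem; your explicit remark about the compatibility of the Arone--Turchin/Dobrinskaya--Turchin identifications with the map $\eta_2$ (so that one identifies the pair, not just the two spaces separately) is a point the paper leaves implicit.
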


\begin{merci}
I would like to thank Muriel Livernet for her help in preparing this paper and Eric Hoffbeck for many helpful comments. I also wish to express my gratitude to Victor Turchin for suggesting the application to the space of $(l)$-immersions and ideas which are at the origin of the second version. I am also grateful to the Max Planck Institute for its hospitality and its financial support.
\end{merci}

\bibliographystyle{amsplain}
\bibliography{bibliography}

\end{document}